\renewcommand\a{\alpha}
\renewcommand\b{\beta}
\newcommand\g{\gamma}
\newcommand\la{\lambda}
\newcommand\z{\zeta}
\renewcommand\th{\theta}
\newcommand\io{\iota}
\newcommand\s{\sigma}
\newcommand\f{\phi}
\newcommand\vf{\varphi}
\renewcommand\r{\rho}
\newcommand\w{\omega}
\newcommand\vG{\varGamma}
\newcommand\ve{\varepsilon}
\newcommand{\OO}{\mathbb O}
\newcommand\SB{\mathscr{B}}
\newcommand\SD{\mathscr{D}}
\newcommand\SE{\mathscr{E}}
\newcommand\SF{\mathscr{F}}
\newcommand\SG{\mathscr{G}}
\newcommand\SL{\mathscr{L}}
\newcommand\SM{\mathscr{M}}
\newcommand\SN{\mathscr{N}} 
\newcommand\SO{\mathscr{O}}
\newcommand\SP{\mathscr{P}}
\newcommand\SQ{\mathscr{Q}}
\newcommand\ST{\mathscr{T}}
\newcommand\SH{\mathscr{H}}
\newcommand\SI{\mathscr{I}}
\newcommand\SW{\mathscr{W}}
\newcommand\SV{\mathscr{V}}
\newcommand\SU{\mathscr{U}}
\newcommand\SX{\mathscr{X}}
\newcommand\SY{\mathscr{Y}}
\newcommand\SZ{\mathscr{Z}}
\newcommand\Ql{\bar{\mathbf Q}_l}
\newcommand\BP{\mathbf P}
\newcommand\BF{\mathbf F}
\newcommand\BZ{\mathbf Z}
\newcommand\BH{\mathbf H} 
\newcommand\BI{\mathbf I}
\newcommand\Bm{\mathbf m}
\newcommand\Bn{\mathbf n}
\newcommand\Bb{\mathbf b}
\newcommand\Bc{\mathbf c}
\newcommand\Be{\mathbf e}
\newcommand\Bk{\mathbf k}
\newcommand\Bd{\mathbf d}
\newcommand\Bla{\boldsymbol\lambda}
\newcommand\Bmu{\boldsymbol\mu}
\newcommand\CH{\mathcal{H}}
\newcommand\CK{\mathcal{K}}
\newcommand\CG{\mathcal{G}}
\newcommand\CZ{ \mathcal{Z}}
\newcommand\CX{ \mathcal{X}}
\newcommand\FD{\mathfrak D}
\newcommand\FN{\mathfrak N}
\newcommand\Fb{\mathfrak b}
\newcommand\Fc{\mathfrak c}
\newcommand\Fg{\mathfrak g}
\newcommand\Fh{\mathfrak h}
\newcommand\Fs{\mathfrak s}
\newcommand\Fl{\mathfrak l}
\newcommand\Ft{\mathfrak t}
\newcommand\Fn{\mathfrak n}
\newcommand\Fp{\mathfrak p}
\newcommand\Fz{\mathfrak z}
\newcommand\iv{^{-1}}
\newcommand\wh{\widehat}
\newcommand\wt{\widetilde}
\newcommand\wg{^{\wedge}}
\newcommand\ol{\overline}
\newcommand\hra{\hookrightarrow}
\newcommand\lra{\leftrightarrow}
\newcommand\IC{\operatorname{IC}}
\newcommand\Hom{\operatorname{Hom}}
\newcommand\End{\operatorname{End}}
\newcommand\Ind{\operatorname{Ind}}
\newcommand\supp{\operatorname{supp}\,}
\newcommand\Lie{\operatorname{Lie}}
\newcommand\ch{\operatorname{ch}}
\newcommand\Ad{\operatorname{Ad}}
\newcommand\ad{\operatorname{ad}}
\newcommand\reg{_{\operatorname{reg}}}
\newcommand\rg{\operatorname{reg}}
\newcommand\sr{\operatorname{sr}}
\newcommand\nilp{\operatorname{nil}}
\newcommand\uni{_{\operatorname{uni}}}
\newcommand\nil{_{\operatorname{nil}}}
\newcommand\id{\operatorname{id}}
\newcommand\lp{\operatorname{\!\langle\!}}
\newcommand\rp{\operatorname{\!\rangle\!}}
\renewcommand\Im{\operatorname{Im}}
\newcommand\nat{^{\natural}}
\newcommand\flt{^{\flat}}
\newcommand\odd{\operatorname{odd}}
\newcommand\Diag{\operatorname{Diag}}
\newcommand\dw{\dot w}
\newcommand{\isom}{\,\raise2pt\hbox{$\underrightarrow{\sim}$}\,}
\numberwithin{equation}{section}
\newtheorem{thm}{Theorem}[section]
\newtheorem{lem}[thm]{Lemma}
\newtheorem{cor}[thm]{Corollary}
\newtheorem{prop}[thm]{Proposition}
\def \para#1{\par\medskip\textbf{#1}
              \addtocounter{thm}{1}}
\def \remark#1{\par\medskip\noindent
                \textbf{Remark #1}
                \addtocounter{thm}{1}}
\begin{document}
\setlength{\baselineskip}{4.9mm}
\setlength{\abovedisplayskip}{4.5mm}
\setlength{\belowdisplayskip}{4.5mm}

%%%
%%%
\renewcommand{\theenumi}{\roman{enumi}}
\renewcommand{\labelenumi}{(\theenumi)}
\renewcommand{\thefootnote}{\fnsymbol{footnote}}
%%%
\renewcommand{\thefootnote}{\fnsymbol{footnote}}
%%%
\allowdisplaybreaks[2]
%\NoBlackBoxes
\parindent=20pt
%%%%%%%%%%%%%%%%%%%%
%%%%%%%%%%%%%%%%%%%%%%%%%%%%%%%%%%%
\medskip
\begin{center}
 {\bf Symmetric spaces associated to classical groups  \\
      with even characteristic }
\\
\vspace{1cm}
Junbin Dong, Toshiaki Shoji and Gao Yang
\\
%\vspace{0.7cm}
\title{}
\end{center}

\begin{abstract}
Let $G = GL(V)$ for an $N$-dimensional vector space $V$ over 
an algebraically closed field $\Bk$, and $G^{\th}$ 
the fixed point subgroup of $G$ under an involution $\th$ on $G$.  
In the case where $G^{\th} = O(V)$, 
the generalized Springer correpsondence for the unipotent 
variety of the symmetric space $G/G^{\th}$ was described in [SY],  
assuming that $\ch \Bk \ne 2$.
The definition of $\th$ given there, and of the symmetric space arising from $\th$, 
make sense even if $\ch \Bk = 2$. In this paper, we discuss 
the Springer correspondence for those symmetric spaces with even characteristic.  
We show, if $N$ is even, that the Springer correspondence is 
reduced to that of symplectic Lie algebras in $\ch \Bk = 2$, which was determined 
by Xue.  
While if $N$ is odd, the number of $G^{\th}$-orbits in the unipotent variety 
is infinite, and 
a very similar phenomenon occurs as in the case of exotic symmetric 
space of higher level, namely of level $r = 3$. 
 
\end{abstract}

\maketitle
\markboth{DONG - SHOJI - YANG}{SYMMETRIC SPACES}
\pagestyle{myheadings}

\begin{center}
{\sc 0. Introduction}
\end{center}

\para{0.1}
Let $G$ be a connected reductive group over an algebraically closed field 
$\Bk$, and $\th : G \to G$ an involutive automorphism of $G$. 
Let $G^{\th} = \{g \in G \mid \th(g) = g\}$ be the fixed point subgroup of 
$G$ by $\th$. It is known by Vust [V] that $G^{\th}$ is a reductive group 
if $\ch \Bk \ne 2$.
Let $\Fg$ be the Lie algebra of $G$.  Then $\th$ induces a linear 
automorphism on $\Fg$, which we also denote by $\th$.  
Put $\Fg^{\th} = \{x \in \Fg \mid \th(x) = x\}$. 
It is known (e.g., [Spr, Th. 5.4.4]) that 
\par\medskip\noindent
(0.1.1) \ $\Lie G^{\th} = \Fg^{\th}$ 
if $\ch \Bk \ne 2$. 
\par\medskip\noindent
In general, we have $\Lie (G^{\th}) \subset \Fg^{\th}$, 
but the equality not necessarily holds if $\ch \Bk = 2$.    
\par
Let $G^{\io\th} = \{g \in G \mid \th(g) = g\iv\}$ be the set of 
$\io\th$-fixed points in $G$, where $\io : G \to G$ 
is the anti-automorphism $g \mapsto g\iv$. Let $G\uni$ be the set of 
unipotent elements in $G$, and put $G^{\io\th}\uni = G^{\io\th} \cap G\uni$. 
The conjugation action of $G^{\th}$ on $G$ preserves $G^{\io\th}$ and 
$G^{\io\th}\uni$.   
In the case where $\ch \Bk \ne 2$, it is known by Richardson [R] 
that $G^{\io\th}_0 = \{ g\th(g)\iv \mid g \in G \}$ coincides with 
the connected component of $G^{\io\th}$ containing the unit element, 
and  the map $g \mapsto g\th(g)\iv$ gives an isomorphism 
$G/G^{\th} \simeq G^{\io\th}_0$.  
Thus we can regard $G^{\io\th}$ as a symmetric space $G/G^{\th}$. 
\par
In the Lie algebra case, with $\ch \Bk \ne 2$, the symmetric space 
is defined as $\Fg^{-\th} = \{ x \in \Fg \mid \th(x) = -x\}$.  Let 
$\Fg\nil$ be the set of nilpotent elements in $\Fg$, and put 
$\Fg^{-\th}\nil = \Fg^{-\th} \cap \Fg\nil$.  It is known that 
$G^{\io\th}\uni \simeq \Fg^{-\th}\nil$, and the isomorphism is compatible 
with the action of $G^{\th}$.
It is also known by [R] that 
\par\medskip\noindent
(0.1.2) \ The number of $G^{\th}$-orbits in $G^{\io\th}\uni$
is finite if $\ch \Bk \ne 2$. 

\para{0.2.}
We consider $G = GL(V)$, where $V$ is an $N$-dimensional vector space 
over $\Bk$ with $\ch \Bk \ne 2$. Let $\th$ be the involutive automorphism such that 
$G^{\th}$ is the symplectic group $Sp(V)$ or the orthogonal group $O(V)$.
First assume that $G^{\th} = Sp(V)$, which we denote by 
$H$. In this case, we also consider the exotic symmetric space $G^{\io\th} \times V$, 
on which $H$ acts diagonally. It is known by Kato [K1] that 
$G^{\io\th}\uni \times V$ has finitely many $H$-orbits (actually he considered 
the Lie algebra case, $\Fg^{-\th}\nil \times V$, the so-called ``Kato's exotic 
nilcone'').  Put $N = 2n$, and let $W_n$ be the Weyl group of type $C_n$.  
He established in [K1] the Springer correspondence between the $H$-orbits
in $\Fg^{-\th}\nil \times V$ and irreducible representations of $W_n$, 
based on the Ginzburg theory ([CG]) on Hecke algebras.  
After that in [SS], the Springer correspondence 
was also proved for $G^{\io\th}\uni \times V$, based on the theory of the generalized 
Springer correspondence due to Lusztig [L1].  
(We remark that a straightforward 
generalization of the classical theory of the Springer correspondence does not hold
for the symmetric space 
$G^{\io\th}\uni$ or $\Fg^{-\th}\nil$ itself).  
\par
The special feature in the exotic nilcone is that for any 
$x \in \Fg^{-\th}\nil \times V$, the stabilizer of $x$ in $G^{\th}$ is connected, 
and so only the constant sheaves appear in the Springer correspondence. 
This type of phenomenon also appears in the nilpotent orbits of the Lie algebra 
$\Fs\Fp(V)$ in characteristic 2. Noting this, in [K2], Kato proved that 
the Springer correspondence for the exotic nilcone $\Fg^{-\th}\nil \times V$
(for any $\ch \Bk$)
is the same as the Springer correspondence for the ordinary nilcone 
$\Fs\Fp(V)\nil$ (for $\ch \Bk = 2$), by using a certain deformation argument of 
schemes over $\BZ$.  

\para{0.3.}
As a generalization of the exotic symmetric space $G^{\io\th} \times V$, we consider 
the variety $G^{\io\th} \times V^{r-1}$ for an integer $r \ge 1$, on which $H$ 
acts diagonally. We consider its unipotent part $\CX_r = G^{\io\th}\uni \times V^{r-1}$ 
with diagonal $H$-action. We call $\CX_r$ the (unipotent) exotic symmetric space of level $r$. 
The crucial difference from the exotic case (i.e., $r = 2$) 
is that the number of $H$-orbits in $\CX_r$
is infinite if $r \ge 3$. Hence the discussion in [SS] can not be applied to this case. 
Nevertheless, it was shown in [Sh] that a generalization of the Springer correspondence   
still holds in the following sense; we consider the complex reflection group 
$W_{n,r} = S_n\ltimes (\BZ/r\BZ)^n$ (hence $W_{n,1} \simeq S_n$ and 
$W_{n,2} \simeq W_n$). Then for each $\r \in W_{n,r}\wg$, one can construct
a smooth, irreducible, $H$-stable, locally closed subvariety $X_{\r}$ 
of $\CX_r$ and we have
a natural correspondence $\r \lra \IC(\ol X_{\r}, \Ql)$. Moreover, for an 
element $z \in X_{\r}$, one can define the Springer fibre $\SB_z$ as a closed subset of 
the flag variety of $H$, and the cohomology $H^{i}(\SB_z,\Ql)$ has a structure 
of $W_{n,r}$-module (Sprigner representation of $W_{n,r}$). Then for a generic 
$z \in X_{\r}$, the top cohomology 
$H^{2d_z}(\SB_z, \Ql)$ gives the 
irreducible representation $\r$ (here $d_z = \dim \SB_z$).  
In this way, any irreducible representation of $W_{n,r}$ is realized as 
the top cohomology of the Springer fibre. 

\para{0.4.}
Next we consider the case where $G^{\th} = O(V)$ with $\ch \Bk \ne 2$. 
We put $H = SO(V)$. In this case, the stabilizer of 
$x \in G^{\io\th}\uni$ in $H$ is not necessarily connected, and we need to
consider twisted local systems on $H$-orbits.  Moreover the Springer correspondence 
is not enough to cover all the $H$-orbits in $G^{\io\th}\uni$. 
In [SY], the generalized Springer correspondence for $G^{\io\th}\uni$ was established.
Actually it was shown there that Lusztig's theory of 
the generalized Springer correspondence for reductive groups fits very well to  
our setting under a suitable modification. For example, if $N = 2n + 1$ or $N = 2n$,  
the Springer correspondence is given by the correspondence 
$\r \lra \IC(\ol\SO_{\r}, \Ql)$, where $\r \in S_n\wg$ and 
$\SO_{\r}$ is a certain $H$-orbit corresponding to $\r$
(note that twisted local systems do not appear in this part).  

\para{0.5.}
In this paper, we consider $\th : G \to G$, defined in a similar way as 
$\th$ in 0.4, but under the condition that $\ch\Bk = 2$. 
The definition of $G^{\io\th}$ given in 0.4 makes sense even if $\ch\Bk = 2$
(see 1.2).
We call $G^{\io\th}$ the symmetric space 
over a field of characteristic 2. The aim of this paper is to establish the 
Springer correspondence for the $G^{\th}$-orbits in $G^{\io\th}\uni$.  
\par
First assume that $N = 2n$ is even.  In that case, we can show  (Proposition 1.7) 
 that $G^{\th} = Sp(V)$, 
the symplectic group in characteristic 2, and there exists an isomoprhism 
$G^{\io\th}\uni \simeq \Fg^{\th}\nil$, compatible with the action of $Sp(V)$,
where $\Fg^{\th} = \Fs\Fp(V)$ is the Lie algebra of $Sp(V)$.  
Hence considering the Springer correspondence for the symmetric space
$G^{\io\th}\uni$ is essentially the same as considering the same problem for 
the ordinary nilcone $\Fs\Fp(V)\nil$. 
Put $H = Sp(V)$ and $\Fh = \Fs\Fp(V)$.
As was explained in 0.2, the Springer correspondence 
for $\Fh\nil = \Fs\Fp(V)\nil$ was determined by [K2] in connection 
with the exotic symmetric space.  
After that Xue [X1, X2] established the Springer correspondence 
for the Lie algebras of classical type in characteristic 2, 
based on the Lusztig's theory of the generalized Springer correspondence.  
Note that the difficulty in the Lie algebras of even characteristic is that
the regular semisimple elements not necessarily exist.   
(Recall, in the case of reductive groups $G$ with Weyl group $W$, that  
the strategy of proving the Springer correspondence is first to construct 
a semisimple perverse sheaf $K$ on $G$, equipped with $W$-action, 
by making use of the finite Galois covering arising from the set of regular semisimple 
elements in $G$, then restrict it to the unipotent variety $G\uni$,  
to obtain the correspondence $\r \lra \IC(\ol\SO, \SE)$, 
where $\r$ is the irreducible representation of $W$, and $\SO$ is a certain 
unipotent class in $G$, $\SE$ is a local system on $\SO$.)
In our situation, $\Fh$ does not have regular semisimple elements.  
In order to overcome this defect, he replaced $H$ and $\Fh$ by a bigger group 
$\wt H$ and its Lie algebra $\wt\Fh$, where $\wt H$ is an extension of $H$ by 
a connected center, so that $\wt\Fh$ has regular semisimple elements. 
Then following the above procedure, he could prove the Springer correspondence for 
$\wt\Fh\nil = \Fh\nil$, in which case it gives a bijection  
$\r \lra \IC(\ol\SO_{\r}, \Ql)$ between irreducible representations of $W_n$, and 
nilpotent orbits in $\Fh\nil$.  
Hence our problem of the Springer correspondence for $G^{\io\th}\uni$ is easily 
solved as a corollary of Xue's result. 

\para{0.6.}
Next assume that $N = 2n + 1$ is odd. 
We write $G = GL(V')$ where $V'$ is an $N$-dimensional 
vector space.  Then we can show (Proposition 1.13) 
that $G^{\th} \simeq Sp(V)$, where $V$ is an 
$2n$-dimensional subspace of $V'$. Moreover $G^{\io\th}\uni \simeq \Fg^{\th}\nil$, 
compatible with the action of $G^{\th}$.
Put $H = Sp(V)$ and $\Fh = \Fs\Fp(V)$ as in 0.5.  
We can show that there is an embedding $\Fh \hra \Fg^{\th}$, 
and $\Fg^{\th}$ is isomorphic to $\Fh \times V \times \Bk$ as algebraic varieties, 
where the action of 
$H$ on $\Fg^{\th}$ corresponds to the diagonal action of $H$ on 
$\Fh \times V$, together with the trivial action of $H$ on $\Bk$. 
Hence the situation is very similar to the exotic symmetric 
space mentioned in 0.2.  But note that the structure of the nilcone  
is different.  In the exotic case, $\Fg^{-\th}\nil \times V$ is considered as 
the nilcone.  In our case, $\Fg^{\th}\nil = (\Fh\nil \times V) \cap \Fg\nil$, 
where $\Fg = \Fg\Fl(V')$ is the Lie algebra of $G$, which is a certain $H$-stable
subset of $\Fh\nil \times V$.  
\par
More interesting thing is that the number of $H$-orbits in $\Fg^{\th}\nil$ 
is infinite.  Hence this gives a counter-example for (0.1.2) in the case where 
$\ch\Bk = 2$. (Also this gives a counter-example for (0.1.1) since $G^{\th} = H$ 
and $\Fg^{\th} \ne \Fh$.) 
We remark that $\Fg^{\th}\nil$ should be understood as an analogue of 
the exotic symmetric space of higher level as discussed in 0.3 
rather than the exotic symmetric space itself.      
In fact, following Kato's observation ([K2]), the Springer correspondence 
for $\Fh\nil$ corresponds to that for the exotic 
symmetric space $G^{\io\th}\uni \times V$.  Hence it is natural to expect 
that the Springer correspondence 
for $\Fh\nil \times V$ should correspond to that for 
$(G^{\io\th}\uni \times V ) \times V = G^{\io\th}\uni \times V^2$, and 
the correspondence is dominated by $W_{n,3}$ (the complex reflection 
group for $r = 3$) as the special case of [Sh].
\par  
We show  in Theorem 9.7 and Proposition 9.10 that this certainly holds.  
It is proved that a similar result as in 0.3 holds for $\Fg^{\th}\nil$ with respect to 
$W_{n,3}$; namely, 
there exists a smooth, irreducible, locally closed subvariety $X_{\r}$ of $\Fg^{\th}\nil$
for each $\r \in W_{n,3}\wg$ such that $\r \lra \IC(\ol X_{\r}, \Ql)$ gives 
the Springer correspondence, namely, for $z \in X_{\r}$, the Springer 
fibre $\SB_z$ is defined as a closed subset of the flag variety of $H$, and 
$H^i(\SB_z,\Ql)$ gives rise to a $W_{n,3}$-module.  
For a generic $z \in X_{\r}$, 
$H^{2d_z}(\SB_z, \Ql)$ gives the irreducible representation $\r$ of $W_{n,3}$.  
Any irreducible representation of $W_{n,3}$ is realized in this way 
on the top cohomology of the Springer fibre. . 

\para{0.7.}
The proof of the Springer correspondence for $\Fg^{\th}\nil$ is basically done by 
modifying the arguments in [Sh] for the case of exotic symmetric spaces of higher level.
But in order to apply the discussion in [Sh] to our case, 
it is necessary to construct a representation of $W_n = W_{n,2}$ on a certain 
semisimple perverse sheaf $K$ on $\Fh$.  Note that a similar perverse sheaf $\wt K$ 
on $\wt\Fh$ equipped with $W_n$-action is already constructed in 0.5.   
Here we need to determine its restriction $\wt K|_{\Fh} = K$ on
$\Fh$. 
In order to do this, we construct a representation of $W_n$ on $K$ directly, 
without referring $\wt\Fh$, 
just using the subregular semisimple elements in $\Fh$
which are open dense in the set of semisimple elements in $\Fh$ 
(though in one step we need to apply the result for $\wt\Fh$).  This process is quite 
similar to the procedure in [SS], which is regarded as 
a reflection of Kato's deformation argument between 
the Springer correspondence for $G^{\io\th}\uni \times V$ and for $\Fh\nil$. 
\par
Note that in [Sh] (and in [SS]) the proof of the main result is 
reduced to the case where $r = 1$, namely the case $G^{\io\th}\uni$, 
by induction on $r$. But the discussion in [Sh] can not cover this case 
since the Springer correspondence 
does not exist for $G^{\io\th}\uni$,   
and we had to refer the result of 
Henderson [Hen], which he proved by using the Fourier-Deligne transform of perverse
sheaves on the Lie algebra. 
So, in some sense, the discussion in [Sh] is unsatisfactory 
from the view point of the strategy in 0.5.     
In the case of $\Fg^{\th}\nil$, this unpleasant situation does not occur
since the Springer correspondence exists for $\Fh\nil$ (though we cannot 
avoid to use $\wt\Fh$).  

\par\bigskip\noindent
{\bf Some notations:}
\par\medskip
For any finite group $\vG$, we denote by $\vG\wg$ the set of 
isomorphism  classes of irreducible representations of $\vG$ over $\Ql$.
\par
Let $G$ be an algebraic group and $\Fg$ its Lie algebra. $G$ acts on 
$\Fg$ by the adjoint action. We denote this action simply by 
$\Ad(g)x = g\cdot x$ 
for $g \in G, x \in \Fg$.   
%%%%
%%%%
\par\bigskip\bigskip\noindent
{\bf Contents}
\par\medskip\noindent
0.  \ Introduction \\
1.  \ Symmetric spaces in characteristic 2  \\
2.  \ Intersection cohomology related to $\Fs\Fp(V)_{\sr}$ \\
3.  \ Intersection cohomology on $\Fc\Fs\Fp(V)$  \\
4.  \ The variety of semisimple orbits  \\
5.  \ Intersection cohomology on $\Fs\Fp(V)$  \\
6.  \ Intersection cohomology on $\Fg^{\th}$  \\
7.  \ Nilpotent variety for $\Fg^{\th}$   \\
8.  \ Springer correspondence for $\Fg^{\th}$  \\
9.  \ Determination of the Springer correspondence  

\par\bigskip
%%%%
%%%%
\section{Symmetric spaces in characteristic 2}

\para{1.1.}
Let $V$ be an $N$-dimensional vector space over an algebraically closed field 
$\Bk$. 
We assume that $\ch \Bk  \ne 2$. 
Let $\lp \ ,\ \rp$ be the non-degenerate symmetric bilinear form on $V$. 
The orthogonal group $O(V)$ associated to the form $\lp \ ,\ \rp$ is 
defined as 
\begin{equation*}
\tag{1.1.1}
O(V) = \{ g \in GL(V) \mid \lp gv, gw\rp = \lp v,w\rp \text{ for any } v,w \in V \}.
\end{equation*}
If we define the quadratic from $Q(v)$ on $V$ by $Q(v) = \lp v, v\rp$, 
(1.1.1) is equivalent to 
\begin{equation*}
\tag{1.1.2}
O(V) = \{ g \in GL(V) \mid Q(gv) = Q(v) \text{ for any } v \in V\}.
\end{equation*} 
\par
We fix a basis $e_1, \dots, e_N$ of $V$, and identify $GL(V)$ with $G = GL_N$ by using 
this basis. If we define the matrix $J \in GL_N$ by $J = (\lp e_i, e_j\rp)$, 
(1.1.1) is also written as
\begin{equation*}
\tag{1.1.3}
O(V) =  \{ g \in G \mid {}^tgJg = J\}.
\end{equation*}
\par
We define a map 
$\th : G \to G$ by $\th(g) = J\iv ({}^tg\iv)J$.  Then $\th$ gives rise to 
an involutive automorphism on $G$, and we have $G^{\th} = O(V)$. 
\par
In the above discussion, if we replace the symmetric bilinear form 
by the non-degenerate skew-symmetric bilinear 
form $\lp \ ,\ \rp$ on $V$ with even $N$, we can define the symplectic group
$Sp(V)$ in a similar way as $O(V)$ by using (1.1.1). The matrix $J \in GL_N$ 
is defined similarly, and by using an involutive automorphism $\th : G \to G$ 
defined by $\th(g) = J\iv({}^tg\iv)J$, we obtain an analogue of (1.1.3) for 
$Sp(V)$. Hence in this case also $G^{\th} = Sp(V)$. 

\para{1.2.}
Hereafter, throughout the paper, we assume that $\ch\Bk = 2$.
Put $G = GL_N$. 
Consider an involutive automorphism  $\th : G \to G$ defined by
$\th(g) = J\iv({}^tg\iv)J$, where 
\begin{align*}
\tag{1.2.1}
J &= \begin{pmatrix}
          1 &    0   &  0  \\ 
          0  &   0   &  1_n  \\
          0  &  1_n  &  0
         \end{pmatrix}   \quad \text{ if } N = 2n+1,  \\  \\
\tag{1.2.2}
J &= \begin{pmatrix}
            0  &  1_n \\
            1_n  &  0     
      \end{pmatrix}  \qquad\quad \text{ if } N = 2n,  
 \end{align*}
with $1_n$ the identity matrix of degree $n$.
Thus we can consider the fixed point subgroup $G^{\th}$ and 
the symmetric space $G^{\io\th}$. 
If $\ch\Bk \ne 2$, then $G^{\th} \simeq O(V)$, and 
the generalized Springer correspondence 
with respect to $G^{\io\th}$ was discussed in [SY]. The aim of this 
paper is to consider a similar problem for $G^{\io\th}$ in the 
case where $\ch\Bk = 2$. 

\para{1.3.}
We consider $\th : G \to G$ as in 1.2.
Since $J = J\iv = {}^t\!J$, we have
\begin{align*}
\tag{1.3.1}
G^{\io\th} &= \{ g \in G \mid  J\iv({}^tg\iv)J = g\iv \} \\
           &= \{ g \in G \mid {}^t(Jg) = Jg \}. 
\end{align*}

Let $\Fg = \Fg\Fl_N$ be the Lie algebra of $G$, and $\th : \Fg \to \Fg$ 
be the linear automorphism induced from $\th : G \to G$. Since $\ch\Bk = 2$, 
$\th$ is given as $x \mapsto - J({}^tx)J = J({}^tx)J$ for $x \in \Fg$.  
Hence 
\begin{align*}
\tag{1.3.2}
\Fg^{\th} &= \{ x \in \Fg \mid J({}^tx)J = x \} \\
          &= \{ x \in \Fg \mid {}^t(Jx) = Jx\}.
\end{align*}

Put $\Fg^{\th}\nil = \Fg^{\th} \cap \Fg\nil$. By comparing (1.3.1) and (1.3.2),
we have

\begin{lem}  %%%%  Lemma 1.4.
The map $g \mapsto g-1$ gives an isomorphism $G^{\io\th}\uni \isom \Fg^{\th}\nil$, 
which is compatible with the conjugate action of $G^{\th}$.  
\end{lem}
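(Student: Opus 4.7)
The plan is to verify the claim by direct calculation, exploiting the fact that in characteristic 2 the signs disappear and the defining equations (1.3.1) and (1.3.2) become formally identical under the substitution $g \leftrightarrow x+1$. First I would check that the map $\f : g \mapsto g-1$ sends $G^{\io\th}\uni$ into $\Fg^{\th}\nil$. Writing $x = g-1$, one has $Jx = Jg - J$, and since ${}^tJ = J$ by the choice of $J$ in (1.2.1)--(1.2.2), the equality ${}^t(Jg) = Jg$ from (1.3.1) immediately gives ${}^t(Jx) = {}^t(Jg) - {}^tJ = Jg - J = Jx$, so $x \in \Fg^{\th}$ by (1.3.2). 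Nilpotency of $x$ is immediate from unipotency of $g$.

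Next I would verify the inverse map. Given $x \in \Fg^{\th}\nil$, set $g = x+1$. Since $x$ is nilpotent, $g$ is unipotent and hence invertible, so $g \in G\uni$. The same computation run backwards shows ${}^t(Jg) = {}^t(Jx) + {}^tJ = Jx + J = Jg$, so $g \in G^{\io\th}$ by (1.3.1). Thus the assignment $x \mapsto x+1$ provides a two-sided inverse to $\f$.

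Finally, both $\f$ and its inverse are restrictions of the global morphisms $G \to \Fg$, $g \mapsto g-1$ and $\Fg \to G$, $x \mapsto x+1$ (the latter defined on the open subset of elements with $x+1$ invertible, which contains $\Fg\nil$); hence $\f$ is an isomorphism of varieties between the closed subvarieties $G^{\io\th}\uni \subset G\uni$ and $\Fg^{\th}\nil \subset \Fg\nil$. Equivariance under $G^{\th}$ is the trivial identity $h(g-1)h\iv = hgh\iv - 1 = \Ad(h)g - 1$ for $h \in G^{\th}$, which matches the adjoint action on $\Fg^{\th}$. I expect no real obstacle; the only point requiring any care is the characteristic-2 coincidence that $\th$ acts on $\Fg$ by $x \mapsto J({}^tx)J$ without a sign, which is exactly what makes (1.3.1) and (1.3.2) parallel.
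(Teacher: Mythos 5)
Your proposal is correct and takes essentially the same approach the paper intends: the paper simply says the lemma follows ``by comparing (1.3.1) and (1.3.2),'' and you have spelled out exactly that comparison, using ${}^tJ = J$ and the characteristic-$2$ fact that $\th$ acts on $\Fg$ by $x \mapsto J({}^tx)J$ without a sign. Your verification of both directions and the equivariance is complete and matches what the authors leave implicit.
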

\para{1.5.}
In order to study $G^{\th}$ and $G^{\io\th}$, we need to consider 
the orthogonal group over the field of characteristic 2. 
Since (1.1.1) and (1.1.2) 
are not equivalent in the case where $\ch \Bk = 2$, 
we have to define the orthogonal group by using 
the quadratic form $Q(v)$ on $V$. Let $Q(v)$ be a quadratic form on $V$.
We define an associated bilinear form $\lp \ , \ \rp$ by 
\begin{equation*}
\tag{1.5.1}
\lp v, w \rp = Q(v + w) - Q(v) - Q(w).
\end{equation*}  
\par
Here we give the quadratic form $Q(v)$ explicitly as follows.
First assume that $N = 2n$, and fix a basis $e_1, \dots, e_n, f_1, \dots f_n$
of $V$. For $v = \sum_ix_ie_i + \sum_iy_if_i$, define 
\begin{equation*}
Q(v) = \sum_{i=1}^nx_iy_i.
\end{equation*} 
Then the associated bilinear form  is given by 
$\lp v, w\rp = \sum_i(x_iy_i' + x_iy_i')$ 
for $v = \sum_ix_ie_i + \sum_iy_ie_i, w = \sum_ix_i'e_i + \sum_iy_i'f_i$.   
Next assume that $N = 2n + 1$, and fix a basis 
$e_0, e_1, \dots, e_n, f_1, \dots, f_n$ of $V$. 
For $v = \sum_ix_ie_i + \sum_iy_if_i$, define
\begin{equation*}
Q(v) = x_0^2 + \sum_{i=1}^nx_iy_i.
\end{equation*} 
Then the associated bilinear form is given by 
$\lp v, w\rp = \sum_{i\ge 1}(x_iy_i' + x_i'y_i)$
for $v = \sum_{i \ge 0}x_ie_i + \sum_{i \ge 1}y_if_i$, 
$w = \sum_{i \ge 0}x_i'e_i + \sum_{i \ge 1}y_i'f_i$.  
\par
We define an orthogonal group $O(V)$ as in (1.1.1). 
If $g \in O(V)$, $g$ leaves the form $\lp\ ,\ \rp$ invariant
by (1.5.1). 
It follows that
\begin{equation*}
\tag{1.5.2}
O(V) \subset \{ g \in GL(V) \mid \lp gv, gw\rp = \lp v,w \rp 
                   \text{ for any $v,w \in V$} \}. 
\end{equation*}

\para{1.6.}
Assume that $N = 2n$.  Since $\ch\Bk = 2$, the symmetric bilinear form 
on $V$ coincides with the skew-symmetric bilinear form on $V$. 
The definition of the symplectic group given in 1.1 makes sense even if 
$\ch\Bk = 2$, which we denote by $Sp(V)$. 
Thus the right hand side of (1.5.2) coincides with 
$Sp(V)$ with respect to this form, and we have 
\begin{equation*}
\tag{1.6.1}
O(V) \subset Sp(V).
\end{equation*}
By using the explicit description 
of the associated symmetric bilinear form $\lp \ ,\ \rp$ on $V$ 
given in 1.5, we see 
that $Sp(V)$ can be written as
\begin{equation*}
\tag{1.6.2}
Sp(V) = \{ g \in G \mid {}^tgJg = J \},
\end{equation*}  
where $J$ is as in (1.2.2). 
In particular, we have $G^{\th} = Sp(V)$. 
\par
Let $\Fs\Fp(V)$ be the Lie algebra of $Sp(V)$. It follows from (1.6.2), we have
\begin{align*}
\Fs\Fp(V) &\subseteq  \{ x \in \Fg\Fl(V) \mid \lp xv,w \rp + \lp v, xw \rp = 0 
                   \text{ for any } v,w \in V \} \\ 
          &=  \{ x \in \Fg\Fl(V) \mid {}^txJ + Jx = 0\} \\
          &=\Fg^{\th}.
\end{align*}
The last equality follows from (1.3.2). 
Here $\dim \Fs\Fp(V) = \dim Sp(V) = 2n^2 + n$.  The dimension of the 
space of symmetric matrices in $V$ is equal to $n(2n + 1)$. Thus 
the equality holds in the above formulas.  We have
\begin{equation*}
\tag{1.6.3}
\Fs\Fp(V) = \Fg^{\th}.
\end{equation*}
\par
Summing up the above arguments, together with Lemma 1.5, we have the following. 

\begin{prop} %%%%   Prop. 1.7.
Assume that $N = 2n$. 
Then $G^{\th} = Sp(V)$,  and 
$G^{\io\th}\uni \simeq \Fg^{\th}\nil = \Fs\Fp(V)\nil$. 
The behaviour of $G^{\th}$-orbits on  $G^{\io\th}\uni$ is 
the same as that of $Sp(V)$-orbits on $\Fs\Fp(V)\nil$.  
\end{prop}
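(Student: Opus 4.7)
The proposition is essentially a bookkeeping statement assembling the ingredients that the preceding discussion in 1.6 has already made available, so my plan is to harvest those ingredients in the right order rather than to introduce new ideas.

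First I would establish the group-level identification $G^{\th}=Sp(V)$. With $J$ as in (1.2.2), the defining equation of $G^{\io\th}$ in (1.3.1) combined with the fact that $\th$ is defined through the same $J$ used in (1.6.2) for $Sp(V)$ shows that the condition ${}^t g J g = J$ is exactly $\th(g) = g$. Hence $G^{\th}$ coincides with the subgroup of $G = GL(V)$ preserving the associated symmetric (= alternating, since $\ch\Bk = 2$) bilinear form $\lp\ ,\ \rp$, which is $Sp(V)$ by definition.

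Next I would establish the Lie-algebra level identification $\Fg^{\th} = \Fs\Fp(V)$. The containment $\Fs\Fp(V)\subseteq \Fg^{\th}$ is immediate from differentiating (1.6.2) and comparing with (1.3.2), as already spelled out in 1.6. For the reverse inclusion, I would simply observe that both sides have the same dimension: $\dim \Fs\Fp(V) = 2n^{2}+n$, while $\Fg^{\th}$ is the space of matrices $x$ with $Jx$ symmetric, i.e.\ a space of the same dimension $n(2n+1)$ as the space of symmetric $N\times N$ matrices. This forces equality, giving $\Fg^{\th}=\Fs\Fp(V)$.

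Finally I would invoke Lemma 1.5: the map $g\mapsto g-1$ provides a $G^{\th}$-equivariant isomorphism $G^{\io\th}\uni \isom \Fg^{\th}\nil$. Combining this with $\Fg^{\th} = \Fs\Fp(V)$ and $G^{\th}=Sp(V)$ from the previous two steps yields the claimed isomorphism $G^{\io\th}\uni \simeq \Fs\Fp(V)\nil$, equivariant for the conjugation action of $G^{\th}=Sp(V)$; the final statement about orbits is then a tautological consequence. There is no serious obstacle, since the only nontrivial piece is the dimension count establishing $\Fg^{\th}=\Fs\Fp(V)$, and everything else has already been set up; one only has to be careful that the $Sp(V)$-action by conjugation on $\Fs\Fp(V)\nil$ corresponds under $g\mapsto g-1$ to the $G^{\th}$-conjugation action on $G^{\io\th}\uni$, which is clear because conjugation is linear on $\Fg$ and fixes the scalar $1$.
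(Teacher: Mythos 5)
Your proof is correct and follows essentially the same route as the paper: the group-level identity $G^{\th}=Sp(V)$ falls out of unwinding $\th(g)=g$ into ${}^tgJg=J$, the Lie-algebra equality $\Fg^{\th}=\Fs\Fp(V)$ is proved by the same dimension count ($n(2n+1)$ on both sides), and the equivariant isomorphism $g\mapsto g-1$ from Lemma~1.4 completes the argument. The only small slip is the appeal to (1.3.1), which describes $G^{\io\th}$ rather than $G^{\th}$; the identity you want, $\th(g)=g\iff{}^tgJg=J$, follows directly from the definition $\th(g)=J\iv({}^tg\iv)J$ and does not need (1.3.1), but this does not affect the correctness of the overall argument.
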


\remark{1.8.}
Let $W_n$ be the Weyl group of 
type $C_n$.  It is known by Hesselink [Hes] and Spaltenstein [Spa] 
that the number of $Sp(V)$-orbits in $\Fs\Fp(V)\nil$ is 
finite, and those orbits are parametrized by $W_n\wg$. 

\remark{1.9.} 
In the case where $\ch\Bk = 2$ and $N$ is even, it is not known 
whether $O(V)$ is realized as $G^{\th}$ for a certain involutive 
automorphism  $\th : G \to G$. 

\para{1.10.}
Assume that $N = 2n+1$.  
By changing the notation, we consider the vector space 
$V'$ with basis $e_0, e_1, \dots, e_n, f_1, \dots, f_n$, and
let $V$ be the subspace of $V'$ spanned by $e_1, \dots, e_n, f_1, \dots, f_n$.
We identify $Sp(V)$ with $Sp_{2n}$ by using this basis. 
$Sp_{2n}$ can be explicitly written as 
\begin{equation*}
\tag{1.10.1}
Sp_{2n} = \biggl\{ \begin{pmatrix}
                f  &  g  \\
                h  &  k
              \end{pmatrix} \in GL_{2n} 
                \big |\  f, g, h, k \in \SM_n,  \text{ (*) } \biggr\}, 
\end{equation*}
where $\SM_n$ is the set of square matrices of degree $n$, 
and the condition (*) is given by 
\begin{equation*}
\tag{1.10.2}
{}^thf = {}^tfh, \quad {}^tkg = {}^tgk, \quad {}^thg + {}^tfk = 1.
\end{equation*}
 
We have the following result.
%%%%
%%%%
\begin{prop} %%%%  Prop. 1.12
Assume that $N = 2n + 1$, and $G = GL_N$.  Then 
\begin{equation*}
\tag{1.11.1}
G^{\th} = \biggl\{ \begin{pmatrix}
                1  &   0  \\
                0  &   y
              \end{pmatrix} \big | \ y \in Sp_{2n} \biggr\}. 
\end{equation*}
In particular, $G^{\th} = SO(V') \simeq Sp(V)$. 
\end{prop}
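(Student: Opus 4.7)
The plan is to decode $G^{\th}$ as the stabilizer of a bilinear form and then read off the block structure. Since $J$ is an involution, the condition $\th(g)=g$, i.e.\ $J^{-1}({}^tg^{-1})J=g$, rearranges to ${}^tgJg=J$. Adapting to the decomposition $V'=\langle e_0\rangle\oplus\langle e_1,\dots,e_n\rangle\oplus\langle f_1,\dots,f_n\rangle$, write
\[
g=\begin{pmatrix} a & b_1 & b_2 \\ c_1 & A_{11} & A_{12} \\ c_2 & A_{21} & A_{22}\end{pmatrix},
\]
and expand ${}^tgJg=J$ into nine block equations. The goal is to force $a=1$, $b_1=b_2=0$, $c_1=c_2=0$, and then identify the remaining equations on $A=\begin{pmatrix}A_{11}&A_{12}\\A_{21}&A_{22}\end{pmatrix}$ with the symplectic conditions (1.10.2).

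The main obstacle, and the genuinely characteristic-$2$ feature of the proof, lies in the diagonal blocks. The $(1,1)$ block reads $a^2+2\,{}^tc_1c_2=1$, which collapses to $a^2=1$, hence $a=1$. The $(2,2)$ block reads ${}^tb_1b_1+{}^tA_{11}A_{21}+{}^tA_{21}A_{11}=0$. Here the key observation is that $M:={}^tA_{11}A_{21}+{}^tA_{21}A_{11}$ is symmetric, so in characteristic $2$ its diagonal entries vanish; reading the diagonal of the whole equation gives $(b_1)_i^2=0$ for each $i$, forcing $b_1=0$. The $(3,3)$ block gives $b_2=0$ in exactly the same way. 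This use of the characteristic-$2$ identity ``symmetric $\Rightarrow$ zero diagonal'' is the only subtle step; everything else is linear bookkeeping.

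Once $a=1$ and $b_1=b_2=0$, invertibility of $g$ forces $\det A\ne 0$ (expanding along the top row). The $(1,2)$ and $(1,3)$ block equations then take the form
\[
({}^tc_1,\,{}^tc_2)\begin{pmatrix}A_{21}&A_{22}\\ A_{11}&A_{12}\end{pmatrix}=0,
\]
and the coefficient matrix is a row-permutation of $A$, hence invertible, so $c_1=c_2=0$. The surviving $(2,2),(2,3),(3,3)$ block equations are precisely ${}^tA_{21}A_{11}={}^tA_{11}A_{21}$, ${}^tA_{22}A_{12}={}^tA_{12}A_{22}$, ${}^tA_{21}A_{12}+{}^tA_{11}A_{22}=1_n$, which are the three defining relations (1.10.2) of $Sp_{2n}$; thus $A\in Sp_{2n}$. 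The reverse inclusion is immediate by reversing these calculations. This proves (1.11.1), and writing the resulting group as $SO(V')\simeq Sp(V)$ records the usual exceptional isomorphism in characteristic $2$ between the symplectic group on $V$ and the stabilizer in $GL(V')$ of the anisotropic line $\langle e_0\rangle$ together with the symplectic form on $V'/\langle e_0\rangle$.
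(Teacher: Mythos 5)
Your proposal follows the same route as the paper's proof: expand ${}^tgJg=J$ in block form, use the characteristic-$2$ vanishing on the diagonal blocks to force $a=1$ and $b_1=b_2=0$, then use invertibility of the $2n\times 2n$ corner to kill $c_1,c_2$, and identify the surviving equations with (1.10.2). The only substantive difference is cosmetic: you obtain invertibility of $A$ directly from $\det g=\det A$, whereas the paper first reads off $y\in Sp_{2n}$ from the last three equations of (1.11.2) and then uses nondegeneracy of $y$; both are fine.

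One justification, however, is stated incorrectly. You assert the ``characteristic-$2$ identity: symmetric $\Rightarrow$ zero diagonal.'' That implication is false (e.g.\ $1_n$ is symmetric with nonzero diagonal). The property you actually need is that $M={}^tA_{11}A_{21}+{}^tA_{21}A_{11}$ is \emph{alternating}, being of the form $N+{}^tN$ with $N={}^tA_{11}A_{21}$; any matrix of the form $N+{}^tN$ has zero diagonal (indeed the $i$th diagonal entry is $2N_{ii}$). In characteristic $2$ ``symmetric'' and ``skew-symmetric'' coincide, but ``alternating'' is strictly stronger, and it is the alternating shape $N+{}^tN$, not symmetry, that forces the diagonal to vanish. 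Since the matrix in question genuinely has this shape, your conclusion stands; only the stated reason needs correcting. Finally, the closing sentence deducing $G^{\th}=SO(V')\simeq Sp(V)$ is left as an appeal to ``the usual exceptional isomorphism''; the paper spells this out by checking $G^{\th}\subset O(V')$ via 1.5, comparing $\dim G^{\th}=\dim Sp_{2n}=\dim SO_{2n+1}$, and invoking connectedness of $Sp(V)$, which is worth recording explicitly rather than leaving implicit.
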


\begin{proof}
Take $x \in GL_N$, and write it as 
\begin{equation*}
x = \begin{pmatrix}
        a  &  \Bb  &  \Bc   \\
        {}^t\Bd  &   f  &  g  \\
        {}^t\Be  &   h  &  k  
     \end{pmatrix}, 
\end{equation*}
where $\Bb = (b_1, \dots, b_n), \Bc = (c_1, \dots, c_n)$ and 
$\Bd = (d_1, \dots, d_n), \Be = (e_1, \dots, e_n)$, 
together with $f,g,h,k \in \SM_n$. 
Note that $G^{\th} = \{ x \in G \mid {}^txJx =J\}$.  The condition 
${}^txJx = J$ implies, in particular,  that 
\begin{align*}
a^2 + \Be\cdot {}^t\Bd + \Bd\cdot {}^t\Be &= 1, \\
{}^t\Bb \cdot \Bb + {}^thf + {}^tfh &= 0, \\
{}^t\Bc\cdot \Bc + {}^tkg + {}^tgk &= 0.
\end{align*}
Since $\Be\cdot{}^t\Bd + \Bd\cdot{}^t\Be = 2\sum_{i=1}^nd_ie_i = 0$, 
the first equality implies that $a = 1$. 
Since the diagonal entries of ${}^thf + {}^tfh$ are all zero, 
the $ii$-entry of the matrix ${}^t\Bb\cdot\Bb + {}^thf + {}^tfh$ is equal to
$b_i^2$. Hence $\Bb = ${ \bf 0} by the second equality. Similar argument by using 
the third shows that $\Bc = 0$.   
Now the condition ${}^txJx = J$ is equivalent to the condition that 
\begin{equation*}
\tag{1.11.2}
\begin{cases}
\Be f + \Bd h = 0, \\
\Be g + \Bd k = 0, \\
{}^thg + {}^tfk = 1, \\
{}^thf  = {}^tfh, \\
{}^tkg = {}^tgk.
\end{cases}
\end{equation*}
By comparing (1.11.2) with (1.10.2), we can write as 
\begin{equation*}
x = \begin{pmatrix}
        1        &   0  &  0   \\
        {}^t\Bd  &   f  &  g  \\
        {}^t\Be  &   h  &  k  
     \end{pmatrix} \quad\text{ with }
       y = \begin{pmatrix}
            f  &  g  \\
            h  &  k
       \end{pmatrix} \in Sp_{2n}. 
\end{equation*}
Since $y$ is non-degenerate, the relation 
$(\Be,\Bd)y = 0$ 
implies that $\Bd = \Be = 0$.
This proves (1.11.1). 
Now by 1.5, one can check that $G^{\th} \subset O(V')$. 
We have $\dim G^{\th} = \dim Sp_{2n} = \dim SO_{2n+1} = 2n^2 + n$.
Since $Sp(V)$ is connected, we conclude that $G^{\th} = SO(V')$. 
The proposition is proved. 
\end{proof}

\para{1.12.}
We determine $\Fg^{\th}$ in the case where $N = 2n+1$. 
By (1.3.2), we have 
$\dim \Fg^{\th} = (n + 1)(2n + 1)$. 
Since $\dim G^{\th} = \dim Sp_{2n} = 2n^2 + n$, we see that 
\par\medskip
\noindent
(1.12.1) \ $\Lie G^{\th} \subsetneq \Fg^{\th}$, namely (0.1.1) does not 
hold for $G^{\th}$.
\par\medskip
More precisely, by the direct computation, we have the following. 
\begin{align*}
\tag{1.12.2}
\Fg^{\th} &= \left\{ x = \begin{pmatrix}
                        a   &   \Bb   &   \Bc \\
                        {}^t\Bc   &   f  &   g   \\
                        {}^t\Bb   &   h  &   {}^tf 
                     \end{pmatrix} \bigg | \ a \in \Bk, f,g,h \in \SM_n, 
                             {}^th = h, {}^t g = g \right\}, \\ 
\Lie G^{\th} &= \{ x \in \Fg^{\th} \mid a = \Bb = \Bc = 0 \} \simeq \Fs\Fp(V). 
\end{align*}   

Summing up the above arguments, together with Lemma 1.4, we have the following.
%%%%
%%%%
\begin{prop}  %%%%   Prop.1.13  
Assume that $N = 2n+1$.  Then $G^{\th} \simeq Sp(V)$, and 
$G^{\io\th} \simeq \Fg^{\th}\nil$. 
Under the embedding 
$\Fs\Fp(V)\nil \hra \Fg^{\th}\nil$, 
$\Fs\Fp(V)\nil$ is a $G^{\th}$-stable subset of $\Fg^{\th}\nil$, and 
the action of $G^{\th}$ on $\Fs\Fp(V)\nil$ 
coincides with the conjugate action of $Sp(V)$ on it. 
\end{prop}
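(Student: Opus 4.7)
The plan is to assemble the statement from three previously-established pieces: Proposition 1.11, Lemma 1.4, and the explicit description (1.12.2) of $\Fg^\th$ and $\Lie G^\th$. None of these steps is expected to be deep; the proof is primarily a matter of carefully threading the block-matrix identifications together.

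First, I would quote Proposition 1.11 directly to obtain the isomorphism $G^\th \simeq Sp(V)$, realized explicitly via the embedding
\[
  Sp(V) \;\hookrightarrow\; G, \qquad y \longmapsto \begin{pmatrix} 1 & 0 \\ 0 & y \end{pmatrix}.
\]
For the second assertion, the isomorphism $G^{\io\th}\uni \simeq \Fg^\th\nil$ is an immediate specialization of Lemma 1.5 (the map $g \mapsto g-1$), which was proved using only the identities ${}^t(Jg) = Jg$ vs. ${}^t(Jx) = Jx$ and is insensitive to the parity of $N$. Since both $G$- and $\Fg$-statements are $G^\th$-equivariant, this identification also matches the two conjugation actions.

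Next I would turn to the embedding $\Fs\Fp(V) \hra \Fg^\th$. From the explicit description (1.12.2), the set $\{x \in \Fg^\th \mid a = 0,\ \Bb = 0,\ \Bc = 0\}$ consists of matrices of the form $\mathrm{diag}(0, z)$ with $z$ a $2n \times 2n$ block of the form $\begin{pmatrix} f & g \\ h & {}^tf\end{pmatrix}$ satisfying ${}^th=h,\ {}^tg=g$; comparing with the definition of $\Fs\Fp(V)$ yields $\Lie G^\th \simeq \Fs\Fp(V)$. Under this embedding, a matrix of the form $\mathrm{diag}(0, z)$ is nilpotent in $\Fg$ if and only if $z$ is nilpotent in $\Fg\Fl(V)$, so the inclusion restricts to $\Fs\Fp(V)\nil \hra \Fg^\th\nil$.

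For the stability and compatibility of the action, I would argue as follows. The group $G^\th$ acts on its own Lie algebra $\Lie G^\th$ by the adjoint action, and this action is the restriction of the conjugation action of $G^\th$ on $\Fg^\th$. Hence $\Fs\Fp(V) \simeq \Lie G^\th$ is a $G^\th$-stable subset of $\Fg^\th$, and in particular so is $\Fs\Fp(V)\nil$. Finally, under the block identifications $G^\th = \{\mathrm{diag}(1, y) \mid y \in Sp(V)\}$ and $\Lie G^\th = \{\mathrm{diag}(0, z) \mid z \in \Fs\Fp(V)\}$, conjugation $\mathrm{diag}(1,y)\,\mathrm{diag}(0,z)\,\mathrm{diag}(1,y\iv) = \mathrm{diag}(0, yzy\iv)$ is exactly the conjugation action of $Sp(V)$ on $\Fs\Fp(V)$, completing the claim. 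The only mild subtlety worth spelling out is the nilpotency preservation in step three, but it reduces to the trivial observation that padding a matrix with a zero row and zero column does not affect nilpotency.
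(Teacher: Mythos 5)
Your proposal is correct and takes essentially the same approach as the paper, which simply states that the proposition follows by "summing up the above arguments, together with Lemma 1.4"; you have spelled out the intended assembly of Proposition 1.11, the block description (1.12.2), and the $g \mapsto g-1$ lemma. One small point worth noting: the displayed statement reads $G^{\io\th} \simeq \Fg^{\th}\nil$, but as your argument makes clear (and as the parallel Proposition 1.7 confirms), this should read $G^{\io\th}\uni \simeq \Fg^{\th}\nil$.
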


\para{1.14.}
We write $H = Sp(V)$ and $\Fh = \Fs\Fp(V)$.  We identify $H$ with $G^{\th}$, and 
$\Fh$ with a subspace of $\Fg^{\th}$.  Then $\Fg^{\th}$ can be written as 
$\Fg^{\th} = \Fh \oplus \Fg_{V'}$, where $\Fg_{V'}$ is a subspace of $\Fg^{\th}$ 
consisting of $x \in \Fg^{\th}$ such that $f = g = h = 0$ (notation in (1.13.2)). 
We express $x \in \Fg_{V'}$ as $x = (a, \Bb, \Bc)$.  
Put $\Fz = \{ (a, 0, 0) \mid a \in \Bk \}$, and 
let $\Fg_V$ be the subspace of $\Fg_{V'}$ consisting of $x = (0, \Bb, \Bc)$. 
Then $\Fg_{V'} = \Fg_{V} \oplus \Fz$. $\Fg_V$ is $H$-stable, and $H$ acts trivially 
on $\Fz$. 
We identify $\Fg_{V}$ with $V$ under the correspondence 
$(0,\Bb,\Bc) \in \Fg_{V} \lra \sum_{i=1}^nc_ie_i + \sum_{i=1}^nb_if_i \in V$
for $\Bb = (b_1, \dots, b_n), \Bc = (c_1, \dots, c_n)$. 
Then we can identify $\Fh \oplus \Fg_{V}$ with $\Fh \times V$, where the natural 
action of $H$ on $\Fg^{\th}$ preserves $\Fh \oplus \Fg_{V}$, which corresponds to 
the diagonal action of $H$ on $\Fh \times V$.  

\remark{1.15.}
The above discussion shows that considering the action of $H$ on $\Fg^{\th}$ 
is the same as considering the diagonal action of $H$ on $\Fh \times V$. 
Hence the situation is quite similar to the case of exotic symmetric spaces 
studied in [K1], [SS].  Recall that the exotic symmetric space (in the Lie 
algebra case) is defined as $\Fg^{-\th} \times V$ for $G^{\th} = Sp(V)$
with $\ch\Bk \ne 2$, together with the diagonal action of $G^{\th}$. 
But note that the structure of the nilpotent variety is different.  In the
exotic case, as the nilpotent variety, we have considered 
$\Fg^{-\th}\nil \times V$ (Kato's exotic 
nilpotent cone [K1]).  In the present case, we consider  
$\Fg^{\th}\nil = (\Fh\oplus \Fg_{V}) \cap \Fg\nil$, which 
 corresponds to a certain subset of $\Fh\nil \times V$.   

\para{1.16.} 
We follow the notation in 1.14.   In the remainder of this section,
we shall show that $\Fg^{\th}\nil$ has infinitely many $H$-orbits. 
Let $B'$ be the subgroup of $G = GL_N$ consisting of elements 
\begin{equation*}
\tag{1.16.1}
        \begin{pmatrix}
               a        &   0  &   \Bc  \\
               {}^t\Bd        &   f    &    g   \\
               0        &   0    &    k,
             \end{pmatrix} 
\end{equation*}
where $a \in \Bk, \Bc, \Bd \in \Bk^n$ (row vectors), $f, g, k \in \SM_n$, and 
$f$ is upper triangular, $k$ is lower triangular. 
Then $B'$ is a $\th$-stable Borel subgroup of $G$.  
Put $\Fb' = \Lie B'$, and let $\Fn'$ be the nilpotent radical of $\Fb'$. 
Then $\Fg^{\th} \cap \Fn' \subset \Fg^{\th}\nil$, and we have  
\begin{equation*}
\tag{1.16.2}
\Fg^{\th} \cap \Fn' = \biggl\{ \begin{pmatrix}
       0  &  0  &  \Bc  \\
 {}^t\Bc  &   f  &  g  \\
      0   &   0  &  {}^tf
     \end{pmatrix} \mid f \text{ : strict upper triangular, } {}^tg = g \biggr\}.
\end{equation*}
The following result gives a counter-example for (0.1.2) in the case 
where $\ch\Bk = 2$. 

\begin{prop}  %%%%   Prop. 1.17
Let $\OO_0$ be the regular nilpotent 
orbit in $\Fg\Fl_N$.  Then $\OO_0 \cap \Fg^{\th}$ has infinitely 
many $H$-orbits. In particular, $\Fg^{\th}\nil$ has infinitely many $H$-orbits. 
\end{prop}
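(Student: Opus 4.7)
My plan is to prove the proposition by a dimension count: I will show that $\OO_0 \cap \Fg^{\th}$ has strictly larger dimension than any single $H$-orbit contained in it, forcing infinitely many $H$-orbits.

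First I would verify that $\OO_0 \cap \Fg^{\th}$ is non-empty by exhibiting an explicit element using (1.16.2). Taking $f$ to be the standard nilpotent Jordan block of size $n$ (i.e., $f_{i,i+1} = 1$), $\Bc = (0, \ldots, 0, 1)$, and $g$ any symmetric matrix produces an element $x \in \Fg^{\th} \cap \Fn'$, which is automatically nilpotent. A direct computation on the basis $e_0, e_1, \ldots, e_n, f_1, \ldots, f_n$ shows $\ker x = \Bk e_1$, so $x$ is a single Jordan block of size $N$ and lies in $\OO_0$. Because $\ol{\OO_0}$ is a closed irreducible subvariety of the affine space $\Fg$ and $\Fg^{\th}$ is a linear subspace, the intersection inequality then yields
\[
\dim(\OO_0 \cap \Fg^{\th}) \ge \dim \OO_0 + \dim \Fg^{\th} - \dim \Fg = (4n^2 + 2n) + (2n^2 + 3n + 1) - (2n+1)^2 = 2n^2 + n.
\]

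Next I would compute the stabilizer $Z_H(x)$ for $x \in \OO_0 \cap \Fg^{\th}$. Regular nilpotency in $\Fg\Fl_N$ gives $Z_G(x) = \Bk[x]^{\times}$, so every $y \in Z_G(x)$ has the form $y = \sum_{k=0}^{N-1} a_k x^k$. Since ${}^t x = JxJ$ by (1.3.2), one has ${}^ty = \sum a_k J x^k J = J y J$, and the condition $y \in H$ (i.e., ${}^ty J y = J$) collapses to $y^2 = I$. In characteristic $2$ squaring is additive, so $y^2 = \sum_k a_k^2 x^{2k}$; together with $x^N = 0$ this forces $a_0 = 1$ and $a_k = 0$ for $1 \le k \le n$, while $a_{n+1}, \ldots, a_{N-1}$ remain free. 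Therefore $Z_H(x) \cong \Bk^n$ as a connected unipotent group and $\dim Z_H(x) = n$, so every $H$-orbit in $\OO_0 \cap \Fg^{\th}$ has dimension $\dim H - n = 2n^2$.

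Combining the two estimates, a finite union of $2n^2$-dimensional subvarieties has dimension at most $2n^2$, but $\dim(\OO_0 \cap \Fg^{\th}) \ge 2n^2 + n > 2n^2$; hence $\OO_0 \cap \Fg^{\th}$ must split into infinitely many $H$-orbits, and \emph{a fortiori} so does $\Fg^{\th}\nil$. The main technical obstacle is the stabilizer computation in characteristic $2$: the collapse $y \in H \Longleftrightarrow y^2 = I$ relies on the intertwining ${}^t y = JyJ$ forced by $x \in \Fg^{\th}$, and the Frobenius-like expansion $y^2 = \sum a_k^2 x^{2k}$ is exactly what cuts $\dim Z_G(x) = N$ down to $\dim Z_H(x) = n$, producing the positive-dimensional stabilizer that drives the dimension gap.
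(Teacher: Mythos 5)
Your proof is correct, and it takes a genuinely different route from the paper's. The paper argues by exhibiting an explicit one-parameter family $x(\xi) = y + z(\xi)$ with the $\Fh$-component $y$ fixed regular nilpotent and only the $\Fg_V$-component $z(\xi)$ varying; it then uses the fact that $H$ preserves the decomposition $\Fg^{\th} = \Fh \oplus \Fg_{V'}$ to reduce to a centralizer computation in $Z_H(y)$ (upper unitriangular matrices), from which pairwise non-conjugacy of the $x(\xi)$ follows directly. You instead argue by a dimension gap: the affine intersection inequality applied to the closed, irreducible varieties $\ol{\OO}_0 = \Fg\nil$ and $\Fg^{\th}$ inside $\Fg$, together with the non-emptiness you verify via (1.16.2), gives $\dim(\OO_0 \cap \Fg^{\th}) \ge 2n^2 + n$; meanwhile the computation $Z_H(x) = \Bk[x]^{\times} \cap H$, with the symplectic condition collapsing to $y^2 = I$ (via ${}^t y = JyJ$, forced by $x \in \Fg^{\th}$) and the characteristic-$2$ Frobenius expansion $y^2 = \sum a_k^2 x^{2k}$ killing exactly the low-degree coefficients, shows every $H$-orbit in $\OO_0 \cap \Fg^{\th}$ has dimension $\dim H - n = 2n^2$, strictly less. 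Your approach is cleaner in that it avoids verifying non-conjugacy of particular elements and makes transparent precisely where $\ch \Bk = 2$ enters (the Frobenius collapse and $a^2 = 1 \Leftrightarrow a = 1$); the paper's approach has the virtue of producing an explicit continuum of pairwise non-conjugate representatives, which is more informative for the later parametrization of the strata $X_{\Bla}$. One small inaccuracy worth noting: you assert $\ker x = \Bk e_1$ for \emph{any} symmetric $g$, which does happen to be true for your choice $\Bc = (0,\dots,0,1)$, but the key constraint is $c_n \ne 0$ rather than the freedom in $g$; stating the verification for one explicit $g$ (say $g = 0$) would make the non-emptiness step airtight without the claim of generality.
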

\begin{proof}
Assume that $f \in \SM_n$ corresponds to the 
transformation on the subspace $\lp e_1, \dots, e_n \rp$ of $V$ defined by  
\begin{equation*}
f : e_n \mapsto e_{n-1} \mapsto \cdots \mapsto e_1 \mapsto 0,
\end{equation*}
and put $g = \Diag(0, \dots, 0,1) \in \SM_n$. 
Then $y = \begin{pmatrix}
               f  &   g  \\
               0  &   {}^tf
      \end{pmatrix}$ 
gives an element in $\Fh = \Fs\Fp_{2n}$, 
which is a regular nilpotent element in $\Fs\Fp_{2n}$. 
Let $\Bc = (0, 0, \dots, 0, \xi)$ with $\xi \in \Bk$, and put 
$z =  (0, 0, \Bc) \in \Fg_V$. 
Then $x = y + z \in \Fg^{\th}\nil$ by (1.16.2), which we denote by $x(\xi)$. 
Since the operation of $x(\xi)$ on $V'$ is given by 
\begin{equation*}
\begin{cases}
f_1 \mapsto f_2 \mapsto \cdots \mapsto f_{n-1} \mapsto f_n,  \\
f_n \mapsto e_n + \xi e_0, \\
e_0 \mapsto \xi e_n, \\
e_n \mapsto e_{n-1} \mapsto \cdots \mapsto e_1 \mapsto 0,
\end{cases}  
\end{equation*}
$x(\xi) \in \OO_0$ for any $\xi \in \Bk^*$.  
In order to prove the proposition, it is enough to see that 
\par\medskip\noindent
(1.17.1) \ $x(\xi)$ and $x(\xi')$ are not conjugate under $H$ 
if $\xi \ne \xi'$.
\par\medskip
We show (1.17.1).
Assume there exists $h \in H$ such that $h(x(\xi)) = x(\xi')$. 
Write $x(\xi) = y + z, x(\xi') = y + z'$. Since $H$ leaves the decomposition 
$\Fg^{\th} = \Fh \oplus \Fg_{V'}$ invariant, 
we must have $h(y) = y$ and $h(z) = z'$.  Hence $h \in Z_H(y)$. 
Since $y \in \Fg^{\th} \cap \Fn'$ is regular nilplotent, $Z_H(y)$ satisfies the property
\begin{equation*}
\tag{1.17.2}
Z_H(y) \subset \left\{ \begin{pmatrix}
                    f_1   &    g_1   \\
                    0   &    {}^tf_1\iv
                \end{pmatrix}
                  \in Sp_{2n} \mid f_1 : \text{ upper unitriangular }\right\}.
\end{equation*}
On the other hand, the action of $H$ on $\Fg_V$ is given 
as in 1.14.  By (1.17.2), we have 
\begin{equation*}
h\cdot {}^t(\underbrace{0, \dots, 0, \xi}_{\text{$n$-times}}, 
\underbrace{0, 0, \dots, 0}_{\text{$n$-times}}) = 
{}^t(\underbrace{c_1, \dots, c_n}_{\text{$n$-times}}, 
          \underbrace{0, 0, \dots, 0}_{\text{$n$-times}})
\end{equation*}
for some $c_1, \dots, c_n$ with $c_n = \xi$. 
Since $h(z) = z'$, we have $\xi = \xi'$. Thus (1.17.1) holds.
The proposition is proved.
\end{proof}

\remark{1.18.}
In later discussion, we use the Jordan decomposition of Lie algebras.
It is known (see [Spr, 4.4.20]) that if $\Fg$ is the Lie algebra of 
an algebraic group $G$, the Jordan decomposition works.  So, 
in the setting of 1.15, we can apply the Jordan decomposition 
for $\Fh = \Lie H$, but not for $\Fg^{\th}$.

\par\bigskip
%%%%
%%%%
%%%%
\section{ Intersection cohomology related to $\Fs\Fp(V)_{\sr}$ }

\para{2.1.}
Let $H = Sp(V), \Fh = \Fs\Fp(V)$, and we follow the notation in 1.6.
Let $W_n$ be the Weyl group of $H$.  As pointed out in Remark 1.8, nilpotent 
orbits in $\Fh$ are parametrized by $W_n\wg$.  The Springer correspondence 
between the set of nilpotent orbits and $W_n\wg$ was first considered by Kato [K2]. 
Then Xue [X1], [X2] established the general theory of 
the Springer correspondence for classical 
Lie algebras in characteristic 2.
\par
The main difficulty in considering $H$ and $\Fh$ relies on the fact 
that the regular semisimple elements do not exist for $\Fh$. 
In order to overcome this defect, Xue replaced $H$ and $\Fh$ 
by bigger ones $\wt H$ and 
$\wt\Fh$, extension by connected center, so that $\wt\Fh$ has regular semisimple 
elements, and established the Springer correspondence by using the bijection
$\wt\Fh\nil \simeq \Fh\nil$. 
(Actually Xue considers the adjoint group $\wt H_{\ad}$ and its Lie algebra 
$\wt\Fh_{\ad}$, but the theory of the Springer correspondence for them is 
essentially the same as that for $\wt H$ and $\wt\Fh$.)
\par
In this paper, for later applications to the case where $N = 2n+1$, 
we discuss the Springer correspondence
for $\Fh$ more directly, without using the regular semisimple elements
(though we need to use $\wt\Fh$). 
In the discussion below, we borrowed the idea to use $\FD$ from [SY].
Note that those discussions have strong resemblance with the case of 
exotic symmetric spaces associated to symplectic groups with $\ch\Bk \ne 2$
([SS, 3]), as explained in the Introduction. 

\para{2.2.}
Let $T \subset B$ be subgroups of $G = GL_N$ with $N = 2n$ given by 
\begin{align*}
B &= \biggl\{ \begin{pmatrix}
               a  &  b  \\
               0  &  {}^ta\iv
       \end{pmatrix} \mid a, b \in \SM_n, 
               a \text{ : upper triangular, } {}^tb = a\iv b\,{}^t\!a \biggr\}, \\ 
T &= \{ \Diag(t_1, \dots, t_n, t_1\iv, \dots, t_n\iv) \mid t_i \in \Bk^*\}.
\end{align*} 
By (1.10.1), $B$ is a Borel subgroup of $H$ and $T$ is a maximal torus 
of $H$ contained in $B$. 
Let $\Ft$ be the Lie algebra of $T$.  Since $\ch\Bk = 2$, we have
\begin{equation*}
\tag{2.2.1}
\Ft = \{ \Diag(t_1, \dots, t_n, t_1, \dots, t_n) \mid t_i \in \Bk \}.
\end{equation*} 
We define a subset $\Ft_{\sr}$ of $\Ft$ by 
\begin{equation*}
\tag{2.2.2}
\Ft_{\sr} = \{ s = \Diag(t_1, \dots, t_n, t_1, \dots, t_n) \mid t_i \ne t_j
                            \text{ for } i \ne j \}. 
\end{equation*}
$\Ft_{\sr}$ is open dense in $\Ft$, and for any $s \in \Ft_{\sr}$, 
$Z_H(s) \simeq SL_2 \times \cdots \times SL_2$ ($n$-times).
Put $\Fh_{\sr} = \bigcup_{g \in H}g(\Ft_{\sr})$. 
Then $\Fh_{\sr}$ is the set of semisimple elements in $\Fh$ such that 
all the eigenspaces have dimension 2. 
\par
Recall that $s \in \Fh$ is called regular semisimple if $Z_H^0(s)$ is a maximal 
torus of $H$. For any $s \in \Ft$, $\dim Z_H(s) \ge 3n$. 
Hence $\Ft$ does not contain regular semisimple elements. 
This implies that $\Fh$ does not contain regular semisimple elements 
(see Lemma 2.3).  
An element $s \in \Fh_{\sr}$ is said to be subregular semisimple. 
\par
Let $U$ be the unipotent radical of $B$. 
Let $\Fb$ be the Lie algebra of $B$, and $\Fn = \Lie U$ the nilpotent radical of $\Fb$. 
We have $\Fb = \Ft \oplus \Fn$. $\Fn$ is written as 
\begin{equation*}
\tag{2.2.3}
\Fn = \biggl\{ \begin{pmatrix}
                a  &  b  \\
                0  &  {}^ta 
         \end{pmatrix} \mid a \text{ : strict upper triangular, } {}^tb = b \biggr\}
\end{equation*}
\par
Let $\Phi^+ \subset \Hom (T, \Bk^*) \simeq \BZ^n$ be the set of positive roots  
of type $C_n$, 
\begin{equation*}
\Phi^+ = \{ \ve_i - \ve_j, \ve_i + \ve_j \quad (1\le i < j \le n), 
            2\ve_i \quad (1 \le i \le n) \}
\end{equation*}
where $\ve_1, \dots,\ve_n$ is a basis of $\Hom(T, \Bk^*)$ given by  
$\ve_i : \Diag(t_1, \dots,t_n, t_1\iv, \dots, t_n\iv) \mapsto t_i$. 
The set of positive long (resp. short) roots $\Phi^+_{l}$, 
(resp, $\Phi^+_s$)  is given as
\begin{align*}
\Phi^+_l &= \{ 2\ve_i \mid 1 \le i \le n \}, \\
\Phi^+_s &= \{ \ve_i - \ve_j, \ve_i + \ve_j \mid 1 \le i < j \le n\}.
\end{align*}
\par\noindent
The root space decomposition of $\Fn$ with respect to $T$ is given as 
\begin{equation*}
\Fn = \bigoplus_{\a \in \Phi^+}\Fg_{\a}, 
\end{equation*}
where $\Fg_{\a} = \{ x \in \Fn \mid s\cdot x = \a(s)x \text{ for any } s \in T\}$. 
Let $d\a \in \Hom (\Ft, \Bk)$ be the differential of $\a \in \Hom (T,\Bk^*)$. 
Then $\Ft$ acts on $\Fg_{\a}$ by $[s,x] = d\a(s)x$ for $s \in \Ft$. 
Since $\ch\Bk = 2$, the weight space decomposition of $\Fn$ with respect to $\Ft$ is 
given as 
\begin{equation*}
\Fn = \FD \oplus \bigoplus_{\a \in \Phi^+_{s}}\Fg_{\a} = \FD \oplus \Fn_{s},
\end{equation*}
where $\Fn_s = \bigoplus_{\a \in \Phi^+_s}\Fg_{\a}$, and 
$\FD = \bigoplus_{\a \in \Phi^+_{l}}\Fg_{\a}$ is the weight space of weight 0.  
Explicitly, $\FD$ is given as follows;
\begin{equation*}
\tag{2.2.4}
\FD = \biggl\{ \begin{pmatrix}
                 0  &  b  \\
                 0  &  0
               \end{pmatrix} \mid b \text{ : diagonal }\biggr\}.
\end{equation*} 
In particular, we have
\begin{equation*}
\tag{2.2.5}
[\Ft, \FD] = 0.
\end{equation*}
According to (2.2.4), we express an element in $\FD \simeq \Bk^n$ as
$\Bb = (b_1, \dots, b_n)\in \FD$ for $b = \Diag(b_1, \dots, b_n)$. 
For $k = 0, \dots, n$, put 
$\FD_k = \{ \Bb  \in \FD \mid b_i = 0 \text{ for } i > k \}$.
Then $\FD_k$ is a $T$-stable subspace of $\FD$. 
We also put $\FD^0_k = \{ \Bb \in \FD_k \mid b_i \ne 0 \text{ for } 1 \le i \le k \}$, 
which is an open dense subset of $\FD_k$.  
\par
We need a lemma.

\begin{lem}  %%%%   Lemma 2.3.
\begin{enumerate}
\item 
Assume that $x \in \Fh$ is semisimple.   Then there exists 
$g \in H$ such that $gx \in \Ft$.
\item
Assume that $x \in \Fb$ is semisimple.  Then 
there exists $g \in B$ such that $gx \in \Ft$.
\end{enumerate}
\end{lem}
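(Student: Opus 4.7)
For (i), the plan is to use that any semisimple $x \in \Fh \subset \Fg\Fl(V)$ is diagonalisable as an endomorphism of $V$, combined with the observation that $\ch\Bk = 2$ makes every $y \in \Fh$ self-adjoint with respect to $\lp\ ,\ \rp$, i.e.\ $\lp yv,w\rp = \lp v,yw\rp$. This forces eigenspaces of $x$ for distinct eigenvalues to be mutually orthogonal, so non-degeneracy of $\lp\ ,\ \rp$ makes every eigenspace non-degenerate, hence even-dimensional. Choosing a symplectic basis inside each eigenspace and concatenating them in the standard order $e_1,\dots,e_n,f_1,\dots,f_n$ produces a symplectic basis of $V$ in which $x$ takes the form $\Diag(t_1,\dots,t_n,t_1,\dots,t_n) \in \Ft$, and the base-change matrix lies in $H$.

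For (ii), I would use the matrix form of $\Fb = \Ft \oplus \Fn$ to write $x$ as a $2 \times 2$ block matrix with upper-left block $f$ upper triangular, lower-right block ${}^tf$, and symmetric upper-right block $b$. Since $x$ is semisimple in $\Fg\Fl(V)$, its restriction $f$ to the standard maximal isotropic subspace $E = \langle e_1,\dots,e_n\rangle$ is also semisimple. A short induction on $n$, using that each eigenspace of $f$ surjects onto the top quotient of the standard flag in $E$, shows that any semisimple upper triangular matrix can be diagonalised by an upper triangular element of $GL_n$. Conjugating $x$ by the corresponding block-diagonal element of $B$ reduces to the case where $f$ equals a diagonal matrix $D = \Diag(\lambda_1,\dots,\lambda_n)$.

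Next, I would conjugate by a unipotent element of $U$ whose upper-right block is a symmetric matrix $c$: this replaces $b$ by $b + [D,c]$, so the $(i,j)$-entry of $b$ is shifted by $(\lambda_i + \lambda_j)c_{ij}$. Setting $c_{ij} = b_{ij}/(\lambda_i + \lambda_j)$ whenever $\lambda_i \ne \lambda_j$, which is automatically symmetric in $i,j$, eliminates every entry of $b$ at such a position. What remains is $x = s + d$ with $s = \Diag(\lambda_1,\dots,\lambda_n,\lambda_1,\dots,\lambda_n) \in \Ft$ and a residual $d$ supported on pairs $(i,j)$ with $\lambda_i = \lambda_j$. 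Such a $d$ is nilpotent and commutes with $s$, so the uniqueness of the Jordan decomposition in $\Fh = \Lie H$, valid by Remark 1.18, forces $d = 0$ from the semisimplicity of $x$.

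The main obstacle is precisely this residual piece, which sits in the weight-zero subspace $\FD$ coming from the long roots in characteristic $2$. Since $d\alpha$ vanishes on $\Ft$ for every long $\alpha$, the entries of $b$ at positions with $\lambda_i = \lambda_j$ lie in the kernel of $\ad(s)$ and cannot be removed by a linear adjustment inside $U$; their vanishing must instead be extracted from the Jordan decomposition for a Lie algebra of an algebraic group. This also clarifies why the corresponding statement would become more subtle if $\Fh$ were replaced by $\Fg^\th$ in the odd-dimensional case, where Jordan decomposition is unavailable (cf.\ Remark 1.18).
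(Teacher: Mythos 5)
For part (i) your argument matches the paper's: mutual orthogonality of eigenspaces (you spell out the self-adjointness that makes this work), hence non-degeneracy and even-dimensionality of each eigenspace, then concatenation of symplectic bases. For part (ii) you take a genuinely different route. The paper argues intrinsically by induction on $n$: pick an eigenvector $e_1'$ of $x$ spanning $\langle e_1\rangle$, pass to the quotient symplectic space $\ol V = \langle e_1\rangle^{\perp}/\langle e_1\rangle$, and build up a compatible symplectic basis. You work in coordinates: first diagonalise the upper-triangular block $f = x|_E$ by an upper-triangular element of $GL_n$ (giving a block-diagonal element of $B$), then conjugate by a symmetric unipotent $u \in U$, which shifts the upper-right block by $Dc+cD$ and so kills all entries at positions with $\lambda_i \ne \lambda_j$, and finally observe that what remains is a nilpotent $d$ commuting with $s \in \Ft$, so uniqueness of the Jordan decomposition together with the semisimplicity of $x$ forces $d = 0$. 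Both proofs are correct. Your version has the benefit of making visible exactly where unipotent conjugation runs out — on $\ker\ad(s)\cap\Fn$ — and why Jordan decomposition (valid for $\Fh = \Lie H$, not for $\Fg^\th$) must then do the rest, which is a nice link to Remark 1.18; the paper's version is shorter and avoids coordinates. One small imprecision in your closing commentary: the residual $d$ lies in $\ker\ad(s)\cap\Fn = \bigoplus_{\a(s)=0}\Fg_\a$, which equals $\FD$ only when $s$ is subregular; for non-subregular $s$ it also contains short-root spaces $\Fg_{\ve_i\pm\ve_j}$ with $\lambda_i=\lambda_j$, $i\ne j$. This does not affect your actual proof, which uses only that $d$ is nilpotent and commutes with $s$.
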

\begin{proof}
Assume that $x \in \Fh$ is semisimple.  Consider the eigenspace decomposition 
$V = W_1 \oplus \cdots \oplus W_a$ of $x$. Then $W_1, \dots, W_a$ are 
mutually orthogonal with respect to the form $\lp\ ,\ \rp$. Then 
the restriction of the form on $W_i$ gives a non-degenerate skew-symmetric
bilinear form.  In particular, $\dim W_i$ is even. We can find a basis 
$e_1', \dots, e_n', f_1', \dots, f_n'$ of $V$ such that 
$\{ e_j', f_j'\mid j \in I_i\}$ gives a symplectic basis of $W_i$, 
where $[1,n] = \coprod_{1 \le i \le a}I_i$ is a partition of $[1,n]$. 
We define a map $g : V \to V$ by $g(e_j) = e_j', g(f_j) = f_j'$ 
for each $j$.  Then $g \in H$, and $x' = g\iv x \in \Ft$. This proves (i).
\par  
Next assume that $x \in \Fb$ is semisimple. 
Let $s \in \Ft$ be the projection of $x \in \Fb$. 
We consider the eigenspace decomposition of $s$ on $V$, 
$V = V_1\oplus\cdots\oplus V_a$, where $V_i$ is the eigenspace 
of $s$ with respect to the eigenvalue $\mu_i$.  This defines 
a partition $[1,n] = \coprod_iI_i$ such that 
$\{ e_j, f_j \mid j \in I_i\}$ gives a symplectic basis of $V_i$. 
Since $x$ is semisimple, and has the same eigenvalues 
$\mu_1, \dots, \mu_a$, we have a decomposition 
$V = W_1 \oplus\cdots \oplus W_a$ into eigenspaces of $x$, where 
$W_i$ is the eigenspace with respect to the eigenvalue $\mu_i$.   
Here $\dim W_i = \dim V_i$.
As before, we can find a symplectic basis 
$e_1', \dots, e_n', f_1', \dots, f_n'$ of $V$, 
and $g \in H$ associated to this basis such that $x' = g\iv x \in \Ft$. 
We show that there exists a choice of $e_1', \dots, e_n', f_1', \dots, f_n'$ such that 
$g \in B$, i.e., the choice such that the subspace spanned by $e_1, \dots, e_k$ 
coincides with that by  
$e_1', \dots, e_k'$ for $k = 1, \dots, n$. 
Since $e_1$ is an eigenvector for $x$, we can put $e_1' = e_1$.
We may assume that $e_1' \in W_1$. 
Let $\ol V = \lp e_1\rp^{\perp}/\lp e_1 \rp$.  Then $\ol V$ has a natural 
symplectic basis $\ol e_j, \ol f_j$ $(2 \le j \le n)$, 
and $x$ induces $\ol x \in \Fs\Fp(\ol V)$.    
By induction on $n$, one can find the required basis $\ol e'_j, \ol f'_j$ $(2 \le j \le n)$
of $\ol V$. This produces vectors $e'_1, \dots, e'_n, f'_2, \dots, f'_n$,
and finally we can choose $f'_1 \in W_1$ by the condition that $\lp e'_1, f'_1\rp = 1$
and $f'_1$ is orthogonal for all other vectors $e'_j, f_j'$. Thus we obtain 
the basis $e_1', \dots, e_n', f_1', \dots, f_n'$ as required, and $g \in B$. 
(ii) holds. The lemma is proved.    
\end{proof}

\para{2.4.}
We consider the varieties
\begin{align*}
\wt X &= \{ (x, gB) \in \Fh \times H/B \mid g\iv x \in \Fb\},  \\
    X &= \bigcup_{g \in H}g(\Fb),
\end{align*}
and define a map $\pi : \wt X \to X$ by $(x, gB) \mapsto x$. 
$\pi$ is a proper map onto $X$, and so $X$ is irreducible, 
closed in $\Fh$. (Later in Lemma 2.9, it will be shown that $X = \Fh$.) 
\par
For $0 \le k \le n$, 
put $\FN_{k,\sr} = \bigcup_{g \in B}g(\Ft_{\sr} + \FD_k)$. 
We define varieties
\begin{align*}
\wt Y_k &= \{ (x, gB) \in \Fh \times H/B
                        \mid g\iv x \in \FN_{k,\sr}  \}, \\
    Y_k &= \bigcup_{g \in H}g(\FN_{k,\sr}) = \bigcup_{g \in H}g(\Ft_{\sr} + \FD_k), 
\end{align*}
and define a map $\psi^{(k)} : \wt Y_k \to Y_k$ by 
$(x,gB) \to x$. 
In the case where $k = n$, we write $\wt Y_k, Y_k$ 
and $\psi^{(k)}$ simply by $\wt Y, Y$ and $\psi$. 
We have a lemma.
%%%%
%%%%
\begin{lem}  %%%%  Lemma 2.5
$\wt Y = \pi\iv(Y)$, and $\psi$ coincides with the restriction of 
$\pi$ on $\pi\iv(Y)$. In particular, $\psi: \wt Y \to Y$ is a proper surjective
map.
\end{lem}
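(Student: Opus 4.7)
The plan is to prove the set equality $\wt Y = \pi\iv(Y)$ by both inclusions, after which the statements about $\psi$ follow almost automatically.

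The inclusion $\wt Y \subset \pi\iv(Y)$ is immediate: if $(x, gB) \in \wt Y$, then $g\iv x \in \FN_{n,\sr} \subset \Ft + \FD \subset \Fb$, so $(x,gB) \in \wt X$, while $x = g(g\iv x) \in g(\FN_{n,\sr}) \subset Y$. For the reverse inclusion, take $(x,gB) \in \pi\iv(Y)$ and put $y = g\iv x \in \Fb$. I would apply the Jordan decomposition in $\Fh$ (available by Remark 1.18): write $x = x_s + x_n$ in $\Fh$, and correspondingly $y = y_s + y_n$ in $\Fb$. Since $x \in Y$, we can write $x = h(s_0 + \Bb_0)$ with $h \in H$, $s_0 \in \Ft_{\sr}$ and $\Bb_0 \in \FD$. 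Because $\FD \subset \Fn$ consists of nilpotent elements and $[\Ft, \FD] = 0$ by (2.2.5), the summands $h(s_0)$ and $h(\Bb_0)$ are respectively semisimple and nilpotent commuting elements, so the uniqueness of Jordan decomposition gives $x_s = h(s_0)$, $x_n = h(\Bb_0)$. In particular, $y_s = g\iv h(s_0)$ is $H$-conjugate to $s_0$.

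Next I would apply Lemma 2.3(ii) to the semisimple element $y_s \in \Fb$ to find $b \in B$ with $b\iv y_s \in \Ft$; since $y_s$ is $H$-conjugate to $s_0 \in \Ft_{\sr}$, its eigenvalues on $V$ come in $n$ distinct pairs of multiplicity two, hence $b\iv y_s \in \Ft_{\sr}$. The crucial remaining step is to show $b\iv y_n \in \FD$. Here $b\iv y_n$ is nilpotent and lies in $\Fb$, so it lies in $\Fn$; moreover it commutes with $b\iv y_s$ because Jordan components commute. Thus I need the claim
\[
Z_{\Fn}(s') = \FD \qquad \text{for every } s' \in \Ft_{\sr}.
\]
This follows from the weight decomposition $\Fn = \FD \oplus \bigoplus_{\a \in \Phi^+_s}\Fg_\a$ of 2.2: for a short root $\a = \ve_i \pm \ve_j$ one has $d\a(s') = t_i \pm t_j$, which in characteristic $2$ equals $t_i + t_j$ in either case and is nonzero by the definition of $\Ft_{\sr}$; hence $s'$ acts invertibly on each short-root space, so its centralizer in $\Fn$ is exactly the zero-weight part $\FD$. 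Combining, $b\iv y = b\iv y_s + b\iv y_n \in \Ft_{\sr} + \FD$, so $y \in b(\Ft_{\sr} + \FD) \subset \FN_{n,\sr}$, giving $(x,gB) \in \wt Y$ as required.

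Once the equality $\wt Y = \pi\iv(Y)$ is established, the remaining claims are formal. The map $\psi$ and the restriction $\pi|_{\pi\iv(Y)}$ are both the projection $(x, gB) \mapsto x$, so they coincide. Properness of $\psi$ follows from properness of $\pi$ by base change along the inclusion $Y \hookrightarrow X$. For surjectivity, given any $x \in Y$, write $x = h(s_0 + \Bb_0)$ with $h \in H$, $s_0 \in \Ft_{\sr}$, $\Bb_0 \in \FD$; then $(x, hB) \in \wt Y$ and $\psi(x, hB) = x$.

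The main obstacle is the centralizer computation $Z_{\Fn}(s') = \FD$, which is where the characteristic-$2$ phenomenon enters: long roots become indistinguishable from $0$ on $\Ft$ (which is why $\FD$ is nonzero at all and why regular semisimple elements in $\Fh$ do not exist), while short roots continue to separate points of $\Ft_{\sr}$. Everything else is a routine use of Jordan decomposition and Lemma 2.3(ii).
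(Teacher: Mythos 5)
Your proof is correct and follows the same overall strategy as the paper's — reduce to showing $Y \cap \Fb \subset \FN_{n,\sr}$, Jordan-decompose, conjugate the semisimple part into $\Ft_{\sr}$ via Lemma 2.3(ii), and then argue that the nilpotent part must land in $\FD$ — but you handle the last step differently, and more cleanly. After arranging the semisimple part to lie in $\Ft_{\sr}$, the paper picks a Weyl group representative $\dw$ to further normalize $g$ into $Z_H(s) \simeq SL_2^n$, and then carries out an explicit matrix computation (conjugating $\begin{pmatrix} 0 & z_i \\ 0 & 0 \end{pmatrix}$ by $g_i \in SL_2$) to see that the nilpotent part falls into $\FD$. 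You replace that normalization and calculation with the single observation that the nilpotent part lies in $Z_\Fn(s')$, and that $Z_\Fn(s') = \FD$ because short roots $\ve_i \pm \ve_j$ both differentiate to $t_i + t_j \ne 0$ on $\Ft_{\sr}$ while the long roots vanish. Your route is shorter and isolates the characteristic-$2$ structural point more transparently; the paper's buys explicitness at the cost of a couple of extra reduction steps. One small slip in your write-up of the trivial inclusion: $\FN_{n,\sr} = \bigcup_{b \in B} b(\Ft_{\sr} + \FD)$ is a full $B$-saturation and is \emph{not} contained in $\Ft + \FD$; the correct intermediate statement is simply $\FN_{n,\sr} \subset \Fb$ (since $\Ft_{\sr} + \FD \subset \Fb$ and $\Fb$ is $B$-stable), which is all your argument actually uses.
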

\begin{proof}
It is enough to show that $Y \cap \Fb = \FN_{n,\sr}$.
Assume that $y \in Y \cap \Fb$. 
Since $y \in Y$, there exists $g \in H, s \in \Ft_{sr}, z \in \FD$ 
such that $y = g(s) + g(z) \in \Fb$, where $g(s)$: semisimple, $g(z)$: nilpotent. 
Moreover since $[s, z] = 0$ by (2.2.5), we have 
$[g(s), g(z)] = 0$.  
By Lemma 2.3, replacing $y$ by its $B$-conjugate, we may assume that 
$g(s) \in \Ft_{sr}, g(z) \in \Fn$. 
Then there exists
$\dw \in N_H(T)$ with $w \in S_n \subset W_n$ such that $g\dw (s) = s$.
Since $\dw$ leaves $\FD$ invariant, by replacing $g$ by $g\dw$ and $z$ by $\dw\iv z$, 
we may further assume that 
$g(s) = s, g(z) \in \Fn$ with $s \in \Ft_{\sr}, z \in \FD$. 
Hence $g \in Z_H(s) \simeq SL_2 \times \cdots \times SL_2$ ($n$-factors). 
If we write $g = (g_1, \dots, g_n)$ with $g_i \in SL_2$, and 
$z = (z_1, \dots, z_n) \in \FD$, the action of $g$ on 
$z \in \FD$ corresponds to the conjugate action of $g_i$ on the matrix 
   $\begin{pmatrix}
         0  &  z_i  \\
         0  &  0
    \end{pmatrix}$ for $i = 1, \dots, n$. 
Now the condition $g(z) \in \Fn$ implies that, if $z_i \ne 0$ then 
$g_i$ is upper triangular.  It follows that $g(z) \in \FD$, 
and so $y \in \Ft_{sr}  + \FD$, up to $B$-conjugate. 
We have $Y \cap \Fb \subset \FN_{n,\sr}$. The other inclusion is obvious. 
The lemma is proved.    
\end{proof} 

\para{2.6.}
For $s \in \Ft_{sr}$, we have 
$Z_H(s) = Z_H(\Ft_{\sr}) \simeq SL_2 \times \cdots \times SL_2$ ($n$-factors), 
and $B \cap Z_H(\Ft_{\sr}) \simeq B_2 \times \cdots \times B_2$, where 
$B_2$ is the Borel subgroup of $SL_2$ consisting of upper triangular matrices.
The action of $Z_H(\Ft_{\sr})$ on $\FD$ is described as in 
the proof of Lemma 2.5.  In particular, $B \cap Z_H(\Ft_{\sr})$ leaves $\FD_k$
invariant for each $k$. Then  
$\wt Y_k$ can be expressed as
\begin{align*}
\tag{2.6.1}
\wt Y_k \simeq H \times^{B}\FN_{k,\sr} 
               \simeq H\times^{B \cap Z_{H}(\Ft_{\sr})}(\Ft_{\sr} + \FD_k). 
\end{align*}
Hence $\wt Y_k$ is smooth and irreducible. 
$\wt Y \simeq H \times^{B \cap Z_H(\Ft_{\sr})}(\Ft_{\sr} + \FD)$ 
is a locally trivial fibration over $H/(B \cap Z_H(\Ft_{\sr}))$ with fibre 
isomorphic to $\Ft_{\sr} + \FD$, and $\wt Y_k$  
is a subbundle of $\wt Y$ corresponding to a closed subset $\Ft_{\sr} + \FD_k$ 
of $\Ft_{\sr} + \FD$.  Hence $\wt Y_k$ is a closed subset of $\wt Y$ for each $k$.
The map $\psi^{(k)}$ is the restriction of $\psi$ on $\wt Y_k$.
Since $\psi :\wt Y \to Y$ is proper, $Y_k = \psi(\wt Y_k)$ is an irreducible 
closed subset in $Y$.  
$\psi^{(k)}: \wt Y_k \to Y_k$ is a proper surjective map. 
\par
The following relation can be verified by a similar argument 
as in the proof of Lemma 2.5. 
\begin{equation*}
\tag{2.6.2}
Y_k \cap (\Ft_{sr} + \FD) = \bigcup_{w \in S_n}\dw(\Ft_{\sr} + \FD_k), \quad (0 \le k \le n).
\end{equation*}
Put $Y_k^0 = Y_k - Y_{k-1}$.  By (2.6.2), we have
\begin{equation*}
\tag{2.6.3}
Y_k^0 = \bigcup_{g \in H}g(\Ft_{\sr} + \FD_k^0).
\end{equation*}

\para{2.7.}
For any subset $I \subset [1,n]$, put 
\begin{equation*}
\tag{2.7.1}
\FD_I = \{ \Bb \in \FD \mid b_i \ne 0 \text{ for } i \in I, 
              b_i = 0 \text{ for } i \notin I \}.
\end{equation*} 
Note that if $I = [1,k]$, $\FD_I$ coincides with $\FD^0_k$. 
Since the action of $B \cap Z_H(\Ft_{\sr})$ on $\FD$ is given by the action of its 
$T$-part, $\Ft_{\sr} + \FD_I$ is $B \cap Z_H(\Ft_{\sr})$-stable. 
We define a locally closed subvariety $\wt Y_I$ of $\wt Y$ by 
\begin{equation*}
\tag{2.7.2}
\wt Y_I \simeq H\times^{B \cap Z_H(\Ft_{\sr})}(\Ft_{\sr} + \FD_I), 
\end{equation*}
and a map $\psi_I : \wt Y_I \to Y$ by $g*x \mapsto gx$, where 
$g*x$ is the image of $(g,x) \in H \times (\Ft_{\sr} + \FD_I)$ on 
its quotient.
Then $\Im \psi_I = \bigcup_{g \in H}(\Ft_{\sr} + \FD_I)$ 
coincides with $Y_k^0$ for $k = |I|$ by (2.6.3), 
which depends only on $k$. 
\par
For $I \subset [1,n]$, we define a parabolic subgroup $Z_H(\Ft_{\sr})_I$ of 
$Z_H(\Ft_{\sr})$ by the condition that the $i$-th factor is $B_2$ if $i \in I$, and 
is $SL_2$ otherwise. Since $Z_H(\Ft_{\sr})_I$ stabilizes $\FD_I$, one can define 
\begin{equation*}
\tag{2.7.3}
\wh Y_I = H \times^{Z_H(\Ft_{\sr})_I}(\Ft_{\sr} + \FD_I).
\end{equation*}
Then $\psi_I$ factors through $\wh Y_I$, 
\begin{equation*}
\tag{2.7.4}
\begin{CD}
\psi_I : \wt Y_I @>\xi_I>> \wh Y_I @>\eta_I>> Y_k^0, 
\end{CD}
\end{equation*}
for $|I| = k$, where $\xi_I$ is the natural surjection, and 
$\eta_I$ is given by $g*x \mapsto gx$ (similar notation as $\psi_I$). 
Then $\xi_I$ is a locally trivial fibration 
with fibre isomorphic to 
\begin{equation*}
Z_H(\Ft_{\sr})_I/(B \cap Z_H(\Ft_{\sr})) \simeq (SL_2/B_2)^{I'} \simeq \BP_1^{I'},
\end{equation*}
where $I'$ is the compliment of $I$ in $[1,n]$.  
\par
Let $S_I$ be the symmetric group of letters in $I \subset [1,n]$, hence 
$S_I \simeq S_k$ for $|I| = k$. 
Then $\SW_I = N_H(Z_H(\Ft_{\sr})_I)/Z_H(\Ft_{\sr})_I \simeq S_I \times S_{I'}$. 
$\SW_I$ acts on $\wt Y_I$ and $\wh Y_I$ since $\Ft_{\sr} + \FD_I$ is stable by 
$N_H(Z_H(\Ft_{\sr})_I)$.
Now the map $\eta_I : \wh Y_I \to Y^0_k$ turns out to be a finite Galois covering 
with Gaolis group $\SW_I$, 
\begin{equation*}
\tag{2.7.5}
\eta_I : \wh Y_I \to \wh Y_I/\SW_I \simeq Y^0_k.
\end{equation*} 

\para{2.8.}
For $0 \le k \le n$, we define $\wt Y^+_k$ as $\psi\iv(Y^0_k)$.  
Then $\wt Y^+_k = \coprod_I \wt Y_I$, where $I$ runs over all the subsets 
$I \subset [1,n]$ such that $|I| = k$. (The disjointness follows from (2.6.2)).  
$\wt Y_I$ is smooth, irreducible by (2.7.2), and $\wt Y_I$ form the connected components
of $\wt Y^+_k$.  
Since $Y = \coprod_{0 \le k \le n}Y^0_k$, we have
\begin{equation*}
\wt Y = \coprod_{0 \le k \le n}\wt Y^+_k.
\end{equation*}
In the case where $I = [1,k]$, we denote $\wt Y_I$
by $\wt Y_k^0$.  Then  
$\wt Y^0_k$ is an open dense subset of $\wt Y_k$. 
By (2.6.1), $S_n \simeq N_H(Z_H(\Ft_{\sr}))/Z_H(\Ft_{\sr})$ 
acts on $\wt Y$, which leaves $\wt Y^+_k$ stable for any $k$. 
We have
\begin{equation*}
\tag{2.8.1}
\wt Y^+_k = \coprod_{\substack{ I \subset [1,n] \\ |I| = k}}
                        \wt Y_I = \coprod_{w \in S_n/(S_k \times S_{n-k})}w(\wt Y^0_k). 
\end{equation*} 
We have the following lemma.

\begin{lem} %%%%  Lemma 2.9.
Let the notations be as before.
\begin{enumerate}
\item 
$X = \bigcup_{g \in H}g(\Fb) = \Fh$.
\item 
$Y_k$ is an irreducible closed subset in $Y$.  Hence $Y_k^0$ is open 
dense in $Y_k$. 
\item
$\dim \wt Y_k = \dim H - n + k$.
\item
$\dim Y_k = \dim \wt Y_k - (n-k) = (\dim H - 2n) + 2k$.
\item
$Y = \coprod_{0 \le k \le n}Y_k^0$ gives a stratification of $Y$ by smooth strata $Y_k^0$,
and the map $\psi : \wt Y \to Y$ is semismall with respect to this stratification.
\end{enumerate}
\end{lem}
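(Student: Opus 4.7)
The plan is to establish the five parts in the order (i), (ii), (iii), (iv), (v), with (i) providing the fundamental identification $X = \Fh$ and (ii)--(v) being geometric consequences of the explicit bundle description of $\wt Y_k$. For (i), I would start from the Jordan decomposition in $\Fh = \Lie H$, available by Remark~1.18. Write $x = s + n$ with $s$ semisimple, $n$ nilpotent, and $[s,n] = 0$. By Lemma~2.3(i), some $g \in H$ carries $s$ into $\Ft$; replacing $x$ by $g\iv x$ we may assume $s \in \Ft$ and $n$ commutes with $s$. Since $H = Sp(V)$ is simply connected, Steinberg's theorem ensures that $Z_H(s)$ is connected reductive; it contains $T$, and $B \cap Z_H(s)$ is a Borel subgroup of $Z_H(s)$. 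A further conjugation by an element of $Z_H(s)$ (which fixes $s$) moves $n$ into $\Lie(B \cap Z_H(s)) \subset \Fb$, yielding $x \in \bigcup_g g(\Fb) = X$, and hence $X = \Fh$.

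For (ii) and (iii), the description (2.6.1) exhibits $\wt Y_k \simeq H \times^{B \cap Z_H(\Ft_{\sr})}(\Ft_{\sr} + \FD_k)$ as a homogeneous fibre bundle, so it is smooth and irreducible. Since $\wt Y_k$ is a closed subvariety of $\wt Y$ and $\psi$ is proper (Lemma~2.5), $Y_k = \psi^{(k)}(\wt Y_k)$ is closed and irreducible in $Y$; the open subset $Y_k^0 = Y_k - Y_{k-1}$ contains the image of $\Ft_{\sr} + \FD^0_k$ and is therefore dense. For the dimension, the factor $B \cap Z_H(\Ft_{\sr}) \simeq B_2 \times \cdots \times B_2$ has dimension $2n$, and $\dim(\Ft_{\sr} + \FD_k) = n + k$, so (2.6.1) yields $\dim \wt Y_k = \dim H - 2n + (n+k) = \dim H - n + k$.

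For (iv), I would exploit the factorization (2.7.4) with $I = [1,k]$: the map $\eta_I : \wh Y_I \to Y_k^0$ is a finite Galois cover by (2.7.5), and is in fact \'etale because $\SW_I$ acts freely on $\Ft_{\sr}$ (distinct diagonal entries), while $\xi_I : \wt Y_I \to \wh Y_I$ is a $\BP_1^{n-k}$-bundle. Hence the fibres of $\psi^{(k)}$ over the dense open stratum $Y_k^0$ have dimension $n-k$, whence $\dim Y_k = \dim Y_k^0 = \dim \wt Y_k - (n-k) = \dim H - 2n + 2k$. For (v), smoothness of each stratum $Y_k^0$ descends from the smoothness of $\wh Y_I$ along the \'etale cover $\eta_I$. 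Because $\dim Y = \dim Y_n = \dim H$, the codimension of $Y_k^0$ in $Y$ is $\dim H - (\dim H - 2n + 2k) = 2(n-k)$, which exactly doubles the generic fibre dimension $n-k$ of $\psi$ over $Y_k^0$; combined with the properness of $\psi$ from Lemma~2.5, this is precisely the semismall condition (in fact, strict semismallness).

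I expect (i) to be the main obstacle, as the characteristic~2 setting complicates the standard Jordan-decomposition argument. One must verify that the nilpotent part $n$ genuinely lies in $\Lie Z_H(s)$ (and not merely in the possibly larger centralizing set $\{y \in \Fh : [s,y] = 0\}$, with which it can disagree in bad characteristic) and that $B \cap Z_H(s)$ is a genuine Borel subgroup of $Z_H(s)$. Once (i) is secured, parts (ii)--(v) are essentially bookkeeping from the homogeneous-bundle description (2.6.1) and the \'etale stratification (2.7.5)--(2.8.1), requiring only standard dimension counts.
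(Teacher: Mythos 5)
Your parts (ii)--(v) track the paper's proof essentially step for step: (ii) from the properness of $\psi$ and the closedness and irreducibility of $\wt Y_k$, (iii) from the dimension count on the bundle (2.6.1), (iv) from the factorization $\psi_I = \eta_I \circ \xi_I$ giving $\BP_1^{n-k}$-fibres over $Y_k^0$, and (v) from the codimension being exactly twice the fibre dimension over each stratum (note the paper is careful to use the fibre dimension for \emph{every} $x \in Y_k^0$, not merely the generic fibre as you phrase it; your factorization argument in fact gives this uniform bound, so there is no real gap).

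Where you genuinely diverge is (i). The paper does not touch Jordan decomposition at all here: it derives (i) as the \emph{last} step, after (iv), by a pure dimension count. Once one knows $\dim Y = \dim Y_n = \dim H$, the chain $Y \subset X \subset \Fh$ forces $\dim X = \dim H = \dim \Fh$, and since $X$ is a closed irreducible subset of the irreducible variety $\Fh$ with the same dimension, $X = \Fh$. This sidesteps precisely the issues you flag as the ``main obstacle.'' Your element-wise argument via Jordan decomposition does in fact go through: for semisimple $s$ with eigenspace decomposition $V = \bigoplus V_i$, one has $Z_H(s) = \prod_i Sp(V_i)$ and $\Lie Z_H(s) = \bigoplus_i \Fs\Fp(V_i) = \{y \in \Fh \mid [s,y]=0\}$ even in characteristic 2, so the concern you raise about $n$ lying in a possibly smaller Lie algebra does not materialize in this case; however, you do not actually carry out this verification in the proposal, you only flag it. Also, ``Steinberg's theorem'' properly refers to connectedness of centralizers of semisimple \emph{group} elements, not Lie-algebra elements; the connectedness here follows by direct computation rather than by that theorem. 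The net effect is that your route for (i) is legitimate but longer and requires auxiliary facts about centralizers that you only sketch, whereas the paper's dimension-count route is both shorter and self-contained given (iv). The trade-off: your approach gives an explicit constructive description of the conjugacy, while the paper's approach is existence by degree-of-freedom bookkeeping.
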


\begin{proof}
(ii) is already given in 2.6.
By (2.6.1), 
\begin{align*}
\dim \wt Y_k &= \dim H - \dim (B \cap Z_H(\Ft_{\sr})) + \dim (\Ft_{\sr} + \FD_k) \\
             &= \dim H - 2n + (n + k)  \\
             &= \dim H - n + k,
\end{align*}
since $\dim (B \cap Z_H(\Ft_{\sr})) = \dim (B_2 \times \cdots \times B_2) = 2n$.
Thus (iii) holds. 
Since $\wh Y_I$ is smooth, irreducible, and $\eta_I$ is a finite Galois covering, 
$Y_k^0 = \eta_I(\wh Y_I)$ is smooth, irreducible. 
By using the decomposition $\psi_I = \eta_I\circ \xi_I$ for $I = [1,k]$, 
we see that $\dim \wt Y_k = \dim Y_k + (n-k)$. Hence (iv) holds.
It follows from (iv), $\dim Y = \dim Y_n = \dim H$.
Since $\dim \psi\iv(x) = n - k$ for any $x \in Y_k^0$ by (2.7.1) 
and by $\psi_I = \eta_I\circ \xi_I$, 
we have $\dim \psi\iv(x) = (\dim Y - \dim Y_k^0)/2$ by (iv).  Thus (v) holds. 
Since $Y$ is open dense in $X$, 
$\dim X = \dim H$.  Since $X$ is irreducible closed in $\Fh$, we obtain (i).
\end{proof}

\para{2.10.}
Let $\psi_k : \wt Y^+_k \to Y_k^0$ be the restriction of $\psi$ on $\psi\iv(Y_k^0)$.
Since $\psi$ is proper, $\psi_k$ is also proper. 
Let $\Ql$ be the constant sheaf on $\wt Y^+_k$.  Since $\wt Y_I$ is a connected 
component for any $I$ by (2.8.1), we have
\begin{equation*}
\tag{2.10.1}
(\psi_k)_!\Ql \simeq \bigoplus_{\substack{ I \subset [1,n] \\ |I| = k }}
                 (\psi_I)_!\Ql. 
\end{equation*}
\par
On the other hand, since $\eta_I : \wh Y_I \to Y^0_k$ is a finite Galois 
covering with group $\SW_I$, we have
\begin{equation*}
\tag{2.10.2}
(\eta_I)_!\Ql \simeq \bigoplus_{\r \in (\SW_I)\wg}\r \otimes \SL_{\r},  
\end{equation*}
where $\SL_{\r} = \Hom (\r, (\eta_I)_!\Ql)$ is a simple local system on $Y^0_k$ 
corresponding to $\r \in (\SW_I)\wg$. 
\par
Since $\psi_k$ is proper, and $\wt Y_I$ is a closed subset of $\wt Y^+_k$, 
$\psi_I$ is proper.  As $\psi_I = \eta_I\circ \xi_I$, $\xi_I$ is also proper. 
By a similar discussion as in [Sh, (1.6.1)], we see that $R^i(\xi_I)_!\Ql$ 
is a constant sheaf for any $i \ge 0$.  
Since $\xi_I$ is a $\BP_1^{I'}$-bundle, we have 
\begin{equation*}
\tag{2.10.3}
(\xi_I)_!\Ql \simeq (\xi_I)_!(\xi_I)^*\Ql \simeq H^{\bullet}(\BP_1^{I'})\otimes \Ql,
\end{equation*}
where $H^{\bullet}(\BP_1^{I'})$ denotes $\bigoplus_{i \ge 0}H^{2i}(\BP_1^{I'},\Ql)$,
which we regard as a complex of vector spaces $(K_i)$ with $K_{\odd} = 0$. 
It follows that 
\begin{equation*}
\tag{2.10.4}
(\psi_I)_!\Ql \simeq (\eta_I)_!(\xi_I)_!\Ql \simeq  
               H^{\bullet}(\BP_1^{I'})\otimes (\eta_I)_!\Ql. 
\end{equation*}
\par
Note that $\BP_1$ is the flag variety of $SL_2$, and $\BZ/2\BZ$ is the Weyl group
of $SL_2$. 
We define an action of $\BZ/2\BZ$ on $H^{\bullet}(\BP_1)$ as the Springer representation 
of $\BZ/2\BZ$, i.e., $\BZ/2\BZ$ acts non-trivially on $H^2(\BP_1) = \Ql$, and 
acts trivially on $H^0(\BP_1) = \Ql$.   
We define an action of $(\BZ/2\BZ)^{[1,n]}$ on 
$H^{\bullet}(\BP_1^{I'}) \simeq H^{\bullet}(\BP_1)^{\otimes |I'|}$ 
by the Springer action of the factor $\BZ/2\BZ$ 
corresponding to $I'$, and the trivial action of the factor $\BZ/2\BZ$ 
corresponding to $I$.  
Note that  $W_n = S_n \ltimes (\BZ/2\BZ)^n$ and  
$\SW_I \simeq S_I \times S_{I'}$. 
Let $W_I = S_I \ltimes (\BZ/2\BZ)^I$ be the Weyl group of type $C_{|I|}$,
and define $W_{I'}$ similarly. 
For $\r \in \SW_I\wg = (S_I \times S_{I'})\wg$, we consider the action of 
$W_I \times W_{I'}$ on 
$H^{\bullet}(\BP_1^{I'})\otimes \r$, 
where $(\BZ/2\BZ)^{[1,n]}$ acts trivially on $\r$. 
In particular, for $I = [1,k]$, we obtain $(W_k \times W_{n-k})$-module
$H^{\bullet}(\BP_1^{n-k})\otimes \r$.  Note that 
$S_n/(S_k \times S_{n-k}) \simeq W_n/(W_k \times W_{n-k})$.  
By (2.8.1), (2.10.1) and (2.10.4), we have
\begin{equation*}
\tag{2.10.5}
(\psi_k)_!\Ql \simeq \bigoplus_{\r \in (S_k \times S_{n-k})\wg}
                      \Ind_{W_k \times W_{n-k}}^{W_n}
                 (H^{\bullet}(\BP_1^{n - k})\otimes \r)\otimes \SL_{\r},
\end{equation*}
where $\SL_{\r} = \Hom (\r, (\eta_I)_!\Ql)$ is the simple local system 
on $Y_k^0$ with $I = [1,k]$. 

\para{2.11.}
For each $k$, let $\ol\psi_k$ be the restriction of $\psi$ on 
$\psi\iv(Y_k)$.  Then $\ol\psi_k : \psi\iv(Y_k) \to Y_k$ is a proper map. 
Assume that $k \ge 1$.  We have $Y_k - Y_{k-1} = Y_k^0$.  
Since $\ol \psi_k$ is proper, 
$(\ol\psi_k)_!\Ql$ is a semisimple complex on $Y_k$.  
We note the following.
\par\medskip\noindent
(2.11.1) \ Assume that $(\ol\psi_{k-1})_!\Ql$ is equipped with 
an action of $W_n$.  
Then the $W_n$-action can be extended to a $W_n$-action on  
$(\ol\psi_k)_!$.  
\par\medskip
In fact, let $j : Y_k^0 \hra Y_k$ be the open immersion.  
By (2.10.5), $(\psi_k)_!\Ql$ has an action of $W_n$. This induces 
an action of $W_n$ on $(j\circ \psi_k)_!\Ql$, and on its perverse cohomology 
${}^pH^i((j\circ\psi_k)_!\Ql)$.  On the other hand, by the assumption, 
${}^pH^i((\ol\psi_{k-1})_!\Ql)$ is equipped with $W_n$-action. 
We consider the long exact sequence of the 
perverse cohomology obtained from the distinguished triangle 
$(j_!(\psi_k)_!\Ql, (\ol\psi_k)_!\Ql, (\ol\psi_{k-1})_!\Ql)$. 
By (2.10.5), $(\psi_k)_!\Ql$ is a semisimple complex which is a sum of 
various $\SL_{\r}[2i]$.  Hence ${}^pH^i((j\circ\psi_k)_!\Ql) = 0$ for odd $i$.  
By induction, we have ${}^pH^i((\psi_k)_!\Ql)= 0$ for odd $i$.  
Since $(\ol\psi_k)_!\Ql$ is a semisimple 
complex, the $W_n$-action of ${}^pH^{i}((\ol\psi_{k-1})_!\Ql)$ and on 
${}^pH^{i}((j\circ\psi_k)_!\Ql)$ determines the $W_n$-action on 
${}^pH^{i}((\ol\psi_k)_!\Ql)$, uniquely. 
(2.11.1) is proved.     

\para{2.12.}
We have a 
natural bijection 
\begin{equation*}
\tag{2.12.1}
\coprod_{0 \le k \le n}(S_k \times S_{n-k})\wg \simeq W_n\wg,  
  \qquad \r \longleftrightarrow  \wh\r
\end{equation*}
satisfying the following properties; 
take $\r = \r'\boxtimes \r'' \in (S_k \times S_{n - k})\wg$, 
where $\r' \in S_k\wg, \r'' \in S_{n-k}\wg$. 
We extend $\r'$ to $\wt\r' \in W_k\wg$ so that $(\BZ/2\BZ)^k$ acts 
trivially on it. On the other hand, we extend $\r''$ to $\wt\r'' \in W_{n-k}\wg$
so that each factor $\BZ/2\BZ$ of $(\BZ/2\BZ)^{n-k}$ 
acts non-trivially on $\wt\r''$.   
Put 
\begin{equation*}
\tag{2.12.2}
\wh\r = \Ind_{W_k \times W_{n-k}}^{W_n} (\wt\r'\boxtimes \wt\r''). 
\end{equation*}
Then $\wh\r \in W_n\wg$, and the correspondence 
$\r \mapsto \wh\r$ gives the required 
bijection.

We show the following proposition.  Put $d_k = \dim Y_k$. 
%%%%
%%%%
\begin{prop} %%%%   Prop. 2.13
$\psi_!\Ql[d_n]$ is a semisimple perverse sheaf on $Y$, equipped with $W_n$-action, 
and is decomposed as 
\begin{equation*}
\tag{2.13.1}
\psi_!\Ql[d_n] \simeq \bigoplus_{0 \le k \le n}
                      \bigoplus_{\r \in (S_k \times S_{n-k})\wg}\wh \r \otimes 
                                     \IC(Y_k, \SL_{\r})[d_k], 
\end{equation*}
where $\SL_{\r}$ is a simple local system on $Y_k^0$ defined in (2.10.2). 
\end{prop}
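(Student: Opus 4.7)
The proof splits into three tasks: establishing semisimplicity and perversity of $\psi_!\Ql[d_n]$, constructing the $W_n$-action, and pinning down the multiplicities.

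First, by (2.6.1) $\wt Y$ is smooth of dimension $d_n = \dim H$ (cf.\ Lemma 2.9(iii) for $k=n$), and by Lemma 2.9(v) the proper map $\psi: \wt Y \to Y$ is semismall relative to the stratification $Y = \coprod_{0 \le k \le n}Y_k^0$. Hence the Decomposition Theorem for proper semismall morphisms from a smooth source implies that $\psi_!\Ql[d_n]$ is a semisimple perverse sheaf on $Y$, each of whose simple summands has the form $\IC(Y_k, \SL)[d_k]$ for some simple local system $\SL$ on $Y_k^0$. Thus the decomposition in (2.13.1) holds modulo identification of the multiplicity spaces.

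Second, I would construct a $W_n$-action on $\psi_!\Ql[d_n]$ by induction on $k$ using (2.11.1). The base case $k=0$ is immediate: since $Y_0 = Y_0^0$, we have $(\ol\psi_0)_!\Ql = (\psi_0)_!\Ql$, which carries the explicit $W_n$-action furnished by (2.10.5) specialised to $k=0$. Inductively, (2.11.1) propagates the action to each $(\ol\psi_k)_!\Ql$ in turn, and at $k=n$ yields the desired $W_n$-action on $\psi_!\Ql = (\ol\psi_n)_!\Ql$, hence on $\psi_!\Ql[d_n]$.

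Third, to determine the multiplicity of $\IC(Y_k, \SL_\r)[d_k]$ in $\psi_!\Ql[d_n]$ for $\r \in (S_k \times S_{n-k})\wg$, I would restrict to the open stratum $Y_k^0$. By Lemma 2.9(iv), $d_n - d_k = 2(n-k)$, so by semismallness this multiplicity equals the multiplicity of $\SL_\r$ in $R^{2(n-k)}(\psi_k)_!\Ql$. Extracting the top degree in (2.10.5) and noting that $H^{2(n-k)}(\BP_1^{n-k})$ is one-dimensional with each factor of $(\BZ/2\BZ)^{n-k}$ acting by the Springer sign (while $(\BZ/2\BZ)^k$ acts trivially on $H^0$), this multiplicity as a $W_n$-module is precisely $\Ind_{W_k\times W_{n-k}}^{W_n}(\wt\r'\boxtimes \wt\r'') = \wh\r$, by the prescription (2.12.2).

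The main technical obstacle is to ensure that the inductively built $W_n$-action on $\psi_!\Ql[d_n]$ really realises, on each stratum, the multiplicity space $\wh\r$ predicted by the explicit formula (2.10.5). This rests on the uniqueness built into (2.11.1): the parity vanishing of ${}^pH^{\text{odd}}$ for $(j\circ\psi_k)_!\Ql$ and the semisimplicity of $(\ol\psi_k)_!\Ql$ established there force the extension of the action across the distinguished triangle $(j_!(\psi_k)_!\Ql,\, (\ol\psi_k)_!\Ql,\, (\ol\psi_{k-1})_!\Ql)$ to be canonical, so that the $W_n$-module structure on the multiplicity spaces is determined stratum by stratum from the action on the open part $Y_k^0$, which is exactly the one read off from (2.10.5).
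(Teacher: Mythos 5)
Your proof is correct and rests on the same three pillars as the paper's: semismallness of $\psi$ (Lemma 2.9(v)) gives semisimplicity and perversity, (2.11.1) propagates the $W_n$-action inductively from the open strata, and (2.10.5) determines the multiplicity spaces. Where you diverge is in the bookkeeping: the paper proves a graded formula (2.13.3) for each $(\ol\psi_m)_!\Ql$ with explicit error terms $\ol\SN_m$, shows the error terms never interfere with the direct-sum part because the representations $\wh\r$ attached to different strata are distinct under (2.12.1), and finally kills $\ol\SN_n$ using semismallness at the last step. You instead invoke the semismall decomposition theorem once to pin down the shape of the answer — each simple summand is $\IC(Y_k,\SL)[d_k]$ for some simple $\SL$ on $Y_k^0$, with multiplicity space equal to the $\SL$-isotypic part of $R^{2(n-k)}(\psi_k)_!\Ql$ — and then read the answer off (2.10.5) in top degree. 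This avoids tracking the $\SN$-terms entirely and is a genuine streamlining; the paper's inductive argument does more work than strictly necessary for the $m=n$ statement (though (2.13.3) has independent value as a structural statement about each $Y_m$). The one place you are slightly terse is the compatibility between the two $W_n$-structures on the multiplicity space: you should note explicitly that $\SH^{-d_k}(\psi_!\Ql[d_n]|_{Y_k^0})$ receives no contribution from $\IC(Y_{k'},\SL_{\r'})[d_{k'}]$ with $k'\ne k$ (vanishing for $k'<k$ by disjointness of supports, and for $k'>k$ by the IC support condition), so that this cohomology sheaf is canonically $\bigoplus_\r V_{k,\r}\otimes\SL_\r$, identified $W_n$-equivariantly with $R^{2(n-k)}(\psi_k)_!\Ql$ via proper base change — this is precisely where the compatibility furnished by (2.11.1) is used. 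Your final paragraph gestures at this but the precise mechanism is worth spelling out.
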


\begin{proof}
The formula (2.10.5) can be rewritten as 
\begin{equation*}
\tag{2.13.2}
(\psi_k)_!\Ql \simeq \biggl(\bigoplus_{\r \in (S_k \times S_{n-k})\wg}
                 \wh\r \otimes \SL_{\r}\biggr)[-2(n - k)] + \SN_k,
\end{equation*}
where $\SN_k$ is a sum of various $\SL_{\r'}[-2i]$ for 
$\r' \in (S_k \times S_{n-k})\wg$ with $0 \le i < n - k$. 
\par
For $0 \le m \le n$, let $\ol\psi_m$ be as in 2.10. We consider the following formula.

\begin{align*}
\tag{2.13.3}
(\ol\psi_m)_!\Ql
     \simeq \bigoplus_{0 \le k \le m}\bigoplus_{\r \in (S_k \times S_{n - k})\wg}
               \wh\r \otimes \IC(Y_k, \SL_{\r})[-2(n - k)] 
          + \ol\SN_m,           
\end{align*}
where $\ol\SN_m$ is a $\BZ$-linear combination of 
various $\IC(Y_k, \SL_{\r'})[-2i]$ for $0 \le k \le m$
and $\r' \in (S_k \times S_{n-k})\wg$ with $i < n - k$. 
We note that (2.13.3) will imply the proposition.  In fact, 
$\ol\psi_n = \psi$ for $k = n$, and 
$d_n - d_k = 2n - 2k$ by Lemma 2.9. 
But since $d_n - 2i > d_k$ if $i < n-k$, 
$\IC(Y_k, \SL_{\r'})[d_n - 2i]$ is not a perverse sheaf. 
Since $\psi$ is semismall, $\psi_!\Ql$ is a semisimple perverse sheaf. 
Thus $\ol\SN_n = 0$ and (2.13.1) follows. 
\par
We show (2.13.3) by induction on $m$. If $m = 0$, then $(\ol\psi_m)_!\Ql$ 
coincides with $(\psi_m)_!\Ql$. Hence (2.13.3) holds by (2.13.2) applied for $k = 0$.  
We assume that (2.13.3) holds for any $k < m$. 
Recall that $Y_m^0 = Y_m - Y_{m -1}$. 
Since $(\ol\psi_m)_!\Ql$ is a semisimple complex, it is a direct sum of the form $A[s]$ 
for a simple perverse sheaf $A$. Suppose that the support $\supp A$ of $A$ is not contained in 
$Y_{m -1}$.  Then $(\supp A) \cap Y_m^0 \neq \emptyset$, and $A|_{Y_m^0}$ is a simple perverse 
sheaf on $Y_m^0$. The restriction of $(\ol\psi_m)_!\Ql$ on $Y_m^0$ is isomorphic to 
$(\psi_m)_!\Ql$, and it is described as in (2.13.2). It follows that 
$A|_{Y_m^0} = \SL_{\r}$ (up to shift) for some $\r \in (S_m \times S_{n-m})\wg$. 
This implies that $A = \IC(Y_m, \SL_{\r})[d_m]$, and the direct sum of $A[s]$ 
appearing in $(\ol\psi_m)_!\Ql$ 
such that $(\supp A) \cap Y_m^0 \ne \emptyset$ is given by
\begin{equation*}
K_1 = \bigoplus_{\r \in (S_m \times S_{n-m})\wg}\wh\r \otimes 
                   \IC(Y_m, \SL_{\r})[-2(n - m)] + \SN'_m, 
\end{equation*}
where $\SN'_m$ is a $\BZ$-linear combination of $\IC(Y_m, \SL_{\r'})[-2i]$ 
for $\r' \in (S_m \times S_{n-m})\wg$ with $0 \le i < n - m$.   
\par
If $\supp A$ is contained in $Y_{m-1}$, $A[s]$ appears as a summand of 
$(\ol\psi_{m -1})_!\Ql$.  
By induction, $(\ol\psi_{m -1})_!\Ql$ 
is described as in (2.13.3) by replacing $m$ by $m -1$. 
Thus if exclude the contribution from the restriction of $K_1$ on $Y_{m -1}$, such 
$A[s]$ is determined from $(\ol\psi_{m -1})_!\Ql$. 
Note that, by induction,  we can 
construct an action of $W_n$ on $(\ol\psi_m)_!\Ql$ by (2.11.1).  
We consider the restriction of $K_1$ on $Y_{m -1}$.
Since each simple component of $\SN'_m$ is 
contained in $\ol\SN_m$, we can ignore this part. 
Let $K_1'$ be the direct sum part of $K_1$.  
Then the restriction of $K_1'$ on $Y_{m - 1}$ affords the representation of 
$W_n$ corresponding to a sum of various $\wh\r$ for 
$\r \in (S_m \times S_{n-m})\wg$. 
But by (2.13.3) applied for $m - 1$, the direct sum part of 
$(\ol\psi_{m -1})_!\Ql$
affords the representation of $W_n$. The irreducible representations appearing 
there is of the form $\wh\r'$, which are different from $\wh\r$ for $K_1'|_{Y_{m -1}}$.
Since each component of $\ol\SN_{m -1}$ is contained in $\ol\SN_m$, we see that 
the restriction of $K_1$ on $Y_{m -1}$ has no overlapping with 
$(\ol\psi_{m -1})_!\Ql$ 
modulo $\ol\SN_m$. Thus (2.13.3) holds for $m$.  The proposition is proved.   
\end{proof}

%%%%
%%%%
\par\bigskip
\section{ Intersection cohomology on $\Fc\Fs\Fp(V)$}

\para{3.1.} 
We follow the notation in Section 2. The conformal symplectic group 
$CSp_N$ is defined  by 
\begin{equation*}
CSp_N = \{ g \in GL_N \mid {}^tgJg = \la_gJ 
            \text{ for some }\la_g \in \Bk^* \},
\end{equation*}
which we denote by $\wt H$. By fixing the basis of $V$ as before, 
we also write it as $\wt H = CSp(V)$. $\wt H$ is a connected group 
with connected center $\wt Z$, where 
\begin{equation*}
\wt Z = \{ \la 1_N \mid \la \in \Bk^*\}, 
\end{equation*} 
and contains $H$ as a closed subgroup. 
$\wt H/\wt Z$ is the adjoint symplectic group $\wt H_{\ad}$. 
Let $\wt\Fh = \Fc\Fs\Fp(V)$ be the Lie algebra of $\wt H$. 
$\wt\Fh$ contains the center $\wt\Fz \simeq \Bk$, and 
we put $\wt\Fh_{\ad} = \wt\Fh/\wt\Fz$, which is the Lie 
algebra of $\wt H_{\ad}$, and is called the adjoint Lie algebra. 
In [X1, Lemma 6.2], Xue proved that $\wt\Fh_{\ad}$ has regular semisimple
elements, and established the Springer correspondence for $\Fh\nil$ by 
making use of the intersection cohomology on $\wt\Fh_{\ad}$. 
Considering $\wt\Fh_{\ad}$ is essentially the same as considering $\wt\Fh$.  
In this section, we shall connect Xue's result with ours discussed in Section 2. 

\para{3.2.}
Put
\begin{align*}
\tag{3.2.1}
\wt T &= \{\Diag(t_1, \dots, t_n, \la t_1\iv, \dots, \la t_n\iv)
                \mid t_i \in \Bk^*, \la \in \Bk^* \}, \\
\wt \Ft &= \{\Diag(t_1, \dots, t_n, t_1 + \la, \dots, t_n + \la)
               \mid t_i \in \Bk, \la \in \Bk \}.  
\end{align*}
Then $\wt T$ is a maximal torus of $\wt H$ containing $T$, and 
$\Lie \wt T = \wt \Ft \supset \Ft$.
We denote an element in $\wt\Ft$ as $\xi = (s,\la)$ for 
$s = (t_1, \dots, t_n) \in \Bk^n$ and $\la \in \Bk^*$.   
Let $\wt B = \wt TU$ be the Borel subgroup of $\wt H$ containing $B$.  
Put $\wt\Fb = \Lie \wt B$.  We have $\wt\Fb = \wt\Ft \oplus \Fn$. 
Put 
\begin{align*}
\tag{3.2.2}
\wt\Ft\reg = \{ (s,\la) \in \wt\Ft \mid t_i \ne t_j \text{ for } i \ne j, \la \in \Bk^*\}.
\end{align*}
Then for $\xi \in \wt\Ft\reg$, we have $Z_H(\xi) = \wt T$.  Thus $\xi$ is a regular 
semisimple element in $\wt\Ft$. 
$\wt\Ft\reg$ is open dense in $\wt\Ft$.  Put 
$\wt\Fh\reg = \bigcup_{g \in \wt H}g(\wt\Ft\reg)$. By using the conjugacy of maximal tori 
in $\wt H$, we see that $\wt\Fh\reg$ coincides with the set of 
regular semisimple elements in 
$\wt\Fh$.  $\wt\Fh\reg$ is open dense in $\wt\Fh$ since it is the intersection with 
regular semisimple elements in $\Fg\Fl_N$. Put 
$\wt\Fb\reg = \wt\Fh\reg \cap \wt\Fb$. 
We consider the varieties
\begin{align*}
\wt Y\flt     &= \{ (x, g\wt B ) \in \wt\Fh \times \wt H/\wt B
                      \mid g\iv x \in \wt\Fb\reg \}, \\
    Y\flt     &= \bigcup_{g \in \wt H}g(\wt\Fb\reg) = \wt\Fh\reg,
\end{align*}
and define a map $\psi\flt : \wt Y\flt \to Y\flt$ by $(x, g\wt B) \mapsto x$.
Then 
\begin{equation*}
\wt Y\flt \simeq \wt H\times^{\wt B}\wt\Fb\reg \simeq \wt H \times^{\wt T}\wt\Ft\reg,
\end{equation*}
and $\psi\flt$ is a finite Galois covering with Galois group $W_n$. 
\par
Let $\Ql$ be the constant sheaf on $\wt Y\flt$.  Then $(\psi\flt)_!\Ql$ is a semisimple 
local system on $Y\flt$, equipped with $W_n$-action, and is decomposed as 
\begin{equation*}
\tag{3.2.3}
(\psi\flt)_!\Ql \simeq \bigoplus_{\r \in W_n\wg}\r \otimes \SL\flt_{\r},
\end{equation*} 
where $\SL\flt_{\r} = \Hom (\r, (\psi\flt)_!\Ql)$ is a simple local system on $Y\flt$. 

\para{3.3.}
We consider the varieties
\begin{align*}
\wt X\flt &= \{ (x, g\wt B) \in \wt\Fh \times \wt H/\wt B \mid g\iv x \in \wt\Fb \}, \\
    X\flt &= \bigcup_{g \in \wt H}g(\wt\Fb), 
\end{align*}  
and define a map $\pi\flt : \wt X\flt \to X\flt$ by 
$(x, g\wt B) \mapsto x$. 
$\pi\flt$ is a proper map onto $X\flt$. Hence 
$X\flt$ is irreducible and closed in $\wt\Fh$. Since $\wt\Fh\reg \subset X\flt$, 
we have $X\flt = \wt\Fh$. 
\par
We consider the complex 
$K = (\pi\flt)_!\Ql$ on $X\flt = \wt\Fh$. 
We can define a similar map 
$\pi\flt_{\ad} ; \wt X\flt_{\ad} \to X\flt_{\ad} = \wt\Fh_{\ad}$, by replacing 
$\wt X\flt, X\flt, \pi\flt$ by $\wt X\flt_{\ad}, X\flt_{\ad}, \pi\flt_{\ad}$, 
respectively. We consider the complex $K_{\ad} = (\pi\flt_{\ad})_!\Ql$ on $\wt\Fh_{\ad}$. 
Let $\f : \wt\Fh \to \wt\Fh_{\ad}$ be the natural projection. 
By the base change theorem, we have $\f^*K_{\ad} \simeq K$. 
It is known by [X1, Prop. 6.6] that $K_{\ad}$ coincides with the intersection 
cohomology $\IC(\wt\Fh_{\ad}, \SL_{\ad})$ for a certain semisimple local system $\SL_{\ad}$ 
on the set of regular semisimple elements in $\wt\Fh_{\ad}$. 
Since $\f$ is smooth with connected fibre, $K$ is also expressed by 
an intersection cohomology on $\wt\Fh$. 
Since $K|_{Y\flt} \simeq (\psi\flt)_!\Ql$, we have  
$K \simeq \IC(\wt\Fh, (\psi\flt)_!\Ql)$. Thus by (3.2.3), the following result holds.
%%%%
%%%%
\begin{prop}  %%%%  Prop. 3.4
$(\pi\flt)_!\Ql[\dim \wt\Fh]$ is a semisimple perverse sheaf on $\wt\Fh$, 
equipped with $W_n$-action, and is decomposed as
\begin{equation*}
\tag{3.4.1}
(\pi\flt)_!\Ql[\dim \wt\Fh] \simeq \bigoplus_{\r \in W_n\wg}
                 \r \otimes \IC(\wt\Fh, \SL\flt_{\r})[\dim \wt\Fh],
\end{equation*}
where $\SL\flt_{\r}$ is a simple local system on $\wt\Fh\reg$ given in (3.2.3). 
\end{prop}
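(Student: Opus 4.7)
The plan is to deduce the decomposition from Xue's theorem [X1, Prop.~6.6] for the adjoint Lie algebra $\wt\Fh_{\ad}$, by transferring it along the natural projection $\f \colon \wt\Fh \to \wt\Fh_{\ad}$, whose fibres are the one-dimensional center $\wt\Fz$.

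First I would set up the parallel construction on the adjoint side: define $\pi\flt_{\ad} \colon \wt X\flt_{\ad} \to \wt\Fh_{\ad}$ in the same manner as $\pi\flt$, and put $K_{\ad} = (\pi\flt_{\ad})_!\Ql$ and $K = (\pi\flt)_!\Ql$. Since $\wt X\flt$ is the fibre product of $\wt X\flt_{\ad}$ and $\wt\Fh$ over $\wt\Fh_{\ad}$, proper base change yields $\f^*K_{\ad} \simeq K$. Xue's theorem identifies $K_{\ad}$, up to the appropriate shift, with $\IC(\wt\Fh_{\ad}, \SL_{\ad})$ for a certain semisimple local system $\SL_{\ad}$ on the regular semisimple locus of $\wt\Fh_{\ad}$. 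Because $\f$ is smooth with connected one-dimensional fibres, smooth pullback preserves the intersection cohomology property (up to a shift by the relative dimension), and we obtain $K[\dim \wt\Fh] \simeq \IC(\wt\Fh, \f^*\SL_{\ad})$.

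It remains to identify this local system and the $W_n$-structure concretely on $Y\flt = \wt\Fh\reg$. Every $x \in \wt\Fb\reg$ is regular semisimple, so there is a unique Borel subgroup containing it; consequently $(\pi\flt)\iv(Y\flt) = \wt Y\flt$ and $K|_{Y\flt} \simeq (\psi\flt)_!\Ql$. By (3.2.3), this local system decomposes $W_n$-equivariantly as $\bigoplus_{\r \in W_n\wg} \r \otimes \SL\flt_{\r}$. Since IC sheaves are determined by their restriction to an open dense subset, applying $\IC(\wt\Fh, -)[\dim \wt\Fh]$ term by term produces the desired decomposition (3.4.1), with the $W_n$-action on $(\pi\flt)_!\Ql[\dim \wt\Fh]$ being the unique extension of the Galois $W_n$-action on the covering $\wt Y\flt \to Y\flt$ described in 3.2. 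The main technical point, rather than a genuine obstacle, is verifying the smooth-pullback statement for IC sheaves and the uniqueness of the $W_n$-equivariant extension; both are standard once we know that $\pi\flt$ is proper (so $K$ is a semisimple complex) and that $\f$ is a trivial line bundle with fibre $\wt\Fz \simeq \Bk$.
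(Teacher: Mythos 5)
Your proposal is correct and follows essentially the same route as the paper: base change along $\f:\wt\Fh\to\wt\Fh_{\ad}$ to transfer Xue's identification $K_{\ad}\simeq\IC(\wt\Fh_{\ad},\SL_{\ad})$, smoothness of $\f$ to preserve the IC property, and then restriction to $Y\flt = \wt\Fh\reg$ to read off the $W_n$-equivariant decomposition from (3.2.3). One small slip: a regular semisimple element lies in a \emph{unique maximal torus}, not a unique Borel subalgebra (there are $|W_n|$ Borel subalgebras containing it); fortunately the consequence you actually use, $(\pi\flt)\iv(Y\flt)=\wt Y\flt$, follows immediately from the definitions without that claim.
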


\para{3.5.}
The set of nilpotent elements $\wt\Fh\nil$ in $\wt\Fh$ coincides with 
$\Fh\nil$. 
The subvariety $(\pi\flt)\iv(\Fh\nil)$ of $\wt X\flt$ can be identified with 
\begin{equation*}
\wt X\nil = \{ (x, gB) \in \Fh \times G/B \mid g\iv x \in \Fn\},
\end{equation*}
and the restriction of $\pi\flt$ on $\wt X\nil$ coincides with the map
$\pi_1 : \wt X\nil \to \Fh\nil$. Note that $\pi_1$ is surjective 
since $\bigcup_{g \in H}g(\Fn) = \Fh\nil$ by Lemma 2.9 (i). 
Since $(\pi\flt)_!\Ql$ has a natural action of $W_n$ by Proposition 3.4, 
$(\pi_1)_!\Ql$ has also an action of $W_n$.  The following result gives the 
Springer correspondence for $\Fs\Fp(V)$, which is essentially due to Xue [X1].

\begin{thm}  %%%%  Theorem 3.6
\begin{enumerate}
\item 
$(\pi_1)_!\Ql[\dim \Fh\nil]$ is a semisimple perverse sheaf on $\Fh\nil$, equipped with 
$W_n$-action, and is decomposed as 
\begin{equation*}
(\pi_1)_!\Ql[\dim \Fh\nil] \simeq \bigoplus_{\r \in W_n\wg}
             \r \otimes \IC(\ol\SO_{\r}, \Ql)[\dim \SO_{\r}],
\end{equation*}
where $\SO_{\r}$ is an $H$-orbit in $\Fh\nil$, and the map 
$\r \mapsto \SO_{\r}$ gives a bijective correspondence between $W_n\wg$ 
and the set of $H$-orbits in $\Fh\nil$. 
\item 
For each $\r \in W_n\wg$, we have 
\begin{equation*}
\IC(\wt\Fh, \SL\flt_{\r})|_{\Fh\nil} \simeq \IC(\ol\SO_{\r}, \Ql), 
                    \quad\text{ $($up to shift$)$.}
\end{equation*}
\end{enumerate}
\end{thm}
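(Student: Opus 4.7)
The plan is to deduce Theorem~3.6 from Proposition~3.4 by restricting the picture from $\wt\Fh$ down to the nilpotent variety $\Fh\nil = \wt\Fh\nil$, and then to import Xue's parametrization of $H$-orbits in $\Fh\nil$ by $W_n\wg$.

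First I would spell out the base-change step already indicated at the start of 3.5. Since $\wt\Fh\nil = \Fh\nil$, and since for $x \in \Fh\nil$ the condition $g\iv x \in \wt\Fb$ forces $g\iv x \in \Fn$, the preimage $(\pi\flt)\iv(\Fh\nil)$ coincides with $\wt X\nil$, and the restriction of $\pi\flt$ to this preimage is exactly $\pi_1$. Proper base change then gives
\begin{equation*}
(\pi\flt)_!\Ql \big|_{\Fh\nil} \simeq (\pi_1)_!\Ql,
\end{equation*}
and the $W_n$-action produced in Proposition~3.4 transports to a $W_n$-action on $(\pi_1)_!\Ql$.

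Next I would analyze each summand $\IC(\wt\Fh, \SL\flt_\r)|_{\Fh\nil}$. The key classical input is that the Springer map $\pi_1 : \wt X\nil \to \Fh\nil$ is semismall (for $\Fs\Fp(V)$ in characteristic $2$; this is the analogue, due to Spaltenstein and made explicit in Xue's work, of the usual semismallness of Springer resolutions). Combined with Proposition~3.4, this forces $(\pi_1)_!\Ql[\dim \Fh\nil]$ to be a semisimple perverse sheaf, and each restriction $\IC(\wt\Fh, \SL\flt_\r)|_{\Fh\nil}$ to be a shifted simple perverse sheaf of the form $\IC(\ol\SO_\r, \SE_\r)$ for some $H$-orbit $\SO_\r \subset \Fh\nil$ and local system $\SE_\r$ on it. Moreover, Xue's analysis of nilpotent orbits of $\Fs\Fp(V)$ in even characteristic shows that the stabilizer component groups act trivially on the relevant Springer fibre cohomology, so each $\SE_\r$ is $\Ql$; this proves (ii) and also reduces (i) to a counting statement.

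The main obstacle is identifying $\SO_\r$ explicitly and verifying that $\r \mapsto \SO_\r$ is a bijection with the (finite) set of $H$-orbits in $\Fh\nil$ listed in Remark~1.8. I would not attempt a direct construction: instead I would pull back along the smooth surjection $\f : \wt\Fh \to \wt\Fh_{\ad}$ of 3.3, which identifies nilpotent cones and preserves $H$-orbit structure. Xue~[X1] establishes the analogous Springer correspondence for $\wt\Fh_{\ad}\nil$ as a bijection with $W_n\wg$, with trivial local systems; pulling back along $\f$ and comparing with the decomposition (3.4.1) (using $\f^* K_{\ad} \simeq K$ from 3.3) gives the desired bijection, completing (i). Dimension counting via Lemma~2.9 together with the semismall property serves as the consistency check that the multiplicities in the decomposition of $(\pi_1)_!\Ql[\dim \Fh\nil]$ are indeed exhausted by $\{\IC(\ol\SO_\r,\Ql)\}_{\r \in W_n\wg}$.
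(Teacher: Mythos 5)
Your overall strategy matches the paper's: the paper's treatment of Theorem~3.6 is itself quite terse, amounting to the observation that the statement is Xue's result [X1, Prop.~6.4] for $(\wt\Fh_{\ad})\nil$ transported along the bijection $\f : \Fh\nil \to (\wt\Fh_{\ad})\nil$, together with the citation of Spaltenstein [Spa] for the connectedness of $Z_H(x)$ (hence only trivial local systems). Your proposal spells out the same route in more detail, adding the base-change identification of $(\pi\flt)\iv(\Fh\nil)$ with $\wt X\nil$ and an explicit appeal to semismallness; both of these are implicit in the paper's discussion in 3.3--3.5.

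One step as you phrase it is too strong: you claim that semismallness of $\pi_1$ combined with Proposition~3.4 ``forces'' each restriction $\IC(\wt\Fh, \SL\flt_\r)|_{\Fh\nil}$ to be a single shifted simple perverse sheaf $\IC(\ol\SO_\r, \SE_\r)$. Semismallness does give that $(\pi_1)_!\Ql[\dim\Fh\nil]$ is a semisimple perverse sheaf, but it does not, by itself, rule out that a given $\IC(\wt\Fh, \SL\flt_\r)|_{\Fh\nil}$ breaks into several shifted simple summands, or that different $\r$ produce overlapping pieces. Pinning down that the restriction is a single $\IC$ sheaf, and that the resulting map $\r \mapsto \SO_\r$ is injective (hence bijective by the orbit count of Remark~1.8), requires the finer dimension and parity estimates in Xue's argument. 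Since you ultimately defer to [X1] for the bijection, the overall argument is sound, but the word ``forces'' should be replaced by an explicit reference to Xue's analysis; on its own the perversity statement does not deliver that conclusion.
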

\par
In fact, Xue proved in [X1, Prop. 6.4] the corresponding formula for 
$(\wt\Fh_{\ad})\nil$, the nilpotent variety of $\wt\Fh_{\ad}$.
Since $\f\iv((\wt\Fh_{\ad})\nil) \simeq \wt\Fz \times \wt\Fh\nil$,  
his result can be translated to the formula in $\wt\Fh\nil$, which induces
our formula for $\Fh\nil$.
Note that $\f$ gives a bijective map $\Fh\nil \to (\wt\Fh_{\ad})\nil$, compatible 
with the action of $\wt H$ and $\wt H_{\ad}$.  
Also note that it is known by [Spa] that for any $x \in \Fh\nil$, $Z_H(x)$ 
is connected.  Hence only the constant sheaf $\Ql$ appears as a local system 
on $\SO_{\r}$ in the Springer correspondence. The explicit correspondence was 
described in [X2] by making use of (a generalization of) Lusztig's symbols. 

\para{3.7.}
Let $\SB = \wt H/\wt B$ be the flag variety of $\wt H$, 
and $W  = N_{\wt H}(\wt T)/\wt T \simeq W_n$ be the Weyl group of $\wt H$. 
For each $x \in \wt\Fh$, put 
$\SB_x = \{ g\wt B \in \SB \mid g\iv x \in \wt\Fb \}$. 
We consider the structure of $\SB_x$.  Let $x = s + z$ be the Jordan decomposition of 
$x$ in $\wt \Fh$, where $s$: semisimple and $z$: nilpotent such that $[s, z] = 0$. 
We assume that $s \in \wt\Ft$. Put $C = Z^0_{\wt H}(s)$ and $\Fc = \Lie C$.    
Then $z \in \Fc\nil$. $\wt B_C = \wt B \cap C$ is a Borel subgroup of $C$ containing 
$\wt T$, and we consider the flag variety $\SB^C = C/\wt B_C$, and its subvariety 
$\SB^C_z$.  For each $w \in W $, $\wt B_{C,w} = w\wt Bw\iv \cap C$ 
is a Borel subgroup of $C$ containing 
$\wt T$, and one can consider the flag variety $\SB^{C,w} = C/\wt B_{C,w}$ of $C$.   
Let $W_s$ be the stabilizer of $s$ in $W$.  Then $W_s$ is the Weyl group 
of $C$.  
Put 
\begin{align*}
\wh\SM  &= \{ g \in \wt H \mid g\iv s\in \wt\Fb \}, \\  
  \SM &= \{ g \in \wt H \mid g\iv s \in \wt\Ft \}.
\end{align*} 
Then $C \times \wt B$ acts on $\wh\SM$, 
by $(h, b) : y \mapsto hyb\iv$, 
and similarly $C \times \wt T$ acts on $\SM$. 
Put $\vG = C\backslash \wt H/\wt T$, $\wh\vG  = C \backslash \wt H/\wt B$.
The natural map $\vG \to \wh\vG$ gives a bijection $\vG \simeq \wh\vG$, 
and we can identify $\vG$ with a set of representatives in $W$ of the cosets 
$W_s\backslash W$. 
This implies that $\coprod_{w \in \vG}\SB^{C,w} \simeq \SB_s$ by 
$h(\wt B_{C,w}) \mapsto hwB$, and we have
\begin{equation*}
\tag{3.7.1}
\SB_x \simeq \coprod_{w \in \vG}\SB^{C,w}_z 
         \simeq \coprod_{w \in W_s\backslash W}\SB^C_z.
\end{equation*} 
\par
Recall that $K = (\pi\flt)_!\Ql$ is a complex with $W$-action by Proposition 3.4.  
Hence for any $x \in \wt\Fh$, 
$\SH^i_xK \simeq H^i(\SB_x, \Ql)$ has a structure of $W$-module 
(Springer representation of $W$). For $C$, we can also 
consider $\pi_C : C \times^{\wt B_C}\wt\Fb_C \mapsto \Fc$, similarly to $\pi\flt$, 
where $\wt\Fb_C = \Lie \wt B_C$.  Since 
$\Fc$ has regular semisimple elements, the previous discussion can be applied, and 
$(\pi_C)_!\Ql$ turns out to be a complex 
with $W_s$-action. 
It follows that $H^i(\SB^C_z, \Ql)$ is a $W_s$-module (Springer representation of $W_s$). 
Then (3.7.1) can be interpreted by Springer representations as follows. 
%%%%
%%%%
\begin{thm}  %%%%   Theorem  3.8
$H^i(\SB_x, \Ql) \simeq \Ind_{W_s}^WH^i(\SB^C_z, \Ql)$ as $W$-modules. 
\end{thm}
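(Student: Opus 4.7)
The plan is to combine the geometric decomposition (3.7.1) with a transverse-slice analysis at $s$ in order to identify the Springer $W$-action on $H^i(\SB_x, \Ql)$ with an induction of the Springer $W_s$-action on $H^i(\SB^C_z, \Ql)$.

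As a $\Ql$-vector space, (3.7.1) immediately gives
\begin{equation*}
H^i(\SB_x, \Ql) \simeq \bigoplus_{w \in W_s \backslash W} H^i(\SB^{C,w}_z, \Ql)
                  \simeq \bigoplus_{w \in W_s \backslash W} H^i(\SB^C_z, \Ql),
\end{equation*}
and the right-hand side carries the natural structure of $\Ind_{W_s}^W H^i(\SB^C_z, \Ql)$: $W_s$ acts on the summand indexed by the trivial coset via the Springer $W_s$-action coming from $(\pi_C)_!\Ql$, and $W$ permutes cosets. What remains is to check that this induced structure matches the Springer $W$-action on $\SH^i_x K$.

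For that, take the affine subspace $\Sigma = s + \Fc$ as a transverse slice to the $\wt H$-orbit of $s$ at $s$. Set $\Fc^* = \{\, s' \in \Fc\reg : s + s' \in \wt\Fh\reg\,\}$, an open dense subset of $\Fc$. For $s' \in \Fc^*$ one has $Z_{\wt H}(s + s') = \wt T = Z_C(s')$, so the fibre of $\psi\flt$ over $s + s'$ is a $W$-torsor which decomposes into its $W_s$-orbits as $\coprod_{w \in W_s \backslash W} W_s\cdot w$, each $W_s \cdot w$ being canonically identified (via a translate) with the $W_s$-torsor fibre of $\psi_C$ over $s'$; this uses the bijection $\vG \simeq W_s \backslash W$ of 3.7 together with the factorization of the covering through $C$. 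Consequently, one obtains a $W$-equivariant isomorphism of local systems on $s + \Fc^*$,
\begin{equation*}
(\psi\flt)_!\Ql|_{s + \Fc^*} \simeq \Ind_{W_s}^W \bigl((\pi_C)_!\Ql|_{\Fc^*}\bigr).
\end{equation*}
Extending by intersection cohomology via Proposition 3.4 and its analogue for $C$, and exploiting transversality of $\Sigma$ to identify stalks of $K$ at $x$ with stalks of $K|_\Sigma$ at $x$ (up to the usual shift), we get a $W$-equivariant identification $\SH^i_x K \simeq \Ind_{W_s}^W \SH^i_z (\pi_C)_!\Ql$. Combined with the Springer identifications $\SH^i_x K \simeq H^i(\SB_x, \Ql)$ and $\SH^i_z (\pi_C)_!\Ql \simeq H^i(\SB^C_z, \Ql)$, this proves the theorem.

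The main obstacle is making the transverse-slice analysis rigorous: specifically, establishing the $W$-equivariant descent of $(\psi\flt)_!\Ql$ on $s + \Fc^*$, and verifying that $K|_\Sigma$ coincides, up to shift, with the induced intersection-cohomology complex built from $(\pi_C)_!\Ql$. Once this local-system identification is in place, the proof is completed by functoriality of $\IC$ and the compatibility of the Springer construction for $C$ with that for $\wt H$.
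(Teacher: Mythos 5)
Your plan correctly identifies the two pillars: the set-theoretic decomposition (3.7.1) and the passage from the regular semisimple locus to a neighbourhood of $s+z$ so as to match up the Springer actions. However, the key step — "Extending by intersection cohomology via Proposition 3.4 \dots and exploiting transversality of $\Sigma$" — hides the genuine content of the proof and is not yet an argument.

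The difficulty is that restriction of a perverse (or IC) complex to a subvariety $\Sigma$ does not in general commute with taking $\IC$, so the $W$-equivariant local-system isomorphism you obtain on $s + \Fc^*$ cannot simply be "extended by IC" to give an isomorphism of complexes over $s + \Fc$. You invoke transversality of $\Sigma = s + \Fc$, but you do not say to what stratification $\Sigma$ should be transverse, nor why a global transverse-slice statement would be available here (the slice property is at best a local statement near $s$, and $s$ is only subregular semisimple). The paper instead follows Lusztig ([L2, Thm.~8.5] and Lemma~8.6 there): it produces a $C$-stable open dense $\SU \subset \Fc$ \emph{containing $\Fc\nil$} and satisfying the compatibility conditions (3.8.1)(iii)--(iv), which guarantee that a flag $g\wt B$ with $g^{-1}(s+x) \in \wt\Fb$, $x \in \SU$, already determines a well-defined double coset in $\wh\vG = C\backslash\wt H/\wt B$. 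This yields a decomposition of the \emph{varieties} $\wt X\flt_{\SU} = \coprod_{\wh\g}\wt X\flt_{\SU,\wh\g}$ into open-and-closed pieces, each identified with the analogous variety $\wt X_{\SU,\g}$ over $C$ (3.8.2)--(3.8.3), so that the decomposition (3.8.9) of $K|_{s+\SU}$ is produced directly by proper base change rather than by any IC-extension argument. That variety-level step is exactly what your transversality claim is meant to replace, and it is the step your sketch does not actually supply; also note that the decomposition only holds on $s + \SU$, not on all of $s+\Fc$, so the neighbourhood must be chosen with care (fortunately $\Fc\nil \subset \SU$, so the stalk at $s+z$ is still reached).

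In short: same strategy as the paper, with the same starting point (3.7.1) and the same endgame (take stalks at $s+z$), but the middle step — passing from the local system on the regular semisimple locus to the complex on the whole slice — is asserted rather than proved. To repair it, either verify the required transversality precisely and invoke the restriction theorem for IC sheaves along a transverse slice in a neighbourhood of $s$, or, as in the paper, appeal to Lusztig's construction of $\SU$ and carry out the open-and-closed decomposition of $\wt X\flt_{\SU}$.
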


\begin{proof}
This formula corresponds to the special case of Lusztig's character formula 
for generalized Green functions [L2, Thm. 8.5]. Concerning the proof,  
essentially the same argument can be applied to our setting (but ignoring the 
$\BF_q$-structure). 
We give an outline of the proof below for the sake of completeness. 
\par
We fix $s \in \wt\Ft, z \in \Fn$ such that $[s,z] = 0$. 
For $x \in \wt\Fh$, let $x_s$ be its semisimple part. 
As in [L2, Lemma 8.6], one can find an open dense subset $\SU$ of 
$\Fc = \Lie Z^0_{\wt H}(s)$
satisfying the following properties;
\par\medskip\noindent
(3.8.1) \ $\SU$ contains 0, and 
\par\medskip
(i) $g(\SU) = \SU$ for any $g \in C$, 
\par
(ii) $x \in \SU$ if and only if $x_s \in \SU$, 
\par
(iii) If $x \in \SU, g \in C$, $g\iv(s + x) \in \wt\Fb$, then 
$g\iv x_s \in \wt\Fb$ and $g\iv s \in \wt\Fb$. 
\par
(iv) If $x \in \SU, g \in C$, $g\iv(s + x) \in \wt\Ft$, then 
$g\iv x_s \in \wt\Ft$ and $g\iv s \in \wt\Ft$.    
\par
\medskip
Note that $\SU$ contains $\Fc\nil$ by (ii).
We define a subvariety $\wt X\flt_{\SU}$ of $\wt X\flt$ by 
\begin{equation*}
\wt X\flt_{\SU} = \{ s + x, g\wt B) \in \wt X\flt \mid x \in \SU \}. 
\end{equation*}
Let $\wh\g \in \wh\vG$ be a double coset in $\wt H$, and consider 
the variety
\begin{align*}
\wt X\flt_{\SU,\wh\g} &= \{ (s + x, g\wt B) \in \wt X\flt_{\SU} \mid g \in \wh\g \} \\
                       &= \{ (s + x, g\wt B) \in (s + \SU) \times \wt H/\wt B
                              \mid g \in \wh\g, g\iv(s + x) \in \wt\Fb\}.  
\end{align*}  
Then one can show that 
\begin{equation*}
\tag{3.8.2}
\wt X\flt_{\SU} = \coprod_{\wh\g \in \wh\vG}\wt X\flt_{\SU,\wh\g},
\end{equation*}
where $\wt X\flt_{\SU,\wh\g}$ is an open and closed subset of 
$\wt X\flt_{\SU}$ for $\wh\g \in \wh\vG$. 
\par
Let $\g \in \vG$ be the double coset in $\wt H$ corresponding to 
$\wh\g \in \wh\vG$.  Take $g_{\g} \in \g$ and 
put $B_{\g} = g_{\g}\wt Bg_{\g}\iv \cap C$. By definition, $s \in g_{\g}(\wt\Ft)$, and 
$B_{\g}$ is a Borel subgroup of $C$ containing a maximal torus 
$T_{\g} = \wt T$.
By replacing $\wt H, \wt B, \wt T$ by $C = Z_{\wt H}^0(s), B_{\g}, T_{\g}$, 
we can define $\psi_{\g} : \wt Y_{\g} \to Y_{\g} = \Fc\reg$, 
$\pi_{\g} : \wt X_{\g} \to X_{\g} = \Fc$
corresponding to $\psi\flt : \wt Y\flt \to Y\flt = \wt\Ft\reg$, 
$\pi\flt : \wt X\flt \to X\flt = \wt\Ft$.   
Put
\begin{equation*}
\wt X_{\SU,\g} = \pi_{\g}\iv(\SU) \subset \wt X_{\g}.
\end{equation*}
By using th property (iv) in (3.8.1), one can show that 
\par\medskip\noindent
(3.8.3) \ The map $(x, zB_{\g}) \mapsto (s + x, zg_{\g}\wt B)$ gives an isomorphism 
$\wt X_{\SU,\g} \isom \wt X\flt_{\SU, \wh\g}$. 
\par\medskip
Replacing $\wt X\flt$ by $\wt Y\flt$, a similar discussion works.
Put 
\begin{align*}
\wt Y\flt_{\SU, \g} &= \{ (x, g\wt T) \in \wt Y\flt_{\SU} \mid g \in \g \}, \\
\wt Y_{\SU,\g}      &= (\psi_{\g})\iv(\SU) \subset \wt Y_{\g},
\end{align*}
where 
$\wt Y\flt_{\SU} = (\psi\flt)\iv(\wt\Ft\reg \cap (s + \SU))$. 
Let $\g_0$ be the orbit in $\vG$ corresponding to $W_s \subset W$.
Now $W$ acts on $\wt Y\flt_{\SU}$ by $w : (x, g\wt T) \mapsto (x, gw\iv \wt T)$. 
Then $W$ permutes the subsets $\wt Y\flt_{\SU,\g}$, which corresponds to the 
right action of $W$ on $\vG$.  In particular, the stabilizer of $\wt Y\flt_{\SU,\g_0}$
in $W$ coincides with $W_s$.  
As an analogue of (3.8.2), we have
\begin{equation*}
\tag{3.8.4}
\wt Y\flt_{\SU} = \coprod_{\g \in \vG}\wt Y\flt_{\SU, \g} 
                    \simeq \coprod_{w \in W/W_s}w(\wt Y\flt_{\SU,\g_0}),
\end{equation*}
where $\wt Y\flt_{\SU,\g}$ is a non-empty, open and closed subset 
for each $\g \in \vG$. 
\par
Combining it with (3.8.3), the following holds.
\par\medskip\noindent
(3.8.5) \ The map $\pi\flt : \wt X\flt_{\SU,\wh\g} \to \wt\Fh$
is a proper map onto $s + \SU$. $\wt X\flt_{\SU,\wh\g}$ is irreducible.
$\psi\flt(\wt Y\flt_{\SU,\g}) = (s + \SU) \cap \wt\Fh\reg$, 
and  
$\wt Y\flt_{\SU,\g} = (\pi\flt)\iv\bigl((s + \SU) \cap \wt\Fh\reg\bigr) 
                           \cap \wt X\flt_{\SU,\g}$.
\par\medskip
Let  $\SV = s + (\Fc\reg \cap \SU)$, and define 
\begin{align*}
\wt Y\flt_{\SV} &= \{ (x, g\wt T) \in \wt Y\flt \mid x \in \SV \}, \\
(\wt Y_{\g})_{-s + \SV} &= \{ (x, gT_{\g}) \in \wt Y_{\g} \mid s + x \in \SV \}.  
\end{align*}
Then we have the following commutative diagram
\begin{equation*}
\tag{3.8.6}
\begin{CD}
\wt Y\flt_{\SV} @<\a<<  \coprod_{\g \in \vG}(\wt Y_{\g})_{-s + \SV} \\
     @VVV                        @VVV   \\
      \SV        @<\b<<      -s + \SV,
\end{CD}
\end{equation*}
where $\a: (x, zT_{\g}) \mapsto (s + x, zg_{\g}\wt T)$, 
$\b: x \mapsto s + x$, and the vertical maps are projections to the first factor.
It is shown that $\a$ turns out to be an isomorphism. $\b$ is also an isomorphism.  
\par
$\wt Y\flt_{\SV}$ is invariant under the action of $W$ on $\wt Y\flt_{\SU}$. 
In view of (3.8.4), it is decomposed as
\begin{equation*}
\tag{3.8.7}
\wt Y\flt_{\SV} = \coprod_{\g \in \vG}\wt Y\flt_{\SV, \g} 
                      \simeq \coprod_{w \in W/W_s}w(\wt Y\flt_{\SV,\g_0}), 
\end{equation*}
where $\wt Y\flt_{\SV,\g} = \{ (x, g\wt T) \in \wt Y\flt \mid x \in \SV, g \in \vG\}$. 
$\a$ gives an isomorphism $(\wt Y_{\g})_{-s + \SV} \isom \wt Y\flt_{\SV, \g}$ for 
each $\g \in \vG$.  $W_s$ acts naturally on $(\wt Y_{\g_0})_{-s + \SV}$, 
and the map $(\wt Y_{\g_0})_{-s + \SV} \to \wt Y\flt_{\SV, \g_0}$ is $W_s$-equivariant.
\par
Now (3.8.6) and (3.8.7) imply that 
\begin{equation*}
\tag{3.8.8}
\b^*\bigl((\psi\flt)_!\Ql\bigr)|_{\SV} \simeq 
                 \bigoplus_{\g \in \vG}(\psi_{\g})_!\Ql|_{-s + \SV}
         \simeq \Ind_{W_s}^W \left((\psi_{\g_0})_!\Ql|_{-s + \SV}\right)
\end{equation*}
as local systems equipped with $W$-action.  Here we consider $(\psi_{\g_0})_!\Ql$ as 
a local system with $W_s$-action.
Put $K = (\pi\flt)_!\Ql$, and $K_{\g} = (\pi_{\g})_!\Ql$. Since 
$K|_{Y\flt} = (\psi\flt)_!\Ql$, $K_{\g}|_{Y_{\g}} \simeq (\psi_{\g})_!\Ql$,  
(3.8.8) can be rewritten as 
\begin{equation*}
\b^*(K|_{\SV}) \simeq \bigoplus_{\g \in \vG}K_{\g}|_{-s + \SV}.
\end{equation*}
Note that $\SV$ is an open subset of $s + \SU$.  Then the above isomorphism can be
extended to an isomorphism 
\begin{equation*}
\tag{3.8.9}
\b^*(K|_{s + \SU}) \simeq \bigoplus_{\g \in \vG}(K_{\g}|_{\SU}). 
\end{equation*}  
$K|_{s + \SU}$ has a natural $W$-action induced from the $W$-action on $(\psi\flt)_!\Ql$, 
which coincides with the $W$-action restricted from the $W$-action on $K$. Similarly, 
the $W_s$-action on $K_{\g_0}|_{\SU}$ induced from that on $(\psi_{\g_0})_!\Ql$ coincides with 
the $W_s$-action restricted from that on $K_{\g_0}$.  Thus (3.8.9) can be written, as
complexes with $W$-action, 
\begin{equation*}
\b^*(K|_{s + \SU}) \simeq \Ind_{W_s}^W (K_{\g_0}|_{\SU}).
\end{equation*}
Now taking the stalk at $s + z \in s + \SU$ of the $i$-th cohomology sheaf on both sides, 
we have an isomorphism of $W$-modules. 
\begin{equation*}
\SH^i_{s + z}K \simeq \Ind_{W_s}^W (\SH^i_z K_{\g_0}).
\end{equation*} 
The theorem follows from this.
\end{proof}

\para{3.9.}
Recall that $Y$ is an open dense subset of $\Fh$.  We regard 
$Y$ as a locally closed subset of $\wt\Fh$.  We consider the 
restriction $(\pi\flt)_!\Ql|_Y$ of $(\pi\flt)_!\Ql$ on $Y$, which 
inherits the $W$-module structure from $(\pi\flt)_!\Ql$.  
By applying Theorem 3.8, we shall prove the following result.

\begin{prop}  %%%%   Prop. 3.10
$(\pi\flt)_!\Ql|_Y$ is isomorphic to $\psi_!\Ql$ as complexes 
equipped with $W$-action. 
In particular, for $\r \in (S_k \times S_{n-k})\wg$, we have
\begin{equation*}
\tag{3.10.1}
\IC(\wt\Fh, \SL\flt_{\wh\r})|_Y \simeq \IC(Y_k, \SL_{\r})[d_k - d_n].
\end{equation*}
\end{prop}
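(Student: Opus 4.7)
The argument proceeds in three steps.

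\emph{Step 1 (identification as plain complexes).} I first claim $(\pi^\flat)\iv(Y) = \wt Y$, with $\pi^\flat|_{\wt Y} = \psi$. Given $(x, g\wt B) \in \wt X^\flat$ with $x \in Y$, since $\wt H = H \cdot \wt Z$ and $\wt Z \subset \wt B$, I may replace $g$ by some $h \in H$ with $h\wt B = g\wt B$; then $h\iv x \in \Fh \cap \wt\Fb = \Fb$. Since $Y$ is $H$-stable, Lemma 2.5 forces $h\iv x \in Y \cap \Fb = \FN_{n,\sr}$, so $(x, hB) \in \wt Y$; the reverse inclusion is immediate. Proper base change along $Y \hra \wt\Fh$ yields $(\pi^\flat)_!\Ql|_Y \simeq \psi_!\Ql$ as complexes on $Y$.

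\emph{Step 2 (matching of $W_n$-actions).} Both complexes are (shifts of) semisimple perverse sheaves, and each simple constituent has one of the strata $Y_k^0$ as open dense subset of its support, so it suffices to check equivariance at a generic stalk over each $Y_k^0$. Pick $x = s + z \in Y_k^0$ with $s \in \Ft_{\sr}$ and $z \in \FD_k^0$. Then $C := Z^0_{\wt H}(s)$ is, modulo a central torus, $SL_2^n$, with Weyl group $W_s = (\BZ/2\BZ)^n$; as the first $k$ components of $z$ are regular nilpotent in $\Fs\Fl_2$ and the remaining $n-k$ vanish, $\SB^C_z \simeq (\mathrm{pt})^k \times \BP_1^{n-k}$. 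Theorem 3.8 then gives
\begin{equation*}
\SH^\bullet_x\bigl((\pi^\flat)_!\Ql\bigr) \simeq \Ind_{W_s}^{W_n} H^\bullet(\BP_1^{n-k}),
\end{equation*}
and induction in stages through $W_k \times W_{n-k}$, combined with the standard identification of $H^\bullet(\BP_1^{n-k})$ with the regular representation of $(\BZ/2\BZ)^{n-k}$, rewrites this as $\Ind_{W_k \times W_{n-k}}^{W_n}\bigl(\Ind_{(\BZ/2\BZ)^k}^{W_k}(\mathrm{triv}) \boxtimes \Ql[W_{n-k}]\bigr)$. From the other side, formula (2.10.5) computes the same stalk as $\bigoplus_\r \Ind_{W_k \times W_{n-k}}^{W_n}(H^\bullet(\BP_1^{n-k}) \otimes \r) \otimes (\SL_\r)_x$; collecting over $\r$ via $\bigoplus_\r \r \otimes (\SL_\r)_x \simeq \Ql[\SW_I]$ (the regular representation of $\SW_I = S_k \times S_{n-k}$, arising from the $\SW_I$-Galois covering (2.7.5)) and a direct character comparison produces the same $W_n$-module. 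Hence the two actions coincide stalkwise, and therefore globally on $Y$.

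\emph{Step 3 (deducing (3.10.1)).} Combining the $W_n$-equivariant iso $\psi_!\Ql \simeq (\pi^\flat)_!\Ql|_Y$ with the decompositions of Propositions 3.4 and 2.13, and matching $\wh\r$-isotypic components via the bijection (2.12.1) $\r \longleftrightarrow \wh\r$, one obtains
\begin{equation*}
\IC(\wt\Fh, \SL^\flat_{\wh\r})|_Y \simeq \IC(Y_k, \SL_\r)[d_k - d_n]
\end{equation*}
for each $\r \in (S_k \times S_{n-k})\wg$, which is exactly (3.10.1).

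\emph{Main obstacle.} The crux is Step 2: the two $W_n$-actions are built from inequivalent geometric input --- a finite Galois covering on $\wt\Fh\reg$ for Proposition 3.4 versus an inductive stratified construction on $Y$ for Proposition 2.13 --- so their coincidence on any common stalk is the essential technical content. Theorem 3.8 is precisely the bridge that converts the first global construction into an explicit induced representation, which can then be matched with the local stratum-wise formula (2.10.5).
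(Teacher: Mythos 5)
Your proposal follows essentially the same strategy as the paper's proof: identify $\wt Y$ with $(\pi\flt)\iv(Y)$ via base change for the plain isomorphism, then use Theorem 3.8 to compute the stalk $W$-module of $(\pi\flt)_!\Ql$ at a subregular point $x = s + z \in Y_k^0$ and compare it with the formula (2.10.5) for $(\psi_k)_!\Ql$. Step 1 is a correct elaboration of the paper's one-line invocation of base change, and the reduction from $\wt H/\wt B$ to $H/B$ via $\wt\Fb \cap \Fh = \Fb$ and Lemma 2.5 is exactly what is needed. Step 3 is the same formal read-off of the two decompositions.

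There is, however, an imprecision in Step 2 that you should repair. You assert that it ``suffices to check equivariance at a generic stalk over each $Y_k^0$'' and, after an \emph{ungraded} character comparison that collects over $\r$, conclude ``the two actions coincide stalkwise, and therefore globally.'' Neither implication is automatic. First, having two $W$-actions on isomorphic semisimple complexes $K_1 \simeq \bigoplus_A V_A^1\otimes A$ and $K_2 \simeq \bigoplus_A V_A^2 \otimes A$ with the same total $W$-module at each stalk does not in general force $V_A^1 \simeq V_A^2$: one must keep track either of the cohomological grading or of the monodromy ($\pi_1$-action distinguishing the local systems $\SL_\r$), and your collection $\bigoplus_\r \r \otimes (\SL_\r)_x \simeq \Ql[\SW_I]$ discards precisely that finer information. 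Second, even with a stratum-wise match one needs an explicit reason that the $W$-action on the whole complex is determined by its restrictions to strata; the paper supplies this by tracking the degree $-2(n-k)$ ``leading term'' in (3.10.4)--(3.10.5), observing that the comparison holds only up to a remainder $\SN_x$ of lower degree (see (3.10.6)), and then invoking the inductive mechanism in the proof of Proposition 2.14, which shows the $W$-structure is completely determined by these leading terms stratum by stratum. Your argument should be recast in this graded form: keep the degree on $H^\bullet(\SB^C_z)$, isolate the top-degree contribution $\bigoplus_\r \wh\r \otimes \Ql^{\dim\r}[-2(n-k)]$, match it against (2.13.2), and appeal to the Proposition 2.14 induction for the passage from local to global. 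With those corrections, the outline becomes a faithful reproduction of the paper's proof.
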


\begin{proof}
The isomorphism $(\pi\flt)_!\Ql|_Y \simeq \psi_!\Ql$ follows from 
the base change theorem.  We show that this isomorphism is compatible 
with $W$-action.  
For $0 \le k \le n$, we can consider $Y_k^0$ as a locally closed 
subvariety of $\wt\Fh$. 
By the base change theorem, we have $(\pi\flt)_!\Ql|_{Y_k^0} \simeq (\psi_k)_!\Ql$. 
$(\pi\flt)_!\Ql|_{Y_k^0}$ has a $W $-action inherited from that on $(\pi\flt)_!\Ql$. 
We compare this $W$-action with that of $(\psi_k)_!\Ql$.  For this, we investigate  
the $W$-module structure of the stalk at $x \in Y^0_k$ of both cohomology sheaves.   
We apply Theorem 3.8 in the  case where $x = s + z \in \Ft_{sr} + \FD^0_k$.
In this case, $C = Z_{\wt H}^0(s) \simeq \wt Z (SL_2 \times \cdots \times SL_2)$
($n$-factors) under the notation in 2.6, 
and $\Fc \simeq \wt\Fz + (\Fs\Fl_2 \oplus \cdots \oplus \Fs\Fl_2)$.
$z \in \Fc\nil$ is written as $z = \sum_{i=1}^nz_i$, where $z_i \in (\Fs\Fl_2)\nil$  
is such that $z_i \ne 0$ for $1 \le i \le k$ and $z_i = 0$ for $i > k$. 
Thus $\SB^C \simeq \BP_1^n$ and $\SB^C_z \simeq \BP_1^{n-k}$. 
We have
\begin{equation*}
\tag{3.10.2}
H^{\bullet}(\BP_1^{n - k}) = (\Ql[-2] \oplus \Ql)^{\otimes (n-k)}
  \simeq \bigoplus_{J \subset [k+1,n]}\Ql[-2|J|].
\end{equation*}
Since $W_s$ is the Weyl group of $C$, we have 
$W_s \simeq (\BZ/2\BZ)^n$, and the Springer 
representation of $W_s$ on $H^{\bullet}(\SB^C_z)$ is given by 
$\vf_J$ on each factor $\Ql[-2|J|]$, where $\vf_J$ is a one-dimensional 
representation of $(\BZ/2\BZ)^n$ such that the $i$-th factor $\BZ/2\BZ$ acts 
non-trivially for $i \in J$, and acts trivially for $i \in [1,n] - J$.  
It follows, for $|J| = k'$, that 
\begin{equation*}
\tag{3.10.3}
\Ind_{W_s}^W\vf_J = \bigoplus_{\r \in (S_{n - k'}\times S_{k'})\wg}
                         \wh\r \otimes \Ql^{\dim \r}.
\end{equation*}
In particular, by applying $J = [k+1, n]$, we have
\begin{equation*}
\tag{3.10.4}
((\pi\flt)_!\Ql)_x \simeq H^{\bullet}(\SB_x, \Ql) \simeq 
             \bigoplus_{\r \in (S_k \times S_{n-k})\wg}
                 \wh\r \otimes \Ql^{\dim \r}[-2(n-k)] \oplus \SN_x, 
\end{equation*}
where $\SN_x$ is a sum of complexes $\Ql[-2i]$ with $i < n-k$.  
\par
On the other hand, by taking the stalk at $x \in \Fh$ in both sides of (2.13.2),
and by taking into account the action of $W$ given in (2.10.5), 
we have 
\begin{equation*}
\tag{3.10.5}
((\psi_k)_!\Ql)_x \simeq \bigoplus_{\r \in (S_k \times S_{n-k})\wg}
                            \wh\r \otimes \Ql^{\dim \r}[-2(n-k)] + \SN_x',
\end{equation*}
where $\SN_x'$ is a sum of complexes of the form $\Ql[-2i]$ with $i < n - k$. 
By comparing (3.10.4) and (3.10.5), we obtain the following.
\par\medskip\noindent
(3.10.6) \ The $W$-module structure of $(\psi_k)_!\Ql$ coincides with 
the $W$-module structure of $(\pi\flt)_!\Ql|_{Y^0_k}$, up to a sum 
of $\SL[-2i]$ with $i <  n - k$ for various local systems $\SL$ on $Y^0_k$.   
\par\medskip
Now the proof of Proposition 2.14 shows that the $W$-module structure  of $\psi_!\Ql$ 
is completely determined from the $W$-module structure of $(\psi_k)_!\Ql$ ignoring the
part $\SN_k$.  A similar discussion holds also for $(\pi\flt)_!\Ql$, the $W$-module 
structure of $(\pi\flt)_!\Ql|_Y$ is completely determined from that of 
$(\pi\flt)_!\Ql|_{Y_k^0}$ ignoring the part $\SL[-2i]$ with $i < n-k$. This proves 
the first assertion of the proposition. (3.10.1) then follows by comparing 
(2.14.1) and (3.4.1).  The proposition is proved.   
\end{proof}
%%%%
%%%%
\par\bigskip
\section{ The variety of semisimple orbits }

\para{4.1.}
Recall the map $\pi : \wt X \to X = \Fh$ as in 2.4.
In this section, we consider $\SB$ as $H/B$ (not as $\wt H/\wt B$), 
and for each $x \in \Fh$ 
put $\SB_x = \{ gB \in H/B \mid g\iv x \in \Fb \}$.
Then $\SB_x \simeq \pi\iv(x)$. 
Let $\SO_x$ be the $H$-orbit of $x$ in $\Fh$.  
Put $\nu_H = \dim U$.  We have the following.
\begin{equation*}
\tag{4.1.1}
\dim \SB_x \le \nu_H - \frac{1}{2}\dim \SO_x 
         = \frac{1}{2}(\dim Z_H(x) - \dim T).
\end{equation*}   
\par
In fact, this formula was proved by Xue [X1, Lemma 6.1] in the case 
where $x \in \Fh$ is nilpotent. 
Although he assumes that the group is of adjont type, the proof 
works for $H$ and $\Fh$.  
In the general case, write $x = s + z$, where $s$: semisimple, 
$z$: nilpotent such that $[s,z] = 0$.  Put $C  = Z^0_H(s)$, and 
consider the variety $\SB^C_z$ defined similarly to $\SB_x$, where 
$\SB^C$ is the flag variety for $C$, and 
$z \in \Lie C$ is nilpotent. By a similar discussion as in (3.7.1), 
we have $\dim \SB_x = \dim \SB^C_z$. 
Then (4.1.1) follows from the corresponding formula for the nilpotent case.

\para{4.2.}
In this section we put $W = N_H(T)/T \simeq W_n$. 
For $x \in \Fh$, let $x = x_s + x_n$ be the Jordan decomposition of $x$, 
where $x_s$ is semisimple, $x_n$ is nilpotent such that $[x_s, x_n] = 0$. 
By Lemma 2.3, the set of semisimple orbits in $\Fh$ is 
in bijection with $\Xi = \Ft/S_n$, 
under the natural action of $S_n \subset W_n = W $ on $\Ft$. 
Since $\Xi$ is identified with the set of closed orbits in $\Fh$, 
the Steinberg map $\w : \Fh \to \Xi$ is defined by associating the 
$H$-orbit of $x_s$ for $x$ (see [X1, 6.4]). 
\par   
For a semisimple element $s \in \Fh$, let 
$V = V_1\oplus V_2 \oplus \cdots \oplus V_a$ be the 
eigenspace decomposition of $s$, with $\dim V_i = n_i$ : even. 
Then $Z_H(s) \simeq Sp_{n_1} \times \cdots \times Sp_{n_a}$. 
Put $\Bn = (n_1, \dots, n_a)$. Then $\Bn$ determines the structure of 
$Z_H(s)$ up to isomorphism, which we denote by $C(\Bn)$.  Let $\SO$ 
be a nilpotent orbit in $\Lie C(\Bn)$. We define 
\begin{equation*}
X_{\Bn, \SO} = \{ x \in \Fh \mid x = x_s + x_n,  
                       Z_H(x_s) \simeq C(\Bn), x_n \in \SO \}.
\end{equation*}  
Then $X_{\Bn,\SO} = X_{\Bn',\SO'}$ if they are not disjoint, and 
$\Fh = \bigcup_{\Bn, \SO}X_{\Bn,\SO}$ gives a partition of $\Fh$. 
By considering the Steinberg map, for $x \in X_{\Bn,\SO}$, we have 
\begin{equation*}
\dim X_{\Bn,\SO} = \dim \SO_x + \dim \{ s \in \Ft \mid Z_H(s) \simeq C(\Bn) \}.
\end{equation*}
In particular, for $x \in X_{\Bn,\SO}$, 
\begin{equation*}
\tag{4.2.1}
\dim X_{\Bn,\SO} \le \dim \SO_x + \dim \Ft. 
\end{equation*}
We have a lemma.
\begin{lem}  %%%%  Lemma 4.3
The map $\pi : \wt X \to X$ is semismall. 
\end{lem}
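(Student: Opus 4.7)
The plan is to verify the semismall inequality directly by combining the two dimension bounds already in hand, namely (4.1.1) and (4.2.1). First I would note that $\wt X = H\times^B\Fb$ has dimension $\dim H = \dim\Fh = \dim X$, so $\pi$ is a generically finite proper surjection, and the semismall condition amounts to showing that for each stratum $X_{\Bn,\SO}$ and each $x\in X_{\Bn,\SO}$,
\begin{equation*}
\dim\pi\iv(x) \le \tfrac{1}{2}\bigl(\dim\Fh - \dim X_{\Bn,\SO}\bigr).
\end{equation*}
Since $\pi\iv(x)\simeq\SB_x$, I take (4.1.1) to get $\dim\SB_x\le\tfrac{1}{2}(\dim Z_H(x)-\dim T)$.

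Next I would rewrite the right-hand side using the partition $\Fh=\bigcup X_{\Bn,\SO}$. By (4.2.1), $\dim X_{\Bn,\SO}\le\dim\SO_x+\dim\Ft$, and since $\dim\SO_x=\dim H-\dim Z_H(x)$ and $\dim\Ft=\dim T$, this gives
\begin{equation*}
\dim\Fh - \dim X_{\Bn,\SO} \ge \dim H - \dim\SO_x - \dim T = \dim Z_H(x) - \dim T.
\end{equation*}
Combining the two bounds yields the required inequality at once, so $\pi$ is semismall with respect to the stratification $\{X_{\Bn,\SO}\}$.

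There is no real obstacle here: the stratification has been set up in 4.2 precisely so that (4.1.1) and (4.2.1) line up cleanly, the $\dim T$ contribution from the fibre bound canceling the $\dim\Ft$ contribution to the codimension of the stratum. The only minor point I would double-check is that (4.1.1), stated for arbitrary $x\in\Fh$, is applied correctly here (it reduces via the Jordan decomposition to Xue's nilpotent case on $C=Z_H^0(x_s)$, exactly as already indicated in 4.1). Everything else is a one-line arithmetic comparison.
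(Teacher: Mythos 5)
Your proof is correct and follows exactly the paper's own argument: you combine the fibre bound (4.1.1), rewritten as $\dim\SB_x\le\frac12(\dim Z_H(x)-\dim T)$, with the stratum bound (4.2.1), and observe that the $\dim T$ contributions cancel so that $\dim\SB_x\le\frac12(\dim X - \dim X_{\Bn,\SO})$. This is precisely what the paper does.
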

\begin{proof}
We know that $\wt X$ is smooth and $\pi$ is proper. 
For $x \in X_{\Bn,\SO}$, by (4.1.1) and (4.2.1), 
\begin{align*}
\dim \SB_x &\le \frac{1}{2}\bigl(\dim X - (\dim \SO_x + \dim \Ft)\bigr) \\ 
           &\le \frac{1}{2}(\dim X - \dim X_{\Bn,\SO}).
\end{align*}
The lemma follows. 
\end{proof}

\para{4.4.}
We consider the Steinberg variety
\begin{equation*}
Z = \{ (x, gB, g'B) \in \Fh \times \SB \times \SB 
          \mid g\iv x\in \Fb, {g'}\iv x \in \Fb \}. 
\end{equation*}
We denote by $\vf : Z \to \Fh$ the projection $(x, gB, g'B) \mapsto x$. 
We have a commutative diagram

\begin{equation*}
\begin{CD}
Z @>\a>> \Ft  \\
@V\vf VV    @VV\w_1 V  \\
\Fh  @> \w>>  \Xi,
\end{CD}
\end{equation*}
where $\a : (x, gB, g'B) \mapsto p_1(g\iv x)$ 
($p_1$ is the projection $\Fb \to \Ft$), 
$\w_1$ 
is the restriction of $\w$ on $\Ft$. 
Note that $\w_1$ is a finite morphism. 
Put $\s = \w_1\circ \a$, and $d' = \dim \Fh - \dim \Ft$.
We define a constructible sheaf $\ST$ on $\Xi$ by 
\begin{equation*}
\tag{4.4.1}
\ST = \SH^{2d'}(\s_!\Ql) = R^{2d'}\s_!\Ql.
\end{equation*}
\par
Recall the notion of perfect sheaves in [L1, (5.4.4)].
A constructible sheaf $\SE$ on an irreducible variety is called 
a perfect sheaf if it satisfies the following two condition;
(a) there exists an open dense smooth subset $V_0$ 
of $V$ such that $\SE|_{V_0}$ is locally constant, and that 
$\SE = \IC(V, \SE|_{V_0})$, (b) the support of any non-zero 
constructible subsheaf of $\SE$ has support $V$. 
\par
Perfect sheaves enjoy the following 
properties; if $\pi: V' \to V$ is a finite morphism with 
$V'$ smooth, and $\SE'$ is a locally constant sheaf on $V'$, then 
$\SE = \pi_*\SE'$ is a perfect sheaf.  Moreover, if 
$0 \to \SE_1 \to \SE_2 \to \SE_3 \to 0$ is an exact sequence of 
constructible sheaves on $V$ such that $\SE_1, \SE_3$ are perfect, 
then $\SE_2$ is perfect.  
\par
We have the following lemma.
\begin{lem}  %%%%  Lemma 4.5
The sheaf $\ST$ is a perfect sheaf on $\Xi$.
\end{lem}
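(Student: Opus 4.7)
\emph{Plan.} The strategy is to use the factorization $\s = \w_1\circ\a$ to identify $\ST$ with a pushforward of a locally constant sheaf on the smooth variety $\Ft$ via the finite morphism $\w_1:\Ft\to\Xi$; then the first criterion for perfect sheaves recalled just above directly yields perfectness. Since $\w_1$ is finite, $R^i(\w_1)_!=0$ for $i\ge 1$, so the Leray spectral sequence for $\s$ collapses and gives
\begin{equation*}
\ST \simeq (\w_1)_*\bigl(R^{2d'}\a_!\Ql\bigr).
\end{equation*}
Setting $\SE' := R^{2d'}\a_!\Ql$, it therefore suffices to show that $\SE'$ is a locally constant sheaf on $\Ft$.

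To analyze $\SE'$, I factor $\a=\tilde\a\circ\pi_1$, where $\pi_1:Z=\wt X\times_X\wt X\to\wt X$ is the first projection (proper, being the base change of the proper map $\pi:\wt X\to X$), and $\tilde\a:\wt X\to\Ft$ is the map $(x,gB)\mapsto p_1(g\iv x)$. Using $\wt X\simeq H\times^B\Fb$, the map $\tilde\a$ becomes the $H$-invariant map $[h,y]\mapsto p_1(y)$, which is smooth of relative dimension $d'=2\nu_H$ with every fiber $H$-equivariantly isomorphic to $H\times^B\Fn$. Proper base change applied to the cartesian square defining $Z$ yields $R(\pi_1)_*\Ql\simeq\pi^*R\pi_*\Ql$, so
\begin{equation*}
R\a_!\Ql \simeq R\tilde\a_!\bigl(\pi^*R\pi_*\Ql\bigr).
\end{equation*}
The semismallness of $\pi$ (Lemma 4.3) combined with the bound (4.1.1) shows that $\dim\a\iv(s)=d'$ for every $s\in\Ft$, hence $\SE'_s = H^{2d'}_c(\a\iv(s),\Ql)$ is freely generated by the top-dimensional irreducible components of $\a\iv(s)$.

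The main step, and the principal obstacle, is to check that the rank of $\SE'$ at $s$ is independent of $s\in\Ft$. Because $\tilde\a$ is smooth and $H$-equivariant with constant fiber type, the pullback $\pi^*R\pi_*\Ql$ restricts uniformly to each fiber $\wt X_s$, and the top-degree contributions to $\SE'_s$ split as a sum of terms $R^{2d'-q}\tilde\a_!\bigl(R^q(\pi_1)_*\Ql\bigr)_s$ for $q\ge 0$. The $q=0$ summand is globally the constant sheaf $\Ql_\Ft(-d')$ on $\Ft$. For $q>0$, the sheaf $R^q(\pi_1)_*\Ql$ is supported over the strata of $\Fh$ where $\SB_x$ has non-vanishing higher cohomology, and one must verify, stratum-by-stratum along the partition of $\Ft$ by the isomorphism type of $Z_H(s)$, that the resulting rank contribution is constant. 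The verification parallels the stratum-by-stratum analysis used in the proof of Proposition 3.10: one applies Theorem 3.8 to express $H^\bullet(\SB_x,\Ql)$ as a representation induced from the Springer representation of $Z_H^0(s)$, and exploits the fact that this induction data varies continuously with $s$; alternatively, if a direct rank-constancy argument should fail at some special stratum, one falls back on the second criterion recalled above, fitting $\SE'$ into an exact sequence of perfect sheaves constructed from restrictions to $S_n$-invariant open subsets of $\Ft$. Once $\SE'$ is shown to be locally constant on the smooth variety $\Ft$, the perfect-sheaf criterion applied to the finite morphism $\w_1:\Ft\to\Xi$ and the locally constant sheaf $\SE'$ directly gives perfectness of $\ST = (\w_1)_*\SE'$, completing the proof.
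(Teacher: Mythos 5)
Your first reduction is correct: since $\w_1$ is finite (hence has no higher direct images), the Leray spectral sequence for $\s=\w_1\circ\a$ collapses and gives $\ST\simeq(\w_1)_*\SE'$ with $\SE'=R^{2d'}\a_!\Ql$. After that, however, the argument does not close. The sufficient condition you want to invoke requires $\SE'$ to be a \emph{locally constant} sheaf on $\Ft$, and you do not prove this. Your case for it relies on an unverified assertion that the Leray pieces $R^{2d'-q}\tilde\a_!\bigl(R^q(\pi_1)_*\Ql\bigr)$ for $q>0$ vary ``continuously in $s$,'' plus an unspecified fallback. Constancy of the stalk rank, even if you had established it, would not suffice: a constructible sheaf on $\Ft$ of constant rank need not be locally constant (for instance, $j_!\Ql_U\oplus i_*\Ql_Z$ for a dense open $U\subset\Ft$ with closed complement $Z$ has rank one everywhere, is not locally constant at $Z$, and is not perfect since $i_*\Ql_Z$ is a nonzero constructible subsheaf with small support). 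And the fallback — ``fitting $\SE'$ into an exact sequence of perfect sheaves constructed from restrictions to $S_n$-invariant open subsets of $\Ft$'' — is not a construction: restricting $\SE'$ to open subsets of $\Ft$ does not yield subsheaves or a filtration of $\SE'$ to which the extension criterion could be applied.

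The paper's proof avoids ever having to settle whether $\SE'$ is locally constant, by decomposing the \emph{source} rather than analyzing the target sheaf. It stratifies $Z=\coprod_{w\in W}Z_w$ via the projection $p:Z\to\SB\times\SB$ and the $H$-orbits $\SO_w$, not via the first projection $\pi_1:Z\to\wt X$ that you use. On each stratum, $\a_w:Z_w\to\Ft$ is a locally trivial fibration with irreducible $d'$-dimensional fibre $H\times^{B\cap wBw\iv}(\Fn\cap w\Fn)$, so $R^{2d'}(\a_w)_!\Ql\simeq\Ql$ and $\ST_w\simeq(\w_1)_!\Ql$ is perfect by exactly the criterion you cite. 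The fibres are iterated affine bundles over $\SB$, so their odd compactly-supported cohomology vanishes; this produces short exact sequences of sheaves on $\Xi$ along the closure filtration of the $Z_w$, and perfectness of $\ST$ then follows from closure of perfect sheaves under extension. Note that the stronger statement you are after — that $\SE'$ is globally locally constant, equivalently that $\ST\simeq\bigoplus_w\ST_w$ as a sheaf with no extension data — is essentially the content of Proposition 4.6, which the paper proves separately and by a different restriction argument on $\Xi_{\sr}$; it is not available as an input to Lemma 4.5.
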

\begin{proof}
The lemma can be proved by a similar argument as in the proof 
of Theorem 5.5 in [L1].  For the sake of completeness, and 
for fixing the notations, we will give
the proof below.
Let $p : Z \to \SB \times \SB$ be the projection 
$(x, gB, g'B) \mapsto (gB, g'B)$. 
$\SB \times \SB$ is decomposed into $H$-orbits, 
$\SB \times \SB = \coprod_{w \in W }\SO_w$, where 
$\SO_w$ is the $H$-orbit containing $(B, wB)$. 
Put $Z_w = p\iv(\SO_w)$ for each $w \in W$.
$Z_w \to \SO_w$ is a locally trivial fibration 
with fibre isomorphic to $\Fb \cap w\Fb$. 
Let $\a_w$ be the 
restriction of $\a$ on $Z_w$.
Then $\a_w$ is a locally trivial fibration with fibre isomorphic to
\begin{equation*}
\tag{4.5.1}
H \times^{(B \cap wBw\iv)}(\Fn \cap w\Fn),
\end{equation*}
where $\Fn = \Lie U$. 
In particular, $\dim \a_w\iv(s) = d'$ for any $s \in \Ft$. 
Moreover, each fibre is an irreducible variety. 
Let $\s_w$ be the restriction of $\s$ on $Z_w$, and put 
$\ST_w = \SH^{2d'}((\s_w)_!\Ql)$. 
It follows from the above remark that 
\begin{equation*}
R^{2d'}(\a_w)_!\Ql \simeq \Ql.
\end{equation*}
Since $\w_1$ is a finite morphism, we have 
\begin{equation*}
\tag{4.5.2}
R^{2d'}(\s_w)_!\Ql \simeq R^0(\w_1)_!R^{2d'}(\a_w)_!\Ql \simeq (\w_1)_!\Ql.
\end{equation*}
It follows that $\ST_w$ is a perfect sheaf since $\w_1$ is finite. 
By (4.5.1), $\a_w\iv(s)$ is a vector bundle over $\SO_w$, and $\SO_w$ is a vector 
bundle over $\SB$. It follows that $H^i_c(\a_w\iv(s),\Ql) = 0$ for odd $i$. 
This implies that $R^i(\s_w)_!\Ql = 0$ for odd $i$. 
Now we have a filtration $Z = \coprod_{w \in W}Z_w$ by locally closed subvarieties 
$Z_w$. For an integer $m \ge 0$, let $Z_m$ be the union of $Z_w$ such that $\dim \SO_w = m$, 
and put $Z_{\le m} = \coprod_{m' \le m}Z_{m'}$.  Then $Z_{\le m}$ is closed in $Z$, and 
$Z_m$ is open in $Z_{\le m}$. Let $\s_m$ be the restriction of $\s$ on $Z_m$, and 
define $\s_{\le m}$ similarly. 
Since $R^i(\s_w)_!\Ql = 0$ for odd $i$, we have 
$R^i(\s_m)_!\Ql = 0$ for odd $i$.  Thus we have an exact sequence 
\begin{equation*}
\begin{CD}
0 @>>> R^{2d'}(\s_m)_!\Ql @>>> R^{2d'}(\s_{\le m})_! @>>> R^{2d'}(\s_{\le m-1})_!\Ql 
                            @>>>  0.
\end{CD}
\end{equation*}
Since $R^{2d'}(\s_m )_!\Ql = \bigoplus_{\dim \SO_w = m}\ST_w$ 
is a perfect sheaf on $\Xi$, by induction on $m$, we see that $R^{2d'}\s_!\Ql$ 
is a perfect sheaf. The lemma is proved.   
\end{proof}

\begin{prop} %%%%  Prop 4.6
$\ST \simeq \bigoplus_{w \in W}\ST_w$ as sheaves on $\Xi$.  
\end{prop}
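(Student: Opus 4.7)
The plan is to prove the analogous decomposition one level lower, on $\Ft$, and then push it down via the finite morphism $\w_1 \colon \Ft \to \Xi$. Concretely, I will first establish
\begin{equation*}
R^{2d'}\a_!\Ql \simeq \bigoplus_{w \in W} R^{2d'}(\a_w)_!\Ql
\end{equation*}
as sheaves on $\Ft$, from which the proposition will follow by applying $(\w_1)_!$.

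The first ingredient is to identify each $R^{2d'}(\a_w)_!\Ql$ with the constant sheaf $\Ql$ on $\Ft$. By the locally trivial fibration (4.5.1), $\a_w$ has constant irreducible fibre of dimension $d'$, so $R^{2d'}(\a_w)_!\Ql$ is a rank-one $\Ql$-local system on $\Ft \simeq \AA^n$; since the \'etale fundamental group of an affine space over an algebraically closed field of characteristic $p = \ch\Bk$ is pro-$p$, every such $\Ql$-local system (with $\ell \ne p$) is trivial.

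Next I would rerun the filtration argument from the proof of Lemma 4.5 verbatim, but with $\a$ in place of $\s$. The crucial additional observation is that $Z_m = \coprod_{\dim \SO_w = m} Z_w$ is an \emph{open-closed} decomposition of $Z_m$, because two distinct orbits of the same dimension in $\SB \times \SB$ have disjoint closures (each closure picks up only orbits of strictly smaller dimension under Bruhat order). This gives $R^{2d'}(\a_m)_!\Ql = \bigoplus_{\dim \SO_w = m} R^{2d'}(\a_w)_!\Ql$, and together with the odd-degree vanishing already verified in the proof of Lemma 4.5 (which applies verbatim to $\a$) this yields, for each $m \ge 0$, the short exact sequence
\begin{equation*}
0 \to \bigoplus_{\dim \SO_w = m}R^{2d'}(\a_w)_!\Ql \to R^{2d'}(\a_{\le m})_!\Ql \to R^{2d'}(\a_{\le m-1})_!\Ql \to 0.
\end{equation*}
By induction on $m$, both outer terms are finite direct sums of copies of the constant sheaf $\Ql$ on $\Ft$. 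Since $\Ext^1(\Ql, \Ql) = H^1(\Ft, \Ql) = H^1(\AA^n, \Ql) = 0$ in \'etale $\Ql$-cohomology, the sequence splits, and the induction yields $R^{2d'}\a_!\Ql \simeq \bigoplus_w \Ql$.

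Finally, since $\w_1$ is a finite morphism the higher $R^p(\w_1)_!$ vanish, so the Leray spectral sequence for $\s = \w_1 \circ \a$ collapses and gives $\ST = R^{2d'}\s_!\Ql \simeq (\w_1)_! R^{2d'}\a_!\Ql \simeq \bigoplus_w (\w_1)_!\Ql \simeq \bigoplus_w \ST_w$, where the last isomorphism is (4.5.2). The only non-routine ingredient is the splitting of the short exact sequences, and this reduces entirely to the vanishing of $H^1(\AA^n, \Ql)$; so I do not expect any substantial obstacle.
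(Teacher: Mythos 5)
Your proposal takes a genuinely different route from the paper's. The paper first establishes (Lemma 4.5) that $\ST$ and $\bigoplus_w\ST_w$ are perfect sheaves on $\Xi$, then invokes the defining rigidity of perfect sheaves to reduce the desired isomorphism to the open dense locus $\Xi_{\sr}$, where the Galois-covering structure of $\wh Y_I \to Y^0_k$ makes the decomposition explicit. You instead work globally, one level higher on $\Ft$ rather than $\Xi$, and replace perfectness by a splitting argument: the Bruhat stratification exhibits $Z_m$ as a disjoint union of the $Z_w$ with $\dim\SO_w = m$, each short exact sequence in the filtration has constant outer terms, and $\Ext^1(\Ql,\Ql)=H^1(\Ft,\Ql)=H^1(\AA^n,\Ql)=0$ forces it to split; pushing forward along the finite $\w_1$ then recovers the statement on $\Xi$. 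This is shorter and bypasses both the perfect-sheaf formalism and the analysis of $Z_0 = \wt Y\times_Y\wt Y$; the trade-off is that it leans essentially on $\Ft$ being affine space. The vanishing $H^1_{\text{\'et}}(\AA^n,\Ql)=0$ you use is genuine — every $\ZZ/\ell^m$-cover of $\AA^n$ is tame and the tame fundamental group of $\AA^n$ over an algebraically closed field is trivial — and the splitting argument is sound.

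One of your intermediate claims, however, is justified incorrectly, and the error is worth flagging because the (false) general statement you invoke would not hold. You assert that $\pi_1^{\text{\'et}}(\AA^n)$ in characteristic $p$ is pro-$p$ and deduce that every rank-one $\Ql$-local system on $\AA^n$ is trivial. Neither is true. By Raynaud's theorem (Abhyankar's conjecture for the affine line), $\pi_1^{\text{\'et}}(\AA^1_{\overline{\FF}_p})$ surjects onto every finite quasi-$p$ group, including nonabelian simple groups such as $PSL_2(\FF_p)$, so it is far from pro-$p$; the same therefore holds for $\AA^n$, since the coordinate projection $\AA^n\to\AA^1$ admits a section. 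And if $\mu_p\subset\Ql^*$ (for instance if $p\mid\ell-1$), an Artin--Schreier cover gives a nontrivial continuous character $\pi_1^{\text{\'et}}(\AA^1)\twoheadrightarrow\ZZ/p\hookrightarrow\Ql^*$, hence a nontrivial rank-one $\Ql$-local system on affine space. The statement you actually need — $R^{2d'}(\a_w)_!\Ql\simeq\Ql$ — is nonetheless correct, and is the same assertion the paper makes inside the proof of Lemma 4.5. The right reason is geometric rather than $\pi_1$-theoretic: $\a_w$ is a smooth morphism whose fibres are irreducible of pure dimension $d'$, so the relative trace map trivializes $R^{2d'}(\a_w)_!\Ql\simeq\Ql(-d')$ canonically. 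With that substitution, your argument goes through.
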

\begin{proof}
Recall $\Ft_{\sr}$ in (2.2.2), 
and put $\Xi_{\sr} = \w_1(\Ft_{\sr})$. 
Then $\Xi_{\sr}$ is an open dense subset of $\Xi$. 
Since $\ST$ and $\bigoplus_{w \in W}\ST_w$ are perfect sheaves on $\Xi$, 
it is enough to show that their restrictions on $\Xi_{\sr}$ are isomorphic.
Put $Z_0 = \s\iv(\Xi_{\sr})$.  Then 
$Z_0 \simeq \wt Y \times_Y \wt Y$, where 
$\psi : \wt Y \to Y$ is as in 2.4.
Let $\s_0$ be the restriction of $\s$ on $Z_0$, which is the composite 
of the natural map $Z_0 \to Y$ with the map $Y \to \Xi_{\sr}$. 
(Note that for any $s \in \Ft_{\sr}$, 
$\w\iv(\w_1(s)) = \bigcup_{g \in H}g(s + \FD) = Y$). 
The restriction of $\ST$ on $\Xi_{\sr}$ is isomorphic to $R^{2d'}(\s_0)_!\Ql$. 
Recall that $\wt Y^+_k = \coprod_I\wt Y_I$ for $0 \le k \le n$ in (2.8.1).
For any $I \subset [1,n]$ such that $|I| = k$, we consider the map 
$\psi_I : \wt Y_I \to Y_k^0$ as in 2.7. 
Let $\wh Y_I$ be as in (2.7.3).  Then $\psi_I$ is decomposed as 
$\psi_I = \eta_I\circ \xi_I$, where $\eta_I$ is a finite Galois covering with 
Galois group $\SW_I$, and $\xi_I$ is a locally trivial fibration with fibre isomorphic 
to $\BP_1^{I'}$.  
For each subset $I, J$ of $[1,n]$ such that $|I| = |J| = k$, put 
$\wt Z_{IJ} = \wt Y_I \times_{Y_k^0}\wt Y_J$ under the inclusion $Y^0_k \hra Y$.    
We have a partition $Z_0 = \coprod_{I,J}\wt Z_{IJ}$ by locally closed subsets $\wt Z_{IJ}$.
We define $\wh Z_{IJ} = \wh Y_I \times_{Y_k^0}\wh Y_j$. 
The natural map $\vf_{IJ} : \wt Z_{IJ} \to Y_k^0$ is decomposed as 
$\vf_{IJ} = \eta_{IJ}\circ \xi_{IJ}$, where $\eta_{IJ} : \wh Z_{IJ} \to Y_k^0$ 
is a finite Galois covering with Galois group $\SW_I \times \SW_J$, and 
$\xi_{IJ} : \wt Z_{IJ} \to \wh Z_{IJ}$ is a locally trivial fibration with fibre 
isomorphic to $\BP_1^{I'} \times \BP_1^{J'}$. 
\par
Let $\a_0 : Z_0 \to \Ft$ be the restriction of $\a$ on $Z_0$, and 
$\a_0^w$ the restriction of $\a_0$ on $Z_0 \cap Z_w$. 
For $s \in \Ft_{\sr}$, we have a partition of $\wt Z_{IJ}$,
\begin{equation*}
\wt Z_{IJ} \cap \a_0\iv(s) = \coprod_{w \in W}\bigl(\a_0^w)\iv(s) \cap \wt Z_{IJ}\bigr).
\end{equation*}
By using the property of $\vf_{IJ} = \xi_{IJ}\circ \eta_{IJ}$ mentioned above, 
we see that $(\a^w_0)\iv(s) \cap \wt Z_{IJ}$ is an open and closed subset of 
$\wt Z_{IJ}$ for any $w \in W$. 
Moreover, the odd cohomology of $(\a^w_0)\iv(s) \cap \wt Z_{IJ}$ vanishes. 
It follows that 
\begin{equation*}
\tag{4.6.1}
(\a_{IJ})_!\Ql \simeq \bigoplus_{w \in W }(\a^w_{IJ})_!\Ql, 
\end{equation*} 
where $\a_{IJ}$ is the restriction of $\a_0$ on $\wt Z_{IJ}$, and  
$\a^w_{IJ}$ is the restriction of 
$\a_0^w$ on $Z_w \cap \wt Z_{IJ}$.
Moreover, we have $R^{i}(\a^w_{IJ})_!\Ql = 0$ for odd $i$.  
By considering the long exact sequence arising from the filtration 
$Z_0 = \coprod_{I,J}\wt Z_{IJ}$, (4.6.1) implies that
\begin{equation*}
\tag{4.6.2}
R^{2d'}(\a_0)_!\Ql \simeq \bigoplus_{w \in W}R^{2d'}(\a^w_0)_!\Ql.
\end{equation*}
By applying $R^0(\w_1)_!$ on both sides of (4.6.2), we have
\begin{equation*}
R^{2d'}(\s_0)_!\Ql \simeq \bigoplus_{w \in W } R^{2d'}(\s^w_0)_!\Ql, 
\end{equation*}
where $\s^w_0$ is the restriction of $\s_0$ on $Z_0 \cap Z_w$. 
This shows that the restrictions of $\SF$ and of $\bigoplus_w\SF_w$  
on $\Xi_{\sr}$ are isomorphic.  The proposition is proved.   
\end{proof}

\para{4.7.}
By the K\"unneth formula, we have $\vf_!\Ql \simeq \pi_!\Ql \otimes \pi_!\Ql$. 
Since $(\pi\flt)_!\Ql$ is a complex with $W$-action by Proposition 3.4, 
$\pi_!\Ql = (\pi\flt)_!\Ql|_{\Fh}$ has the action of $W$ inherited 
from $(\pi\flt)_!\Ql$. Hence $\vf_!\Ql$ has 
a natural action of $W \times W $. 
It follows that $\ST = \SH^{2d'}(\s_!\Ql) \simeq \SH^{2d'}(\w_!\vf_!\Ql)$ 
is a sheaf equipped with $W \times W$-action. 
We note that under the decomposition of $\ST$ in Proposition 4.6, the action of 
$W \times W$ has the following property; for each $w_1, w_2 \in W$, 
\begin{equation*}
\tag{4.7.1}
(w_1, w_2)\cdot \ST_w  = \ST_{w_1ww_2\iv}.
\end{equation*}

In fact, since $\ST$ is a perfect sheaf by Lemma 4.5, it is enough to check 
the relation (4.7.1) for the restriction of $\ST$ on $\Xi_{\sr}$. 
The action of $\SW_I \times \SW_J$ on $(\vf_{IJ})_!\Ql$ is extended to the action of 
$\wt\SW _I\times \wt\SW_J$ (here $\wt \SW_I = \SW_I\ltimes (\BZ/2\BZ)^n$) 
so that the $i$-th factor $\BZ/2\BZ$ of $\wt\SW_I$ acts trivially if 
$i \in I$ and acts non-trivially if $i \in I'$, and similarly for $\wt\SW_J$. 
This action induces an action of $W \times W$ on $(\vf_0)_!\Ql$, which 
is nothing but the action of $W \times W$ inherited from the action of 
$W$ on $\pi_!\Ql$ by Proposition 3.10. 
Then a similar relation as (4.7.1) for $(\vf_{IJ})_!\Ql$ under the decomposition 
\begin{equation*}
(\vf_{IJ})_!\Ql \simeq \bigoplus_{w \in W}(\vf^w_{IJ})_!\Ql,
\end{equation*}
where $\vf^w_{IJ}$ is the restriction of $\vf_{IJ}$ on $\wt Z_{IJ} \cap Z_w$,
can be verified directly by using the decomposition $\vf_{IJ} = \eta_{IJ}\circ \xi_{IJ}$.
Thus (4.7.1) is proved. 
\par
We consider the cohomology group $H^{2n}_c(\Xi, \ST)$. 
The following fact holds.

\begin{prop}  %%%   Proposition 4.8.
$H^{2n}_c(\Xi,\ST)$ has a structure of $W \times W$-module, which is isomorphic to 
the two-sided regular representation of $W$.  
\end{prop}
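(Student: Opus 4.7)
The plan is to combine the decomposition of $\ST$ in Proposition 4.6 with the explicit description of each summand given in (4.5.2), and then read off the $W\times W$-action from (4.7.1). First I would use Proposition 4.6 together with (4.5.2) to obtain
\begin{equation*}
H^{2n}_c(\Xi,\ST)\simeq \bigoplus_{w\in W} H^{2n}_c(\Xi,\ST_w)\simeq \bigoplus_{w\in W}H^{2n}_c(\Xi,(\w_1)_!\Ql).
\end{equation*}
Since $\w_1:\Ft\to\Xi$ is a finite morphism and $\Ft\simeq \BA^n$ is an $n$-dimensional affine space, each summand is $H^{2n}_c(\Ft,\Ql)\simeq \Ql$. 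Hence $\dim H^{2n}_c(\Xi,\ST)=|W|$, matching the dimension of the two-sided regular representation $\Ql[W]$.

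Next, I would use (4.7.1) to transport the $W\times W$-action to $\bigoplus_{w\in W}\Ql\cdot e_w$: since $(w_1,w_2)\cdot\ST_w=\ST_{w_1ww_2^{-1}}$, the pair $(w_1,w_2)$ sends the one-dimensional line $H^{2n}_c(\Xi,\ST_w)$ isomorphically onto $H^{2n}_c(\Xi,\ST_{w_1ww_2^{-1}})$ via some scalar $c(w_1,w,w_2)\in\Ql^\times$. Once one verifies that the basis vectors $\{e_w\}$ can be chosen so that $c\equiv 1$, the resulting action $(w_1,w_2)\cdot e_w=e_{w_1ww_2^{-1}}$ on $\Ql[W]$ is precisely the two-sided regular representation, and we are done.

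The main obstacle is therefore showing that the scalars $c(w_1,w,w_2)$ are trivial, or equivalently that the stabilizer of $w$ in $W\times W$ (always of order $|W|$ by transitivity of $w\mapsto w_1ww_2^{-1}$) acts trivially on the one-dimensional space $H^{2n}_c(\Xi,\ST_w)$. By transitivity it suffices to treat $w=1$, where the stabilizer is the diagonal copy of $W$. For this I would restrict to the open dense subset $\Xi_{\sr}\subset\Xi$, where, following the proof of Proposition 4.6, each $\ST_w|_{\Xi_{\sr}}$ corresponds to a connected component $Z_0\cap Z_w$ of the fibre product $\wt Y\times_Y\wt Y$, and the $W\times W$-action permutes these components via the two commuting deck transformations of the Galois cover $\psi:\wt Y\to Y$ (whose Galois-theoretic description is recorded just after (4.7.1) via the $\wt\SW_I\times\wt\SW_J$-actions). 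A diagonal element $(w_0,w_0)$ then acts on $Z_0\cap Z_1$ by a simultaneous relabelling of the two factors that preserves the fibre of $\a_0$ pointwise, so it induces the identity on the top compactly supported cohomology of each fibre, and hence trivially on $H^{2n}_c(\Xi,\ST_1)$.

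Finally, since $\ST$ is a perfect sheaf by Lemma 4.5, the identifications established on $\Xi_{\sr}$ extend uniquely to all of $\Xi$, and the resulting $W\times W$-equivariant isomorphism $H^{2n}_c(\Xi,\ST)\simeq\Ql[W]$ is the desired two-sided regular representation.
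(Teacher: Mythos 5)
Your proof follows the same route as the paper's: decompose $\ST\simeq\bigoplus_{w\in W}\ST_w$ via Proposition 4.6, identify each summand's $H^{2n}_c$ with $H^{2n}_c(\Ft,\Ql)\simeq\Ql$ via (4.5.2), and read off the permutation structure from (4.7.1). The paper itself ends there with "follows from (4.7.1)", whereas you correctly notice that this only pins down the module up to the character $\chi$ of the diagonal stabilizer, and you add an argument that $\chi$ is trivial. That extra step is a legitimate concern, and your reduction to checking it over $\Xi_{\sr}$ via perfectness (Lemma 4.5) is sound. However, the specific justification you give is not quite right. A diagonal element $(w_0,w_0)$ does not preserve the fibres of $\a_0$ pointwise: it sends $\a_0\iv(s)\cap Z_1$ to $\a_0\iv(w_0 s)\cap Z_1$, and it is only the fibres of $\s_0=\w_1\circ\a_0$ that are preserved (since $\w_1$ is $S_n$-invariant). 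The triviality on $H^{2n}_c(\Xi,\ST_1)\simeq H^{2n}_c(\Ft,\Ql)$ therefore needs a degree argument (an automorphism of a smooth irreducible variety acts trivially on top compactly supported cohomology), not a fibre-preservation argument. Moreover, "simultaneous relabelling of the two factors" overlooks that the full $W\times W=W_n\times W_n$-action on $\vf_!\Ql$ is not geometric on $Z_0$: only the $S_n$-factors act on $\wt Y\times_Y\wt Y$ as a variety, while the $(\BZ/2\BZ)^n$-parts act via the Springer convention on $H^\bullet(\BP_1)$ (see 2.10 and the discussion after (4.7.1)), so they must be tracked at the sheaf level. None of this invalidates your strategy, but it means the stabilizer step requires the more careful bookkeeping indicated in 4.7 rather than a one-line geometric observation.
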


\begin{proof}
Since $\SF$ is a sheaf with $W \times W$-action, $H^i_c(\Xi, \SF)$
has a structure of 
$W \times W$-module. By Proposition 4.6, we have a decomposition 
\begin{equation*}
H^{2n}_c(\Xi,\ST) \simeq \bigoplus_{w \in W}H^{2n}_c(\Xi, \ST_w). 
\end{equation*}
By (4.5.2) 
\begin{equation*}
H^{2n}_c(\Xi,\ST_w) \simeq H^{2n}_c(\Xi, (\w_1)_!\Ql) \simeq 
                      H^{2n}_c(\Ft, \Ql) = \Ql
\end{equation*}   
since $\dim \Ft = n$. The proposition then follows from (4.7.1). 
\end{proof}

The following lemma, originally due to Lusztig [L1, Lemma 6.7], was proved 
in [Sh, Lemma 7.6].   Note that in [Sh] it is stated for the unipotent variety, 
but it works for any variety.  Actually this result is proved in [L2, (7.4.2)], in 
a full generality.  

\begin{lem} %%%%   Lemma 4.9
Let $A, A'$ be simple perverse sheaves on $X = \Fh$.  Then we have
\begin{equation*}
\dim \BH^0_c(X, A \otimes A') = 
       \begin{cases}
          1 &\quad\text{ if } A' \simeq D(A), \\
          0 &\quad\text{ otherwise, }
       \end{cases}
\end{equation*}
where $D(A)$ is the Verdier dual of $A$. 
\end{lem}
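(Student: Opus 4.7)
The plan is to combine Verdier duality on $X = \Fh$ with Schur's lemma for the abelian category of perverse sheaves. Let $f : X \to \mathrm{pt}$ denote the structure morphism. The compatibility $Rf_! \cong D_{\mathrm{pt}} \circ Rf_* \circ D_X$ provides a canonical isomorphism $\BH^0_c(X, \SF) \cong \BH^0(X, D_X \SF)^*$ for any constructible complex $\SF$ on $X$. Applied to $\SF = A \otimes^L A'$, this gives
\begin{equation*}
\BH^0_c(X, A \otimes A') \cong \BH^0(X, D_X(A \otimes^L A'))^*.
\end{equation*}

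The next step is the tensor-hom adjunction in the Grothendieck-Verdier formalism, which yields $D_X(A \otimes^L A') \cong R\mathcal{H}om(A, D_X A')$; passing to global sections converts the right-hand side into $\Hom_{D^b(X)}(A, D_X A')$. Combining with the previous step, one obtains the key identity
\begin{equation*}
\BH^0_c(X, A \otimes A') \cong \Hom_{D^b(X)}(A, D_X A')^*.
\end{equation*}

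Since Verdier duality restricts to an anti-equivalence on the abelian category of perverse sheaves, $D_X A'$ is again simple, and so is $A$. Schur's lemma for that abelian category then shows that $\dim \Hom_{D^b(X)}(A, D_X A')$ equals $1$ when $A \simeq D_X A'$, i.e.\ when $A' \simeq D(A)$, and is $0$ otherwise, which is precisely the claim.

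The entire proof is formal once the Grothendieck-Verdier formalism is in place, so there is no substantive obstacle to overcome; the references [L1, Lemma 6.7], [Sh, Lemma 7.6] and [L2, (7.4.2)] merely record this computation in slightly different settings. In particular, the assumption that $X = \Fh$ plays no role, which is why the same statement is valid on any reasonable base.
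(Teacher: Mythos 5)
Your proof is correct and is exactly the standard Verdier-duality argument implicit in the references the paper cites ([L1, Lemma 6.7], [L2, (7.4.2)], [Sh, Lemma 7.6]); the paper gives no proof of its own beyond those citations. The one step you leave implicit — that $\Hom_{D^b(X)}(A, D_X A')$ agrees with $\Hom$ computed in the abelian category of perverse sheaves, so that Schur's lemma over the algebraically closed field $\Ql$ applies — holds because the heart of a $t$-structure embeds fully faithfully in the ambient triangulated category.
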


\para{4.10.}
Recall that $\pi : \wt X \to X$ is semismall by Lemma 4.3, 
and so $K = \pi_!\Ql[d]$ is a semisimple 
perverse sheaf on $X = \Fh$, where $d = \dim X = \dim \Fh$.
We can write it as 
\begin{equation*}
\tag{4.10.1}
K = \bigoplus_A V_A\otimes A,
\end{equation*}
 where $A$ is a simple perverse sheaf 
and $V_A = \Hom (K, A)$ is the multiplicity space for $A$. 
We have the following.

\begin{prop}  %%%%   Prop. 4.11.
Put $m_A = \dim V_A$ for each $A$. Then we have
\begin{equation*}
\sum_Am^2_A = |W |.
\end{equation*}
\end{prop}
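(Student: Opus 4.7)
The plan is to identify both $|W|$ and $\sum_A m_A^2$ with $\dim H^{2d}_c(Z, \Ql)$, thereby forcing their equality. The key input is the identification $\vf_!\Ql \simeq K \otimes K[-2d]$, which I would obtain by applying proper base change and the projection formula to the defining Cartesian square of $Z \simeq \wt X \times_X \wt X$: since $\pi$ is proper, letting $p_1,p_2$ denote the projections, $\vf_!\Ql \simeq \pi_!(p_2)_!(p_1)^*\Ql \simeq \pi_!\pi^*\pi_!\Ql \simeq \pi_!\Ql \otimes \pi_!\Ql \simeq K \otimes K[-2d]$. Passing to compactly supported hypercohomology yields $H^{2d}_c(Z, \Ql) \simeq \BH^0_c(X, K \otimes K)$.

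First, I would identify the left-hand side $H^{2d}_c(Z, \Ql)$ with $|W|$. The computation in (4.5.1) shows that the fibres of $\a: Z \to \Ft$ have dimension $d'$, and since $\w_1$ is finite the fibres of $\s = \w_1 \circ \a$ have dimension at most $d'$ as well. Combined with $\dim \Xi = n$ and $d' + n = d$, the Leray spectral sequence $E_2^{p,q} = H^p_c(\Xi, R^q\s_!\Ql) \Rightarrow H^{p+q}_c(Z, \Ql)$ contains a unique term in total degree $2d$, namely $E_2^{2n, 2d'} = H^{2n}_c(\Xi, \ST)$, which therefore survives to $E_\infty$ and gives $H^{2d}_c(Z, \Ql) \simeq H^{2n}_c(\Xi, \ST)$. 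By Proposition 4.8 this has dimension $|W|$.

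Second, I would evaluate $\BH^0_c(X, K \otimes K)$ using the isotypic decomposition (4.10.1). Expanding and applying Lemma 4.9, only the diagonal pairs with $A' \simeq D(A)$ survive, each contributing one-dimensionally:
\begin{equation*}
\dim \BH^0_c(X, K \otimes K) = \sum_{A, A'} m_A m_{A'}\dim\BH^0_c(X, A \otimes A') = \sum_A m_A m_{D(A)}.
\end{equation*}
Since $\wt X$ is smooth of pure dimension $d$ and $\pi$ is proper, Verdier duality gives $D(\pi_!\Ql[d]) \simeq \pi_*\Ql[d]$, so $D(K) \simeq K$ (up to Tate twist); hence the multiplicity spaces satisfy $m_A = m_{D(A)}$. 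Combining with the first step yields $\sum_A m_A^2 = |W|$.

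The only slightly delicate point is the collapse of the Leray spectral sequence in the top degree, which rests on the dimension count for the fibres of $\a_w$; the self-duality of $K$ and the remaining identifications are formal consequences of standard sheaf-theoretic operations.
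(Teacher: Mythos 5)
Your proof is correct and follows essentially the same route as the paper: both use the Leray spectral sequence for the map to $\Xi$ (written there as $H^i_c(\Xi, R^j\w_!(\pi_!\Ql\otimes\pi_!\Ql)) \Rightarrow \BH^{i+j}_c(X, \pi_!\Ql\otimes\pi_!\Ql)$, which is identical to yours for $\s$ since $\s_!\Ql = \w_!\vf_!\Ql$), extract the corner term $H^{2n}_c(\Xi,\ST) \simeq \BH^0_c(X,K\otimes K)$ via the same dimension counts $\dim\Xi = n$, $d = d'+n$, and then combine Proposition 4.8, Lemma 4.9, and self-duality of $K$ exactly as you do. You spell out the Künneth step $\vf_!\Ql \simeq \pi_!\Ql\otimes\pi_!\Ql$ via proper base change and projection formula, which the paper simply cites as the Künneth formula, but the substance is the same.
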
   

\begin{proof}
We have  
\begin{align*}
H^{2n}_c(\Xi, \ST) = H^{2n}_c(\Xi, R^{2d'}\w_!(\pi_!\Ql\otimes\pi_!\Ql)). 
\end{align*}
Consider the spectral sequence
\begin{equation*}
H^i_c(\Xi, R^j\w_!(\pi_!\Ql\otimes\pi_!\Ql)) \Longrightarrow
                \BH^{i+j}_c(X, \pi_!\Ql\otimes\pi_!\Ql).
\end{equation*}
Since $\dim X = d' + n = d$, $\dim \Xi = n$, we have
\begin{align*}
H^{2n}_c(\Xi, \ST) &\simeq \BH^{2d}_c(X, \pi_!\Ql\otimes\pi_!\Ql) \\
                   &\simeq \BH^0_c(X, K\otimes K).
\end{align*}
Hence by (4.10.1), we have
\begin{equation*}
\dim H^{2n}_c(\Xi,\ST) = \sum_{A,A'}m_Am_{A'}\dim \BH^0_c(X, A\otimes A').
\end{equation*} 
By Lemma 4.9, $\BH^0(X, A\otimes A') \ne 0$ only when $D(A) \simeq A'$, in which case, 
we have $\dim \BH^0(X, A\otimes A') = 1$. 
But since $K$ is self-dual, $m_A = m_{D(A)}$ for each $A$. It follows that 
$\dim H^{2n}_c(\Xi,\ST) = \sum_Am_A^2$. 
On the other hand, by Proposition 4.8, $\dim H^{2n}_c(\Xi,\ST) = |W |$. 
The proposition is proved.  
\end{proof}

%%%%
%%%%
\par\bigskip
\section{ Intersection cohomology on $\Fs\Fp(V)$ } 

\para{5.1.}
In Section 3, we have considered the intersection cohomologies on 
$\wt\Fh$.  In this section, we consider their restriction on $\Fh$. 
We follow the notation in 2.2.  In particular, 
$\Fn_s = \bigoplus_{\a \in \Phi^+_s}\Fg_{\a}$ and 
$\FD = \bigoplus_{\a \in \Phi^+_{l}}\Fg_{\a}$ are subspaces of $\Fh$
such that $\Fn = \Fn_s \oplus \FD$. 
Put $\FN_0 = \bigcup_{g \in B}g(\Ft)$.
Let $\ol\FN_0$ be the closure of $\FN_0$ in $\Fh$. 
We show the following lemma. 

\begin{lem}  %%%%  Lemma 5.2
\begin{enumerate}
\item
$\ol\FN_0 = \Ft \oplus \Fn_s$.  In particular, $\Ft\oplus \Fn_s$ is $B$-stable.
\item
Let $\Fh_{ss}$ be the set of semisimple elements in $\Fh$.  Then 
\begin{equation*}
\tag{5.2.1}
\ol\Fh_{ss} = \bigcup_{g \in H}g(\Ft\oplus \Fn_s).
\end{equation*}
Moreover, $\dim \ol\Fh_{ss} = \dim \Fh - 2n$. 
\end{enumerate}
\end{lem}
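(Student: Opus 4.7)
The plan is to prove (i) by a direct matrix computation combined with a dimension count, and then derive (ii) formally from Lemma~2.3 and the properness of the map $\pi$ of 2.4. For the inclusion $\FN_0 \subseteq \Ft \oplus \Fn_s$, since $\Ad(T)$ fixes $\Ft$ pointwise, it suffices to treat $\Ad(u)s$ for $u \in U$ and $s \in \Ft$. I would factor $U = L \cdot U'$, where $L$ consists of block-diagonal elements with upper-left block $a$ upper unitriangular and lower-right block ${}^t a\iv$, while $U'$ consists of elements with identity blocks on the diagonal and symmetric upper-right block $b$. A direct computation for $u' \in U'$ and $s = \Diag(t_1,\dots,t_n,t_1,\dots,t_n) \in \Ft$ shows that $\Ad(u')s - s$ has only the upper-right block $tb + bt$ nonzero, and its $(i,i)$-entry $2t_ib_{ii} = 0$ in characteristic $2$; hence $\Ad(u')s \in \Ft \oplus \Fn_s$.

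Next, $\Ad(L)$ preserves $\Ft \oplus \Fn_s$: using the decomposition of $\Fh$ into the $\Fg\Fl_n$-Levi $\Fl$ and the two off-diagonal blocks of symmetric matrices, I verify piece by piece that $\Ad(L)$ carries $\Ft$ into $\Ft \oplus (\Fl \cap \Fn)$, preserves $\Fl \cap \Fn$ (strict upper triangular matrices), and preserves the symmetric matrices with vanishing diagonal in the upper-right block, the last point again using the characteristic-$2$ identity $(aB\,{}^ta)_{ii} = 2\sum_{j<k} a_{ij}B_{jk}a_{ik} = 0$ for $B$ symmetric with zero diagonal. Both $\FN_0$ and $\Ft \oplus \Fn_s$ are irreducible, the former as the image of $B \times \Ft$, and their dimensions match: for $s \in \Ft_{\sr}$, $B \cap Z_H(s)$ is a product of $n$ Borel subgroups of $SL_2$, of dimension $2n$, giving $\dim \FN_0 = (n^2+n) + n - 2n = n^2 = n + \dim \Fn_s$. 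Thus $\ol{\FN_0} = \Ft \oplus \Fn_s$, and the $B$-stability of the latter follows automatically as the closure of a $B$-stable set.

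For (ii), the $B$-stability from (i) permits embedding $H \times^B (\Ft \oplus \Fn_s)$ as a closed subvariety of $\wt X = H \times^B \Fb$, so composing with the proper map $\pi : \wt X \to \Fh$ yields a proper map whose image $\bigcup_{g\in H} g(\Ft \oplus \Fn_s)$ is closed in $\Fh$. Since $\Fh_{ss} = H \cdot \Ft$ by Lemma~2.3(i), this closed set contains $\ol{\Fh_{ss}}$; conversely $H \cdot (\Ft \oplus \Fn_s) = H \cdot \ol{B \cdot \Ft} \subseteq \ol{H \cdot \Ft} = \ol{\Fh_{ss}}$, proving (5.2.1). For the dimension, $Z_H(s) \simeq SL_2 \times \cdots \times SL_2$ has dimension $3n$ for $s \in \Ft_{\sr}$, so the generic fibre of $H \times \Ft \to \Fh_{ss}$ has dimension $3n$, giving $\dim \Fh_{ss} = (2n^2+n) + n - 3n = 2n^2 - n = \dim \Fh - 2n$. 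The main difficulty is the characteristic-$2$ vanishing in (i): the identities $2t_ib_{ii} = 0$ and $2\sum a_{ij}B_{jk}a_{ik} = 0$ are precisely what forces $B \cdot \Ft$ to miss the long-root direction $\FD$, so that one lands in the proper subspace $\Ft \oplus \Fn_s$ rather than in all of $\Fb$.
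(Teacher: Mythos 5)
Your proof is correct and follows essentially the same strategy as the paper's: first establish the inclusion $\FN_0 \subseteq \Ft \oplus \Fn_s$ via a matrix computation hinging on a characteristic-$2$ vanishing of diagonal entries, then conclude equality of closures by a dimension count, and derive (ii) from Lemma~2.3 together with the properness of $\pi$ on $H\times^B(\Ft\oplus\Fn_s)$. The only organizational difference is in the inclusion step: the paper computes $gsg^{-1}$ for general $g\in B$ in one stroke, observing that the upper-right block has the form $A + {}^tA$ and hence zero diagonal, whereas you factor $U = L\cdot U'$ and track each factor; your dimension count $\dim\FN_0 = n^2$ likewise reproduces, by hand, what the paper extracts from $\wt Y_0 \simeq H\times^B\FN_{0,\sr}$ and Lemma~2.9.
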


\begin{proof}
First we show that 
\begin{equation*}
\tag{5.2.2}
\bigcup_{g \in B}g(\Ft) \subset \Ft \oplus \Fn_s.
\end{equation*}
Any $g \in B$ can be written by (1.10.1) as
\begin{equation*}
g = \begin{pmatrix}
            b  &   c   \\
            0  &   {}^tb\iv
    \end{pmatrix},
\end{equation*}
where $b,c$ are square matrices of degree $n$, with $b$ non-singular
upper triangular, 
and $c$ satisfies the condition that ${}^tc = b\iv c\,{}^tb$.  
Then $g\iv$ can be written as
\begin{equation*}
g\iv = \begin{pmatrix}
         b\iv  &  -b\iv c\, {}^tb  \\
          0    &    {}^t b
       \end{pmatrix}
      = \begin{pmatrix}
           b\iv &   {}^tc  \\
           0    &   {}^tb
         \end{pmatrix}.
\end{equation*}
Thus, for a diagonal matrix $s$ of degree $n$, we have
\begin{equation*}
   x =  g\begin{pmatrix}
              s   &   0  \\
              0   &   s
      \end{pmatrix}g\iv = 
             \begin{pmatrix}
                bs b\iv  &  bs\,{}^tc + cs\,{}^tb \\
                 0      &  {}^tb\iv s\, {}^tb
             \end{pmatrix}. 
\end{equation*}
Since $bs\,{}^tc + cs\,{}^tb$ is of the form $A + {}^tA$ for a squar matrix $A$, 
its diagonal entries are all zero, hence $x \in \Ft \oplus \Fn_s$. (5.2.2) holds.  
\par
Recall that, for $k = 0$,  $\wt Y_0 \simeq H \times^B\FN_{0,\sr}$, where 
$\FN_{0,\sr} = \bigcup_{g \in B}g(\Ft_{\sr})$.  Hence 
$\dim \wt Y_0 = \dim H - \dim B + \dim \FN_{0,\sr}$.  
Since $\dim \wt Y_0 = \dim H - n$ by Lemma 2.9 (iii), 
we see that 
$\dim \FN_{0,\sr} = \dim B - n$. 
We have 
\begin{equation*}
\FN_{0,\sr} \subset \FN_0 \subset \Ft \oplus \Fn_s
\end{equation*}
by (5.2.2).  Since $\dim \Ft\oplus \Fn_s = \dim B - n$, we have
$\dim \FN_0 = \dim \Ft \oplus \Fn_s$.  Since $\FN_0$ is irreducible, 
(i) holds. 
\par
Put $\wt X_0 = H \times^B(\Ft\oplus\Fn_s)$ and 
$X_0 = \bigcup_{g \in H}(\Ft\oplus\Fn_s)$. 
The map $\pi^{(0)} : \wt X_0 \to \Fh$ is proper and $\Im\pi^{(0)} = X_0$. Hence 
$X_0$ is a closed subset of $\Fh$. 
Recall that $Y_0 = \bigcup_{g \in H}g(\Ft_{\sr})$, then 
$Y_0 \subset \bigcup_{g \in G}g(\Ft) \subset X_0$.  
Since $Y_0$ is open dense in $\Fh_{ss}$, $\ol Y_0 = \ol \Fh_{ss}$. Hence 
$\ol\Fh_{ss} \subset X_0$. 
On the other hand, $\bigcup_{g \in B}g(\Ft) = \FN_0$ is contained in 
$\Fh_{ss}$.  Hence $\Ft\oplus \Fn_s \subset \ol\Fh_{ss}$ by (i), 
and so $X_0 \subset \ol\Fh_{ss}$.  
It follows that $X_0 = \ol\Fh_{ss}$.  (5.2.1) is proved.
The last assertion follows from Lemma 2.9 (iv) since $\dim \ol\Fh_{ss} = \dim Y_0$. 
\end{proof}

\remark{5.3.}
In the case of reductive Lie algebras $\Fg$ with $p \ne 2$, clearly the closure of 
$\bigcup_{g \in B}g(\Ft)$ coincides with $\Fb$, and $\ol\Fg_{ss} = \Fg$ holds. 
Lemma 5.2 gives a special phenomenon occurring in the case where $p = 2$ and 
regular semisimple elements do not exist.  
\para{5.4.}
We shall generalize Lemma 5.2 in connection with $\FN_{k,\sr}$. 
For $k = 0,1, \dots, n$, put
\begin{equation*}
\tag{5.4.1}
\FN_k = \bigcup_{g \in B}g(\Ft + \FD_k).
\end{equation*}
Note that for $k = 0$, $\FN_k$ coincides with the previous notation.
Let $\ol\FN_k$ be the closure of $\FN_k$ in $\Fh$.
We define varieties 
\begin{align*}
\tag{5.4.2}
\wt X_k &= \{(x, gB) \in \Fh \times H/B \mid g\iv x \in \ol\FN_k \}, \\
    X_k &= \bigcup_{g \in H}g(\ol\FN_k), 
\end{align*}
and define a map $\pi^{(k)}: \wt X_k \to \Fh$ by 
$(x, gB) \mapsto x$.  Then $\pi^{(k)}$ is proper, and 
$\Im \pi^{(k)} = X_k$.  Hence $X_k$ is closed in $\Fh$.
We show a lemma.

\begin{lem} %%%%  Lemma 5.5
\begin{enumerate}
\item
$\ol\FN_k = \Ft\oplus\Fn_s \oplus \FD_k$ for each $k$.  
In particular, $\Ft\oplus\Fn_s\oplus \FD_k$ is $B$-stable. 
\item
$X_0 = \ol\Fh_{ss}$ and $X_n = \Fh$. In particular, 
the map $\pi^{(n)} : \wt X_n \to X_n$ coincides with the map 
$\pi : \wt X \to X$ given in 2.4. 
\end{enumerate}
\end{lem}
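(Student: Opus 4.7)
\medskip\noindent
\textbf{Proof proposal for Lemma 5.5.}
My plan is to prove (i) by combining a direct matrix computation (extending the one in the proof of Lemma~5.2) with a dimension count, and then to deduce (ii) as immediate corollaries for the extreme values $k = 0$ and $k = n$.

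For (i), the first step is to show the inclusion $\FN_k \subset \Ft \oplus \Fn_s \oplus \FD_k$. Write an element of $\FN_k$ as $g(s+z)g\iv$ with $g \in B$, $s \in \Ft$, $z \in \FD_k$, and split into two parts. The conjugation $gsg\iv$ was shown in the proof of Lemma 5.2(i) to lie in $\Ft \oplus \Fn_s$. For $gzg\iv$, writing $g = \begin{pmatrix} a & c \\ 0 & {}^t a\iv \end{pmatrix}$ with $a$ upper triangular and $z = \begin{pmatrix} 0 & b \\ 0 & 0 \end{pmatrix}$ with $b$ diagonal satisfying $b_i = 0$ for $i > k$, a short block computation gives
\begin{equation*}
gzg\iv = \begin{pmatrix} 0 & a b \,{}^t\! a \\ 0 & 0 \end{pmatrix}.
\end{equation*}
The matrix $a b \,{}^t\! a$ is symmetric; its $(i,i)$-entry equals $\sum_{k' \ge i} a_{ik'}^2 b_{k'}$, which vanishes whenever $i > k$ because $b_{k'} = 0$ for $k' > k$. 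Hence the diagonal part of $a b \,{}^t\! a$ lies in $\FD_k$ and the off-diagonal part lies in $\Fn_s$, giving $gzg\iv \in \FD_k \oplus \Fn_s$ as required.

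The second step is a dimension count. From $\wt Y_k \simeq H \times^B \FN_{k,\sr}$ and Lemma 2.9(iii), we obtain $\dim \FN_{k,\sr} = \dim B - n + k = n^2 + k$. On the other hand, $\dim(\Ft \oplus \Fn_s \oplus \FD_k) = n + |\Phi_s^+| + k = n + (n^2 - n) + k = n^2 + k$. Since $\FN_{k,\sr}$ is irreducible (as the image of $B \times (\Ft_{\sr} + \FD_k)$ under the action map) and is contained in the irreducible affine space $\Ft \oplus \Fn_s \oplus \FD_k$ of the same dimension, its closure fills this space; as $\FN_{k,\sr} \subset \FN_k \subset \Ft \oplus \Fn_s \oplus \FD_k$, we conclude $\ol\FN_k = \Ft \oplus \Fn_s \oplus \FD_k$. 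The $B$-stability of the closure then follows from the $B$-stability of $\FN_k$.

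Part (ii) is essentially formal from (i). For $k = 0$, $\FD_0 = 0$, so $\ol\FN_0 = \Ft \oplus \Fn_s$, and $X_0 = \bigcup_{g \in H} g(\Ft \oplus \Fn_s) = \ol\Fh_{ss}$ by Lemma 5.2(ii). For $k = n$, $\FD_n = \FD$ and $\Fn_s \oplus \FD = \Fn$, hence $\ol\FN_n = \Ft \oplus \Fn = \Fb$; therefore $X_n = \bigcup_{g \in H} g(\Fb) = \Fh$ by Lemma 2.9(i), and the definition of $\wt X_n$ reduces to that of $\wt X$, so $\pi^{(n)} = \pi$. The main technical point is the block computation in the first step, which hinges on the characteristic 2 phenomenon that only the squares $a_{ik'}^2$ (and no cross-terms) appear on the diagonal of $a b \,{}^t\! a$; everything else is bookkeeping.
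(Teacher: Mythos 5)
Your proof is correct and follows essentially the same route as the paper: the same block-matrix computation of $gzg^{-1}$ exploiting that only squares $a_{ik'}^2$ survive on the diagonal in characteristic $2$, followed by the same dimension count using Lemma~2.9(iii) to force $\ol\FN_k = \Ft\oplus\Fn_s\oplus\FD_k$, with (ii) read off at $k=0$ and $k=n$. The only cosmetic difference is that you compute $\dim\FN_{k,\sr}$ directly and sandwich $\ol\FN_k$ between $\ol{\FN_{k,\sr}}$ and the target space, whereas the paper runs the equivalent inequality chain through $\dim\wt Y_k \le \dim\wt X_k$.
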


\begin{proof}
First we show 
\begin{equation*}
\tag{5.5.1}
\bigcup_{g \in B}g(\FD_k) \subset 
           \bigoplus_{\substack{\a \in \Phi^+ \\ \a = \ve_i + \ve_j }}\Fg_{\a}
             \oplus \FD_k.
\end{equation*}
\par  
In fact, 
$x \in \FD_k$ is written as $x = \begin{pmatrix}
                                     0  &  d  \\
                                     0  &  0
                                 \end{pmatrix}$, 
where $d = \Diag (d_1, \dots, d_n)$ is a diagonal matrix with $d_i = 0$ 
for $i > k$. 
Thus under the notation in the proof of Lemma 5.2, for $g \in B$, we have
\begin{equation*}
gxg\iv = \begin{pmatrix}
             0   &   bd\, {}^tb  \\
             0   &   0
         \end{pmatrix} \in \bigoplus_{\a = \ve_i + \ve_j}\Fg_{\a} \oplus \FD.
\end{equation*}    
Put $y = (y_{ij}) = bd\, {}^tb$, and $b = (b_{ij})$.  Then 
$y_{ii} = \sum_{j \ge i}b_{ij}^2d_j$. 
It follows that $y_{ii} = 0$ for $i > k$, and (5.5.1) holds.
\par
Since we know $\bigcup_{g \in B}g(\Ft) \subset \Ft\oplus\Fn_s$ by Lemma 5.2, 
we have
\begin{equation*}
\tag{5.5.2}
\FN_k = \bigcup_{g \in B}g(\Ft + \FD_k) \subset \Ft \oplus \Fn_s \oplus \FD_k.
\end{equation*} 
\par
On the other hand, since $\FN_{k,\sr} \subset \FN_k$ and 
$\wt X_k \simeq H \times^B\ol\FN_k$, $\wt Y_k$ is regarded as a subvariety 
of $\wt X_k$. 
It follows, by (5.5.2), that
\begin{align*}
\tag{5.5.3}
\dim \wt Y_k &\le \dim \wt X_k  \\
             &= \dim H - \dim B + \dim \FN_k \\
             &\le \dim H - \dim B + \dim (\Ft\oplus \Fn_s \oplus \FD_k) \\
             &= \dim H  - n + k.
\end{align*}
We know $\dim \wt Y_k = \dim H - n + k$ by Lemma 2.9. 
Hence the inequalities in(5.5.3) are actually  equalities, 
and we have $\dim \FN_k = \dim \Ft\oplus\Fn_s\oplus\FD_k$. 
Since $\FN_k$ is irreducible, we conclude that 
$\ol\FN_k = \Ft\oplus\Fn_s\oplus\FD_k$. This proves (i). 
\par
For (ii), we know $X_0 = \ol\Fh_{ss}$ by Lemma 5.2.  
Since $\ol \FN_n = \Fb$ by (i), we have $X_n = X$.
Thus (ii) holds. The lemma is proved.
\end{proof}

As a corollary, we have the following.

\begin{prop}  %%%%  Prop. 5.6
For $k = 0, 1, \dots, n$, we have
\begin{enumerate}
\item 
$\wt X_k$ is a smooth, irreducible variety. 
\item
$X_k$ is a closed subset of $\Fh$, with 
$X_k = \bigcup_{g \in H}g(\Ft\oplus\Fn_s\oplus\FD_k)$.  
\item
$\wt Y_k$ is open dense in $\wt X_k$, and $Y_k$ is open dense in $X_k$.
\item
$\dim \wt X_k = \dim H - n + k$, 
$\dim X_k = \dim H - 2n + 2k$. 
\end{enumerate}
\end{prop}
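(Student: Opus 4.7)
The starting point is Lemma 5.5(i), which identifies $\ol\FN_k = \Ft\oplus\Fn_s\oplus\FD_k$ as a $B$-stable linear subspace of $\Fb$. Consequently, the definition (5.4.2) realizes $\wt X_k$ as the vector bundle $H\times^B\ol\FN_k$ over the smooth projective variety $H/B$ with linear fibre, which yields (i) at once. The dimension count
\begin{equation*}
\dim\wt X_k = (\dim H-\dim B) + \dim\ol\FN_k = (\dim H-\dim B) + (\dim B-n+k) = \dim H-n+k,
\end{equation*}
(using $\dim\Fn_s = \dim\Fn - n = \dim B - 2n$) gives the first formula in (iv). Claim (ii) is immediate: $\pi^{(k)}$ is proper, so $X_k = \pi^{(k)}(\wt X_k)$ is closed in $\Fh$, and the explicit description is then Lemma 5.5(i).

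The essential point in (iii) is that $\wt Y_k$ is open in $\wt X_k$, which I reduce to the identity
\begin{equation*}
\FN_{k,\sr} = p_1\iv(\Ft_{\sr})\subset\ol\FN_k,
\end{equation*}
where $p_1:\ol\FN_k = \Ft\oplus\Fn_s\oplus\FD_k\to\Ft$ is the projection along $\Fn$. The inclusion $\subseteq$ follows from $B$-invariance of $p_1$, which is a consequence of $\Ad(B)(\Ft)\subset\Ft+\Fn$ and $\Ad(B)(\Fn)\subset\Fn$. For the reverse inclusion, given $y=s+n_s+d\in p_1\iv(\Ft_{\sr})$, I exploit that $\ad(s):\Fn_s\to\Fn_s$ is a linear bijection (because $d\alpha(s)\ne 0$ for every short root $\alpha$ when $s\in\Ft_{\sr}$, while $\ad(s)$ annihilates $\FD$) to construct, by a direct matrix computation in $U$, an element $u\in U$ with $\Ad(u\iv)(y)\in\Ft+\FD_k$. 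I expect this construction to be the principal obstacle: in characteristic $2$ the standard exponential passage from the Lie-algebra bijection $\ad(s)$ to a group element is unavailable, so one must solve the relevant system of root-coordinate equations by hand, exploiting the fact that in type $C_n$ the only nontrivial commutator relations between $\Fn_s$ and $\FD$ are the pairings $[\Fg_{\ve_i-\ve_j},\Fg_{2\ve_j}]\subset\Fg_{\ve_i+\ve_j}$. Once the identity is proved, $\FN_{k,\sr}$ is open in $\ol\FN_k$, hence $\wt Y_k\simeq H\times^B\FN_{k,\sr}$ is open in $\wt X_k$; density then follows from $\dim\wt Y_k = \dim H-n+k = \dim\wt X_k$ (Lemma 2.9(iii)) combined with the irreducibility of $\wt X_k$ from (i).

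The remaining parts of (iii) and the second dimension formula in (iv) follow quickly. Density of $Y_k$ in $X_k$ is immediate from density of $\wt Y_k$ in $\wt X_k$ together with properness of $\pi^{(k)}$. For openness of $Y_k$ in $X_k$, I identify
\begin{equation*}
Y_k = \w\iv(\Xi_{\sr})\cap X_k,
\end{equation*}
where $\w:\Fh\to\Xi$ is the Steinberg map and $\Xi_{\sr}=\w_1(\Ft_{\sr})$ is open in $\Xi$. The key observation is that for any $(x,gB)\in\wt X_k$, the element $g\iv x\in\Fb$ is block upper triangular with diagonal equal to $p_1(g\iv x)\in\Ft$, whose eigenvalues coincide with those of the semisimple part $x_s$; hence $x_s$ being subregular forces $p_1(g\iv x)\in\Ft_{\sr}$, and so $(x,gB)\in\wt Y_k$ by the identity above, giving $x\in Y_k$. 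Finally, $\dim X_k = \dim Y_k = \dim H-2n+2k$ by Lemma 2.9(iv).
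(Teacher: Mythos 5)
The overall plan matches the paper for (i), (ii), and (iv), and your observation that the crux of (iii) is openness of $\FN_{k,\sr}$ in $\ol\FN_k$ (which the paper simply asserts) is exactly right, as is your reduction to the identity $\FN_{k,\sr}=p_1\iv(\Ft_{\sr})$ and your proof of the easy inclusion $\subseteq$ via $B$-equivariance of $p_1$. However, the reverse inclusion --- which you correctly flag as the ``principal obstacle'' --- is left as a sketch and never carried out; since the rest of (iii) (and in particular your Steinberg-map argument for openness of $Y_k$ in $X_k$) hinges on this identity, this is a genuine gap, not just a loose end.

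You do not in fact need the root-coordinate computation in $U$ that you anticipate; Lemma~2.3(ii), already in the paper, gives a characteristic-free shortcut. Take $y\in\ol\FN_k$ with $p_1(y)=s\in\Ft_{\sr}$, and let $y=y_s+y_n$ be the Jordan decomposition in $\Fb$. As $y_n\in\Fb$ is nilpotent, it is upper-triangular with zero diagonal, so $y_n\in\Fn$ and hence $p_1(y_s)=s$. By Lemma~2.3(ii) there is $b\in B$ with $b(y_s)\in\Ft$; $B$-invariance of $p_1$ then forces $b(y_s)=s$. Now $b(y_n)\in\Fn$ commutes with $s\in\Ft_{\sr}$, and since $\ad(s)$ is injective on $\Fn_s$ and kills $\FD$, we get $\Fz_{\Fn}(s)=\FD$, so $b(y_n)\in\FD$. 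Finally $b(y_n)=b(y)-s\in\ol\FN_k$ (as $\ol\FN_k$ is $B$-stable and $\Ft$-containing), so $b(y_n)\in\FD\cap\ol\FN_k=\FD_k$. Thus $b(y)\in\Ft_{\sr}+\FD_k$ and $y\in\FN_{k,\sr}$. Alternatively, one can avoid the set-theoretic identity entirely: the morphism $\mu:B\times(\Ft_{\sr}+\FD_k)\to\ol\FN_k$, $(b,x)\mapsto \Ad(b)x$, has surjective differential at every point of the source --- at $(1,s+d)$ the image of $d\mu$ is $[\Fb,s+d]+\Ft+\FD_k$, and $[\Fn_s,s+d]=\Fn_s$ because $\ad(s+d)|_{\Fn_s}$ is unipotent-plus-invertible-diagonal in the height filtration ($d\a(s)\ne 0$ for short $\a$, while $[\Fn_s,\FD]\subset\Fn_s$ raises height) --- so $\mu$ is smooth, hence open, and its image $\FN_{k,\sr}$ is open in $\ol\FN_k$. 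Either route closes the gap cleanly.
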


\begin{proof}
By Lemma 5.5, $\wt X_k \simeq H \times^B(\Ft\oplus\Fn_s\oplus\FD_k)$. 
Hence $\wt X_k$ is smooth, irreducible.  This proves (i).  (ii) also 
follows from Lemma 5.5.
We have $\wt Y_k \simeq H \times^B\FN_{k,\sr}$, and $\FN_{k,\sr}$ is open dense 
in $\Ft\oplus\Fn_s\oplus\FD_k$.  Hence $\wt Y_k$ is open dense in $\wt X_k$. 
$Y_k$ coincides with the subset of 
$X_k$ consisting of $x \in X_k$ such that its semisimple part 
$x_s$ is contained in $Y_0 = \bigcup_{g \in H}g(\Ft_{\sr})$.  Since $Y_0$ is open dense 
in $\Fh_{ss}$, $Y_k$ is open dense in $X_k$. This proves (iii).
Then (iv) follows from  Lemma 2.9.  
\end{proof}

We shall prove the following theorem.  

\begin{thm}  %%%%  Theorem 5.7.
$\pi_!\Ql[d_n]$ is a semisimple perverse sheaf on $X = \Fh$, equipped with 
the action of $W_n $, and is decomposed as
\begin{equation*}
\pi_!\Ql[d_n] \simeq \bigoplus_{0 \le k \le n}
                 \bigoplus_{\r \in (S_k \times S_{n-k})\wg}
                    \wh\r \otimes \IC(X_k, \SL_{\r})[d_k]. 
\end{equation*}  
\end{thm}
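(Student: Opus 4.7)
The plan is to run a restriction argument from $X = \Fh$ to the open dense subvariety $Y$, where the decomposition of Proposition 2.13 applies, and then to use the multiplicity bound of Proposition 4.11 to exclude summands supported on the complement $X \setminus Y$. First, since $\pi : \wt X \to X$ is proper and semismall by Lemma 4.3, the decomposition theorem implies that $K := \pi_!\Ql[d_n]$ is a semisimple perverse sheaf on $X$. By 4.7 the complex $\pi_!\Ql = (\pi^{\flt})_!\Ql|_{\Fh}$ inherits a $W_n$-action from $(\pi^{\flt})_!\Ql$, and this $W_n$-action commutes with the decomposition in the usual way.

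Next I would restrict $K$ to the open dense subset $Y \subset X = \Fh$ (openness and density via Proposition 5.6 together with Lemma 2.9 (i)). By Lemma 2.5, $\psi : \wt Y \to Y$ is the restriction of $\pi$ to $\pi\iv(Y)$, so by base change $K|_Y \simeq \psi_!\Ql[d_n]$. Moreover, Proposition 3.10 identifies this restriction as $W_n$-equivariant complexes, so that the $W_n$-action on $K|_Y$ coming from $K$ agrees with the $W_n$-action on $\psi_!\Ql$ constructed in Section 2. Therefore Proposition 2.13 yields
\begin{equation*}
K|_Y \simeq \bigoplus_{0 \le k \le n}\bigoplus_{\r \in (S_k \times S_{n-k})\wg} \wh\r \otimes \IC(Y_k,\SL_{\r})[d_k]
\end{equation*}
as semisimple perverse sheaves with $W_n$-action on $Y$.

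Now, by Proposition 5.6, $Y_k$ is open dense in $X_k$ and $X_k$ is closed in $X = \Fh$, so the intermediate extension $\IC(X_k,\SL_{\r})[d_k]$ is a simple perverse sheaf on $X$ whose restriction to $Y$ is $\IC(Y_k,\SL_{\r})[d_k]$. Since $K$ is semisimple perverse on $X$, every simple summand $A$ of $K$ whose support meets $Y$ satisfies $A|_Y$ simple perverse (up to shift), and the above decomposition of $K|_Y$ forces $A$ to be one of the $\IC(X_k,\SL_{\r})[d_k]$, occurring with multiplicity $\dim \wh\r$. Thus
\begin{equation*}
K \simeq \Biggl(\bigoplus_{0 \le k \le n}\bigoplus_{\r \in (S_k \times S_{n-k})\wg} \wh\r \otimes \IC(X_k,\SL_{\r})[d_k]\Biggr) \oplus K',
\end{equation*}
where $K'$ is a semisimple perverse sheaf on $X$ all of whose simple summands have support contained in $X \setminus Y$.

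To finish, I would show $K' = 0$ by a counting argument. By Proposition 4.11, if $K = \bigoplus_A V_A \otimes A$ with $m_A = \dim V_A$, then $\sum_A m_A^2 = |W_n|$. The contribution from the displayed direct sum is
\begin{equation*}
\sum_{0 \le k \le n}\sum_{\r \in (S_k \times S_{n-k})\wg}(\dim \wh\r)^2 = \sum_{\r' \in W_n\wg}(\dim \r')^2 = |W_n|,
\end{equation*}
using the bijection (2.12.1). This saturates the bound of Proposition 4.11, so $K' = 0$ and the theorem follows. The main obstacle, I expect, is verifying cleanly that the $W_n$-equivariant identifications are compatible through the restriction $Y \hookrightarrow X$ (so that the summands on $Y$ correspond to the right isotypic components on $X$); but Proposition 3.10 has been set up precisely to make this step automatic, and everything else is a bookkeeping application of the decomposition theorem together with the multiplicity estimate from Proposition 4.11.
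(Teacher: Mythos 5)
Your argument is correct, and it is genuinely more direct than the one the paper gives. The paper proves Theorem 5.7 together with Proposition 5.16 by a nested induction on $n$: it analyses $(\pi_k)_!\Ql$ on each open stratum $X_k^0$ via the principal-bundle diagram (5.13.2), invoking the inductive hypothesis for the smaller symplectic group $H_2 = Sp(\ol M_k)$ (Lemma 5.18), treats $k=0$ separately by restricting to $Y_0$ and allowing an error term $\SN_0$ (5.19.1), reassembles along the filtration $X_0 \subset X_1 \subset \cdots \subset X_n$ as in (5.19.2)--(5.19.3), and only at the very end invokes Proposition 4.11 to kill $\SN_0$. You collapse almost all of this into one step: restrict $K = \pi_!\Ql[d_n]$ to the open dense $Y$, use base change together with Proposition 3.10 (for $W_n$-equivariance) and Proposition 2.13 (for the decomposition on $Y$) to pin down every simple summand whose support meets $Y$ along with its multiplicity space $\wh\r$, note that $\IC(Y_k,\SL_\r)$ extends uniquely to $\IC(X_k,\SL_\r)$ because $Y_k$ is open dense in the closed $X_k$ (Proposition 5.6), and then apply the same multiplicity bound from Proposition 4.11 to see the count $\sum_A m_A^2 = |W_n|$ is already saturated, so nothing is supported in $X \setminus Y$. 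All the inputs you cite are established independently of Theorem 5.7, so there is no circularity. What the paper's longer route buys is Proposition 5.16 itself --- the finer assertion that every summand of $(\pi_k)_!\Ql$ has full support on $X_k^0$ --- which is needed later (e.g. (6.15.2) and Remark 6.18), but which your argument does not produce and which Theorem 5.7 does not require.
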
 

\para{5.8.}
As in the case of $Y_k^0$, put $X_k^0 = X_k - X_{k-1}$. 
By (2.6.3), $Y_k^0 \subset X_k^0$.  Hence $Y_k^0$ is open dense in $X_k^0$. 
For each $k$, we define a locally closed subvariety $\wt X^+_k$ of $\wt X$ by 
$\wt X^+_k = \pi\iv(X_k^0)$. 
Let $\pi_k : \wt X^+_k \to X_k^0$ be the restriction of $\pi$ on $\wt X^+_k$. 
\par
Take $s \in \Ft$, and consider the decomposition 
$V = V_1 \oplus \cdots \oplus V_a$ into eigenspaces of $s$, 
with $\dim V_i = 2n_i$.  
Then $Z_H(s) \simeq Sp(V_1) \times \cdots \times Sp(V_a)$.
Put $H_i = Sp(V_i)$ and $\Fh_i = \Fs\Fp(V_i)$. 
We consider the corresponding decomposition 
$\FD = \FD^1 \oplus\cdots \oplus\FD^a$ with $\FD^i \subset \Fh_i$.
The subvariety $X^{H_i,0}_{k_i}$ of $\Fh_i$ is defined similarly to 
$X^0_k$ by replacing 
$H, \Fh, \FD$, etc. by $H_i, \Fh_i, \FD^i$, etc. 
\par
For each $(x, gB) \in \wt X^+_k$, we associate a subset $I \subset [1,n]$
such that $|I| = k$ as follows.  By definition, $g\iv x \in X_k^0 \cap \Fb$.   
In the case where $g\iv x \in \Fn$, put $I = [1,k]$.  
In general, 
by replacing $g\iv x \in \Fb$ by its $B$-conjugate if necessary, we may assume that 
$g\iv x = s + z$, where $s \in \Ft, z \in \Fn$ with $[s,z] = 0$, 
hence $z \in \Lie Z_H(s)$. 
Then $z$ can be written as $z = \sum_iz_i$ with $z_i \in \Fh_i$. 
There exists $k_i$ such that $z_i \in X^{H_i,0}_{k_i}$ for $i = 1, \dots, a$
and that $\sum_{i=1}^ak_i = k$. 
We put $I_i' = [1, k_i] \subset [1, n_i]$, which corresponds to the nilpotent case 
for $H_i$.   
Now $V = V_1\oplus\cdots \oplus V_a$ gives a partition $[1,n] = \coprod_iJ_i$
such that $|J_i| = n_i$. Under the correspondence $J_i \lra [1, n_i]$, 
$I_i'$ gives a subset $I_i \subset J_i$ (the first $k_i$ letters in $J_i$). 
Put $I = \coprod_{i=1}^a I_i$.  Thus $I$ is a subset of $[1,n]$ such that 
$|I| = k$. 
Note that $I$ depends only on the $B$-conjugates of $g\iv x$. Thus 
$I$ is determined by $(x, gB) \in \wt X^+_k$.  We denote this assignment 
by $(x, gB) \mapsto I$.    
For each $I$ with $|I| = k$, we define a subset $\wt X_I$ of $\wt X^+_k$ by 
\begin{equation*}
\tag{5.8.1}
\wt X_I = \{ (x, gB) \in \wt X^+_k \mid (x, gB) \mapsto I \}.
\end{equation*}
We show the following lemma.

\begin{lem}  %%%%  Lemma 5.9
$\wt X^+_k$ is decomposed as
\begin{equation*}
\tag{5.9.1}
\wt X^+_k = \coprod_{\substack{I \subset [1,n] \\ |I| = k}} \wt X_I,
\end{equation*}
where $\wt X_I$ is an irreducible component of $\wt X^+_k$ for each $I$. 
\end{lem}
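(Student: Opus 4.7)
The plan is to establish (5.9.1) as a decomposition of $\wt X^+_k$ into irreducible components via three steps: (a) the assignment $(x, gB) \mapsto I$ from 5.8 is well-defined, so that the right-hand side of (5.9.1) is a genuinely disjoint union; (b) each $\wt Y_I$ is an open dense irreducible subvariety of $\wt X_I$ of dimension $\dim H - n + k$; and (c) each $\wt X_I$ is an irreducible component of $\wt X^+_k$.

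For (a), given $(x, gB) \in \wt X^+_k$, Lemma 2.3(ii) produces a $B$-conjugate $s + z$ of $g^{-1}x$ with $s \in \Ft$, $z \in \Fn$, $[s, z] = 0$. Two such presentations differ by an element of $N_B(\Ft) \cdot Z_B(s)$; since the unipotent radical of $B$ does not meet the normalizer $N_H(\Ft)$ outside $\{1\}$, one has $N_B(\Ft) = T$, which fixes $s$ pointwise (together with the eigenspace partition $\coprod J_i$). The residual $Z_B(s) = B \cap Z_H(s)$-conjugation acts on $z = \sum z_i$ factorwise as $(B \cap H_i)$-conjugation on each $z_i \in \Fh_i$, and because the stratum $X^{H_i, 0}_{k_i}$ is $H_i$-invariant, each $k_i$, and hence the full subset $I = \coprod_i I_i$, is intrinsic to $(x, gB)$. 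This establishes (5.9.1) as a disjoint union.

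For (b), an element $(x, gB) \in \wt Y_I$ has $g^{-1}x \in \Ft_{sr} + \FD_I$ after $B$-conjugation: $s \in \Ft_{sr}$ makes each $J_i$ a singleton and $z \in \FD_I$ forces $k_i = 1$ exactly for $i \in I$, so the assignment yields $I$. Hence $\wt Y_I = \wt X_I \cap \wt Y^+_k$ is open in $\wt X_I$ (as $\wt Y^+_k = \pi^{-1}(Y^0_k)$ is open in $\wt X^+_k = \pi^{-1}(X^0_k)$), and is irreducible of dimension $\dim H - n + k$ by (2.7.2) and Lemma 2.9(iii). For density, starting from $(x, gB) \in \wt X_I$ with $g^{-1}x = s + z$, the plan is to conjugate each $z_i \in X^{H_i, 0}_{k_i}$ within $H_i$ to a normal form $z_i' \in \FD^{H_i}_{[1, k_i]}$ (possible by the Hesselink--Spaltenstein classification of nilpotent orbits in characteristic $2$, or equivalently by an inductive instance of the present lemma applied to the smaller group $Sp(V_i)$), then deform the semisimple part via $s(t) = s + t s_1 \in \Ft_{sr}$ for a generic $s_1 \in \Ft$ and $t \ne 0$; reassembling and adjusting the flag representative $g$ by the conjugating element yields a family in $\wt X^+_k$ whose general fiber lies in $\wt Y_I$ and which specializes to $(x, gB)$ at $t = 0$.

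For (c), Lemma 4.3 (semismallness of $\pi$) combined with Proposition 5.6(iv) gives that every irreducible component of $\wt X^+_k$ has dimension at most $\dim X^0_k + (n - k) = \dim H - n + k$. Since $\wt X_I$ is irreducible of exactly this dimension by (b), it is an irreducible component; the disjoint union in (5.9.1) thereby exhausts the irreducible components of $\wt X^+_k$. The main obstacle is the density step in (b), namely constructing a coherent one-parameter deformation that specializes a general element of $\wt X_I$ to the subregular locus $\wt Y_I$; this ultimately reduces, via the factorization $Z_H(s) = \prod_i H_i$ of the centralizer, to statements about nilpotent orbits in each $\Fs\Fp(V_i)$ in characteristic $2$, which are available from the Hesselink--Spaltenstein parametrization or from the inductive hypothesis in $n$.
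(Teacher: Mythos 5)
The decomposition (5.9.1) and well-definedness of the assignment $(x,gB)\mapsto I$ in your step (a) are fine (and indeed more careful than the paper's ``clear from the definition''), and the dimension bound you invoke in (c) is correct. The problem is that (c) is logically incomplete: showing that $\wt X_I$ is irreducible and has the maximal possible dimension among irreducible subvarieties of $\wt X^+_k$ does \emph{not} imply that $\wt X_I$ is an irreducible component. You also need $\wt X_I$ to be \emph{closed} in $\wt X^+_k$. For a toy counterexample, take $Z = \{y=0\}\sqcup\{x=0,\ y\neq 0\}\subset\AA^2$: the second piece is irreducible of maximal dimension, but it is not an irreducible component of $Z$ (its closure in $Z$ is strictly bigger). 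In a disjoint union $\wt X^+_k = \coprod_I\wt X_I$ with each piece irreducible, the irreducible components of $\wt X^+_k$ are the sets $\ol{\wt X_I}\cap\wt X^+_k$, and these coincide with the $\wt X_I$ only once closedness is established. This is exactly what the paper supplies and your argument omits: they observe that $Z_I := \bigcup_{I'\subseteq I}\wt X_{I'}$ is closed in $\wt X$, so $\wt X_I = Z_I\cap\wt X^+_k$ is closed in $\wt X^+_k$, and combined with irreducibility this finishes the proof.

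A secondary point concerns (b). You make the density of $\wt Y_I$ in $\wt X_I$ the load-bearing step (and acknowledge that the deformation argument is only sketched), deducing irreducibility of $\wt X_I$ afterwards. The paper reverses the logic: it proves irreducibility of $\wt X_I$ \emph{directly}, by parametrizing its elements via $H$-conjugates of pairs $(s+z, B)$ with $s$ lying in the irreducible locus of $\Ft$ giving a fixed eigenspace partition and $z$ in the (irreducible) nilpotent piece $\bigcup_{g\in H}g(\Fn_s\oplus\FD_k)$. Once $\wt X_I$ is known to be irreducible, density of the nonempty open subset $\wt Y_I = \wt X_I\cap\wt Y^+_k$ is automatic, with no deformation construction needed. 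Reordering your argument this way would close the gap you flagged in (b), but it would not by itself repair (c); you would still need the closedness argument.
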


\begin{proof}
It is clear from the definition that $\wt X_I$ are mutually disjoint, and 
gives a partition (5.9.1) of $\wt X^+_k$.
We show that $\wt X_I$ is irreducible.  In fact, 
$X_k \cap \Fh\nil = \bigcup_{g \in H}g(\Fn_s\oplus\FD_k)$ is irreducible. 
The set of $s \in \Ft$ such that the eigenspace decomposition of $V$ gives a
 fixed partition $[1,n] = \coprod_iJ_i$ 
is irreducible.  Hence the set of $x = s + z \in X^0_k \cap \Fb$ with $s$
above is irreducible.  Since $\wt X_I$ is the set of $H$-conjugates of  
$(x, B)$ with $x$ as above, $\wt X_I$ is irreducible.
Now 
$\wt Y_I$ is a subset of $\wt X_I$, which consists of $(x, gB) \mapsto I$ 
such that $g\iv x = s + z$ with $s \in \Ft_{\sr}$.
Thus $\wt Y_I$ is an open dense subset of $\wt X_I$.
Since $\wt Y^+_I = \coprod_i \wt Y_I$, with $\wt Y_I$ irreducible, 
$\wt X^+_k = \bigcup_i\ol{\wt Y_I}$ gives a decomposition into irreducible 
components, where $\ol{\wt Y_I}$ is the closure of $\wt Y_I$ in $\wt X^+_k$.  
In order to prove the lemma, it is enough to see that $\wt X_I$ is closed. 
But the set $Z_I = \bigcup_{I'}\wt X_{I'}$ is closed in $\wt X$, where 
$I'$ runs over all subsets of $[1,n]$ such that $I' \subseteq I$.
Hence $\wt X_I = Z_I \cap \wt X^+_k$ is closed in $\wt X^+_k$.  The lemma is proved. 
\end{proof}

\para{5.10.}
For each $x \in X^0_k$, we associate an $x$-stable isotropic subspace $W_x \subset V$
with $\dim W_x = k$ as follows.  Assume that $x \in X^0_k \cap \Fb$. Up to 
$B$-conjugate, we can write $x = s + z$ with $s \in \Ft, z \in \Fn, [s, z] = 0$.
In the case where $x \in \Fn$, namely $s = 0$, 
let $W_x$ be the subspace of $V$ spanned by $e_1, \dots, e_k$. 
Then $W_x$ is an $B$-stable isotropic subspace of $V$, and is $x$-stable.  
For $x = s + z \in \Fb$ in general, as in the discussion in 5.8, 
$z$ can be written as $z = \sum_{i=1}^az_i$ 
with $z_i \in X^{H_i,0}_{k_i}$ for some $k_i$ such that $0 \le k_i \le n_i$ and that 
$\sum k_i = k$. We define an isotropic subspace $W_i \subset V_i$ for each $i$, 
by applying the above discussion to the nilpotent element $z_i \in \Fh_i$, and 
put $W_x = W_1\oplus \cdots \oplus W_a$. 
Then $W_x$ is an $(B \cap Z_H(s))$-stable isotropic subspace of $V$ with $\dim W_x = k$. 
In general, for $x \in X^0_k$, one can find $g \in H$ such that $g\iv x \in \Fb$, 
and that $g\iv x$ can be written as $g\iv x = x' = s + z$ as above.  If we fix such $s$, 
the choice of $g$ is unique up to $(B \cap Z_H(s))$-conjugate.
We define $W_{x'}$ as above, and put $W_x = g(W_{x'})$.  This $W_x$ satisfies 
the required property.     
\par
$W_x^{\perp}/W_x$ has a natural symplectic structure, and put 
$H_x = Sp(W_x^{\perp}/W_x)$.  
The action of $x$ on $W_x$ induces $x|_{W_x^{\perp}/W_x} \in \Lie H_x$. 
One can check that $x|_{W_x^{\perp}/W_x}$ is contained in $X_0^{H_x}$, where 
$X_0^{H_x}$ is defined similarly to $X_0$ by replacing $H$ by $H_x$. 

\para{5.11.}
For $i = 1, \dots, n$, let $M_i$ be the isotropic subspace of $V$ 
spanned by $e_1, \dots, e_i$. Put $\ol M_i = M_i^{\perp}/M_i$.
$\ol M_i$ has a natural symplectic structure. 
We fix $k$, and consider 
$G_1 = GL(M_k)$, $H_2 = Sp(\ol M_k)$. Also put
$\Fg_1 = \Lie G_1, \Fh_2 = \Lie H_2$. 
Let $X'^0_{k'}$ be the subvariety of $X' = \Fh_2$ defined 
similarly to $X^0_k$ by replacing $H$ by $H_2$.
We consider $X'^0_0 = X'_0$ for $k' = 0$. 
\par
We define a variety $\SG_k$ by
\begin{align*}
\tag{5.11.1}
\SG_k = \{ (x, &\f_1, \f_2) \mid x \in X^0_k, \\
           &\f_1 : W_x \isom M_k, \f_2 : W_x^{\perp}/W_x \isom \ol M_k
                   (\text{ symplectic isom.}) \}.  
\end{align*}

We consider the diagram 
\begin{equation*}
\tag{5.11.2}
\begin{CD}
\Fg_1 \times X'_0 @<\s<< \SG_k @>q>> X^0_k, 
\end{CD}
\end{equation*}
where 
\begin{align*}
q &: (x, \f_1, \f_2) \mapsto x, \\
\s &: (x, \f_1, \f_2) \mapsto (\f_1(x|_{W_x})\f_1\iv, 
                         \f_2(x|_{W_x^{\perp}/W_x})\f_2\iv).
\end{align*}

\par
$H \times (G_1 \times H_2)$ acts on $\SG_k$ by 
\begin{equation*}
(g, (h_1, h_2)) : (x, \f_1, \f_2) \mapsto (gx, h_1\f_1g\iv, h_2\f_2g\iv)
\end{equation*}
for $g \in H, h_1 \in G_1, h_2 \in H_2$. 
Moreover, $\s$ is $H \times (G_1 \times H_2)$-equivariant with respect to 
the adjoint action of $G_1 \times H_2$ and the trivial action of 
$H$ on $\Fg_1 \times X'_0$. 
By a standard argument, one can check 
\par\medskip\noindent
(5.11.3) \ The map $q$ is a principal bundle with fibre isomorphic to 
$G_1 \times H_2$.  The map $\s$ is a locally 
trivial fibration with smooth, connected fibre of dimension $\dim H$. 

\remark{5.12.}
The variety $\SG_k$ introduced here 
is a different type of the variety $\SG_k$ discussed in [SS, 4.5].
The discussion below has some similarly with the discussion in 
[Sh, Section 2]. 

\para{5.13.}
Let $B_1$ be the Borel subgroup of $G_1$ which is the stabilizer of 
the flag $(M_i)_{0 \le i \le k}$ in $G_1$, and $B_2$ the Borel subgroup of 
$H_2$ which is the stabilizer of the flag $(M_{k + i}/M_k)_{k \le i \le n}$ 
in $H_2$. Put 
\begin{equation*}
\wt\Fg_1 = \{ (x, gB_1) \in \Fg_1 \times G_1/B_1 \mid g\iv x \in \Lie B_1 \}, 
\end{equation*}  
and define $\pi^1: \wt\Fg_1 \to \Fg_1$ by $(x,gB_1) \mapsto x$. 
We define $\pi^2: \wt X' \to X' = \Fh_2$ similarly 
to $\pi : \wt X \to X$, by replacing $H$ by $H_2$.  
We put $\wt X'_0 = \wt X'^+_{0} = (\pi^2)\iv(X'_{0})$, and let 
$\pi^2_{0}$ be the restriction of $\pi^2$ on $\wt X'_{0}$. 
We define a variety
\begin{align*}
\tag{5.13.1}
\wt Z^+_k = \{ (x, gB, &\f_1, \f_2) \mid (x, gB) \in \wt X^+_k, \\
                &\f_1 : W_x \isom M_k, \f_2 : W_x^{\perp}/W_x \isom \ol M_k \}
\end{align*}
and define a map $\wt q : \wt Z^+_k \to \wt X^+_k$ by the natural projection.
We define a map $\wt\s : \wt Z^+_k \to \wt\Fg_1 \times \wt X'^+_{0}$ 
as follows; take $(x,gB, \f_1, \f_2) \in \wt Z^+_k$.  
Let $s$ be the semisimple part of $x$.  
Then $B_s = Z_H(s) \cap gBg\iv$ is a Borel subgroup of $Z_H(s)$ such that 
$x \in \Lie B_s$. 
By 5.10, $W_x$ is a $B_s$-stable subspace of 
$V$, and is decomposed as $W_x = W_1 \oplus \cdots \oplus W_a$ 
according to the decomposition $V = V_1 \oplus \cdots \oplus V_a$ into 
eigenspaces of $s$. 
Then $\prod_i(GL(W_i) \cap B_s)$ is a Borel subgroup of $\prod_iGL(W_i)$, and 
there exists a unique Borel subgroup $B^1_x$ of $GL(W_x)$ containing it.
We see that $x|_{W_x} \in \Lie B^1_x$. We denote by $g_1B_1 \in G_1/B_1$ the element 
corresponding to the Borel subgroup $\f_1(B^1_x)\f_1\iv$ of $G_1$.     
On the other hand, $W_i$ is $B_s$-stable, and we have a  homomorphism 
$B_s \to Sp(W_i^{\perp}/W_i)$. If we denote by $B_{s,i}$ 
the image of this map, then  
$\prod_i B_{s,i}$ is a Borel subgroup of $\prod_iSp(W_i^{\perp}/W_i)$, and 
there exists a unique Borel subgroup $B'_x$ of $Sp(W_x^{\perp}/W_x)$ containing it.
We see that $x|_{W_x^{\perp}/W_x} \in B'_x$. 
We denote by $g_2B_2 \in H_2/B_2$ the element corresponding to the Borel subgroup 
$\f_2(B'_x)\f_2\iv$ of $H_2$.  
We now define $\wt\s : \wt Z^+_k \to \wt \Fg_1 \times \wt X'_0$ by 
\begin{equation*}
\wt\s : 
(x, gB, \f_1, \f_2) \mapsto ((\f_1(x|_{W_x})\f_1\iv, g_1B_1),
                             (\f_2(x_{W_x^{\perp}/W_x})\f_2\iv, g_2B_2).
\end{equation*}  

\par
We define a map $\wt\pi_k : \wt Z^+_k \to \SG_k$ by 
$(x, gB, \f_1, \f_2) \mapsto (x, \f_1, \f_2)$. 
Then we have the following commutative diagram extending (5.11.2).
\begin{equation*}
\tag{5.13.2}
\begin{CD}
\wt \Fg_1 \times \wt X'_{0} @<\wt\s << \wt Z^+_k @>\wt q>> \wt X^+_k \\
    @V\pi^1 \times \pi^2_{0}VV   @VV\wt\pi_kV          @VV\pi_kV    \\
  \Fg_1 \times X'_{0}  @<\s<< \SG_k  @>q>>  X^0_k. 
\end{CD}
\end{equation*}

\para{5.14.}
Let $\Fg_{1,\rg}$ be the set of regular semisimple elements in $\Fg_1$.
Let $\psi^1$ be the restriction of $\pi^1: \wt\Fg_1 \to \Fg_1$ to
$(\pi^1)\iv (\Fg_{1,\rg})$. Then $\psi^1$ is a finite Galois covering 
with Galois group $S_k$, and $(\psi^1)_!\Ql$ is decomposed as
\begin{equation*}
(\psi^1)_!\Ql \simeq \bigoplus_{\r_1 \in S_k\wg}\r_1 \otimes \SL^1_{\r_1},
\end{equation*}
where $\SL^1_{\r_1}$ is a simple local system on $\Fg_{1,\rg}$ 
corresponding to $\r_1$.  $\Fg_{1,\rg}$ is an open dense subset of $\Fg_1$, 
and it is well-known that 
$(\pi^1)_!\Ql[\dim \Fg_1]$ is a semisimple perverse sheaf, equipped with 
$S_k$-action, and is decomposed as 
\begin{equation*}
\tag{5.14.1}
(\pi^1)_!\Ql[\dim \Fg_1] \simeq \bigoplus_{\r_1 \in S_k\wg}
          \r_1\otimes \IC(\Fg_1, \SL^1_{\r_1})[\dim \Fg_1].
\end{equation*} 
We put $A_{\r_1} = \IC(\Fg_1, \SL^1_{\r_1})[\dim \Fg_1]$. 
\par
On the other hand, the varieties $Y'^0_{i}, \wt Y'^+_{i}$ and the map 
$\psi^2_{i} : \wt Y'^+_{i} \to Y'^0_{i}$ are defined similarly to 
$Y^0_i, \wt Y_i$ and $\psi_i : \wt Y^+_i \to Y_i^0$,  
by replacing $H$ by $H_2$.  In particular, in the case where $i = 0$, 
we have, by (2.10.5), 
\begin{equation*}
\tag{5.14.2}
(\psi^2_{0})_!\Ql \simeq \bigoplus_{\r_2 \in S_{n-k}\wg}
                    H^{\bullet}(\BP_1^{n-k})\otimes \r_2 \otimes \SL^2_{\r_2},
\end{equation*}
where $\SL^2_{\r_2}$ is a simple local system on $Y'_0 = Y'^0_{0}$ 
corresponding to $\r_2$. 
Since $Y'_{0}$ is an open dense smooth subset of $X'_{0}$, we 
can consider the intersection cohomology 
$A_{\r_2} = \IC(X'_{0}, \SL^2_{\r_2})[\dim X'_{0}]$ on $X'_{0}$.   
\par
Now $A_{\r_1}\boxtimes A_{\r_2}$ is a $(G_1 \times H_2)$-equivariant simple 
perverse sheaf on $\Fg_1 \times X'_{0}$. 
By (5.11.3), there exists a unique simple perverse sheaf $A_{\r}$ on $X^0_k$ such that 
\begin{equation*}
\tag{5.14.3}
q^*A_{\r}[\b_2] \simeq \s^*(A_{\r_1} \boxtimes A_{\r_2})[\b_1], 
\end{equation*}
where $\b_1 = \dim H$ and $\b_2 = \dim (G_1 \times H_2)$. 
(Here we put $\r = \r_1\boxtimes \r_2 \in (S_k \times S_{n-k})\wg$.)
\par
We have the following lemma.

\begin{lem}  %%%%  Lemma 5.15
Let $\SL_{\r}$ be a simple local system on $Y_k^0$ given in (2.10.5).
Then we have
\begin{equation*}
A_{\r} \simeq \IC(X^0_k, \SL_{\r})[d_k].
\end{equation*} 
\end{lem}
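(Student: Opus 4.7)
My plan is to verify that $\IC(X_k^0, \SL_{\r})[d_k]$ satisfies the defining relation (5.14.3), for then the uniqueness of $A_{\r}$ (which comes from $\sigma$ being smooth with connected fibres and $q$ being a principal $G_1 \times H_2$-bundle, by (5.11.3)) forces the desired isomorphism. Both $A_{\r}$ and $\IC(X_k^0, \SL_{\r})[d_k]$ are simple perverse sheaves on the irreducible variety $X_k^0$ (Proposition 5.6(ii)), and any IC extension is determined by its restriction to an open dense smooth subset, so it suffices to compare them on $Y_k^0 \subset X_k^0$ (Proposition 5.6(iii)).

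\textbf{Reduction on $Y_k^0$.}
For $x = s + z \in Y_k^0 \cap \Fb$ with $s \in \Ft_{\sr}$ and $z \in \FD_k^0$, the subspace $W_x$ constructed in 5.10 is spanned by $e_1, \ldots, e_k$, so $x|_{W_x}$ is regular semisimple in $\Fg_1$ with distinct eigenvalues $t_1, \ldots, t_k$, while $x|_{W_x^{\perp}/W_x}$ lies in $Y'_0$ with eigenvalues $t_{k+1}, \ldots, t_n$. Hence $\sigma$ sends $q\iv(Y_k^0)$ into the open subset of $\Fg_{1,\rg} \times Y'_0$ where these two eigenvalue sets are disjoint, and there $A_{\r_1} \boxtimes A_{\r_2}$ reduces to a shift of $\SL^1_{\r_1} \boxtimes \SL^2_{\r_2}$. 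A direct dimension count using $d_k = \dim H - 2(n-k)$ (Proposition 5.6(iv)) shows that the shifts in (5.14.3) match, so the problem reduces to identifying the local system on $Y_k^0$ obtained by pulling $\SL^1_{\r_1} \boxtimes \SL^2_{\r_2}$ back via $\sigma$ and descending via the principal bundle $q$ with the local system $\SL_{\r}$.

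\textbf{Matching the Galois covers.}
The hard part will be to match the finite Galois covers underlying these local systems. By definition $\SL_{\r}$ comes from $\eta_I : \wh Y_I \to Y_k^0$ for $I = [1,k]$, with Galois group $\SW_I = S_k \times S_{n-k}$; a point $[g, s+z]$ of $\wh Y_I = H \times^{Z_H(\Ft_{\sr})_I}(\Ft_{\sr} + \FD_I)$ records an ordering of the eigenvalues of $s$ within $[1,k]$ and within $[k+1,n]$ separately. On the other side, $\SL^1_{\r_1}$ comes from the $S_k$-cover of $\Fg_{1,\rg}$ ordering the eigenvalues of $x|_{W_x}$, and $\SL^2_{\r_2}$ from the $S_{n-k}$-cover $\wh Y'_{\emptyset} \to Y'_0$ ordering the eigenvalues of $x|_{W_x^{\perp}/W_x}$. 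Because $(t_1, \ldots, t_n)$ splits cleanly into the first $k$ eigenvalues, acting on $W_x$, and the last $n-k$, acting on $W_x^{\perp}/W_x$, I expect the $\SW_I$-action to correspond precisely to the product of the two permutation actions. Making this rigorous by constructing an explicit $H$-equivariant isomorphism between the appropriate fibre products of these covers, intertwining the two $(S_k \times S_{n-k})$-actions, is the crux of the argument; once it is in hand, isolating the $\r = \r_1 \boxtimes \r_2$-isotypic component yields $A_{\r}|_{Y_k^0} \simeq \SL_{\r}[d_k]$, and therefore the lemma.
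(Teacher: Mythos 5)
Your proposal follows essentially the same route as the paper: reduce to checking the restriction over the open dense $Y_k^0$ using simplicity of $A_\r$, verify the shifts via $d_k = \dim H - 2(n-k)$, and then identify $q^*\SL_\r$ with $\s^*(\SL^1_{\r_1}\boxtimes\SL^2_{\r_2})$ by matching the Galois covers. The step you flag as ``the crux'' without carrying out --- an $H$-equivariant identification of the covers intertwining the $(S_k\times S_{n-k})$-actions --- is precisely what the paper supplies by exhibiting the commutative diagram (5.15.2), whose bottom two squares are cartesian, so that $\eta^1\times\eta^2_{I'}$ is compatible with $\eta_I$; your outline of what must happen there (the eigenvalues $t_1,\dots,t_k$ go to $W_x$ and $t_{k+1},\dots,t_n$ to $W_x^\perp/W_x$, with the two symmetric groups acting separately) is exactly the content of that diagram.
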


\begin{proof}
Since $A_{\r}$ is a simple perverse sheaf on $X^0_k$, 
in order to prove the lemma, it is enough to see that 
\begin{equation*}
\tag{5.15.1}
\SH^{-d_k}A_{\r}|_{Y^0_k} \simeq \SL_{\r}.
\end{equation*}
\par
We consider, for $I = [1,k], I' = \emptyset$,  the following commutative diagram.
\begin{equation*}
\tag{5.15.2}
\begin{CD}
\wt\Fg_{1,\rg} \times \wt Y'_{I'}  @<\wt \s_0<<  \wt Z^0_I @>\wt q_0>> \wt Y_I \\
   @V\xi^1 \times \xi^2_{I'}VV     @VV\wt\xi_IV           @VV\xi_IV \\
(G_1/T_1 \times \Ft_{1,\rg}) \times \wh Y'_{I'}   
              @<\wh\s_0<<  \wh Z_I  @>\wh q_0>> \wh Y_I  \\
    @V\eta^1 \times \eta^2_{I'}VV       @VV\wt\eta_IV   @VV\eta_IV  \\ 
\Fg_{1,\rg} \times Y'_{0} @<\s_0<<   \SG_{k,\sr}   @>q_0>>   Y^0_k,
\end{CD}
\end{equation*}
where $\SG_{k,\sr} = q\iv(Y^0_k), \wt Z^0_I = \wt q\iv(\wt Y_I)$, and 
$\wh Z_I$ is the quotient of $\wt Z^0_I$ by the natural action of the group
$Z_G(\Ft_{\sr})_I/(Z_G(\Ft_{\sr}) \cap B)$. The maps $\wt q_0, q_0, \wt\s_0, \s_0$ 
are defined as the restriction of the corresponding maps $\wt q, q, \wt s, \s$. 
\par
Now the map $\eta^1 \times \eta^2_{I'}$ is a finite Galois covering 
with Galois group $S_k \times S_{n-k}$.  Since the bottom squares in the diagram 
(5.15.2) are cartesian, this Galois covering is compatible with the Galois covering 
$\eta_I$.  Hence for any $\r_1 \in S_k\wg, \r_2 \in S_{n-k}\wg$, 
we have
\begin{equation*}
\s^*(\SL^1_{\r_1} \boxtimes \SL^2_{\r_2}) \simeq q^*\SL_{\r}
\end{equation*}
for $\r = \r_1\boxtimes\r_2 \in (S_k \times S_{n-k})\wg$. 
(5.15.1) follows from this.  The lemma is proved.
\end{proof}

\par
We can now state the following result.
\begin{prop}  %%%%   Prop. 5.16
$(\pi_k)_!\Ql$ is decomposed as
\begin{equation*}
(\pi_k)_!\Ql \simeq H^{\bullet}(\BP_1^{n-k}) \otimes 
                 \bigoplus_{\r \in (S_k \times S_{n-k})\wg}
              \wh \r \otimes \IC(X^0_k, \SL_{\r}),
\end{equation*}
where $\wh \r$ is regarded as a vector space, ignoring the $W _n$-action. 
\end{prop}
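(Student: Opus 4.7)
The plan is to reduce the assertion for $(\pi_k)_!\Ql$ to the known decompositions of the two flanking maps $\pi^1$ (the classical Grothendieck--Springer resolution for $\Fg_1 = \Fg\Fl_k$) and $\pi^2_{0}$ (the symplectic analogue for $\Fh_2 = \Fs\Fp(\ol M_k)$ restricted above $X'_{0}$), by means of the diagram (5.13.2). The first task is to verify that both squares in (5.13.2) are Cartesian. For the right square this is essentially immediate, since the auxiliary data $(\f_1,\f_2)$ is transported unchanged across $\pi_k$ and $\wt\pi_k$, so proper base change along the proper map $\pi_k$ yields $q^*(\pi_k)_!\Ql \simeq (\wt\pi_k)_!\Ql$. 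For the left square one uses the identification of Borel subgroups of $H$ adapted to the filtration $W_x \subset W_x^{\perp} \subset V$ with pairs of Borel subgroups in $GL(W_x)$ and $Sp(W_x^{\perp}/W_x)$; this gives $\wt\s^*\bigl((\pi^1)_!\Ql \boxtimes (\pi^2_{0})_!\Ql\bigr) \simeq (\wt\pi_k)_!\Ql$, and composing produces the key isomorphism
\begin{equation*}
q^*(\pi_k)_!\Ql \simeq \s^*\bigl((\pi^1)_!\Ql \boxtimes (\pi^2_{0})_!\Ql\bigr).
\end{equation*}

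For $(\pi^1)_!\Ql$ I would invoke (5.14.1), and for $(\pi^2_{0})_!\Ql$ the analogue of the present proposition at $k=0$ for the smaller symplectic group $H_2$. The latter is established as follows: the restriction of $(\pi^2_{0})_!\Ql$ to the open dense stratum $Y'_{0}$ is computed by (5.14.2) and already displays the $H^{\bullet}(\BP_1^{n-k})$ factor, while a dimension count (Lemma 4.3 applied to $H_2$) shows that no simple summand of $(\pi^2_{0})_!\Ql$ can be supported on a proper closed subset of $X'_{0}$, whence
\begin{equation*}
(\pi^2_{0})_!\Ql \simeq H^{\bullet}(\BP_1^{n-k}) \otimes \bigoplus_{\r_2 \in S_{n-k}\wg} \r_2 \otimes \IC(X'_{0}, \SL^2_{\r_2}).
\end{equation*}
Plugging this into the box product on the right and using the defining isomorphism (5.14.3) of $A_{\r}$ together with the identification $A_{\r} \simeq \IC(X^0_k, \SL_{\r})[d_k]$ from Lemma 5.15 exhibits $q^*(\pi_k)_!\Ql$ (up to a uniform shift) as the $q$-pullback of the right-hand side of the proposition. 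Since $q$ is a principal $(G_1 \times H_2)$-bundle, the functor $q^*$ is fully faithful on perverse sheaves up to a uniform cohomological shift, and the decomposition descends from $\SG_k$ to $X^0_k$.

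The main obstacle I anticipate is the joint bookkeeping of multiplicities and cohomological shifts. By Lemma 5.9, $\wt X^+_k$ splits into $\binom{n}{k}$ irreducible components $\wt X_I$ indexed by subsets $I \subset [1,n]$ with $|I| = k$, so the diagram (5.13.2) naturally produces a multiplicity $(\dim \r_1)(\dim \r_2)$ per component; summing over the $\binom{n}{k}$ choices must reproduce the multiplicity $\dim \wh\r = \binom{n}{k}(\dim \r_1)(\dim \r_2)$ predicted by (2.12.2). One therefore needs to verify that the construction of $W_x$ in 5.10 is compatible across components of $\wt X^+_k$, and that the accumulated shifts from the two base-change isomorphisms, from (5.14.3), and from Lemma 5.15 combine to leave exactly the $H^{\bullet}(\BP_1^{n-k})$ factor visible with no residual shift. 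A secondary but essential point is the verification that $(\pi^2_{0})_!\Ql$ has full support on $X'_{0}$, which requires delicate stratum-by-stratum dimension estimates.
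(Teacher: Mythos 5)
Your reduction for $k \neq 0$ via the two cartesian squares in (5.13.2), together with (5.14.1) for $(\pi^1)_!\Ql$ and the inductive hypothesis applied to $H_2 = Sp(\ol M_k)$ of rank $n-k < n$, is essentially the paper's argument (Lemma 5.18). Your observations about matching multiplicities over the $\binom{n}{k}$ components $\wt X_I$, and that (5.14.3) together with Lemma 5.15 identifies $A_\r$ with $\IC(X_k^0, \SL_\r)$, are also correct.

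The gap lies in your treatment of the $k = 0$ case, which is the base of the induction and cannot be obtained from the flanking diagram. You claim that ``a dimension count (Lemma 4.3 applied to $H_2$) shows that no simple summand of $(\pi^2_{0})_!\Ql$ can be supported on a proper closed subset of $X'_{0}$.'' This is not correct: semismallness (Lemma 4.3 is moreover stated for $\pi : \wt X \to X$, not $\pi_0$) only guarantees that the pushforward is a perverse sheaf; it does not exclude simple summands whose supports are proper closed subsets of $X_0'$. In the paper, such potential summands are precisely the error term $\SN_0$ appearing in (5.19.1), and it is explicitly acknowledged that local dimension estimates do not rule it out. The elimination of $\SN_0$ requires the global multiplicity identity $\sum_A m_A^2 = |W_n|$ of Proposition 4.11, which in turn depends on the $W_n$-action on $\pi_!\Ql$ constructed in Sections 3--4 by restriction from $(\pi^\flat)_!\Ql$ on $\wt\Fh = \Fc\Fs\Fp(V)$ (ultimately Xue's existence of regular semisimple elements there), together with the perfect-sheaf analysis of the Steinberg variety over $\Xi$. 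Without this input, the $k = 0$ case --- and hence the entire induction --- does not close. You would need to replace your dimension count by that multiplicity argument, or equivalently cite Propositions 4.8 and 4.11 and run the computation of 5.19 verifying $\sum_{\wh\r}(\dim\wh\r)^2 = |W_n|$ forces $\SN_0 = 0$.
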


\para{5.17.}
We prove Proposition 5.16 and Theorem 5.7 simultaneously, by induction 
on $n$.  We assume that the theorem and the proposition holds for $n' < n$. 
First we show 
\begin{lem}  %%%%  Lemma 5.18
Proposition 5.16 holds for $k \ne 0$. 
\end{lem}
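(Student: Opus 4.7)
The plan is to argue by induction on $n$, exploiting the diagram (5.13.2) to reduce the computation of $(\pi_k)_!\Ql$ to the two ``factors'' $(\pi^1)_!\Ql$ on $\Fg_1 = \Fg\Fl(M_k)$ and $(\pi^2_0)_!\Ql$ on $X'_0 \subset \Fh_2 = \Fs\Fp(\ol M_k)$. Since $k \ne 0$, the symplectic rank of $H_2$ is $n-k < n$, so by the induction hypothesis for the $k' = 0$ case of Proposition~5.16 applied to $H_2$ we obtain
$$(\pi^2_0)_!\Ql \simeq H^{\bullet}(\BP_1^{n-k})\otimes
   \bigoplus_{\r_2 \in S_{n-k}\wg}\wh{\r_2}\otimes \IC(X'_0, \SL^2_{\r_2}),$$
while (5.14.1) supplies the classical Grothendieck--Springer decomposition for $(\pi^1)_!\Ql$.

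Next I would transport these decompositions along the diagram (5.13.2). The right-hand square is cartesian directly from the definitions, so proper base change gives $q^*(\pi_k)_!\Ql \simeq (\wt\pi_k)_!\Ql$. For the left-hand square, one has to analyse $\wt\s$ carefully: its fibres compare Springer fibres $\SB_x$ with the product $\SB^{G_1}_{y_1}\times\SB^{H_2}_{y_2}$, and by Theorem~3.8 combined with the decomposition (3.7.1) these differ by the $W_s\backslash W$-combinatorics that produces the induction from $W_k\times W_{n-k}$ to $W_n$. Via the K\"unneth formula one obtains $(\pi^1\times\pi^2_0)_!\Ql$ on $\Fg_1\times X'_0$, and pulling back through $\s$ and applying Lemma~5.15 to identify $\s^*(A_{\r_1}\boxtimes A_{\r_2})$ with $q^*A_{\r}$ up to shift, one arrives at a decomposition of $q^*(\pi_k)_!\Ql$ on $\SG_k$.

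Since $q$ is a principal $(G_1\times H_2)$-bundle with smooth connected fibres by (5.11.3), the functor $q^*$ (up to shift) is fully faithful on semisimple perverse sheaves, so the decomposition descends to the desired decomposition of $(\pi_k)_!\Ql$ on $X^0_k$. The multiplicity $\wh\r = \Ind^{W_n}_{W_k\times W_{n-k}}(\wt\r_1\boxtimes\wt\r_2)$ in Proposition~5.16 is exactly what emerges from the product $\r_1 \otimes \wh{\r_2}$ after accounting for the $\binom{n}{k}$ contribution, which appears because of the non-cartesian nature of the left square and the decomposition of $\wt X^+_k$ into $\binom{n}{k}$ components $\wt X_I$ recorded in Lemma~5.9.

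The main obstacle will be the bookkeeping in the second paragraph: namely, showing that the computation on $\SG_k$ carried out locally, using only the ``standard'' Borel $B_s$ compatible with $W_x$, indeed reassembles globally on $X^0_k$ into representation $\wh\r$ of $W_n$ rather than merely $\wt\r_1 \boxtimes \wh{\r_2}$ of $W_k\times W_{n-k}$. This requires carefully matching the $W_s\backslash W$-orbits in (3.7.1) with the coset representatives in $\Ind^{W_n}_{W_k\times W_{n-k}}$, so that the Springer action on $\SB_x$ coming from Proposition~3.10 is correctly transported across the diagram and the factor $\binom{n}{k} = [W_n:W_k\times W_{n-k}]$ materializes in the multiplicity $\dim\wh\r$.
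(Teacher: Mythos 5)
Your overall strategy is the right one, and matches the paper's in several respects: induction on $n$, applying the $k'=0$ case of Proposition~5.16 to the smaller group $H_2 = Sp(\ol M_k)$ (legitimate since $k \ne 0$ forces $n-k < n$), using the factorization $\s, q$ with $q$ a $(G_1\times H_2)$-principal bundle, and Lemma~5.15 to match $\s^*(A_{\r_1}\boxtimes A_{\r_2})$ with $q^*A_\r$. But there are two places where your plan would not go through as written, and the gap you flag at the end is real and unresolved.

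First, the diagram you work with is (5.13.2), and you correctly sense that its left-hand square is \emph{not} cartesian. The paper's resolution is not to analyze $\wt\s$ via Theorem~3.8 and (3.7.1) --- those statements live in the $\wt\Fh$-world and are not directly applicable to the map $\wt\s$ on $\wt Z^+_k$ --- but rather a purely combinatorial move: restrict everything to a single irreducible component $\wt X_I$ of $\wt X^+_k$ (Lemma~5.9), obtaining the new diagram (5.18.1) in which \emph{both} squares become cartesian. The reason is that the subset $I$ records exactly where the isotropic block $W_x$ sits inside the full flag attached to $gB$, which is the missing information needed to invert $\wt\s$; after restricting to $\wt X_I$, proper base change gives $q^*(\pi_I)_!\Ql \simeq \s^*(K_1\boxtimes K_2)$ up to shift. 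Your plan has no step that recovers this, and Theorem~3.8 will not supply it.

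Second, even after this the conclusion from (5.18.1) is only the support statement: every simple summand of $(\pi_I)_!\Ql$, and hence of $(\pi_k)_!\Ql = \bigoplus_I (\pi_I)_!\Ql$, lies in $\{\IC(X_k^0,\SL_\r)\mid \r \in (S_k\times S_{n-k})\wg\}$, i.e.\ every simple summand of $(\pi_k)_!\Ql$ has full support $X_k^0$. The full decomposition \emph{and} the $W_n$-action are then obtained not by reassembling local data through the diagram, but by restricting to the open dense subset $Y^0_k \subset X^0_k$, where $(\pi_k)_!\Ql$ restricts to $(\psi_k)_!\Ql$, whose decomposition with its $W_n$-action is already established in (2.10.5). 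Your plan to have $\wh\r$ ``emerge from the product $\r_1\otimes\wh{\r_2}$ after accounting for the $\binom{n}{k}$ contribution'' is precisely the bookkeeping you identify as an obstacle, and it is sidestepped entirely in the paper by this restriction-to-$Y^0_k$ argument. Without that step, you have no mechanism to pass from a $(W_k\times W_{n-k})$-module structure on each $(\pi_I)_!\Ql$ to a $W_n$-module structure on their sum.
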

\begin{proof}
For any $I \subset [1,n]$ with $|I| = k$, put $\wt Z_I = \wt q\iv(\wt X_I)$. 
We have the following commutative diagram
\begin{equation*}
\tag{5.18.1}
\begin{CD}
\wt\Fg_1 \times \wt X'_0 @<<<  \wt Z_I  @>>>  \wt X_I  \\
   @V\pi^1 \times \pi^2_0VV      @VVV             @VV\pi_IV   \\
   \Fg_1 \times X'_0  @<\s<<   \SG_k  @>q>>     X^0_k, 
\end{CD}
\end{equation*}
where $\pi^1, \pi^2_0$ are as in (5.13.2). 
$\pi_I$ is the restriction of $\pi_k : \wt X^+_k \to X_k^0$. 
Since $\wt X_I$ is closed in $\wt X^+_k$, $\pi_I$ is proper.  
We note that both squares are cartesian squares. 
\par
We show the following.
\par\medskip\noindent
(5.18.2) \ Any simple summand (up to shift) of the semisimple complex 
$(\pi_I)_!\Ql$ is contained in the set $\{ A_{\r} \mid \r \in (S_k \times S_{n-k})\wg \}$.
\par\medskip
Put $K_1 = (\pi^1)_!\Ql$ and $K_2 = (\pi^2_0)_!\Ql$.  
We have
\begin{align*}
K_1 &\simeq \bigoplus_{\r_1 \in S_k\wg}\r_1\otimes \IC(\Fg_1, \SL^1_{\r_1}), \\
K_2 &\simeq H^{\bullet}(\BP_1^{n-k})\otimes \bigoplus_{\r_2 \in S_{n-k}\wg}
                           \wh\r_2 \otimes \IC(X'_0, \SL^2_{\r_2}).
\end{align*}
In fact, the first formula follows from (5.14.1), the second formula 
follows from Proposition 5.16, by applying the induction hypothesis 
to the case $n' = n - k < n$ and $k' = 0$. 
Since both squares in (5.18.1) are cartesian, we have 
$\s^*(K_1 \boxtimes K_2) \simeq q^*(\pi_I)_!\Ql$, up to shift.  
Then (5.18.2) follows from (5.14.3).
\par 
Now by Lemma 5.9, we have $(\pi_k)_!\Ql \simeq \bigoplus_{I}(\pi_I)_!\Ql$. 
Hence (5.18.2) implies, by Lemma 5.15, that
\par\medskip\noindent
(5.18.3) \ Any simple summand (up to shift) of the semisimple complex $(\pi_k)_!\Ql$
is contained in the set $\{ \IC(X^0_k, \SL_{\r}) \mid \r \in (S_k \times S_{n-k})\wg\}$. 
\par\medskip
(5.18.3) implies, in particular, that any simple summand of $K = (\pi_k)_!\Ql$ 
has its support $X_k^0$. Since the restriction of $K$ on $Y^0_k$ coincides with 
$K_0 = (\psi_k)_!\Ql$, the decomposition of $K$ into simple summands is determined by 
the decomposition of $K_0$. Hence the lemma follows from  (2.10.5). 
\end{proof}

\para{5.19.}
We consider the semisimple complex 
$(\pi_0)_!\Ql$ for $\pi_0 : \wt X_0 \to X_0 = \ol\Fh_{ss}$.  
In this case, the induction hypothesis can not be applied. 
But $(\pi_0)_!\Ql|_{Y_0} \simeq (\psi_0)_!\Ql$, and by (2.10.5) we have
\begin{equation*}
(\psi_0)_!\Ql \simeq H^{\bullet}(\BP_1^n)\otimes 
                \bigoplus_{\r \in S_n\wg}\wh\r \otimes \SL_{\r},
\end{equation*} 
by ignoring the $W _n$-module structure. It follows that $(\pi_0)_!\Ql$ can be written as 
\begin{equation*}
\tag{5.19.1}
(\pi_0)_!\Ql \simeq H^{\bullet}(\BP_1^n)\otimes\bigoplus_{\r \in S_n\wg}
                      \wh\r \otimes \IC(X_0, \SL_{\r}) + \SN_0.
\end{equation*}
Here $\SN_0$ is a sum of various complexes of the form $A[i]$, 
where $A$ is a simple perverse sheaf such that $\dim\supp A < \dim X_0$. 
\par
For each $0 \le m \le n$, let $\ol\pi_m$ be the restriction of $\pi$ 
on $\pi\iv(X_m)$. The following formula can be proved by 
a similar argument as in the proof of (2.13.3), by using Lemma 5.18 and 
(5.19.1) instead of (2.10.5).  
\begin{align*}
\tag{5.19.2}
(&\ol\pi_m)_!\Ql[d_m]  \\ 
    &\simeq \bigoplus_{0 \le k \le n}
                          \bigoplus_{\r \in (S_k \times S_{n-k})\wg}
          \wh\r \otimes \IC(X_k, \SL_{\r})[d_m - 2(n-k)] + \SM_m + \SN_0,   
\end{align*}
where $\SM_m$ is a sum of various $\IC(X_k, \SL_{\r})[d_m -2i]$ for $0 \le k \le m$ 
and $\r \in (S_k \times S_{n-k})\wg$ with $i < n - k$. 
\par\medskip
Note that, if $k > 0$, then all the simple perverse sheaves $A$ appearing 
in the decomposition of $(\pi_k)_!\Ql$ (up to shift) have support 
$X_k$ by Lemma 5.18.
This is also true for a simple perverse sheaf $A$ appearing in the first term of 
$(\pi_0)_!\Ql$ in (5.19.1). By Lemma 2.9, we have $\dim X_k \ge \dim X_0$ for any $k$.
Hence the above perverse sheaves $A$ have the property that $\dim\supp A \ge \dim X_0$. 
Since any perverse sheaf $A'$ appearing in $\SN_0$ has the property that 
$\dim\supp A' < \dim X_0$, there is no interaction between $\SN_0$ and other parts
in the computation of $(\ol\pi_m)_!\Ql$.  Thus $\SN_0$ appears in (5.19.2) 
without change (up to shift).  
\par
We consider the case where $m = n$. In this case, 
$(\ol\pi_n)_!\Ql[d_n] = \pi_!\Ql[d]$ is a semisimple perverse sheaf by Lemma 4.3.
This implies that $\SM_n = 0$, and we have
\begin{equation*}
\tag{5.19.3}
\pi_!\Ql[d] \simeq 
           \bigoplus_{0 \le k \le n}
                          \bigoplus_{\r \in (S_k \times S_{n-k})\wg}
          \wh\r \otimes \IC(X_k, \SL_{\r})[d_k] +  \SN_0.   
\end{equation*}
\par
We now apply Proposition 4.11.  
Since $\sum_{\wh\r \in W_n\wg}(\dim \wh\r)^2 = |W _n |$, 
we must have $\SN_0 = 0$. This proves Proposition 5.16 in the case where $k = 0$. 
Hence Proposition 5.16 holds for any $k$ by Lemma 5.18. The theorem now  
follows from (5.19.3).  It remains to consider the case where $n = 1$. 
But in this case, $X_0 = \Ft$ coincides with the center of $\Fh = \Fs\Fl_2$, 
and the proposition is easily verified.  This completes the proof of Theorem 5.7
and Proposition 5.16. 
\par\medskip
As a corollary to Theorem 5.7, we have the following.

\begin{cor} %%%%   Cor. 5.20
Assume that $\r \in (S_k \times S_{n-k})\wg$, and let $\wh\r \in W_n\wg$
be as in (2.12.1). Then 
\begin{align*}
\tag{5.20.1}
\IC(\wt\Fh, \SL\flt_{\wh\r})|_{\Fh} &\simeq \IC(X_k, \SL_{\r}) 
                           \quad \text{\rm (up to shift)}, \\
\tag{5.20.2}
\IC(X_k, \SL_{\r})|_{\Fh\nil} &\simeq \IC(\ol\SO_{\wh\r}, \Ql) 
                           \quad \text{\rm  (up to shift)}.
\end{align*}
\end{cor}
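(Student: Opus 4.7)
The two parts chain together: I would prove (5.20.1) first by a proper base change identifying $(\pi\flt)_!\Ql|_\Fh$ with $\pi_!\Ql$ and matching $W_n$-isotypic components, then obtain (5.20.2) by restricting (5.20.1) once more to $\Fh\nil = \wt\Fh\nil$ and applying Theorem 3.6(ii) to conclude
\[
\IC(X_k,\SL_\r)|_{\Fh\nil} \simeq \IC(\wt\Fh,\SL\flt_{\wh\r})|_{\Fh\nil} \simeq \IC(\ol\SO_{\wh\r},\Ql)
\]
up to shift.

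For (5.20.1), I first check that pulling back $\pi\flt : \wt X\flt \to \wt\Fh$ along $\Fh \hra \wt\Fh$ recovers $\pi : \wt X \to \Fh$. Two ingredients suffice: the flag-variety identification $\wt H/\wt B \simeq H/B$ (since $\wt H = H \cdot \wt Z$, because any scalar $\la \cdot 1 \in \wt Z$ admits a square root in the algebraically closed field $\Bk$, and $\wt Z \subset \wt B$), and the equality $\wt\Fb \cap \Fh = \Fb$ (a quick check using (2.2.1) and (3.2.1), since an element $\xi = \Diag(t_1,\dots,t_n,t_1+\la,\dots,t_n+\la) \in \wt\Ft$ lies in $\Fh$ only when $\la = 0$). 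Hence $(\pi\flt)\iv(\Fh) = \wt X$, and proper base change yields $(\pi\flt)_!\Ql|_\Fh \simeq \pi_!\Ql$. This isomorphism is automatically $W_n$-equivariant, because by construction (cf.~4.7) the $W_n$-action used on $\pi_!\Ql$ in Theorem 5.7 is precisely the one inherited by restriction from the $W_n$-action on $(\pi\flt)_!\Ql$ supplied by Proposition 3.4. Comparing the $\wh\r$-isotypic components of $(\pi\flt)_!\Ql|_\Fh$ (via Proposition 3.4 restricted) and of $\pi_!\Ql$ (via Theorem 5.7 together with the bijection (2.12.1)) then yields
\[
\IC(\wt\Fh,\SL\flt_{\wh\r})|_\Fh \simeq \IC(X_k,\SL_\r)[d_k-d_n],
\]
which is (5.20.1).

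I do not expect a substantial obstacle in this corollary: the real content is already packaged inside Proposition 3.4, Theorem 5.7, and Theorem 3.6(ii), and what remains is a formal comparison of isotypic components under base change. The one point requiring genuine care is to confirm that the $W_n$-action on $\pi_!\Ql$ used in Theorem 5.7 is indeed the restriction of that on $(\pi\flt)_!\Ql$, so that the base-change identification respects the isotypic decompositions; this amounts to tracing the construction through Section 4, and is in fact how the $W$-action on $\pi_!\Ql$ was introduced in 4.7.
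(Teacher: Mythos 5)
Your proposal matches the paper's approach: the paper's own proof of (5.20.1) is the one-line ``compare Theorem 5.7 with Proposition 3.4,'' and your proposal fleshes this out exactly as intended — identify $(\pi\flt)_!\Ql|_{\Fh}$ with $\pi_!\Ql$ by base change (your checks that $\wt H/\wt B \simeq H/B$ and $\wt\Fb \cap \Fh = \Fb$ are both correct), then match $\wh\r$-isotypic components; your proof of (5.20.2) by restricting (5.20.1) further and invoking Theorem 3.6(ii) is also what the paper does. One small imprecision worth noting: you attribute the compatibility of the two $W_n$-actions to ``how the $W$-action on $\pi_!\Ql$ was introduced in 4.7,'' but the $W_n$-action featuring in Theorem 5.7 is actually built inductively (as in 5.19, modeled on 2.11), and the fact that it coincides with the restriction of the $W_n$-action on $(\pi\flt)_!\Ql$ rests on Proposition 3.10 — the isomorphism $(\pi\flt)_!\Ql|_Y \simeq \psi_!\Ql$ as complexes with $W_n$-action — together with the observation that restriction from $\Fh$ to the dense open $Y$ is injective on endomorphism algebras since every $\IC(X_k,\SL_\r)$ has $Y_k \subset Y$ dense in its support. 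This is the ingredient you should cite for the $W_n$-equivariance, not 4.7 alone.
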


\begin{proof}
(5.20.1) is obtained by comparing Theorem 5.7 with Proposition 3.4.
By comparing Proposition 3.4 and Theorem 3.6, we have 
$\IC(\wt\Fh, \SL\flt_{\wh\r})|_{\Fh\nil} \simeq \IC(\ol\SO_{\wh\r}, \Ql)$, 
up to shift.  Hence by (5.20.1),  
\begin{equation*}
\IC(X_k, \SL_{\r})|_{\Fh\nil} \simeq \IC(\wt\Fh, \SL\flt_{\wh\r})|_{\Fh\nil}
                              \simeq \IC(\ol\SO_{\wh\r}, \Ql),
\end{equation*}
up to shift. Thus (5.20.2) holds. 
\end{proof}

\par\bigskip
%%%%
%%%%
\section{Intersection cohomology on $\Fg^{\th}$}

\para{6.1.}
From this section until the end of this paper, we discuss about $G^{\io\th}$ with 
$N$ : odd. 
So, assume that $N = 2n+1$, and $G = GL_N$.  We follow the notation in 1.10.
We have $G^{\th} \simeq SO(V') \simeq Sp(V)$, and $G^{\io\th} \simeq \Fg^{\th}$
by Proposition 1.11 and 1.13. 
Put $H = Sp(V)$ and $\Fh = \Fs\Fp(V)$. As in 1.14, 
$\Fg^{\th} = \Fh \oplus \Fg_{V'} = \Fh \oplus \Fg_V \oplus \Fz$. 
$H$ acts trivially on $\Fz \simeq \Bk$, and the action of $H$ on $\Fh\oplus\Fg_V$ 
can be identified with the diagonal action of $H$ on $\Fh \times V$. 
Moreover, $G^{\io\th}\uni \simeq \Fg^{\th}\nil = (\Fh\oplus \Fg_V)\nil$.
Hence considering $G^{\io\th}$ with $G^{\th}$-action is essentially the same 
as considering the variety $\Fh \times V$ with diagonal action of $H$ 
(but see Remark 1.15). 
For $H$ and $\Fh$, we use the same notation as in the previous sections. 

\para{6.2.}
Put 
\begin{equation*}
\SQ_{n,3} = \{ \Bm = (m_1, m_2, m_3) \in \BZ^3_{\ge 0}\mid \sum m_i = n\}.
\end{equation*} 
Recall the definition of $\FN_k$ in (5.4.1) and $\FN_{k,\sr}$ in 2.4.. 
For $i = 1, \dots, n$, let $M_i$ be 
the subspace of $V$ spanned by $e_1, \dots, e_i$. 
For $\Bm \in \SQ_{n,3}$, we define varieties
\begin{align*}
\wt\SX_{\Bm} &= \{ (x, v, gB) \in \Fh \times V \times H/B
                      \mid g \iv x \in \ol\FN_{p_2}, g\iv v \in M_{p_1} \} \\
    \SX_{\Bm} &= \bigcup_{g \in H}g(\ol\FN_{p_2} \times M_{p_1}),
\end{align*}
where we put $p_1 = m_1, p_2 = m_1 + m_2$.
We define a map $\pi^{(\Bm)} : \wt\SX_{\Bm} \to \SX_{\Bm}$ by $(x,v, gB) \mapsto (x,v)$. 
$\pi^{(\Bm)}$ is a proper surjective map.  Since 
\begin{equation*}
\wt \SX_{\Bm} \simeq H \times^B(\ol\FN_{p_2} \times M_{p_1}), 
\end{equation*}
$\wt\SX_{\Bm}$ is smooth, irreducible by Lemma 5.5.  
We also define varieties
\begin{align*}
\wt\SY_{\Bm} &= \{ (x, v, gB) \in \Fh \times V  \times H/B
                    \mid g\iv x \in \FN_{p_2, \sr}, g\iv v \in M_{p_1} \}, \\
   \SY_{\Bm} &= \bigcup_{g \in H}g(\FN_{p_2,\sr} \times M_{p_1}), 
\end{align*}
and a map $\psi^{(\Bm)} : \wt\SY_{\Bm} \to \SY_{\Bm}$ by 
$(x,v, gB) \mapsto (x,v)$. 
In the case where $\Bm = (n,0,0)$, we write 
$\wt\SX_{\Bm}, \SX_{\Bm}, \pi^{(\Bm)}$ and 
$\wt\SY_{\Bm}, \SY_{\Bm},\psi^{(\Bm)}$ simply as 
$\wt\SX, \SX, \pi$ and 
$\wt\SY, \SY, \psi$. 
Note that 
$\SX = \bigcup_{g \in H}g(\Fb \times M_n)$ is a closed subset of $\Fh \times V$, 
and $\SX_{\Bm}$ is a closed subset of $\SX$ for any $\Bm$. 
Also note that in the case where $m_1 = 0$, $\SX_{\Bm}, \SY_{\Bm}$, etc.
coincide with $X_{m_2}, Y_{m_2}$, etc. in previous sections. 
\par
As in (2.6.1), one can write $\wt\SY_{\Bm}$ as
\begin{align*}
\wt \SY_{\Bm} &\simeq H\times^B(\FN_{p_2,\sr} \times M_{p_1}) \\
         &\simeq H \times^{B \cap Z_H(\Ft_{\sr})}
             \bigl((\Ft_{\sr} + \FD_{p_2}) \times M_{p_1}\bigr).
\end{align*}
 
\par
For each $I \subset [1,n]$,
let $M_I$ be the subset of $M_n$ consisting of $v = \sum_{i \in I} c_ie_i$ 
with $c_i \ne 0$.  
For $\Bm \in \SQ_{n,3}$, we define $\SI(\Bm)$ as the set of $\BI = (I_1, I_2, I_3)$
such that $[1,n] = \coprod_{1 \le i \le 3} I_i$ with $|I_i| = m_i$  
For $\BI \in \SI(\Bm)$, put
$\FD_{\BI} = \FD_{I_2} + \ol \FD_{I_1}$, where $\ol\FD_{I_1}$ is the closure of 
$\FD_{I_1}$ in $\FD$. 
We define  a variety 
\begin{align*}
\tag{6.2.1}
\wt\SY_{\BI} = H \times^{B \cap Z_H(\Ft_{\sr})}
                \bigl((\Ft_{\sr} +  \FD_{\BI}) \times M_{I_1}\bigr).
\end{align*}
Since the actions of $B \cap Z_H(\Ft_{\sr})$ on $\FD$ and on $M_n$ are 
given by the actions of the torus part $T$, 
$(\Ft_{\sr} + \FD_I) \times M_{I_1}$ is $B \cap Z_H(\Ft_{\sr})$-stable. 
Hence $\wt \SY_{\BI}$ is well-defined. 
Let $\psi_{\BI} : \wt\SY_{\BI} \to \SY$ be the map defined by 
$g*(x, v) \mapsto (gx, gv)$, where 
$g \in H, (x,v) \in (\Ft_{\sr} + \SD_{\BI}) \times M_{I_1}$. 
Then $\Im \psi_{\BI}$ is independent of the choice of $\BI \in \SI(\Bm)$, which we denote 
by $\SY^0_{\Bm}$.  We have, for any $\BI \in \SI(\Bm)$, 
\begin{equation*}
\SY^0_{\Bm} = \bigcup_{g \in H}g\bigl((\Ft_{\sr}+ \SD_{\BI}) \times M_{I_1}\bigr).
\end{equation*}
\par
For $\BI \in \SI(\Bm)$, we define a parabolic subgroup $Z_H(\Ft_{\sr})_{\BI}$ 
by the condition that the $i$-th factor is $SL_2$ if $i \in I_3$ and is $B_2$ otherwise
(a generalization of $Z_H(\Ft_{\sr})_I$ in 2.7). 
Since $Z_H(\Ft_{\sr})_{\BI}$ stabilizes $\SD_{\BI}$ and $M_{I_1}$, one can define 
\begin{equation*}
\tag{6.2.2}
\wh\SY_{\BI} = H \times^{Z_H(\Ft_{\sr})_{\BI}}
               \bigl((\Ft_{\sr} + \SD_{\BI}) \times M_{I_1}\bigr).
\end{equation*} 

The map $\psi_{\BI}$ factors through $\wh\SY_{\BI}$, 
\begin{equation*}
\begin{CD}
\psi_{\BI} : \wt\SY_{\BI} @>\xi_{\BI}>>  \wh\SY_{\BI} @>\eta_{\BI}>>  \SY^0_{\Bm},
\end{CD}
\end{equation*} 
where $\xi_{\BI}$ is the natural projection, and $\eta_{\BI}$ is the map 
defined by  $g*(x,v) \mapsto (gx, gv)$. 
$\xi_{\BI}$ is the locally trivial fibration with fibre isomorphic to 
\begin{equation*}
Z_H(\Ft_{\sr})_{\BI}/(B \cap Z_H(\Ft_{\sr})) 
            \simeq (SL_2/B_2)^{I_3} \simeq \BP_1^{I_3}.
\end{equation*}
\par
Let $S_{\BI}\simeq S_{I_1}\times S_{I_2} \times S_{I_3}$ be the stabilizer of 
$\BI = (I_1, I_2, I_3)$ in $S_n$. 
Now $N_H(\Ft_{\sr})/Z_H(\Ft_{\sr}) \simeq S_n$, and $S_n$ acts on 
$Z_H(\Ft_{\sr}) \simeq SL_2 \times \cdots \times SL_2$ as the permutation of 
factors. Since $S_{\BI}$ stabilizes $M_{I_1}$ and $\SD_{\BI}$, $S_{\BI}$ acts on 
$\wt\SY_{\BI}$ and on $\wh\SY_{\BI}$. 
Now the map $\eta_{\BI}$ is a finite Galois covering with Galois group 
$S_{\BI}$. 
\par
For each $\Bm$, we define
$\BI(\Bm) =  ([1, p_1], [p_1 +1, p_2], [p_2 + 1, n])$, 
and put $\wt\SY_{\BI(\Bm)} = \wt\SY^0_{\Bm}$, $S_{\BI(\Bm)} = S_{\Bm}$.
Note that $\wt\SY^0_{\Bm}$ is an open dense subset of $\wt\SY_{\Bm}$, hence 
irreducible.
Put $\psi\iv(\SY^0_{\Bm}) = \wt\SY^+_{\Bm}$. 
$S_n$ acts naturally on $\wt\SY$, and the map $\psi$ is $S_n$-equivariant with respect 
to the trivial action of $S_n$ on $\SY$.  
Hence it preserves the subset $\wt\SY^+_{\Bm}$ of $\wt\SY$, and the stabilizer of 
$\wt\SY^0_{\Bm}$ in $S_n$ coincides with $S_{\Bm}$.    
One can check that 
\begin{equation*}
\tag{6.2.3}
\wt\SY^+_{\Bm} = \coprod_{\BI \in \SI(\Bm)}\wt\SY_{\BI}
               = \coprod_{w \in S_n/S_{\Bm}}w(\wt\SY^0_{\Bm}), 
\end{equation*}
where $\wt\SY_{\BI}$ is an irreducible component, hence is a connected component. 
\par
As in [Sh, 1.3], we define a partial order on $\SQ_{n,3}$ by $\Bm' \le \Bm$ if 
$p_i' \le p_i$ for $i = 1,2$, where $(p_1',p_2')$ are define for $\Bm'$ 
similarly to $(p_1,p_2)$ for $\Bm$. 
Then $\SY_{\Bm'} \subset \SY_{\Bm}$ and $\SX_{\Bm'} \subset \SX_{\Bm}$
if $\Bm' \le \Bm$.  One can check that
\begin{equation*}
\tag{6.2.4}
\SY^0_{\Bm} = \SY_{\Bm} -  \bigcup_{\Bm' < \Bm}\SY_{\Bm'}.
\end{equation*} 
Thus $\SY^0_{\Bm}$ is an open dense subset of $\SY_{\Bm}$, and we have
a partition $\SY_{\Bm} = \coprod_{\Bm'\le \Bm}\SY^0_{\Bm'}$. 
It follows that $\SY_{\Bm'} \subseteq \SY_{\Bm}$ if and only if $\Bm' \le \Bm$.
Also we have $\SY = \coprod_{\Bm \in \SQ_{n,3}}\SY^0_{\Bm}$. 
\par
The following lemma can be proved in a similar way as Lemma 1.4 in [Sh] 
(the special case where $r = 3$).

\begin{lem}  %%%%   Lemma 6.3
Let $\Bm \in \SQ_{n,3}$.
\begin{enumerate}
\item 
$\SY_{\Bm}$ is open dense in $\SX_{\Bm}$, and $\wt\SY_{\Bm}$ is open dense
in $\wt\SX_{\Bm}$. 
\item
$\dim \wt\SX_{\Bm} = \dim\wt\SY_{\Bm} = 2n^2 + 2m_1 + m_2$.
\item
$\dim \SX_{\Bm} = \dim \SY_{\Bm} = 2n^2 + 2m_1 + m_2 - m_3$. 
\item
$\SY = \coprod_{\Bm \in \SQ_{n,3}}\SY^0_{\Bm}$ gives a stratification of $\SY$ by 
smooth strata $\SY^0_{\Bm}$, and the map $\psi : \wt\SY \to \SY$ is
semismall with respect to this stratification. 
\end{enumerate}
\end{lem}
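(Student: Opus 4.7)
The plan is to mimic the proof of [Sh, Lemma 1.4], exploiting the bundle presentations $\wt\SX_{\Bm} \simeq H \times^B(\ol\FN_{p_2} \times M_{p_1})$ and $\wt\SY_{\Bm} \simeq H \times^B(\FN_{p_2,\sr} \times M_{p_1})$, together with the structure theorem $\ol\FN_{p_2} = \Ft \oplus \Fn_s \oplus \FD_{p_2}$ from Lemma~5.5 and the Galois-covering factorization $\psi_{\BI(\Bm)} = \eta_{\BI(\Bm)} \circ \xi_{\BI(\Bm)}$ through $\wh\SY_{\BI(\Bm)}$ from Section~6. The principal obstacle will be the downstairs part of (i), where openness of $\SY_{\Bm}$ in $\SX_{\Bm}$ requires an intrinsic characterization of $\SY_{\Bm}$ in terms of the semisimple part.

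For part (i), I would first establish the upstairs statement: since both $M_{p_1}$ and $\ol\FN_{p_2}$ are $B$-stable, the bundle description reduces everything to showing that $\FN_{p_2,\sr}$ is open dense in $\ol\FN_{p_2}$. Density is immediate since $\Ft_{\sr}+\FD_{p_2}$ is open dense in $\Ft\oplus \FD_{p_2}$. For openness, mimicking the proof of Lemma~2.5, an element $y \in \ol\FN_{p_2}$ belongs to $\FN_{p_2,\sr}$ if and only if the projection of $y$ to $\Ft$ along $\Fn_s \oplus \FD_{p_2}$ lies in $\Ft_{\sr}$; the nontrivial direction uses a $U$-conjugation to bring $y$ into $\Ft_{\sr}+\FD$, which must in fact land in $\Ft_{\sr}+\FD_{p_2}$ by $B$-stability of $\ol\FN_{p_2}$. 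Downstairs, density of $\SY_{\Bm}$ in $\SX_{\Bm}$ follows from the upstairs density via the proper surjection $\pi^{(\Bm)}$. For openness, I would characterize $\SY_{\Bm}$ intrinsically as the set of $(x,v) \in \SX_{\Bm}$ whose semisimple part $x_s$ is $H$-conjugate to an element of $\Ft_{\sr}$: the easy direction is clear, and the converse uses that once $(x,v)$ is conjugated to $(x',v') \in \ol\FN_{p_2} \times M_{p_1}$, a further $U$-conjugation (which preserves $M_{p_1}$) brings $x'$ into $\FN_{p_2,\sr}$. The condition on $x_s$ is open in $\Fh$ via the Steinberg map $\w$ of Section~4, since $\Xi_{\sr} = \w(\Ft_{\sr})$ is open in $\Xi$.

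Parts (ii) and (iii) are dimension counts. Using $\dim B = n^2+n$, $\dim\Fn_s = n(n-1)$, $\dim\FD_{p_2}=p_2$, and $\dim M_{p_1}=p_1$, we have
\begin{equation*}
\dim \wt\SX_{\Bm} = \dim H - \dim B + \dim \ol\FN_{p_2} + \dim M_{p_1} = (2n^2+n)-(n^2+n)+(n^2+p_2)+p_1 = 2n^2 + 2m_1 + m_2,
\end{equation*}
and $\dim \wt\SY_{\Bm} = \dim \wt\SX_{\Bm}$ by (i). For (iii), the presentation $\wh\SY_{\BI(\Bm)} \simeq H \times^{Z_H(\Ft_{\sr})_{\BI(\Bm)}}((\Ft_{\sr}+\FD_{\BI(\Bm)}) \times M_{I_1})$, with $\dim Z_H(\Ft_{\sr})_{\BI(\Bm)} = 3m_3 + 2(m_1+m_2) = 2n+m_3$, yields $\dim \wh\SY_{\BI(\Bm)} = 2n^2 + 2m_1 + m_2 - m_3$. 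Since $\eta_{\BI(\Bm)}$ is a finite Galois covering, $\dim \SY^0_{\Bm} = 2n^2 + 2m_1 + m_2 - m_3$, and combining with (6.2.4) and (i) gives the stated dimensions of $\SY_{\Bm}$ and $\SX_{\Bm}$.

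For (iv), the partition $\SY = \coprod_{\Bm} \SY^0_{\Bm}$ follows by iterating (6.2.4), and each stratum $\SY^0_{\Bm}$ is smooth since it is the image of the smooth variety $\wh\SY_{\BI(\Bm)}$ under the finite étale surjection $\eta_{\BI(\Bm)}$. For semismallness, note that $\wt\SY$ is smooth (as a bundle over $H/(B \cap Z_H(\Ft_{\sr}))$ with smooth fibre) and $\psi$ is proper; for $(x,v) \in \SY^0_{\Bm}$, decomposing $\psi^{-1}(x,v) = \coprod_{\BI \in \SI(\Bm)} \psi_{\BI}^{-1}(x,v)$ and using that $\psi_{\BI} = \eta_{\BI}\circ \xi_{\BI}$ with $\xi_{\BI}$ a $\BP_1^{I_3}$-bundle and $\eta_{\BI}$ finite, we obtain $\dim \psi^{-1}(x,v) \le m_3$. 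The semismall inequality
\begin{equation*}
\dim \SY^0_{\Bm} + 2\dim \psi^{-1}(x,v) \le (2n^2 + 2m_1 + m_2 - m_3) + 2m_3 = 2n^2 + 2m_1 + m_2 + m_3 \le 2n^2 + 2n = \dim \wt\SY
\end{equation*}
holds since $2m_1 + m_2 + m_3 \le 2(m_1+m_2+m_3) = 2n$, which completes the proof.
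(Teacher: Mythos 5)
Your proof is correct, and it follows the same strategy the paper intends (the paper just cites [Sh, Lemma~1.4], whose $r=2$ prototype is Lemma~2.9 here): compute $\dim\wt\SX_{\Bm}$ from the $H\times^B$-bundle presentation and Lemma~5.5, compute $\dim\SY^0_{\Bm}$ from the finite covering $\eta_{\BI(\Bm)}$ combined with the $\BP_1^{I_3}$-fibration $\xi_{\BI(\Bm)}$, and use these to get semismallness. Your handling of openness in (i) via the intrinsic characterization ``$(x,v)\in\SY_{\Bm}$ iff $x_s$ is subregular semisimple'' and the Steinberg map $\w$ is sound: the $\Ft$-component along $\Ft\oplus\Fn_s\oplus\FD_{p_2}$ is a $B$-conjugation invariant, and once $y_s$ is brought into $\Ft_{\sr}$ by a $B$-conjugate, its nilpotent part lies in $Z_{\Fn}(\Ft_{\sr})=\FD$, hence in $\FD\cap(\Fn_s\oplus\FD_{p_2})=\FD_{p_2}$, which is exactly the argument needed.
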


\para{6.4.}
Let $\psi_{\Bm} : \wt\SY^+_{\Bm} \to \SY^0_{\Bm}$ be the restriction of 
$\psi$ on $\wt\SY^+_{\Bm}$. 
Then $\psi_{\Bm}$ is $S_n$-equivariant with respect to the natural action of 
$S_n$ on $\wt\SY^+_{\Bm}$ and the trivial action of $S_n$ on $\SY^0_{\Bm}$. 
By (6.2.3), we have
\begin{equation*}
\tag{6.4.1}
(\psi_{\Bm})_!\Ql \simeq \bigoplus_{\BI \in \SI(\Bm)}(\psi_{\BI})_!\Ql.
\end{equation*}
Since $\eta_{\BI} : \wh\SY_{\BI} \to \SY^0_{\Bm}$ is a finite Galois covering 
with Galois group $S_{\BI}$, $(\eta_{\BI})_!\Ql$ is a semisimple local system on 
$\SY^0_{\Bm}$, and is decomposed as
\begin{equation*}
\tag{6.4.2}
(\eta_{\BI})_!\Ql \simeq \bigoplus_{\r \in S_{\BI}\wg}\r \otimes \SL_{\r},
\end{equation*}
where $\SL_{\r} = \Hom (\r, (\eta_{\BI})_!\Ql)$ is a simple local system on $\SY^0_{\Bm}$. 
\par
Now by a similar argument as in 2.10, we have
\begin{equation*}
\tag{6.4.3}
(\psi_{\BI})_!\Ql \simeq (\eta_{\BI})_!(\xi_{\BI})_!\Ql 
                  \simeq H^{\bullet}(\BP_1^{I_3})\otimes (\eta_{\BI})_!\Ql.
\end{equation*}
For a positive integer $r$, let $W_{n,r} = S_n \ltimes (\BZ/r\BZ)^n$ 
be the complex reflection group.
Hereafter we assume that $r = 3$, and consider $W_{n,r} = W_{n,3}$. 
Since $S_{\Bm}$ is a subgroup of $S_n$, 
we can consider $W_{\Bm, r} = S_{\Bm}\ltimes (\BZ/r\BZ)^n$ as a subgroup of 
$W_{n,r}$. 
Let $\z$ be  a primitive $r$-th root of unity in $\Ql$, and define a linear character
$\tau_i : \BZ/r\BZ \to \Ql^*$ by $\tau_i(a) = \z^{i-1}$, where $a$ is a fixed generator 
of $\BZ/r\BZ$. 
Let $\r \in S_{\Bm}\wg$. 
Since $S_{\Bm} \simeq \prod_iS_{m_i}$,  
$\r$ is written as $\r = \r_1\boxtimes \r_2 \boxtimes \r_3$, with 
$\r_i \in S_{m_i}\wg$. 
Here $W_{\Bm,r} \simeq \prod_iW_{m_i,r}$. 
We extend the irreducible $S_{m_i}$-module $\r_i$ to 
an irreducible $W_{m_i,r}$-module $\wt\r_i$ by defining the action of 
$(\BZ/r\BZ)^{m_i}$ via $\tau_i^{\otimes m_i}$, and put 
$\wt\r = \wt\r_1\boxtimes\wt\r_2\boxtimes\wt\r_3 \in W_{\Bm,r}\wg$.  
We also define $\wt\r' = \wt\r_1\boxtimes\wt\r_2\boxtimes\wt\r_3' \in W_{\Bm,r}\wg$, 
where $\wt\r_3'$ is the trivial extension of $\r_3$ to $W_{m_3,r}$. 
Put $\wh\r = \Ind_{W_{\Bm,r}}^{W_{n,r}}\wt\r$.   
Then $\wh\r$ is an irreducible 
$W_{n,r}$-module, and any irreducible representation of $W_{n,r}$ 
is obtained in this way from $\r \in S_{\Bm}\wg$ for various $\Bm$.  
\par
In view of (6.4.1), (6.4.2) and (6.4.3), similarly to the discussion 
in 2.10, $(\psi_{\Bm})_!\Ql$
can be written as 
\begin{equation*}
\tag{6.4.4}
(\psi_{\Bm})_!\Ql \simeq \bigoplus_{\r \in S_{\Bm}\wg}
                      \Ind_{S_{\Bm}}^{S_n}(H^{\bullet}(\BP_1^{m_3}) \otimes \r)
                                \otimes \SL_{\r}.   
\end{equation*}
We define an action of $\BZ/r\BZ$ on $H^{\bullet}(\BP_1) = H^2(\BP_1) \oplus H^0(\BP_1)$
by $\tau_r \oplus \tau_1$, and define an action of $(\BZ/r\BZ)^{m_r}$ on 
$H^{\bullet}(\BP_1^{I_r}) \simeq H^{\bullet}(\BP_1)^{\otimes m_r}$ as its tensor product. 
Thus we can extend $H^{\bullet}(\BP_1^{m_r})\otimes \r$ to a complex of $W_{\Bm,r}$-modules
$H^{\bullet}(\BP_1^{m_r})\otimes \wt\r'$.  Thus by (6.4.4), we can define a $W_{n,r}$-action 
on $(\psi_{\Bm})_!\Ql$, 
\begin{equation*}
\tag{6.4.5}
(\psi_{\Bm})_!\Ql \simeq \bigoplus_{\r \in S_{\Bm}\wg}
                    \Ind_{W_{\Bm,r}}^{W_{n,r}}(H^{\bullet}(\BP_1^{m_3})\otimes \wt\r')
                            \otimes \SL_{\r}.
\end{equation*}
\par
Note that (6.4.5) can be rewritten as 
\begin{equation*}
\tag{6.4.6}
(\psi_{\Bm})_!\Ql \simeq \biggl(\bigoplus_{\r \in S_{\Bm}\wg}\wh\r \otimes \SL_{\r}\biggr)
                     [-2m_3] + \SN_{\Bm},
\end{equation*}
where $\SN_{\Bm}$ is a sum of various $\SL_{\r}[-2i]$ for $\r \in S_{\Bm}\wg$ 
with $0 \le i < m_3$. 

\para{6.5.}
For each $\Bm \in \SQ_{n,3}$, let $\ol\psi_{\Bm}$ be the restriction of $\psi$
on $\psi\iv(\SY_{\Bm})$. 
Put $d_{\Bm} = \dim \SY_{\Bm}$. 
Let $\SQ_{n,3}^0$ be the set of $\Bm = (m_1, m_2, m_3) \in \SQ_{n,3}$ such that 
$m_3 = 0$. Take $\Bm = (m_1, m_2, 0) \in \SQ_{n,3}^0$. 
For $0 \le k \le m_2$, define $\Bm(k) \in \SQ_{n,3}$ by 
$\Bm(k) = (m_1, k, m_2 - k)$. The following result can be proved in a similar 
way as Proposition 1.7 in [Sh]. It is a special case where $r = 3$ of 
[loc. cit.]. (See also the  proof of Proposition 2.13).
  
\begin{prop} %%%%  Prop. 6.6. 
Assume that $\Bm \in \SQ^0_{n,3}$. 
Then $(\ol\psi_{\Bm})_!\Ql[d_{\Bm}]$ is a semisimple perverse sheaf on $\SY_{\Bm}$, 
equipped with $W_{n,3}$-action, and is decomposed as
\begin{equation*}
(\ol\psi_{\Bm})_!\Ql[d_{\Bm}] \simeq \bigoplus_{0 \le k \le m_2}
                    \bigoplus_{\r \in S_{\Bm(k)}\wg}
                        \wh\r \otimes \IC(\SY_{\Bm(k)}, \SL_{\r})[d_{\Bm(k)}].
\end{equation*} 
\end{prop}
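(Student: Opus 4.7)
The plan is to mirror the inductive scheme used in the proof of Proposition 2.13, adapted to track the extra vector data $v \in V$. Semismallness of $\psi$ established in Lemma 6.3(iv) restricts to semismallness of $\ol\psi_{\Bm}$ with respect to the induced stratification of $\SY_{\Bm}$, so that $(\ol\psi_{\Bm})_!\Ql[d_{\Bm}]$ is automatically a semisimple perverse sheaf; the remaining task is to identify the simple summands together with their $W_{n,3}$-equivariant multiplicity spaces.

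Concretely, I would induct on $k$ along the chain $\Bm(0) < \Bm(1) < \cdots < \Bm(m_2) = \Bm$ in $\SQ_{n,3}$, proving a refined statement that is valid for all $\Bm(k)$ (not only those with $m_3 = 0$). At each step, the distinguished triangle
\[
j_!(\psi_{\Bm(k)})_!\Ql \longrightarrow (\ol\psi_{\Bm(k)})_!\Ql \longrightarrow (\ol\psi_{\Bm(k-1)})_!\Ql[1],
\]
where $j : \SY^0_{\Bm(k)} \hra \SY_{\Bm(k)}$ is the open immersion, and its perverse cohomology long exact sequence provide the inductive bridge. Formula (6.4.6) describes $(\psi_{\Bm(k)})_!\Ql$ on the open stratum as the leading piece $\bigoplus_{\r}\wh\r \otimes \SL_\r$ placed in degree $-2(m_2-k)$, plus error terms $\SL_\r[-2i]$ in strictly smaller perverse degrees; the leading piece IC-extends to $\IC(\SY_{\Bm(k)}, \SL_\r)[d_{\Bm(k)}]$, which is the new contribution at stage $k$. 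The $W_{n,3}$-action of (6.4.5) on the open stratum then extends uniquely to the full perverse sheaf because odd perverse cohomology vanishes, by the same mechanism as in (2.11.1).

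The main obstacle is to rule out IC sheaves supported on strata $\SY_{\Bm'}$ with $m_1(\Bm') < m_1$; such strata are contained in $\SY_{\Bm}$ but do not appear in the asserted decomposition. One must argue, by tracking simple summands inductively and comparing restrictions to each stratum of $\SY_{\Bm}$, that any such contribution is confined to an error term in lower perverse degrees and never promotes to a genuine summand. Specialising to $k = m_2$, the hypothesis $\Bm \in \SQ^0_{n,3}$ (i.e.\ $m_3 = 0$) collapses the $H^{\bullet}(\BP_1^{m_3})$ factor to a single copy of $\Ql$, and semismallness of $\ol\psi_{\Bm}$ forces the accumulated error term to vanish, mirroring the forcing of $\SM_n = 0$ in the proof of Proposition 2.13 and thereby yielding the claimed decomposition.
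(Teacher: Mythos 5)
The paper does not supply a written proof here: it simply cites~[Sh, Prop.~1.7] (the $r=3$ case of Shoji's result for exotic symmetric spaces of higher level) and points back to the proof of Proposition~2.13 as a model. So I can only test your argument on its own terms against the geometry the paper has set up, and there are genuine gaps.

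The central problem is that the inductive geometry you are using is wrong when $m_1 > 0$. The stratification of $\SY_{\Bm}$ is by \emph{all} strata $\SY^0_{\Bm'}$ with $\Bm' \le \Bm$ in the partial order of 6.2, which means $m_1' \le m_1$; this poset is not a chain. Your distinguished triangle
\[
j_!(\psi_{\Bm(k)})_!\Ql \longrightarrow (\ol\psi_{\Bm(k)})_!\Ql \longrightarrow (\ol\psi_{\Bm(k-1)})_!\Ql[1]
\]
is therefore false: the closed complement of $\SY^0_{\Bm(k)}$ in $\SY_{\Bm(k)}$ is $\bigcup_{\Bm' < \Bm(k)}\SY_{\Bm'}$, and this strictly contains $\SY_{\Bm(k-1)}$ whenever $m_1 > 0$ --- for instance $\Bm' = (m_1-1,\,k+1,\,m_2-k)$ satisfies $\Bm' < \Bm(k)$ but $\Bm' \not\le \Bm(k-1)$. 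This is precisely the reason the present situation differs from Proposition~2.13, where the $Y_k$ do form a chain with $Y_k - Y_{k-1} = Y^0_k$; the analogy is not direct. Any correct induction must run over the full poset $\{\Bm' \le \Bm\}$ (or over a linear extension of it), not merely over $\Bm(0) < \cdots < \Bm(m_2)$.

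Two further points are overstated. First, the claim that semismallness of $\psi$ (Lemma~6.3(iv)) ``restricts'' to give perversity of $(\ol\psi_{\Bm})_!\Ql[d_{\Bm}]$ is not automatic: the relevant semismall inequality for $\ol\psi_{\Bm}$ is $2m_3' \le (m_1 - m_1') + 2m_3'$, which is a different estimate from the one for $\psi$, and more importantly $\psi\iv(\SY_{\Bm})$ is neither smooth nor obviously equidimensional, so semismallness alone does not yield perversity here. Second, you correctly flag ``the main obstacle'' of ruling out IC sheaves supported on $\SY_{\Bm'}$ with $m_1' < m_1$, but the resolution offered is a hand-wave. What actually rules them out is a degree count: the fibre over $\SY^0_{\Bm'}$ has dimension $m_3'$, so $(\psi_{\Bm'})_!\Ql$ lives in cohomological degrees $[0, 2m_3']$, whereas $d_{\Bm} - d_{\Bm'} = (m_1 - m_1') + 2m_3'$, strictly larger than $2m_3'$ when $m_1' < m_1$; hence no summand of the form $\IC(\SY_{\Bm'}, \SL)[d_{\Bm'}]$ can contribute a \emph{perverse} constituent. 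That comparison is the mechanism that makes the argument close, and it needs to be done explicitly rather than asserted.
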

  
\para{6.7.}
For $\Bm = (m_1, m_2, 0) \in \SQ^0_{n,3}$, let $W\nat_{\Bm}$ be the subgroup 
of $W_n$ defined by $W\nat_{\Bm} \simeq S_{m_1} \times W_{m_2}$, where 
$S_{m_1}$ is the group of permutations for $[1, m_1]$ and $W_{m_2} = W_{m_2,2}$ 
is the group 
of signed permutations for $[m_1+1, n]$. 
Let $\Bm(k) = (m_1,k, k')$ with $k + k' = m_2$. 
For $\r = \r_1\boxtimes \r_2 \boxtimes \r_3 \in S_{\Bm(k)}\wg$, we define 
$\r\nat \in W\nat_{\Bm}$ by 
$\r\nat = \r_1\boxtimes \r_2'$, where $\r_2' \in W_{m_2}\wg$ is given by  
$\r_2' = \Ind_{W_{k} \times W_{k'}}^{W_{m_2}}(\wt\r_2 \boxtimes \wt\r_3)$. 
(Here $\wt\r_2$ is the trivial extension of $\r_2$ to $W_{k}$, $\wt\r_3$ is
 the extension of $\r_3$ to $W_{k'}$ 
with non-trivial action of $\BZ/2\BZ$ for each factor.) 
\par
Recall the map $\psi^{(\Bm)} : \wt\SY_{\Bm} \to \SY_{\Bm}$ given in 6.2. 
The following result is a variant of Proposition 6.6, and is proved by a similar 
argument (see also Proposition 3.5 in [Sh]). 

\begin{prop}  %%%%  Prop. 6.8
Assume that $\Bm = (m_1, m_2, 0) \in \SQ^0_{n,3}$. 
$\psi^{(\Bm)}_!\Ql[d_{\Bm}]$ is a semisimple perverse sheaf on $\SY_{\Bm}$, equipped 
with $W\nat_{\Bm}$-action, and is decomposed as
\begin{equation*}
\psi^{(\Bm)}_!\Ql[d_{\Bm}] \simeq \bigoplus_{0 \le k \le m_2}
    \bigoplus_{\r \in S\wg_{\Bm(k)}}
        \r\nat\otimes \IC(\SY_{\Bm(k)}, \SL_{\r})[d_{\Bm(k)}].
\end{equation*}
\end{prop}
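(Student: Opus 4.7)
The plan is to parallel the proof of Proposition 6.6, keeping track of the smaller symmetry group $W\nat_{\Bm} \simeq S_{m_1} \times W_{m_2}$ instead of $W_{n,3}$. The essential difference is that in $\psi^{(\Bm)}$ the source $\wt\SY_{\Bm}$ imposes $g\iv v \in M_{m_1}$, so only the subgroup of $S_n$ stabilizing $M_{m_1}$, namely $S_{m_1} \times S_{m_2}$ with $S_{m_2}$ permuting $[m_1{+}1, n]$, acts on the flag-variety side; the $(\BZ/2\BZ)^{m_2}$-signs will enter only through the Springer action on the $\BP_1$-fibers, and not through any $(\BZ/3\BZ)$-signs available for the full $\psi$.

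I would first stratify $\SY_{\Bm} = \coprod_{0 \le k \le m_2} \SY^0_{\Bm(k)}$ and analyze each stratum separately. Writing $k' = m_2 - k$, the preimage of $\SY^0_{\Bm(k)}$ in $\wt\SY_{\Bm}$ decomposes as $\coprod_{I_2} \wt\SY_{\BI}$ indexed by $\BI = ([1,m_1], I_2, I_3) \in \SI(\Bm(k))$ with $I_1 = [1,m_1]$ forced, $I_2$ a $k$-subset of $[m_1{+}1, n]$, and $I_3$ its complement; the index set is $S_{m_2}/(S_k \times S_{k'})$. For each $\BI$, the factorization $\psi_{\BI} = \eta_{\BI}\circ \xi_{\BI}$ from 6.2 applies, with $\xi_{\BI}$ a $\BP_1^{I_3}$-bundle and $\eta_{\BI}$ a finite Galois covering of group $S_{\BI} = S_{m_1} \times S_k \times S_{k'}$. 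Summing over $I_2$ and running the computation that produced (6.4.4)--(6.4.5) inside the smaller group $S_{m_1} \times S_{m_2}$, the restriction acquires a natural $W\nat_{\Bm}$-action through (i) permutation of strata together with the $S_{\BI}$-Galois action, producing the induction $\Ind_{S_k \times S_{k'}}^{S_{m_2}}$ in the second factor, and (ii) the Springer action of $\BZ/2\BZ$ on each factor $H^{\bullet}(\BP_1)$ over $I_3$ and the trivial action over $I_2$, yielding in top degree the $W\nat_{\Bm}$-module $\r\nat$ of 6.7, up to summands of strictly lower cohomological degree.

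Finally, I would apply the inductive patching argument of (2.11.1), as in the proof of Proposition 2.13 and [Sh, Prop.\ 3.5] for the case $r = 3$, to glue these stratum-wise descriptions into a global $W\nat_{\Bm}$-action on $\psi^{(\Bm)}_!\Ql[d_{\Bm}]$, using Lemma 6.3(iv) to ensure that this complex is a semisimple perverse sheaf. The main obstacle is verifying that the $W\nat_{\Bm}$-structure on each top-stratum summand is precisely $\r\nat$: the delicate point is that the sign extension $\wt\r_3$ of $\r_3 \in S_{k'}\wg$ to $W_{k'}$ comes out exactly from the non-trivial Springer action of each $\BZ/2\BZ$ on $H^2(\BP_1)$ for $i \in I_3$, while the trivial extension $\wt\r_2$ of $\r_2$ to $W_k$ arises from the trivial action on the $I_2$-factors, producing precisely $\r\nat = \r_1 \boxtimes \Ind_{W_k \times W_{k'}}^{W_{m_2}}(\wt\r_2 \boxtimes \wt\r_3)$ as required.
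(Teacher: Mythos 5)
Your proposal is correct and follows the approach the paper intends: Proposition 6.8 is stated to be proved "by a similar argument" to Proposition 6.6 (which in turn refers to the proof pattern of Proposition 2.13 and [Sh, Prop.\ 1.7]), and your filling-in matches that pattern, with the key observations being (a) the constraint $g\iv v \in M_{m_1}$ forces $I_1 = [1,m_1]$ so that the stratum-preimage components are indexed by $I_2 \subset [m_1{+}1,n]$, (b) the $W\nat_{\Bm} = S_{m_1}\times W_{m_2,2}$-action arises from the $S_{m_1}\times S_{m_2}$ permutation of components combined with the $\BZ/2\BZ$-Springer action on the $\BP_1^{I_3}$ factors, and (c) patching via the mechanism of (2.11.1). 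One small remark: Lemma 6.3(iv) as stated concerns the semismallness of the full $\psi$, so for the perversity of $\psi^{(\Bm)}_!\Ql[d_{\Bm}]$ you should instead note that the same fibre-dimension count ($\dim\psi^{(\Bm)-1}(z) = m_2 - k$ for $z\in\SY^0_{\Bm(k)}$, with $d_{\Bm}-d_{\Bm(k)} = 2(m_2-k)$, using Lemma 6.3(ii),(iii) and $m_3=0$) shows $\psi^{(\Bm)}$ itself is semismall.
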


\para{6.9.}
For each $\Bm \in \SQ_{n,3}$, 
we define $\ol\pi_{\Bm}$ the map $\pi\iv(\SX_{\Bm}) \to \SX_{\Bm}$ as 
the restriction of $\pi$ to $\pi\iv(\SX_{\Bm})$. Thus $\ol\pi_{\Bm}$ is a proper 
surjective map to $\SX_{\Bm}$. 
The following result is a generalization of Theorem 5.7 
(note that if $\Bm = (0, m_2, 0)$, this coincides with Theorem 5.7.) 

\begin{thm}  %%%%  Theorem 6.10
Assume that $\Bm \in \SQ_{n,3}^0$. Then $(\ol\pi_{\Bm})_!\Ql[d_{\Bm}]$
is a semisimple perverse sheaf on $\SX_{\Bm}$, equipped with $W_{n,3}$-action, 
and is decomposed as
\begin{equation*}
(\ol\pi_{\Bm})_!\Ql[d_{\Bm}] \simeq
\bigoplus_{0 \le k \le m_2}\bigoplus_{\r \in S_{\Bm(k)}\wg }
                            \wh\r \otimes \IC(\SX_{\Bm(k)}, \SL_{\r})[d_{\Bm(k)}].
\end{equation*}
\end{thm}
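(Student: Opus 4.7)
The plan is to adapt the proof of Theorem 5.7, which is precisely the case $\Bm = (0,n,0)$ of the present statement. The argument proceeds in three stages: first, establish that $\ol\pi_{\Bm}$ is semismall; second, identify $(\ol\pi_{\Bm})_!\Ql$ on the open dense subset $\SY_{\Bm} \subset \SX_{\Bm}$ via Proposition 6.6; third, rule out any simple perverse summand supported in $\SX_{\Bm} \setminus \SY_{\Bm}$.

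First, I would prove semismallness of $\ol\pi_{\Bm}$ by imitating Lemma 4.3. Using the Jordan decomposition $x = x_s + x_n$ together with an analog of (3.7.1) for the pair $(x,v)$, one gets a dimension bound on the fiber of $\ol\pi_{\Bm}$ above $(x,v) \in \SX_{\Bm}$ in terms of $\dim Z_H(x)$ and the $Z_H(x)$-orbit of $v$. Combining this with a stratification of $\SX_{\Bm}$ by triples (conjugacy type of $x_s$; $Z^0_H(x_s)$-orbit of $x_n$; $Z_H(x)$-orbit of $v$) and an analog of (4.2.1), the semismallness inequality follows. Consequently, $(\ol\pi_{\Bm})_!\Ql[d_{\Bm}]$ is a semisimple perverse sheaf on $\SX_{\Bm}$.

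Second, Lemma 6.3(i) gives $\SY_{\Bm}$ open dense in $\SX_{\Bm}$, and proper base change yields $(\ol\pi_{\Bm})_!\Ql|_{\SY_{\Bm}} \simeq (\ol\psi_{\Bm})_!\Ql$. Proposition 6.6 then decomposes the latter as
\begin{equation*}
\bigoplus_{0 \le k \le m_2} \bigoplus_{\r \in S_{\Bm(k)}\wg} \wh\r \otimes \IC(\SY_{\Bm(k)}, \SL_{\r})[d_{\Bm(k)}]
\end{equation*}
with $W_{n,3}$-action. Since $\SY_{\Bm(k)}$ is open dense in $\SX_{\Bm(k)}$ (Lemma 6.3(i) applied to $\Bm(k)$), each $\IC(\SY_{\Bm(k)},\SL_{\r})$ is the restriction of $\IC(\SX_{\Bm(k)},\SL_{\r})$, and the $W_{n,3}$-action extends uniquely through the open embedding. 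This accounts for every simple summand of $(\ol\pi_{\Bm})_!\Ql[d_{\Bm}]$ whose support meets $\SY_{\Bm}$, with the multiplicities stated in the theorem.

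The main obstacle is the third stage: excluding defect summands $\IC(Z,\CE)$ with $Z \subseteq \SX_{\Bm}\setminus\SY_{\Bm}$. My plan is to adapt the multiplicity count of Section 4 culminating in Proposition 4.11. Form the Steinberg-type fiber product $\SZ_{\Bm} = \pi\iv(\SX_{\Bm}) \times_{\SX_{\Bm}} \pi\iv(\SX_{\Bm})$ together with a refined Steinberg map $\s_{\Bm}$ to a parameter space $\Xi_{\Bm}$ encoding both the semisimple orbit of $x_s$ and the $H$-orbit datum of $v$ relative to the isotropic flag; then analyze the top compactly supported cohomology of $R^{2d'}(\s_{\Bm})_!\Ql$ in parallel to Lemma 4.5 and Proposition 4.6, where the double-coset decomposition of $\SB \times \SB$ used there is enriched by a decomposition of $M_n \times M_n$ under $B \times B$ so that the combinatorics of $W_{n,3}$, rather than just $W_n$, emerges. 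Matching the resulting count $\sum_A m_A^2$ against the total multiplicity $\sum_{k,\r}(\dim \wh\r)^2$ predicted by the theorem will force any defect contribution to vanish. The delicate point, and where I expect the bulk of the work to lie, is identifying $\Xi_{\Bm}$ so that the cohomology count matches precisely the sum of squares over the $\wh\r$ appearing in the statement; once this is in place, the proof closes in the same inductive pattern as Section 5, with Theorem 5.7 serving as the base case.
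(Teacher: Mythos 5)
Your third stage is where your proposal departs from the paper, and it contains a concrete obstacle. The quantity you would need the count to produce is $\sum_{k}\sum_{\r\in S_{\Bm(k)}\wg}(\dim\wh\r)^2$; since $\dim\wh\r = [W_{n,3}:W_{\Bm(k),3}]\,\dim\r$ and $\sum_\r(\dim\r)^2 = |S_{\Bm(k)}| = m_1!\,k!\,(m_2-k)!$, this works out to $\binom{n}{m_1}^2\,|W\nat_{\Bm}|$, which is not the order of any reflection group in the picture. The argument of Section 4 yields a group order precisely because $\ST$ splits as $\bigoplus_{w\in W}\ST_w$ (Proposition 4.6) with the two-sided regular action (4.7.1) and each $\ST_w$ contributing one copy of $\Ql$; your $\Xi_{\Bm}$ would need an indexing set of cardinality a binomial squared times a group order, and no such decomposition is in sight. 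Moreover, the fiber of your $\SZ_{\Bm}$ over $(B,wB)\in\SO_w$ carries the global constraint $(x,v)\in\SX_{\Bm}$ in addition to $x\in\Fb\cap w\Fb$ and $v\in M_n\cap wM_n$; this constraint does not factor through the pair of flags, so the vector-bundle structure of (4.5.1), on which Lemma 4.5 and the whole perfect-sheaf mechanism rest, is lost.

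The paper's proof avoids this entirely. It proves Proposition 6.17, which pins down $(\pi_{\Bm'})_!\Ql$ on every stratum $\SX^0_{\Bm'}$ via the cartesian diagram (6.14.3), using (6.15.2), which is Proposition 5.16 applied to $Sp(\ol V_0)$. Because Proposition 5.16 is available for all $k$ including $k=0$, the boundary stratum carries no residual $\SN_0$ term of the kind that (5.19.1) produced in the proof of Theorem 5.7 and that needed Proposition 4.11 to kill; this is exactly the content of 6.18 and 6.19. Once each stratum is controlled, the descending induction of (5.19.2), together with semismallness of the proper map, closes the theorem with no further count. Where the paper does run a Steinberg-variety count in this $\Bm$-dependent setting (Proposition 8.7), it is for $\pi_1^{(\Bm)}$ with the $W\nat_{\Bm}$-action, giving the clean answer $|W\nat_{\Bm}|$; the $W_{n,3}$-decomposition for $\ol\pi_{\Bm,1}$ is then deduced in Corollary 8.11, not by a direct count. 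You should replace your third stage with a proof of Proposition 6.17 and the stratum-by-stratum induction of 5.19.
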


\para{6.11.}
The theorem can be proved in an almost parallel way as the proof of 
Theorem 2.2 in [Sh], the special case where $r = 3$, once formulated 
appropriately. Also the proof is quite similar to the proof of Lemma 5.18.
So in the following, we give an outline of the proof.
\par
For $(x, v) \in \SX$, we define an $x$-stable isotropic subspace $W = W(x,v)$ as follows;
If $x$ is nilpotent, put $W = \Bk[x]v$. Since $(x,v) \in g(\Fn \times M_n)$ for 
some $g \in H$, $W$ is isotropic.  For general $x = s + z$, where 
$s$ is semisimple, $z$ is nilpotent such that $[s,z] = 0$, we consider the 
decomposition of $V$ into eigenspaces of $s$, 
$V = V_1 \oplus \cdots \oplus V_a$ as in 5.8.  Then 
$Z_H(s) \simeq Sp(V_1) \times \cdots \times Sp(V_a)$, and $z \in \Fh\nil$ can be written 
as $z = \sum_{i= 1}^az_i$, where $z_i \in \Fs\Fp(V_i)\nil$.  
We write $v = \sum_{i=1}^av_i$ with $v_i \in V_i$.  Then $(z_i, v_i) \in \SX^{(i)}$, 
where $\SX^{(i)}$ is defined similarly to $\SX$ by replacing $H$ by $Sp(V_i)$. 
We define a subspace $W_i = W(z_i,v_i)$ of $V_i$ as above, and put 
$W = \bigoplus_iW_i$.  Then $W = W(x,v)$ satisfies the required properties.  
We consider $\SX_{\Bm}$ for $\Bm \in \SQ_{n,3}$. 
Note that 
\begin{equation*}
\tag{6.11.1}
\dim W(x,v) \le m_1 \text{ if } (x,v) \in \SX_{\Bm}.
\end{equation*} 
It follows from the construction of $W$, we have 
\par\medskip\noindent
(6.11.2) \ $x|_W  \in \Fg\Fl(W)$ is a regular element. 
\par\medskip
Put $W = W(x,v)$ for $(x,v) \in \SX_{\Bm}$.  Then $V' = W^{\perp}/W$ 
has a natural symplectic structure, and we define $H' = Sp(V'), \Fh' = \Lie H'$.
$x$ induces an endomorphism $x'$ on $V'$, and we have $x' \in \Fh'$. 
Let $X'_k, {X'}^0_k, \FN'_k$, etc. be the varieties defined for $H'$, 
similarly to $X_k, X^0_k, \FN_k$, etc.  defined for $H$.
It is easy to see that if $(x,v) \in \ol\FN_{m_1 + m_2} \times M_{m_1}$ 
with $W(x,v) = M_{m_1}$, then $x' \in \ol\FN'_{m_2}$. 
It follows that 
\begin{equation*}
\tag{6.11.3}
x' \in X'_{m_2}  \text{ if } (x,v) \in \SX_{\Bm} \text{ and } \dim W(x,v) = m_1.
\end{equation*}   

\para{6.12.}
Assume that $\Bm \in \SQ_{n,3}$.  
As in the case of $\SY^0_{\Bm}$, we define an open subset $\SX^0_{\Bm}$ 
of $\SX_{\Bm}$ as
\begin{equation*} 
\SX^0_{\Bm} = \SX_{\Bm} - \bigcup_{\Bm' < \Bm}\SX_{\Bm'}.
\end{equation*}
It follows from (6.11.1) and (6.11.3) that 
\begin{equation*}
\tag{6.12.1}
\SX^0_{\Bm} = \{ (x,v) \in \SX \mid 
      \dim W(x,v) = m_1 \text{ and } x' \in {X'}^0_{m_2} \}. 
\end{equation*}

We define $\wt\SX^+_{\Bm} = \pi\iv(\SX^0_{\Bm})$, and let 
$\pi_{\Bm} : \wt\SX^+_{\Bm} \to \SX^0_{\Bm}$ be the restriction of 
$\pi$ on $\wt\SX^+_{\Bm}$. 
To $(x,v, gB) \in \wt\SX^+_{\Bm}$, we associate $\BI \in \SI(\Bm)$ as follows;
assume that $(x,v) \in \Fb \times M_n$, and $x = s + z$ be the Jordan decomposition 
of $x \in \Fb$. By replacing $(x,v)$ by $B$-conjugate, we assume that 
$s \in \Ft, z \in \Fn$. Then the decomposition $V = V_1 \oplus \cdots \oplus V_a$
gives a decomposition $M_n = M_{n,1}\oplus\cdots\oplus M_{n,a}$. 
Here $M_n$ has a basis $\{ e_1, \dots, e_n\}$, and $M_{n,i}$ has a basis 
$\{ e_j \mid j \in J_i\}$, which gives a partition 
$[1,n] = \coprod_{1 \le i \le a}J_i$. 
$(x,v)$ determines $(z_i, v_i)$, and put $q_i = \dim W(z_i,v_i)$. Clearly 
$q_i \le \dim M_{n,i}$, and put $J_i' \subset J_i$ as the set of 
first $q_i$ letters in $J_i$.  We define $I_1 = \coprod_{1 \le i \le a}J_i'$.
Since $\dim W(x,v) = m_1$, we have $|I_1| = m_1$ by (6.12.1). 
Here $x' \in {X'}^0_{m_2}$ again by (6.12.1). 
Thus by 5.8, one can associate to $x'$ a subset $I_2$
of $[m_1 + 1,n]$ such that $|I_2| = m_2$. We have $I_1 \cap I_2 = \emptyset$, and 
$\BI = (I_1, I_2, I_3)$ gives an element in $\SI(\Bm)$ ($I_3$ is the complement 
of $I_1 \cup I_2$.)  The assignment $(x, v) \mapsto \BI$ does not depend on the
$B$-conjugates of $(x,v)$, Thus we obtain a well-defined map 
$(x,v, gB) \mapsto \BI$.     
\par
We define a subvariety $\wt\SX_{\BI}$ of $\wt\SX^+_{\Bm}$ by 
\begin{equation*}
\tag{6.12.2}
\wt\SX^+_{\Bm} = \{ (x,v, B) \in \wt\SX^+_{\Bm} \mid (x,v, gB) \mapsto \BI \}.
\end{equation*}

The following lemma can be proved in a similar way as Lemma 5.9 
(see also Lemma 2.4 in [Sh]). 

\begin{lem}  %%%%  Lemma 6.13.
$\wt\SX^+_{\Bm}$ is decomposed as
\begin{equation*}
\wt\SX^+_{\Bm} = \coprod_{\BI \in \SI(\Bm)}\wt\SX_{\BI}.
\end{equation*}
\end{lem}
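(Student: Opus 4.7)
The plan is to mirror the proof of Lemma 5.9, with the additional data of the vector $v \in V$ to be tracked. The central issue is to verify that the map $(x,v,gB) \mapsto \BI$ constructed in 6.12 is well-defined, for once that is established the statement is essentially formal: the sets $\wt\SX_{\BI}$ are pairwise disjoint by definition of the map, their union is $\wt\SX^+_{\Bm}$ because every element receives some assignment, and we must only check that the resulting $\BI$ actually lies in $\SI(\Bm)$ (i.e.\ that $|I_1| = m_1$, $|I_2| = m_2$, $|I_3| = m_3$).

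First I would carry out the well-definedness check. Given $(x, v, gB) \in \wt\SX^+_{\Bm}$, the assignment proceeds by reducing to $(g\iv x, g\iv v) \in \ol\FN_{m_1+m_2} \times M_{m_1}$ and then further conjugating by $B$ so that the Jordan decomposition $x = s + z$ satisfies $s \in \Ft$ and $z \in \Fn$. Such a $B$-reduction exists by Lemma 2.3(ii), and the remaining ambiguity is the action of $B \cap Z_H(s)$. I would show that (a) the eigenspace decomposition $V = V_1 \oplus \cdots \oplus V_a$ of $s$, together with the induced partition $[1,n] = \coprod J_i$, is intrinsic to the $H$-orbit of $s$; and (b) given the decomposition, the subsets $J_i' \subset J_i$ selected from the dimensions $q_i = \dim W(z_i, v_i)$ depend only on the integers $q_i$, which in turn depend only on $(x,v)$ via $W(x,v) = \bigoplus W_i$. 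The selection of $I_2$ from the residual action on $W^{\perp}/W$ is then the content of the nilpotent-case assignment of Lemma 5.9 applied to $x' \in {X'}^0_{m_2}$, so its well-definedness is already known.

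Next I would verify that $\BI \in \SI(\Bm)$. The cardinality $|I_1| = m_1$ follows from (6.11.1) combined with (6.12.1), which forces $\dim W(x,v) = m_1$ on $\SX^0_{\Bm}$; the cardinality $|I_2| = m_2$ follows from (6.11.3) together with the fact that $x' \in {X'}^0_{m_2}$ on $\SX^0_{\Bm}$, so that the Lemma 5.9 construction applied to $x'$ produces a subset of $[1, n - m_1]$ of size exactly $m_2$; and $I_3$ is the complement, of size $m_3 = n - m_1 - m_2$.

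With well-definedness in hand, the partition $\wt\SX^+_{\Bm} = \coprod_{\BI \in \SI(\Bm)} \wt\SX_{\BI}$ is immediate: disjointness is formal from the functional nature of the assignment, and surjectivity of the assignment onto $\SI(\Bm)$ (which is not actually required for the lemma, but morally lurks behind it) can be seen by exhibiting explicit representatives in $\Ft_{\sr} + \FD_{\BI}$ paired with vectors in $M_{I_1}$, as in the construction of $\wt\SY_{\BI}$ in (6.2.1). The main obstacle is the well-definedness step, specifically the verification that the choice of $B$-reduction of the Jordan decomposition does not alter the labelling $J_i' \subset J_i$; this reduces to showing that $B \cap Z_H(s)$ acts on each block preserving the flag structure used to pick out the first $q_i$ coordinates, which parallels the analogous step in the proof of Lemma 5.9.
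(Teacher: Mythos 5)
Your proposal correctly observes that, once the assignment $(x,v,gB) \mapsto \BI$ is well-defined and takes values in $\SI(\Bm)$, the disjointness and exhaustiveness of the pieces $\wt\SX_{\BI}$ are formal, and your cardinality checks via (6.11.1), (6.11.3), and (6.12.1) are on the right track. But you have misidentified where the real work lies. The paper states the well-definedness of the assignment already in 6.12, prior to the lemma, and the proof of Lemma 5.9 which you are told to mirror dismisses the set-theoretic disjointness as ``clear from the definition.'' The substance of that proof — and the actual content of Lemma 6.13 — is that each $\wt\SX_{\BI}$ is \emph{irreducible} and \emph{closed} in $\wt\SX^+_{\Bm}$, so that the $\wt\SX_{\BI}$ are precisely the connected (= irreducible) components. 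Your proof addresses none of this.

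This is not cosmetic: the closedness is used immediately afterwards. In the proof of Proposition 6.17, one needs $\pi_{\BI} = \pi_{\Bm}|_{\wt\SX_{\BI}}$ to be proper (which requires $\wt\SX_{\BI}$ closed in $\wt\SX^+_{\Bm}$), and one needs the $\wt\SX_{\BI}$ to be the connected components in order to split $(\pi_{\Bm})_!\Ql \simeq \bigoplus_{\BI}(\pi_{\BI})_!\Ql$. To fill the gap, run the two missing steps from Lemma 5.9's proof: (a) irreducibility — fix the partition $[1,n] = \coprod_i J_i$; the set of $s\in\Ft$ with that eigenspace pattern is irreducible, the nilpotent fibres over it are irreducible, and $\wt\SX_{\BI}$ is the $H$-sweep of an irreducible locally closed set (it also contains $\wt\SY_{\BI}$ as an open dense subset, from (6.2.1)); (b) closedness — as with $Z_I$ in Lemma 5.9, the union of all $\wt\SX_{\BI'}$ over $\BI'$ dominated by $\BI$ (in the natural partial order on pairs $(I_1', I_1'\cup I_2')$) is closed in $\wt\SX$, and $\wt\SX_{\BI}$ is its intersection with $\wt\SX^+_{\Bm}$.
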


\para{6.14.}
We fix $\Bm = (m_1, m_2, m_3) \in \SQ_{n,3}$. 
We consider the space $V_0 = M_{m_1}$ and 
$\ol V_0 = V_0^{\perp}/V_0$.  As in 5.11, we put 
$G_1 = GL(V_0), H_2 = Sp(\ol V_0)$. 
We consider $X'_{m_2} \subset X'$ with respect to $H_2$. 
Put $\Fg_1 = \Lie G_1$, and let $\Fg_1^0$ be the set of regular 
elements in $\Fg_1$. 
For $\xi = (x,v) \in \SX$, put $W_{\xi} = W(x,v)$.
As a variant of (5.11.1), (see also [Sh, 2.5]) we define a variety 
\begin{align*}
\tag{6.14.1}
\CK_{\Bm} = \{ (x,&v, \f_1, \f_2) \mid \xi = (x,v) \in \SX^0_{\Bm}, \\
            &\f_1 : W_{\xi} \isom V_0, 
     \vf_2 : W_{\xi}^{\perp}/W_{\xi} \isom \ol V_0  (\text{symplectic isom.})\}
\end{align*}
with morphisms
\begin{equation*}
\begin{aligned}
q : &\CK_{\Bm} \to \SX^0_{\Bm}, &\quad  &(x,v,\f_1, \f_2) \mapsto  (x,v), \\
\s : &\CK_{\Bm} \to \Fg_1^0 \times {X'}^0_{m_2}, &\quad
          &(x,v, \f_1, \f_2) \mapsto 
     (\f_1(x|_{W_{\xi}})\f_1\iv, \f_2(x|_{W _{\xi}^{\perp}/W_{\xi}})\f_2\iv)
\end{aligned}
\end{equation*}
\par
$H \times (G_1 \times H_2)$ acts on $\CK_{\Bm}$ by 
\begin{equation*}
(g, (h_1, h_2)) : (x,v,\f_1, \f_2) \mapsto (gx, gv, h_1\f_1g\iv, h_2\f_2g\iv).
\end{equation*}
Moreover, $\s$ is $H \times (G_1 \times H_2)$-equivariant with respect to the 
adjoint action of $G_1 \times H_2$ and the trivial action 
of $H$ on $\Fg_1^0 \times {X'}^0_{m_2}$. Similarly to (5.11.3), we have the following.
(The proof is similar to ([Sh, 2.5]). 
\par\medskip\noindent
(6.14.2) \ The map $q$ is a principal bundle with fibre isomorphic to $G_1 \times H_2$.
The map $\s$ is a locally trivial fibration with smooth connected fibre of dimension 
$\dim H + m_1$. 
\par\medskip  
We define a Borel subgroup $B_1$ of $G_1$ and $B_2$ of $H_2$ as in 5.13, by 
replacing $k$ by $m_1$, and define $\pi^1: \wt\Fg_1 \to \Fg_1$ similarly. 
Put $\wt\Fg^0_1 = (\pi^1)\iv(\Fg^0_1)$, and let 
$\vf^1 : \wt\Fg^0_1 \to \Fg^0_1$ be the restriction of $\pi^1$. 
We define $X'$ by using $H_2/B_2$, and let 
$\pi^2_{m_2} : {\wt X}'^+_{m_2} \to {X'}^0_{m_2}$ be the corresponding map.  
We define a variety
\begin{align*}
\wt\CZ^+_{\Bm} = \{ (\xi, gB, &\f_1, \f_2) \mid (\xi, gB) \in \wt\SX^+_{\Bm}, \\
      &\f_1 : W_{\xi} \isom V_0, \f_2 : W_{\xi}^{\perp}/W_{\xi} \isom \ol V_0 \}, 
\end{align*}
and define a map $\wt q: \wt\CZ^+_{\Bm} \to \wt\SX^+_{\Bm}$ by 
the natural projection.  We define a map 
$\wt\s : \wt\SZ^+_{\Bm} \to \wt\Fg^0_1 \times {\wt X}'^+_{m_2}$ as follows;
take $(x,v,B, \f_1, \f_2) \in \wt\CZ^+_{\Bm}$.  Since $\xi = (x,v) \in \SX^0_{\Bm}$, 
$W_{\xi}$ coincides with $g(M_{m_1})$.  Let $g_1B_1$ be the element corresponding to
the flag $\f_1(g(M_i))_{0 \le i \le m_1}$, and $g_2B_2$ the element corresponding to the
isotropic flag $\f_2(g(M_i)/g(M_{m_1}))_{m_1 \le i \le n}$.  Then 
\begin{equation*}
\wt\s : (x,v,gB, \f_1, \f_2) \mapsto 
               ((\f_1(x|_{W_{\xi}})\f_1\iv, g_1B_1), 
                (\f_2(x|_{W_{\xi}^{\perp}/W_{\xi}})\f_2\iv, g_2B_2).
\end{equation*}  
We also define a map $\wt\pi_{\Bm} : \wt\CZ^+_{\Bm} \to \CK_{\Bm}$ 
by $(x,v,gB, \f_1, \f_2) \mapsto (x,v,\f_1,\f_2)$. 
Then we have the following commutative diagram
(compare this with (5.13.2));
\begin{equation*}
\tag{6.14.3}
\begin{CD}
\wt\Fg^0_1 \times {\wt X}'^+_{m_2} @<\wt s<<  \wt\CZ^+_{\Bm} @>\wt q>> \wt\SX^+_{\Bm} \\
    @V\vf^1 \times \pi^2_{m_2}VV             @VV\wt\pi_{\Bm}V     @VV\pi_{\Bm}V  \\
\Fg_1^0 \times X'^0_{m_2}   @<\s<< \CK_{\Bm}   @>q>>     \SX^0_{\Bm}. 
\end{CD}
\end{equation*}

\para{6.15.}
By (5.14.1), $(\pi^1)_!\Ql$ can be written as 
$(\pi^1)_!\Ql \simeq \IC(\Fg_1, \SL)$ for a semisimple local system 
$\SL$ on $(\Fg_1)\reg$; the set of regular semisimple elements in $\Fg_1$.  
Since $\Fg_1^0$ is an open dense subset of $\Fg_1$
containing $(\Fg_1)\reg$, $(\vf^1)_!\Ql$ can be written as
\begin{equation*}
\tag{6.15.1}
(\vf^1)_!\Ql \simeq \bigoplus_{\r_1 \in S_{m_1}\wg}\r_1\otimes 
               \IC(\Fg_1^0, \SL^1_{\r_1}).
\end{equation*} 
By applying Proposition 5.16 to the map 
$\pi^2_{m_2} ; \wt X'^+_{m_2} \to X'^0_{m_2}$, we have
\begin{equation*}
\tag{6.15.2}
(\pi^2_{m_2})_!\Ql \simeq H^{\bullet}(\BP_1^{m_3})\otimes 
                    \bigoplus_{\r_2 \in (S_{m_2} \times S_{m_3})\wg}
                           \wh \r_2 \otimes \IC(X'^0_{m_2}, \SL'_{\r'}).
\end{equation*}

Put $A_{\r_1} = \IC(\Fg^0_1, \SL^1_{\r_1})[\dim \Fg_1]$ and 
$A_{\r_2} = \IC(X'^0_{m_2}, \SL'_{\r_2})[\dim X'_{m_2}]$. Then 
$A_{\r_1}\boxtimes A_{\r_2}$ is a $(G_1 \times H_2)$-equivariant 
simple perverse sheaf on $\Fg_1^0 \times X'^0_{m_2}$.
By a similar argument as in 5.14, thanks to (6.14.2), 
one can construct a simple perverse sheaf $A_{\r}$ on $\SX^0_{\Bm}$, 
where $\r = \r_1 \boxtimes \r_2 \in S_{\Bm}\wg$, satisfying the following property.
\begin{equation*}
q^*A_{\r}[\b_2] \simeq \s^*(A_{\r_1}\boxtimes A_{\r_2})[\b_1],
\end{equation*}
where $\b_1 = \dim H + m_1$ and $\b_2 = \dim (G_1 \times H_2)$. 
The following lemma can be proved in a similar way as [Sh, Lemma 2.7] 
(see also the proof of Lemma 5.15).

\begin{lem}  %%%%  Lemma 6.16.
Let $\SL_{\r}$ be a simple local system on $\SY^0_{\Bm}$ as given in (6.4.4). 
Then we have
\begin{equation*}
A_{\r} \simeq \IC(\SX^0_{\Bm}, \SL_{\r})[d_{\Bm}].
\end{equation*}
\end{lem}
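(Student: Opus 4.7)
The plan is to follow closely the strategy of Lemma 5.15, now enriched by the vector component $v \in V$. Since $A_{\r}$ is by construction a simple perverse sheaf on $\SX^0_{\Bm}$, and $\SY^0_{\Bm}$ is open dense in $\SX^0_{\Bm}$ (an analogue of Lemma 6.3 (i) at the level of $\SX^0_{\Bm}$), it is enough to prove the identification on the open stratum, namely that
\[
\SH^{-d_{\Bm}}A_{\r}|_{\SY^0_{\Bm}} \simeq \SL_{\r}.
\]

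Next I would restrict the commutative diagram (6.14.3) over $\SY^0_{\Bm}$ on the right and over $\Fg_{1,\rg} \times Y'^0_{m_2}$ on the left. On these opens, $A_{\r_1}|_{\Fg_{1,\rg}}$ is, up to shift, the simple local system $\SL^1_{\r_1}$ coming from the $S_{m_1}$-Galois covering $\wt\Fg_{1,\rg} \to \Fg_{1,\rg}$, while $A_{\r_2}|_{Y'^0_{m_2}}$ is, up to shift, a constituent of the pushforward of $\Ql$ along the $(S_{m_2}\times S_{m_3})$-Galois covering built for $H_2 = Sp(\ol V_0)$ by the construction of (2.7.5). I would then insert these two Galois-covering layers between the bottom and top rows of the restricted diagram, producing a three-level commutative diagram parallel to (5.15.2).

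The decisive step is to check that the bottom-right square of this enriched diagram is cartesian: the product covering for $\Fg_{1,\rg} \times Y'^0_{m_2}$, pulled back via $\s$, coincides via $q$ with the Galois covering $\eta_{\BI}\colon \wh\SY_{\BI} \to \SY^0_{\Bm}$ of (6.2.2) for $\BI = \BI(\Bm)$, whose Galois group is $S_{\BI} = S_{\Bm} \simeq S_{m_1}\times S_{m_2}\times S_{m_3}$. Once this compatibility is established, the isotypic decomposition (6.4.2) yields
\[
\s^*(\SL^1_{\r_1}\boxtimes \SL^2_{\r_2}) \simeq q^*\SL_{\r}
\]
over the preimage of $\SY^0_{\Bm}$, for $\r = \r_1\boxtimes \r_2 \in S_{\Bm}\wg$. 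Combining this with the defining relation $q^*A_{\r}[\b_2] \simeq \s^*(A_{\r_1}\boxtimes A_{\r_2})[\b_1]$ and with (6.14.2) (which states that $q$ and $\s$ are locally trivial with smooth connected fibers of the recorded dimensions), routine tracking of shifts yields the stated identification $A_{\r} \simeq \IC(\SX^0_{\Bm},\SL_{\r})[d_{\Bm}]$.

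I expect the main obstacle to be the verification of the cartesian property in the decisive step. Concretely, one must show that the combined datum of a point $\xi = (x,v) \in \SY^0_{\Bm}$ together with a Borel of $G_1$ containing the semisimple part of $x|_{W_{\xi}}$ and an isotropic flag in $W_{\xi}^{\perp}/W_{\xi}$ of the type encoded by $\BI(\Bm)$ amounts to the same thing as a lift of $\xi$ to $\wh\SY_{\BI(\Bm)}$, modulo the natural action of $Z_H(\Ft_{\sr})_{\BI}/(B\cap Z_H(\Ft_{\sr}))$. This is delicate because of the interplay between the eigenspace decomposition $V = \bigoplus_i V_i$ of the semisimple part of $x$, the decomposition $W_{\xi} = \bigoplus_i W_i$ it induces, and the $S_n$-action on indices, but the analysis is essentially parallel to the one carried out in Section 5 and in [Sh, Lemma 2.7].
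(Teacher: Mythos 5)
Your proposal follows essentially the same route the paper intends: the paper omits the details and explicitly refers the reader to [Sh, Lemma 2.7] and the proof of Lemma 5.15, which is precisely the strategy you spell out (reduce to the open stratum $\SY^0_{\Bm}$, restrict diagram (6.14.3), insert the Galois-covering layers, and verify compatibility with the covering $\eta_{\BI(\Bm)}$ having Galois group $S_{\Bm}$). Your identification of the cartesian-compatibility check as the decisive step, and of it as parallel to Section 5 and [Sh, Lemma 2.7], is exactly what the paper relies on.
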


By using Lemma 6.16, we can prove the following.

\begin{prop}  %%%%  Prop. 6.17
Under the notation of Lemma 6.16, $(\pi_{\Bm})_!\Ql$ is decomposed as
\begin{equation*}
(\pi_{\Bm})_!\Ql \simeq H^{\bullet}(\BP_1^{m_3})\otimes 
               \bigoplus_{\r \in S_{\Bm}\wg}\wh\r \otimes \IC(\SX^0_{\Bm}, \SL_{\r}),
\end{equation*}
where $\wh\r$ is regarded as a vector space, ignoring the $W_{n,3}$-action. 
\end{prop}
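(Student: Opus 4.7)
The plan is to mimic the proof of Lemma 5.18, applied to the commutative diagram (6.14.3), with main inputs (6.15.1), (6.15.2), Lemma 6.16, and the bundle properties (6.14.2). The argument will run through the diagram to transfer the known decomposition on $\Fg_1^0 \times X'^0_{m_2}$ back up to $\SX^0_{\Bm}$, and then match multiplicities on the open subset $\SY^0_{\Bm}$.

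First I would verify that both squares in (6.14.3) are cartesian, which is a direct check from the definitions of $\wt\CZ^+_{\Bm}$, $\CK_{\Bm}$ and the maps $\wt q, q, \wt\s, \s$. Granting this, successive base change gives
\[
\s^*(\vf^1 \times \pi^2_{m_2})_!\Ql \simeq (\wt\pi_{\Bm})_!\Ql \simeq q^*(\pi_{\Bm})_!\Ql
\]
up to the shift $[\b_2 - \b_1]$ accounting for the difference of fibre dimensions between $q$ and $\s$ recorded in (6.14.2), with $\b_1 = \dim H + m_1$ and $\b_2 = \dim(G_1 \times H_2)$.

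Next, by the Künneth formula applied to (6.15.1) and (6.15.2), the left-hand side expands, up to shift, as
\[
\bigoplus_{\r_1, \r_2} \r_1 \otimes \wh\r_2 \otimes H^{\bullet}(\BP_1^{m_3}) \otimes \s^*(A_{\r_1}\boxtimes A_{\r_2}),
\]
where $\r_1 \in S_{m_1}\wg$ and $\r_2 \in (S_{m_2} \times S_{m_3})\wg$. For each $\r = \r_1 \boxtimes \r_2 \in S_{\Bm}\wg$, the construction of $A_{\r}$ in 6.15 combined with Lemma 6.16 identifies $\s^*(A_{\r_1} \boxtimes A_{\r_2})$, up to the expected shift, with $q^*\IC(\SX^0_{\Bm}, \SL_{\r})$. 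Since $q$ is a principal $(G_1 \times H_2)$-bundle by (6.14.2), pulling back along $q$ is fully faithful on equivariant perverse sheaves, so the decomposition descends to $\SX^0_{\Bm}$ in the form
\[
(\pi_{\Bm})_!\Ql \simeq \bigoplus_{\r \in S_{\Bm}\wg} V_{\r} \otimes \IC(\SX^0_{\Bm}, \SL_{\r})
\]
for some graded vector spaces $V_{\r}$.

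Finally, to identify $V_{\r}$, I would restrict both sides to the open dense subset $\SY^0_{\Bm} \subset \SX^0_{\Bm}$. On $\SY^0_{\Bm}$ the map $\pi_{\Bm}$ reduces to $\psi_{\Bm}$ and $\IC(\SX^0_{\Bm}, \SL_{\r})$ reduces to $\SL_{\r}$, so that comparing with (6.4.6), read as a statement about vector-space multiplicities after forgetting the $W_{n,3}$-action, forces $V_{\r} \simeq H^{\bullet}(\BP_1^{m_3}) \otimes \wh\r$ (the total dimensions match: $2^{m_3} \cdot [W_{n,3}:W_{\Bm,3}] \dim \r$ on both sides). The main technical obstacle will be the careful verification that the two squares in (6.14.3) are cartesian together with the accompanying bookkeeping of shifts; these steps are direct but somewhat tedious, and completely parallel to what was carried out in the proof of Lemma 5.18 and in [Sh, Section 2].
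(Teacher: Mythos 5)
Your strategy is essentially the one the paper uses (base change through a cartesian diagram, K\"{u}nneth with (6.15.1)--(6.15.2), Lemma 6.16 to identify the summands, descent along the principal bundle $q$, and finally restriction to $\SY^0_{\Bm}$ to pin down multiplicities), so the overall architecture is sound. But there is one genuine gap at the very first step: the claim that the left square of the \emph{full} diagram (6.14.3) is cartesian is not correct, and this is exactly the point where the paper's proof differs from yours. The fibre of $\wt\pi_{\Bm}$ over a point $(x,v,\f_1,\f_2)$ of $\CK_{\Bm}$ is the whole variety $\{\,gB : (x,v,gB)\in\wt\SX^+_{\Bm}\,\}$, whereas the fibre of $\vf^1\times\pi^2_{m_2}$ over $\s(x,v,\f_1,\f_2)$ is $\SB^1_{y_1}\times\SB^2_{y_2}$, and these have \emph{different} numbers of connected components: already for $x$ subregular semisimple the former decomposes into $n!/|S_{\Bm}|$ times as many pieces as the latter, the discrepancy being indexed precisely by $\BI\in\SI(\Bm)$. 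Thus base change applied to the full diagram does not yield $\s^*(K_1\boxtimes K_2)\simeq q^*(\pi_{\Bm})_!\Ql$, which your argument needs.

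The paper handles this by invoking Lemma 6.13 to decompose $\wt\SX^+_{\Bm}=\coprod_{\BI\in\SI(\Bm)}\wt\SX_{\BI}$ (with $\wt\SX_{\BI}$ closed in $\wt\SX^+_{\Bm}$, so $\pi_{\BI}$ is proper), and then replaces (6.14.3) by the restricted diagram with $\wt\CZ_{\BI}=\wt q\iv(\wt\SX_{\BI})$ in the middle and $\wt\Fg^0_1\times\wt X'_{I_2}$ in the top-left. It is \emph{these} restricted squares that are cartesian, giving $\s^*(K_1\boxtimes K_2)\simeq q^*(\pi_{\BI})_!\Ql$ per $\BI$ and hence that every simple summand of each $(\pi_{\BI})_!\Ql$ lies in $\{\,A_\r\,\}$; summing over $\BI$ then yields the same statement for $(\pi_{\Bm})_!\Ql=\bigoplus_{\BI}(\pi_{\BI})_!\Ql$, and your final restriction-to-$\SY^0_{\Bm}$ step correctly determines the multiplicities. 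So the fix is small and does not change your conclusion or the rest of your argument: you must insert the decomposition by $\BI$ (the analogue of what is done in Lemma 5.18 and in [Sh, Prop.\ 2.8]) before invoking base change, rather than applying it to the full diagram.
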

\begin{proof}
We fix $\BI = (I_1, I_2, I_3) \in \SI(\Bm)$. Then the following commutative 
diagram is obtained from (6.14.3).

\begin{equation*}
\begin{CD}
\wt\Fg^0_1 \times {\wt X}'_{I_2} @<<<  \wt\CZ_{\BI} @>>> \wt\SX_{\BI} \\
    @V\vf^1 \times \pi^2_{I_2}VV             @VVV     @VV\pi_{\BI}V  \\
\Fg_1^0 \times X'^0_{m_2}   @<\s<< \CK_{\Bm}   @>q>>     \SX^0_{\Bm}, 
\end{CD}
\end{equation*}
where $\wt\CZ_{\BI} = \wt q\iv(\wt\SX_{\BI})$. 
Note that $\pi_{\BI}$ is proper, and the both squares are cartesian squares.
Then the proposition can be proved in a similar way as in the proof of Proposition 2.8 
in [Sh].  Also see the discussion in the proof of Lemma 5.18.  We omit the details.
\end{proof}

\remark{6.18.}
The proof of [Sh, Prop. 2.8] uses the induction on $r$, and depends on the Henderson's 
result [Hen] for the case where $r = 1$, which was proved by making use of 
the Fourier-Deligne  
transform on perverse sheaves on Lie algebras.  
In turn, in the proof of Lemma 5.18, we needed to assume 
that $k \ne 0$. But thanks to Proposition 5.16, we don't need any restriction in 
the proof of Proposition 6.17. Note that Proposition 5.16 was proved by making 
use of the $W_n$-actions on perverse sheaves on $\wt\Fh$. 

\para{6.19.}
Now the theorem is proved by a similar argument as in 2.10 in [Sh].
See also the discussion in 5.19. Note that in our situation, 
$\SN_0$ does not appear thanks to Proposition 5.16.  In 5.19, if $\SN_0 = 0$, 
one can construct a representation of $W_n$ on $(\ol\pi_m)_!\Ql$ by a similar 
method as in the case of $(\ol\psi_m)_!\Ql$ (see the discussion in 2.11). 
A similar argument works for $\ol\pi_{\Bm}$ also.  

\para{6.20.}
Recall the map $\pi^{(\Bm)}: \wt\SX_{\Bm} \to \SX_{\Bm}$ given in 6.2, and
$W\nat_{\Bm}$ given in 6.7. 
The following result is a variant of Theorem  6.10,  and is proved in a similar
way as Theorem 3.2 in [Sh].   Note that if $\Bm = (m_1, m_2, 0)$, then 
$W\nat_{\Bm} = S_{m_1} \times W_{m_2,2}$.  In the special case where 
$m_1 = 0, m_2 = n$, Theorem 6.21 coincides with Theorem 5.7. 

\begin{thm}  %%%%  Thm 6.21
Assume that $\Bm = (m_1, m_2, 0)\in \SQ^0_{n,3}$. $\pi^{(\Bm)}_!\Ql[d_{\Bm}]$ 
is a semisimple perverse sheaf on $\SX_{\Bm}$, equipped with $W\nat_{\Bm}$-action,
and is decomposed as
\begin{equation*}
\pi^{(\Bm)}_!\Ql[d_{\Bm}] \simeq \bigoplus_{0 \le k \le m_2}
     \bigoplus_{\r \in S_{\Bm(k)}\wg}\r\nat\otimes \IC(\SX_{\Bm(k)}, \SL_{\r})[d_{\Bm(k)}].
\end{equation*} 
\end{thm}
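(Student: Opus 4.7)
The plan is to deduce Theorem 6.21 from Proposition 6.8 (which gives the analogous decomposition of $\psi^{(\Bm)}_!\Ql$ on the open dense subset $\SY_{\Bm}$), together with Proposition 6.17 and Theorem 6.10, by an argument closely paralleling the passage from Proposition 6.6 to Theorem 6.10 outlined in Section 6.19.  The key difference is that we now work with the smaller group $W\nat_{\Bm} \simeq S_{m_1} \times W_{m_2,2}$ in place of $W_{n,3}$, but the geometric setup is the same.

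First I would establish that $\pi^{(\Bm)} : \wt\SX_{\Bm} \to \SX_{\Bm}$ is a proper surjection from a smooth irreducible variety of dimension $d_{\Bm}$, and is semismall with respect to the stratification $\SX_{\Bm} = \coprod_{\Bm' \le \Bm}\SX^0_{\Bm'}$.  Smoothness of $\wt\SX_{\Bm}$ follows from the description $\wt\SX_{\Bm} \simeq H \times^B(\ol\FN_{p_2} \times M_{p_1})$ and Lemma 5.5; semismallness is checked stratum by stratum by the same fibre dimension argument as in Lemma 6.3(iv), using the analogue of (4.1.1) applied to $(x,v) \in \SX^0_{\Bm'}$.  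The decomposition theorem then yields that $\pi^{(\Bm)}_!\Ql[d_{\Bm}]$ is a semisimple perverse sheaf on $\SX_{\Bm}$.

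Next I would identify the simple summands.  On the open dense subset $\SY_{\Bm}\subset\SX_{\Bm}$, base change gives $\pi^{(\Bm)}_!\Ql|_{\SY_{\Bm}} \simeq \psi^{(\Bm)}_!\Ql$, which by Proposition 6.8 decomposes as $\bigoplus_{0 \le k \le m_2}\bigoplus_{\r \in S_{\Bm(k)}\wg}\r\nat \otimes \IC(\SY_{\Bm(k)},\SL_{\r})[d_{\Bm(k)}]$ with $W\nat_{\Bm}$-action.  Restricting $\pi^{(\Bm)}_!\Ql$ to an arbitrary stratum $\SX^0_{\Bm'}$ with $\Bm' \le \Bm$ yields the map $\pi_{\Bm'}$ of 6.12, and Proposition 6.17 forces every simple summand of $\pi^{(\Bm)}_!\Ql$ supported on $\ol{\SX^0_{\Bm'}}$ to be of the form $\IC(\SX_{\Bm'},\SL_{\r})$ for $\r \in S_{\Bm'}\wg$.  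Comparing with the known restriction to $\SY_{\Bm}$ via $\IC(\SX_{\Bm(k)},\SL_{\r})|_{\SY_{\Bm}} \simeq \IC(\SY_{\Bm(k)},\SL_{\r})$ (up to shift) pins down the multiplicities of $\IC(\SX_{\Bm(k)},\SL_{\r})[d_{\Bm(k)}]$ as $\dim\r\nat$, and rules out $\IC(\SX_{\Bm'},\SL_{\r})$ for $\Bm' \le \Bm$ with $m'_1 < m_1$ on dimensional grounds (as in the disappearance of $\SN_0$ remarked in Section 6.19, which rests on Proposition 5.16 and the dimension count of Proposition 4.11 adapted to the present setting).

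Finally, the $W\nat_{\Bm}$-action on $\pi^{(\Bm)}_!\Ql$ is lifted from $\SY_{\Bm}$ to $\SX_{\Bm}$ by the same inductive procedure as in 2.11: one filters $\SX_{\Bm}$ by the closed subsets $\SX_{\Bm'}$, uses the long exact sequence of perverse cohomology attached to the open-closed decompositions, and invokes the vanishing of odd perverse cohomology (which holds because every stratum contribution is concentrated in even degrees, as seen from Proposition 6.17 and Theorem 6.10) to extend the $W\nat_{\Bm}$-action uniquely at each step.  I expect the main technical obstacle to be the ruling-out of simple constituents $\IC(\SX_{\Bm'},\SL_{\r})$ with $m_1' < m_1$, since these strata are not captured by the restriction to $\SY_{\Bm}$; this should be resolved by comparing with Theorem 6.10 after restricting the $W_{n,3}$-module $\wh\r$ to the subgroup $W\nat_{\Bm}$ and matching the resulting multiplicities with the Frobenius-reciprocity formula defining $\r\nat$ in 6.7.
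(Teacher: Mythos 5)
Your outline — semismallness of $\pi^{(\Bm)}$, identification of simple summands by restricting over strata, then lifting the $W\nat_{\Bm}$-action as in 2.11 — matches the strategy the paper inherits from [Sh, Thm.\ 3.2], so you are on the right track at that level. However, there is a concrete slip that matters for the hardest part of the argument. You claim that restricting $\pi^{(\Bm)}_!\Ql$ over $\SX^0_{\Bm'}$ "yields the map $\pi_{\Bm'}$ of 6.12." That is not so: $\pi_{\Bm'}$ of 6.12 is the restriction of the \emph{universal} map $\pi$ (the case $\Bm=(n,0,0)$, where the flag is required only to satisfy $g\iv v\in M_n$), whereas the restriction of $\pi^{(\Bm)}$ has the strictly smaller domain $\wt\SX_{\Bm}\cap\pi\iv(\SX^0_{\Bm'})$ carrying the extra closed condition $g\iv v\in M_{m_1}$. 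Proposition 6.17 therefore does not apply as written, and what you need is a version of the diagram (6.14.3) and of Proposition 6.17 adapted to $\pi^{(\Bm)}$, in which the factor $\CK_{\Bm'}$ is replaced by its subvariety encoding the constraint that $W_\xi$ sits inside a fixed $m_1$-dimensional isotropic space — which changes the $G_1$-side of the diagram.

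This slip feeds directly into the step you flag as "the main technical obstacle": ruling out simple constituents supported on $\ol{\SX^0_{\Bm'}}$ with $m_1'<m_1$. Your proposed fix — compare with Theorem 6.10 and restrict the $W_{n,3}$-module $\wh\r$ to $W\nat_{\Bm}$ — does not obviously apply, because $\pi^{(\Bm)}$ and $\ol\pi_{\Bm}$ have different source varieties (the latter is $\pi\iv(\SX_{\Bm})$, the former the closed subvariety $\wt\SX_{\Bm}$), and there is no restriction-of-representation relation between the corresponding perverse pushforwards. What should be used instead is the fibre-dimension estimate: writing $d_{\Bm}-\dim\SX^0_{\Bm'}=(m_1-m_1')+2m_3'$, the extra condition $v\in g(M_{m_1})$ cuts the $(\pi_{\Bm'})$-fibre by enough codimension to force $\dim(\pi^{(\Bm)})\iv(z)<\tfrac12\bigl((m_1-m_1')+2m_3'\bigr)$ when $m_1'<m_1$, so that no simple summand can have full support on $\ol{\SX^0_{\Bm'}}$ — mirroring the mechanism behind the vanishing of $\SN_0$ in 5.19 via Proposition 5.16. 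As it stands your proposal asserts the conclusion but does not supply this estimate, which is the actual content of the theorem beyond Proposition 6.8.
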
 
\par\bigskip
\section{Nilpotent variety for $\Fg^{\th}$}

\para{7.1.}
For each $\Bm \in \SQ_{n,3}$ with $p_1 = m_1, p_2 = m_1 + m_2$, we define varieties
\begin{align*}
\wt\SX_{\Bm,\nilp} &= \{ (x,v, gB) \in \Fh \times V \times H/B 
            \mid g\iv x \in \Fn_s \oplus \SD_{p_2}, 
                                     g\iv v \in M_{p_1}  \}, \\
   \SX_{\Bm,\nilp} &= \bigcup_{g \in H}g\bigl((\Fn_s\oplus\FD_{p_2}) \times M_{p_1}\bigr).  
\end{align*}
We write $\SX_{\Bm,\nilp}$ as $\SX\nil$ if $\Bm = (n,0,0)$. 
Note that $\Fn \times M_n \simeq \Fn \oplus (M_n)_{\Fg}$ is contained in the nilpotent 
radical of a Borel subalgebra of $\Fg\Fl_{2n+1}$ (here $(M_n)_{\Fg}$ is the 
corresponding subspace of $V_{\Fg}$).  Hence 
\begin{equation*}
\tag{7.1.1}
\SX_{\Bm, \nilp} \subset \SX\nil \subset \Fg^{\th}\nil.
\end{equation*} 

In this paper, we are only concerned with $\SX\nil$, not with $\Fg^{\th}\nil$.
However it is likely that $\Fg^{\th}\nil = \SX\nil$. 
\par
We define a map $\pi_1^{(\Bm)}: \wt\SX_{\Bm, \nilp} \to \SX_{\Bm,\nilp}$ by 
$(x,v,gB) \mapsto (x,v)$. It is clear that $\wt\SX_{\Bm,\nilp}$ is 
smooth and irreducible, and $\pi_1^{(\Bm)}$ is a proper map 
onto $\SX_{\Bm,\nilp}$. 
Since
\begin{equation*}
\wt\SX_{\Bm,\nilp} \simeq H \times^B\bigl((\Fn_s\oplus \FD_{p_2}) \times M_{p_1}\bigr),
\end{equation*}
we have
\begin{align*}
\tag{7.1.2}
\dim \wt\SX_{\Bm, \nilp} &=  \dim H - (n + m_3) + m_1  \\
                         &= 2n^2 + m_1 - m_3.
\end{align*}

\para{7.2}
For an integer $r \ge 1$, let $\SP_{n,r}$ be the set of 
$r$-tuples of partitions $\Bla = (\la^{(1)}, \dots, \la^{(r)})$ such that 
$\sum_{1 \le i \le r}|\la^{(i)}| = n$.  
In the case where $r = 1$, we simply denote by $\SP_n$ the set of partitions of 
$n$. It is well-known that $S_n\wg$ is in bijection with $\SP_n$.  We denote by 
$\r_{\la}$ the irreducible representation of $S_n$ corresponding to $\la \in \SP_n$.
We normalize the parametrization so that the unit representation $1_{S_n}$ corresponds 
to $\la = (n)$.  We consider the complex reflection group $W_{n,r}$. 
As explained in 6.4, the irreducible representation of 
$W_{n,r}$ is constructed from irreducible representations of symmetric groups. 
By this construction, we have a natural parametrization between $W_{n,r}\wg$ and 
$\SP_{n,r}$.  We denote by $\r_{\Bla}$ the irreducible representation of $W_{n,r}$
corresponding to $\Bla \in \SP_{n,r}$. 
\par
We consider the nilpotent orbits in $\Fh = \Fs\Fp(V)$. By Theorem 3.6, nilpotent 
orbits $\SO$ are parametrized by $W\wg_{n,2}$, as $\SO = \SO_{\r}$ for 
$\r \in W\wg_{n,2}$.    
We denote by $\SO_{\Bla}$ the nilpotent orbit $\SO_{\r}$ in $\Fh$ corresponding to 
$\r = \r_{\Bla}$ with $\Bla \in \SP_{n,2}$. 

\para{7.3}
Let $V_1$ be an $n$-dimensional vector space over $\Bk$, and $G_1 = GL(V_1)$. 
Put $\Fg_1 = \Lie G_1$.  We consider the diagonal action of $G_1$ on $\Fg_1 \times V_1$.
$G_1$ acts on $(\Fg_1)\nil \times V_1$. It is known by [AH], [T]  that the set of 
$G_1$-orbits in $(\Fg_1)\nil \times V_1$ is parametrized by $\SP_{n,2}$.  We denote by 
$\OO_{\Bla}$ the $G_1$-orbit corresponding to $\Bla \in \SP_{n,2}$. According to [AH], 
the explicit correspondence is given as follows;
take $(x,v) \in (\Fg_1)\nil \times V_1$. Put $E^x = \{ y \in \End(V_1) \mid yx = xy \}$.
$E^x$ is a subalgebra of $\End(V_1)$, stable by the multiplication of $x$. 
If we put $W = E^xv$, $W$ is an $x$-stable subspace of $V_1$. We denote by $\la^{(1)}$ 
the Jordan type of $x|_W$ and by $\la^{(2)}$ the Jordan type of $x|_{V_1/W}$. 
Then the Jordan type of $x$ is $\la^{(1)} + \la^{(2)}$, and 
$\Bla = (\la^{(1)}, \la^{(2)}) \in \SP_{n,2}$. $\OO_{\Bla}$ is defined 
as the $G_1$-orbit
containing $(x,v)$.  This gives the required labelling of $G_1$-orbits in 
$(\Fg_1)\nil \times V_1$. Note that if $(x,v) \in \OO_{\Bla}$, the Jordan type of 
$x$ is $\la^{(1)} + \la^{(2)}$ for $\Bla = (\la^{(1)},\la^{(2)})$.   
\par 
For a partition $\la = (\la_1, \la_2, \cdots) \in \SP_n$, put 
$n(\la) = \sum_{i \ge 1}(i-1)\la_i$. Let $\SO_{\la}$ be the $G_1$-orbit in $(\Fg_1)\nil$
consisting of $x$ of Jordan type $\la$.  It is well-known that 
\begin{equation*}
\tag{7.3.1}
\dim \SO_{\la} = n^2 - n - 2n(\la). 
\end{equation*}
\par
Let $\OO_{\Bla}$ be as above.  The following formula was proved 
in [AH, Prop. 2.8].  Put $\nu = \la^{(1)} + \la^{(2)}$, 
and define $n(\Bla) = n(\la^{(1)}) + n(\la^{(2)})$.  Then 
\begin{align*}
\tag{7.3.2}
\dim \OO_{\Bla} &= \dim \SO_{\nu} + |\la^{(1)}| \\
                &= n^2 - n - 2n(\Bla) + |\la^{(1)}|.
\end{align*}

\para{7.4.}
For $\Bla \in \SP_{n,3}$, we shall define a variety $X_{\Bla} \subset \Fh \times V$.
Put $\Bla = (\la^{(1)}, \la^{(2)}, \la^{(3)})$, and $m_i = |\la^{(i)}|$ for 
$i = 1,2,3$. Let $P$ be the parabolic subgroup of $H$, which is the 
stabilizer of the subspace $V_0 = M_{m_1}$, 
and $L$ the Levi subgroup of $P$ containing $T$.  
Then $L \simeq GL(V_0) \times Sp(\ol V_0)$, where $\ol V_0 = V_0^{\perp}/V_0$.
Put $G_1 = GL(V_0)$ and $\Fg_1 = \Lie G_1$. 
$G_1$ acts on $\Fg_1 \times V_0$, as the restriction of the action of $H$ on $\Fh \times V$. 
Let $\SO_1$ 
be the $G_1$-orbit in $(\Fg_1)\nil \times V_0$ corresponding to $\OO_{\Bla}$ 
with $\Bla = (\la^{(1)}, \emptyset) \in \SP_{m_1,2}$.  
Put $H_2 = Sp(\ol V_0)$ and $\Fh_2 = \Lie H_2$. 
We denote by $\SO_2$ the $H_2$-orbit $\SO_{\Bla'}$ in $(\Fh_2)\nil$ 
corresponding to $\Bla' = (\la^{(2)}, \la^{(3)}) \in \SP_{m_2 + m_3, 2}$. 
Put $\Lie P = \Fp$.  
We define a set $\SM_{\Bla} \subset \Fp\nil \times V_0$ by 
\begin{equation*}
\tag{7.4.1}
\SM_{\Bla} = \{ (x,v) \in \Fp\nil \times V_0 \mid  
            (x|_{V_0} , v) \in \SO_1, x|_{\ol V_0} \in \SO_2 \},  
\end{equation*} 
and define $X_{\Bla} = \bigcup_{g \in H}g(\SM_{\Bla})$. 
Let $\Fn_P$ be the nilpotent radical of $\Fp$.  We define a variety 
\begin{equation*}
\tag{7.4.2}
\wt X_{\Bla} = H \times^P((\ol\SO_1 + \ol\SO_2) + \Fn_P)
\end{equation*}
and a map $\pi_{\Bla} : \wt X_{\Bla} \to \SX\nil$ by 
$g*x \mapsto gx$.  Then $\pi_{\Bla}$ is proper, and 
$\Im \pi_{\Bla} = \bigcup_{g \in H}g(\ol\SO_1 + \ol\SO_2 + \Fn_P)$ is closed in $\SX\nil$.

We show that 
\begin{lem}  %%%%  Lemma 7.5
Under the notation above, 
\begin{enumerate}
\item
$X_{\Bla}$ is an $H$-stable,  smooth,  irreducible, 
locally closed subvariety of $\SX\nil$.
Moreover, $\Im \pi_{\Bla} = \ol X_{\Bla}$. 
\item  $\dim \wt X_{\Bla} = \dim X_{\Bla} = 2\dim U_P + \dim \SO_1 + \dim \SO_2$. 
\item  $X_{\Bla}$ gives a partition of $\SX\nil$, namely, 
\begin{equation*}
\SX\nil = \coprod_{\Bla \in \SP_{n,3}}X_{\Bla}.
\end{equation*}
\end{enumerate}
\end{lem}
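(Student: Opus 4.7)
The plan has three stages. First I would analyze $\SM_\Bla$ directly: this subvariety of $\Fp\nil \times V_0$ is $P$-stable, and using the Levi decomposition $\Fp = \Fl \oplus \Fn_P$ (with $\Fl = \Lie L$), one sees that the projection
\[
\SM_\Bla \to \SO_1 \times \SO_2, \quad (x,v) \mapsto ((x|_{V_0},v),\, x|_{\ol V_0})
\]
is a vector bundle with fiber $\Fn_P$. Hence $\SM_\Bla$ is smooth and irreducible with $\dim \SM_\Bla = \dim \SO_1 + \dim \SO_2 + \dim \Fn_P$. Setting $\wt X_\Bla^0 := H \times^P \SM_\Bla$, this is an open subset of $\wt X_\Bla$ that is smooth and irreducible of dimension $\dim H/P + \dim \SM_\Bla = 2\dim U_P + \dim \SO_1 + \dim \SO_2$ (using $\dim H/P = \dim U_P = \dim \Fn_P$), and I set $X_\Bla := \pi_\Bla(\wt X_\Bla^0)$.

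Next, for the partition (iii), I would attach to each $(x,v) \in \SX\nil$ an $H$-invariant $\Bla(x,v) \in \SP_{n,3}$, adapting the analogous construction of [Sh] for higher-level exotic symmetric spaces. One first constructs an $x$-stable isotropic subspace $W = W(x,v) \subset V$ containing $v$ such that $(x|_W, v)$ lies in the $GL(W)$-orbit $\OO_{(\la^{(1)}, \emptyset)}$ in the sense of Achar--Henderson (so $v$ generates $W$ as an $E^{x|_W}$-module); this determines $m_1 = \dim W$ and $\la^{(1)}$. One then reads off the bipartition $(\la^{(2)}, \la^{(3)}) \in \SP_{n-m_1, 2}$ labeling the $Sp(W^\perp/W)$-orbit of the induced nilpotent element via Theorem 3.6. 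Taking $W = V_0$ shows $\Bla(x,v) = \Bla$ on $\SM_\Bla$; conversely, if $\Bla(x,v) = \Bla$, the transitivity of $H$ on isotropic subspaces of a given dimension supplies $g \in H$ with $gW = V_0$, whence $g(x,v) \in \SM_\Bla$. This establishes both covering and disjointness in (iii).

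The remaining parts follow from this partition. Uniqueness of $W(x,v)$ on a dense open subset of $\SM_\Bla$ (where $v$ is cyclic enough for the centralizer action) implies generic injectivity of the restriction $\pi_\Bla^0 : \wt X_\Bla^0 \to X_\Bla$; this gives $\dim X_\Bla = \dim \wt X_\Bla^0 = 2\dim U_P + \dim \SO_1 + \dim \SO_2$, which together with $\wt X_\Bla^0$ being open dense in $\wt X_\Bla$ yields (ii). For (i), $X_\Bla$ is locally closed as the locus where the semicontinuous invariant $\Bla(\cdot,\cdot)$ takes the value $\Bla$; smoothness and irreducibility are inherited from $\wt X_\Bla^0$ via generic injectivity and $H$-equivariance, and $\Im \pi_\Bla = \ol X_\Bla$ follows from properness of $\pi_\Bla$, since the image of $\wt X_\Bla \setminus \wt X_\Bla^0$ lies in the union of lower strata $X_{\Bla'}$ for a natural partial order on $\SP_{n,3}$ induced by orbit closure in $\SO_1 \times \SO_2$.

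The main technical obstacle lies in giving a canonical intrinsic definition of $W(x,v)$: the naive candidate $\Bk[x]v$ is strictly smaller than $V_0$ whenever $\la^{(1)}$ has repeated parts, so one must instead work with the centralizer module $E^x v$---for instance, characterizing $W$ as the smallest $x$-stable isotropic subspace containing $v$ with $E^{x|_W} v = W$---and verify that this description is intrinsic, $H$-equivariant, and coincides with $V_0$ on a dense open subset of $\SM_\Bla$. Once the invariant is in place and shown to match $\Bla$ on $\SM_\Bla$, the remaining bookkeeping is routine.
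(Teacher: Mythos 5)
Your proposal follows the paper's overall strategy: exhibit $\SM_{\Bla}$ as a bundle over $\SO_1 \times \SO_2$ to get smoothness, irreducibility and the dimension count, and build the partition in (iii) from an intrinsic Achar--Henderson-type invariant $\Bla(x,v)$. The handling of (i) and (ii), though, differs from the paper's in a way worth noting. For (i), the paper observes that the maps $\pi'_{\Bla}$ obtained by replacing $\SO_1,\SO_2$ with orbits in their closures still have closed image, so $X_{\Bla}$ is the complement of a closed set in $\Im\pi_{\Bla}$ and hence open dense; this argument is independent of the invariant, whereas your semicontinuity route presupposes that the invariant is already well defined. For (ii), the paper writes $X_{\Bla} \simeq H \times^{Z_L(x,v)U_P}\bigl((x + \Fn_P)\times\{v\}\bigr)$ directly and reads off smoothness, irreducibility and the dimension in one step, rather than going through generic injectivity of $\pi_{\Bla}$ as you do.

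There is a genuine gap in your treatment of (iii). You correctly reject $\Bk[x]v$ as too small, but the replacement you propose --- ``the smallest $x$-stable isotropic subspace $W$ containing $v$ with $E^{x|_W}v = W$'' --- collapses back to $\Bk[x]v$. Indeed, on $\Bk[x]v$ the operator $x$ acts as a single Jordan block, so $E^{x|_{\Bk[x]v}} = \Bk[x|_{\Bk[x]v}]$ and therefore $E^{x|_{\Bk[x]v}}v = \Bk[x]v$; thus $\Bk[x]v$ satisfies your condition and is the smallest such subspace. The paper's construction instead conjugates $(x,v)$ so that $v \in M_n$ and $x$ stabilizes $M_n$, and takes $W = E^{x|_{M_n}}v$; the substantive point, which the paper asserts but does not elaborate (``the $H$-conjugates of $(x,v)$ determine $\Bla$ uniquely''), is that the resulting $\Bla$ does not depend on the choice made. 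Your instinct that this uniqueness is the key obstacle is sound, but the intrinsic characterization you offer does not in fact deliver it, and without a correct characterization both the disjointness in (iii) and the generic injectivity you invoke for (ii) remain unproved.
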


\begin{proof}
Take $(x,v) \in L \times M_n$, with $x = (x_1, x_2)$ such that 
$(x_1,v) \in \SO_1$ and $x_2 \in \SO_2$. 
Then we can write as
\begin{equation*}
\tag{7.5.1}
X_{\Bla} \simeq H \times^{Z_L(x,v)U_P}\bigl((x + \Fn_P) \times \{ v\}\bigr).
\end{equation*}
Hence $X_{\Bla}$ is smooth and irreducible. 
We have 
\begin{align*}
\dim X_{\Bla} &= \dim H - \dim Z_L(x,v) \\
              &= \dim H - \dim Z_{G_1}(x_1,v) - \dim Z_{H_2}(x_2)  \\
              &= 2\dim U_P + \dim \SO_1 + \dim \SO_2.
\end{align*}
Thus by (7.4.2), we see that $\dim \wt X_{\Bla} = \dim X_{\Bla}$. 
(ii) is proved. 
\par
We have $X_{\Bla} \subset \Im \pi_{\Bla}$.  As a variant of $\pi_{\Bla}$, 
it is possible to define $\pi'_{\Bla}$ by replacing $\SO_i$ 
by any orbits $\SO_i' \subset \ol\SO_i$ for $i = 1,2$.  In that case, $\Im \pi'_{\Bla}$ 
is also closed. This implies that $X_{\Bla}$ is an open dense subset of $\Im \pi_{\Bla}$, 
hence $X_{\Bla}$ is locally closed in $\SX\nil$, and $\Im \pi_{\Bla} = \ol X_{\Bla}$.
This proves (i).  
\par
We show (iii).  Take $(x,v) \in \SX\nil$. Up to $H$-conjugate, we may assume 
that $(x,v) \in \Fh\nil \times M_n$. Let $x'$ be the restriction of $x$ on $M_n$.
Let $W = E^{x'}v$ for $V_1 = M_n$ 
under the notation in 7.3.  Let $\la^{(1)}$ be the Jordan type of $x'|_W$.
Then $(x'|_W,v)  \in \OO_{(\la^{(1)}, -)}$ in $W$. 
Let $P$ be the stabilizer of $W$.  Then $x'' = x|_{\ol W}$ gives an element 
in $\Fs\Fp(\ol W)\nil$.  Assume that $x'' \in \SO_{\Bla'}$ with 
$\Bla'' = (\la^{(2)}, \la^{(3)})$.  Then $(x,v) \in X_{\Bla}$ with 
$\Bla = (\la^{(1)}, \la^{(2)}, \la^{(3)}) \in \SP_{n,3}$.  This implies that 
$\SX\nil = \bigcup_{\Bla}X_{\Bla}$.  The above discussion 
shows that the $H$-conjugates of $(x,v)$determines $\Bla$ uniquely.  Hence 
they are disjoint, and (iii) holds.   
\end{proof}

\remark{7.6.}
It follows from the construction, $X_{\Bla}$ is a single
$H$-orbit if $m_1 = 0$. Since 
$\Fg^{\th}$ contains infinitely many $H$-orbits by Proposition 1.8, 
some of $X_{\Bla}$ contains infinitely many $H$-orbits. 

\para{7.7.}
For $\Bm \in \SQ_{n,3}$, we define $\Bla = \Bla(\Bm) \in \SP_{n,3}$ by 
$\la^{(i)} = (m_i)$ for $i = 1,2,3$.  Also we define a subset $\SP(\Bm)$ 
of $\SP_n$ as the set of $\Bla \in \SP_{n,3}$ such that 
$|\la^{(i)}| = m_i$ for $i = 1,2,3$. 
For $\Bm \in \SQ^0_{n,3}$, put
\begin{equation*}
\tag{7.7.1}
\wt\SP(\Bm) = \coprod_{0 \le k \le m_2}\SP(\Bm(k)).
\end{equation*}
We have the following result.

\begin{prop}  %%%%   Prop.  7.8
Assume that $\Bm \in \SQ_{n,3}^0$.  Then we have
\begin{enumerate}
\item 
$\SX_{\Bm, \nilp} = \ol X_{\Bla(\Bm)}$. 
\item
$\dim \wt\SX_{\Bm,\nilp} = \dim \SX_{\Bm,\nilp} = 2n^2 + m_1$.
\item 
For $\Bmu \in \wt\SP(\Bm)$, we have $X_{\Bmu} \subset \SX_{\Bm, \nilp}$.  
\end{enumerate}
\end{prop}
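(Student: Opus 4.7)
\emph{Proof plan.} I will establish (iii) first, then use it to deduce (ii) and (i).

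For (iii), fix $\Bmu \in \wt\SP(\Bm)$, so that $|\mu^{(1)}| = m_1$ and $|\mu^{(2)}| + |\mu^{(3)}| = m_2$. The parabolic $P$ stabilizing $V_0 = M_{m_1}$ in $H$ is the same one used to define $X_{\Bmu}$ and $X_{\Bla(\Bm)}$; its Levi is $L = G_1 \times H_2$ with $G_1 = GL(V_0)$ and $H_2 = Sp(\ol V_0)$. Since $B \subset P$ with $B = (B \cap L) U_P$ and $B \cap L = B_1 \times B_2$ a Borel subgroup of $L$, we have $\Fn = \Fn_1 + \Fn_2 + \Fn_P$, where $\Fn_i$ denotes the nilpotent radical of $\Lie B_i$. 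Since $X_{\Bmu} = H \cdot \SM_{\Bmu}$ and $\SX_{\Bm,\nilp}$ are both $H$-stable, it suffices to prove $\SM_{\Bmu} \subset \SX_{\Bm,\nilp}$. Given $(x,v) \in \SM_{\Bmu}$, write $x = x_L + x_U$ along $\Fp = \Fl \oplus \Fn_P$; both $x_L|_{V_0} \in (\Fg_1)\nil$ and $x_L|_{\ol V_0} \in (\Fh_2)\nil$ can be conjugated by suitable $l_1 \in G_1$, $l_2 \in H_2$ into $\Fn_1$ and $\Fn_2$, respectively. Setting $l = (l_1, l_2) \in L$, one obtains $l x l\iv \in \Fn_1 + \Fn_2 + \Fn_P = \Fn$ and $l v \in L \cdot V_0 = M_{m_1}$, so $(x, v) \in l\iv(\Fn \times M_{m_1}) \subset \SX_{\Bm,\nilp}$.

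For (ii), the upper bound $\dim \SX_{\Bm,\nilp} \le 2n^2 + m_1$ is immediate from (7.1.2) and the surjectivity of $\pi_1^{(\Bm)}$. For the matching lower bound, note that $\Bla(\Bm) \in \SP(\Bm) \subset \wt\SP(\Bm)$ (take $k = m_2$ in (7.7.1)), so part (iii) gives $X_{\Bla(\Bm)} \subset \SX_{\Bm,\nilp}$. Lemma 7.5(ii) yields $\dim X_{\Bla(\Bm)} = 2 \dim U_P + \dim \SO_1 + \dim \SO_2$, where $\SO_1 = \OO_{((m_1), \emptyset)}$ in $(\Fg_1)\nil \times V_0$ has dimension $m_1^2$ by (7.3.2). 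The orbit $\SO_2 = \SO_{((m_2), \emptyset)}$ in $(\Fh_2)\nil$ corresponds, under the bijection $\r \leftrightarrow \wh\r$ of (2.12.1) together with the Springer correspondence of Theorem 3.6, to the trivial representation of $W_{m_2, 2}$, hence to the regular nilpotent orbit in $\Fh_2 = \Fs\Fp_{2m_2}$, of dimension $2m_2^2$. Using $\dim L = m_1^2 + 2m_2^2 + m_2$ and $n = m_1 + m_2$, one computes $2 \dim U_P = m_1^2 + 4 m_1 m_2 + m_1$, giving $\dim X_{\Bla(\Bm)} = 2n^2 + m_1$ as required.

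For (i), the properness of $\pi_1^{(\Bm)}$ shows $\SX_{\Bm, \nilp}$ is closed in $\Fh \times V$, and it is irreducible as the image of the irreducible variety $\wt\SX_{\Bm,\nilp} \simeq H \times^B(\Fn \times M_{m_1})$. By (iii) applied to $\Bla(\Bm)$, $\ol X_{\Bla(\Bm)} \subset \SX_{\Bm,\nilp}$, and by Lemma 7.5(i) together with (ii), $\ol X_{\Bla(\Bm)}$ is an irreducible closed subset of $\SX_{\Bm,\nilp}$ of the same dimension $2n^2 + m_1$; hence $\ol X_{\Bla(\Bm)} = \SX_{\Bm,\nilp}$. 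The subtlest ingredient is the identification of $\SO_{((m_2), \emptyset)}$ as the regular nilpotent orbit of $\Fs\Fp_{2m_2}$ via the Springer correspondence of Theorem 3.6 combined with the parametrization convention of Section 7.2; once this is granted, the remaining arguments amount to Levi conjugation and routine dimension bookkeeping.
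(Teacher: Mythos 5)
Your proof is correct and follows essentially the same path as the paper's: you prove (iii) by an explicit Levi conjugation, then use (iii) together with Lemma 7.5(ii), (7.3.2), and the identification of $\SO_2$ with the regular nilpotent orbit of $\Fs\Fp_{2m_2}$ to get the lower bound $\dim X_{\Bla(\Bm)} = 2n^2 + m_1$, match it against (7.1.2), and conclude by irreducibility and closedness. The only presentational difference is that you spell out the justification for $\SO_{((m_2),\emptyset)}$ being the regular orbit (via the trivial representation of $W_{m_2,2}$ under Theorem 3.6), where the paper simply asserts it; and your (iii) argument makes the decomposition $\Fn = \Fn_1 + \Fn_2 + \Fn_P$ explicit rather than invoking it implicitly. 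These are minor clarifications, not a different route.
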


\begin{proof}
We show (iii).  Since $m_3 = 0$, we have $\SD_{m_1 + m_2} = \SD$, 
hence we can write as 
\begin{equation*}
\SX_{\Bm, \nilp} = \bigcup_{g \in H}g(\Fn \times M_{m_1}).
\end{equation*} 
Take $(x, v) \in X_{\Bmu}$. Up to $H$-conjugate, we may assume
that $(x,v) \in \SM_{\Bmu}$, where $\SM_{\Bmu}$ is as in (7.4.1). 
Assume that $\Bmu \in \wt\SP(\Bm)$.  
Let $P$ be the stabilizer of $W = M_{m_1}$, and $L \simeq GL(W) \times Sp(\ol W)$. 
We may further assume that $(x,v)$ is of the form 
$(x_1 + x_2 + z, v) \in \Fh\nil \times V$, 
where $(x_1,v) \in \SO_1, x_2 \in \SO_2, z \in \Fn_P$ in the notation of 7.4.. 
Hence, up to $H$-conjugate,  we can take $v \in M_{m_1}$ and $x \in \Fn$.
It follows that $X_{\Bmu} \subset \SX_{\Bm\nilp}$. This proves (iii).
Now by (iii), $X_{\Bla(\Bm)} \subset \SX_{\Bm, \nilp}$. By Lemma 7.5 (ii), 
$\dim X_{\Bla(\Bm)} = 2\dim U_P + \dim \SO_1 + \dim \SO_2$. 
Since $\SO_1$ corresponds to $((m_1), -)$, we have 
$\dim \SO_1 = m_1^2$ by (7.3.2). On the other hand, since $\SO_2$ corresponds to 
$((m_2), -)$, which is the regular nilpotent orbit in $(\Fh_2)\nil$.  Thus 
$\dim \SO_2 = \dim H_2 - m_2$.  It follows that 
\begin{align*}
\dim X_{\Bla(\Bm)} &= (\dim H - \dim G_1 - \dim H_2) + m_1^2 + (\dim H_2 - m_2) \\ 
              &= 2n^2 + m_1. 
\end{align*}  
By the previous discussion, we have 
\begin{equation*}
\dim X_{\Bla(\Bm)} \le \dim \SX_{\Bm,\nilp} \le \dim\wt\SX_{\Bm,\nilp}.
\end{equation*}
By (7.1.2) (note that $m_3 = 0$), the above inequalities are actually equalities. 
Since $X_{\Bla(\Bm)}$ is irreducible, we conclude that 
$\ol X_{\Bla(\Bm)} = \SX_{\Bm,\nilp}$. Hence (i) holds. 
(ii) also follows from this.  
\end{proof}

\para{7.9.}
Let $P = LU_P$ be a parabolic subgroup of $H$, where $L$ is 
a Levi subgroup, and $U_P$ is the unipotent radical of $P$. 
Let $\pi_P : P \to L$ be the natural projection. 
Let $\SO'$ be an $L$-orbit in $(\Lie L)\nil$. 
Let $\SO$ be an $H$-orbit in $\Fh\nil$ such that 
$\SO \cap \pi_P\iv(\SO')$ is open dense in $\pi_P\iv(\SO')$.  
For any $x \in \SO$, consider the variety 
\begin{equation*}
\tag{7.9.1}
\SP_x = \{gP \in H/P \mid g\iv x \in \pi_P\iv(\SO') \}. 
\end{equation*}
Then clearly $\SP_x \ne \emptyset$.  
The following result can be proved in a similar way as [Sh, Prop. 6.3].
%%%%
%%%%
\begin{prop}  %%%%  Prop. 7.10
Under the setting in 7.9, assume that $x \in \SO$. 
\begin{enumerate}
\item 
$\SP_x$ consists of one point.
\item 
$\dim Z_H(x) = \dim Z_L(x')$ for $x' \in \SO'$.
\item
Let $x_1 \in \pi_P\iv(\SO')$ be such that $\dim Z_H(x_1) = \dim Z_H(x)$.  
Then $x_1 \in \SO$. 
\item
Take $x_1 \in \SO \cap \pi_P\iv(\SO')$ and put $x' = \pi_P(x_1)$. 
Let $Q = Z_P(x')$.  Then $\dim Z_Q(x_1) = \dim Z_H(x_1)$.  In particular, 
\begin{equation*}
Z_H(x_1) = Z_P(x_1) = Z_Q(x_1).
\end{equation*} 
\item 
$P$ acts transitively on $\SO \cap \pi_P\iv(\SO')$, and $Q$ acts transitively 
on $\SO \cap \pi_P\iv(x')$. 
\end{enumerate}
\end{prop}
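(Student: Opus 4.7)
The plan is to mimic the proof of [Sh, Prop.~6.3], an incarnation of Lusztig--Spaltenstein induction for nilpotent orbits adapted to our setting. Consider the proper $H$-equivariant map
\begin{equation*}
\mu : \wt X := H \times^P \pi_P\iv(\SO') \longrightarrow \Fh\nil, \qquad g*y \mapsto gy,
\end{equation*}
whose image $H\cdot\pi_P\iv(\SO')$ is closed. Since $\pi_P\iv(\SO')$ is irreducible and $\SO \cap \pi_P\iv(\SO')$ is open dense in it, the orbit $\SO$ is open dense in $H\cdot\pi_P\iv(\SO')$, so $\mu$ is surjective onto $\ol\SO$, and for $x \in \SO$ the fiber $\mu\iv(x)$ is canonically identified with $\SP_x$. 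A direct dimension count gives $\dim \wt X = \dim H - \dim Z_L(x')$, hence the generic fiber over $\SO$ satisfies $\dim \SP_x = \dim Z_H(x) - \dim Z_L(x') \geq 0$.

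The crucial step is to prove that for $y_0$ in the open dense $P$-orbit of $\pi_P\iv(\SO')$, the centralizer $Z_H(y_0)$ is contained in $P$. This is the heart of the argument and is carried out along the lines of [Sh, Prop.~6.3], by writing $y_0 = x_0' + n_0$ with $x_0' \in \SO'$, $n_0 \in \Fn_P$, and analyzing the $Z_H(x_0')$-action on $\Fn_P$ to rule out elements of $H \setminus P$ centralizing both components simultaneously for a generic choice of $y_0$. Given this, $Z_H(y_0) = Z_P(y_0)$, and since $P\cdot y_0$ is open dense in $\pi_P\iv(\SO')$,
\begin{equation*}
\dim Z_H(y_0) = \dim Z_P(y_0) = \dim P - \dim \pi_P\iv(\SO') = \dim Z_L(x'),
\end{equation*}
which combined with $y_0 \in \SO$ proves (ii).

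From here the remaining parts follow by formal dimension arguments and uniqueness of the open orbit in the irreducible $\pi_P\iv(\SO')$. For (iii), any $x_1 \in \pi_P\iv(\SO')$ with $\dim Z_H(x_1) = \dim Z_H(x)$ satisfies $\dim(P \cdot x_1) \geq \dim P - \dim Z_H(x_1) = \dim \pi_P\iv(\SO')$, so $P \cdot x_1$ is the open $P$-orbit and lies in $\SO$. The same estimate applied to $y_1, y_2 \in \SO \cap \pi_P\iv(\SO')$ forces $P \cdot y_1 = P \cdot y_2$, giving $P$-transitivity in (v); the $Q$-transitivity on $\SO \cap \pi_P\iv(x')$ is then extracted by restricting a $P$-conjugator $p = \ell u$ to the fiber, using $L$-equivariance of $\pi_P$ to force $\ell \in Z_L(x')$ and hence $p \in Q$. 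Part (iv) is established by a parallel dimension analysis of the $Q$-action on $\pi_P\iv(x')$, yielding $\dim Z_Q(x_1) = \dim Z_H(x_1)$ and hence the chain $Z_H(x_1) = Z_P(x_1) = Z_Q(x_1)$. Finally for (i), replacing $x$ by an $H$-conjugate we may assume $x \in \SO \cap \pi_P\iv(\SO')$; for any $gP \in \SP_x$, the element $g\iv x$ lies in $\SO \cap \pi_P\iv(\SO') = P \cdot x$ by (v), so $g\iv x = p\iv x$ for some $p \in P$, giving $gp \in Z_H(x) \subseteq P$ by the central claim, whence $gP = P$.

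The main obstacle is the central claim $Z_H(y_0) \subseteq P$ for generic $y_0 \in \pi_P\iv(\SO')$. The structural argument of [Sh, Prop.~6.3] is insensitive to the characteristic and transfers to our setting, but care is required in the explicit analysis of the $Z_H(x_0')$-action on $\Fn_P$, particularly because passing between group and Lie algebra centralizers involves the peculiarities of the symplectic group in even characteristic recorded in Sections~1--2; once this claim is in hand, however, everything else reduces to bookkeeping with the induction diagram.
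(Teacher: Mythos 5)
Your proposal diverges from the paper's proof in a way that leaves a genuine gap. The engine of the paper's argument is the pair of dimension bounds from [L1, Prop.\ 1.2] (Lie-algebra version in [X2, Prop.\ 3.1]): writing $c = \dim\SO$, $c' = \dim\SO'$, one has first $\dim\bigl(\SO \cap \pi_P\iv(x')\bigr) \le (c-c')/2$, which combined with the density hypothesis forces $\dim U_P \le (c-c')/2$, and second $\dim\SP_x \le \dim U_P - (c-c')/2$. Together these give $\dim\SP_x \le 0$, hence $\dim\SP_x = 0$ and $c - c' = 2\dim U_P$, which is exactly (ii). Only after this does the paper derive (iv) (using also the connectedness of $Z_H(x_1)$ in characteristic $2$, from [X2, 2.14], to upgrade the dimension equality $\dim Z_H(x_1) = \dim Z_Q(x_1)$ to a genuine equality of groups), then (v) via transitivity of $H$ on $\SU_\SO = H \times^P\bigl(\SO\cap\pi_P\iv(\SO')\bigr)$, and then (i). Your proposal never invokes these bounds and instead makes the containment $Z_H(y_0)\subset P$ the ``central claim,'' which you do not prove but only ascribe to an analysis of the $Z_H(x_0')$-action on $\Fn_P$; this is not in fact what [Sh, Prop.\ 6.3] does, and you yourself flag that it is unclear how to carry it out in characteristic $2$.

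Even granting that central claim, your derivation is circular. To conclude $\dim Z_H(y_0) = \dim Z_L(x')$ you use that ``$P\cdot y_0$ is open dense in $\pi_P\iv(\SO')$''; but the hypothesis in 7.9 only says $\SO\cap\pi_P\iv(\SO')$ is open dense, which is a union of $P$-orbits, not a priori a single one. The existence of an open dense $P$-orbit is exactly the content of (v), which you later deduce from (ii). The central claim alone gives only $\dim Z_P(y_0) = \dim Z_H(y_0)$; combined with $P\cdot y_0 \subset \pi_P\iv(\SO')$ this yields $\dim Z_H(y_0) \ge \dim Z_L(x')$, which is the \emph{wrong} direction of the inequality and already follows from the fibration count you gave at the start. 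The missing input is precisely [L1, Prop.\ 1.2(a)]/[X2, Prop.\ 3.1(a)], which controls $\dim\bigl(\SO\cap\pi_P\iv(x')\bigr)$ from above and thereby breaks the circle. Without it there is no mechanism in your sketch to rule out $\dim\SP_x > 0$ or an infinite family of $P$-orbits in $\SO\cap\pi_P\iv(\SO')$.
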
 

\begin{proof}
For the sake of completeness, we give an outline of the proof below.
First we show that
\begin{equation*}
\tag{7.10.1}
\dim \SP_x = 0.
\end{equation*}
Replacing by $H$-conjugate, we may assume that $x \in \SO \cap \pi_P\iv(\SO')$.
Put $x' = \pi_P(x) \in \SO'$. We have $\dim \pi_P\iv(x) = \dim U_P$. 
Put $c = \dim \SO, c' = \dim \SO'$. By [L1, Prop.1.2] (actually a similar 
argument works also for the Lie algebra case, see [X2, Prop. 3.1]), we have
$\dim (\SO \cap \pi_P\iv(x')) \le (c - c')/2$.  
Since $\SO \cap \pi_P\iv(\SO')$ is open dense in $\pi\iv(\SO')$, 
$\SO \cap \pi_P\iv(x')$ is open dense in $\pi_P\iv(x')$.  It follows that 
\begin{equation*}
\dim U_P \le (c - c')/2.
\end{equation*}
On the other hand, by Proposition 3.1 (ii) in [X2] (or [L1, Prop. 1.2 (b)]), we have
\begin{align*}
\tag{7.10.2}
\dim \SP_x &\le (\dim U - c/2) - (\dim U_L - c'/2) \\
           &= \dim U_P - (c - c')/2,
\end{align*}
where $U_L = U \cap L$ is a maximal unipotent subgroup in $L$. 
Hence $\dim \SP_x \le 0$.  Since $\SP_x \ne \emptyset$, we obtain (7.10.1). 
We also have $c - c' = 2\dim U_P$. This implies that $\dim Z_H(x) = \dim Z_L(x')$.
Hence (ii) holds. Now consider (iv). 
Based on the above computation, in a similar way as in [Sh, Prop. 6.3], we can show 
$\dim Z_H(x_1) = \dim Z_Q(x_1)$. Since it is known that $Z_H(x_1)$ is connected 
([X2, 2.14]) in the case of characteristic 2, we have $Z_H(x_1) = Z_Q(x_1)$.
Hence $Z_H(x_1) = Z_P(x_1)$, which proves (iv). 
Put
\begin{equation*}
\SU = \{ (x_1, gP) \in \Fh\nil \times H/P \mid g\iv x_1 \in \pi_P\iv(\SO')\}.
\end{equation*} 
Then $\SU \simeq H \times^P\pi_P\iv(\SO')$ and so $\SU$ is irreducible. 
Let $f : \SU \to \Fh\nil$ be the first projection, and put $\SU_{\SO} = f\iv(\SO)$. 
Then $\SU_{\SO} \simeq H \times^P(\SO \cap \pi_P\iv(\SO'))$, and so $\SU_{\SO}$ 
is also irreducible.  
For any $x \in \SO$, $\dim f\iv(x) = 0$ by (7.10.1).  Thus $\dim \SU_{\SO} = \dim \SO$.
Since $f : \SU_{\SO} \to \SO$ is an $H$-equivariant surjective map, for any 
$\xi \in \SU_{\SO}$, the $H$-orbit $H\xi$ is open dense in $\SU_{\SO}$.  It follows that 
\par\medskip\noindent
(7.10.3) \ 
$H$ acts transitively on $\SU_{\SO}$. 
\par\medskip

Take $x, x_1 \in \SO \cap \pi_P\iv(\SO')$. Since 
$(x, P), (x_1, P) \in \SU_{\SO}$, there exists $g\in P$ such that $gx = x_1$.  This proves
the first statement of (v). Now assume that $x_1, x_2 \in \SO \cap \pi_P\iv(x')$.
Then there exists $g \in P$ such that $gx_1 = x_2$.  But since $\pi_P$ is $P$-equivariant, 
$g \in Z_P(x') = Q$. This proves the second statement of (v). 
\par
We show (i). We may assume that $x \in \SO \cap \pi_P\iv(\SO')$.  
Then $P \in \SP_x$. Assume that $gP \in \SP_x$.  Then $(x, P), (x, gP) \in \SU_{\SO}$, 
and so there exists $h \in Z_H(x)$ such that $gP = hP$ by (7.10.3).  
But by (iv), $h \in P$, and so (i) holds. 
(iii) is proved in the same way as in [Sh].  
The proposition is proved. 
\end{proof}

\para{7.11.}
We return to the original setting. 
For later application, we shall consider some 
open dense subvariety $X_{\Bla}^0$ of $X_{\Bla}$. 
As in 7.4, let $\SO_1$ be a $G_1$-orbit in $(\Fg_1)\nil \times W$ 
and $\SO_2$ an $H_2$-orbit in $(\Fh_2)\nil$.
We denote by $\SO_1'$ the $G_1$-orbit in $(\Fg_1)\nil$ which is the 
projection of $\SO_1$ to $(\Fg_1)\nil$, hence $\SO_1' = \SO_{\la^{(1)}}$ 
(see 7.3).
We define a subset $\SM_{\Bla}^0$ of $\SM_{\Bla}$ as the set of $(x,v)$
such that the orbit $\SO$ containing $x$ satsifies the 
property that $\SO \cap \pi_P\iv(\SO'_1 \times \SO_2)$ is open dense 
in $\pi_P\iv(\SO'_1\times \SO_2) = (\SO'_1 \times \SO_2) + \Fn_P$.
Clearly, $\SM_{\Bla}^0$ is open dense in $\SM_{\Bla}$.  We define
\begin{equation*}
\tag{7.11.1}
X_{\Bla}^0 = \bigcup_{g \in H}g(\SM_{\Bla}^0).
\end{equation*}  
Let $\pi_{\Bla} : \wt X_{\Bla} \to \ol X_{\Bla}$ be as in 7.4.
We define 
\begin{equation*}
\wt X^0_{\Bla} = H \times^P\SM^0_{\Bla}. 
\end{equation*}
Since $\wt X_{\Bla} \simeq H \times^P\ol\SM_{\Bla}$, and $\SM_{\Bla}$ is 
open dense in $\ol\SM_{\Bla}$, $\wt\SX^0_{\Bla}$ is open dense in 
$\wt X_{\Bla}$, and one can check that $\pi_{\Bla}\iv(X^0_{\Bla}) = \wt X^0_{\Bla}$. 
Since $\pi_{\Bla}$ is proper, 
$X_{\Bla}^0$ is open dense in $X_{\Bla}$. 
Let $\pi_{\Bla}^0: \wt X^0_{\Bla} \to X^0_{\Bla}$ be the restriction of 
$\pi_{\Bla}$ on $\wt X^0_{\Bla}$.  We have a lemma.

\begin{lem}  %%%%  Lemma 7.12 
$\pi_{\Bla}^0$ gives an isomorphism $\wt X^0_{\Bla} \isom X^0_{\Bla}$. 
\end{lem}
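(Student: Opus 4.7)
The plan is to reduce to a fibre analysis and apply Proposition 7.10. By Lemma 7.5(ii) together with the openness of $X^0_{\Bla} \subset X_{\Bla}$ and $\wt X^0_{\Bla} \subset \wt X_{\Bla}$, both $\wt X^0_{\Bla}$ and $X^0_{\Bla}$ are smooth irreducible varieties of equal dimension, and $\pi^0_{\Bla}$ is proper and surjective. Since $\pi^0_{\Bla}$ is $H$-equivariant and $\SM^0_{\Bla}$ meets every $H$-orbit in $X^0_{\Bla}$ (by the very definition $X^0_{\Bla} = H \cdot \SM^0_{\Bla}$), it suffices to verify that for each $(x,v) \in \SM^0_{\Bla}$ the scheme-theoretic fibre $(\pi^0_{\Bla})\iv(x,v)$ consists of a single reduced point.

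For the set-theoretic statement, let $gP \in (\pi^0_{\Bla})\iv(x,v)$. Then $g\iv(x,v) \in \SM^0_{\Bla}$ forces $g\iv x \in \Fp\nil$ with $\pi_P(g\iv x) \in \SO_1' \times \SO_2$, whence $g\iv x \in \pi_P\iv(\SO_1'\times\SO_2)$. In the notation of 7.9, taking $L = G_1 \times H_2$, $\SO' = \SO_1'\times\SO_2$, and $\SO$ the $H$-orbit of $x$ in $\Fh\nil$, this reads $gP \in \SP_x$. The density hypothesis required in 7.9 is precisely the defining condition of $\SM^0_{\Bla}$, so Proposition 7.10(i) yields $\SP_x = \{P\}$; combined with the trivial fact that $P$ itself lies in the fibre, this shows $(\pi^0_{\Bla})\iv(x,v) = \{P\}$ set-theoretically.

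For reducedness I would compute $\ker d\pi^0_{\Bla}$ at the point $1 * (x,v)$. Representing a tangent vector of $\wt X^0_{\Bla}$ as $(\eta, (\alpha,\beta)) \in \Fh \oplus T_{(x,v)}\SM^0_{\Bla}$ modulo the image of $\Lie P$, the differential sends it to $([\eta, x]+\alpha, \eta v + \beta)$; setting this to zero and eliminating $(\alpha,\beta)$ shows that the kernel reduces, modulo $\Lie P$, to the Lie algebra of the common stabilizer $Z_H(x,v) = Z_H(x) \cap \{h \in H : hv = v\}$. Proposition 7.10(iv) gives $Z_H(x) = Z_P(x) \subset P$, hence $Z_H(x,v) \subset P$ and so $\Lie Z_H(x,v) \subset \Lie P$; the kernel therefore vanishes. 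Consequently $\pi^0_{\Bla}$ is a proper bijective morphism between smooth irreducible varieties of equal dimension with reduced length-one fibres at every closed point, which is enough to force $(\pi^0_{\Bla})_*\SO_{\wt X^0_{\Bla}} \simeq \SO_{X^0_{\Bla}}$ and hence the desired isomorphism. The main technical obstacle is this last tangent computation; once Proposition 7.10(iv) is brought to bear the kernel collapses, and the global conclusion is standard.
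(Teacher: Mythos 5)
Your set-theoretic argument (reduction to $\SP_x$ via Proposition 7.10(i)) is exactly the paper's. Where you diverge is at the last step: the paper simply observes that the inverse map $g(x,v)\mapsto gP$ is a well-defined morphism $X^0_{\Bla}\to \wt X^0_{\Bla}$, whereas you attempt to prove \'etaleness via a tangent-space computation, which would indeed suffice (bijective + \'etale $\Rightarrow$ isomorphism over an algebraically closed field, and this is the right worry in characteristic $2$).

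However, the tangent computation as stated has a gap. After imposing $[\eta,x]+\alpha=0$, $\eta v+\beta=0$ you get $\alpha=-[\eta,x]$, $\beta=-\eta v$, but you must still require $(\alpha,\beta)\in T_{(x,v)}\SM^0_{\Bla}$ for $(\eta,(\alpha,\beta))$ to be a genuine tangent vector of $\wt X^0_{\Bla}$. The kernel is therefore
\[
\bigl\{\eta\in\Fh : (-[\eta,x],-\eta v)\in T_{(x,v)}\SM^0_{\Bla}\bigr\}\big/\Lie P,
\]
not $\Lie Z_H(x,v)/(\Lie Z_H(x,v)\cap\Lie P)$. The latter is $0$ anyway (by Proposition 7.10(iv) one has $Z_H(x,v)\subset Z_H(x)\subset P$), so citing 7.10(iv) does not close the argument; what you actually need is that every $\eta\in\Fh$ with $(-[\eta,x],-\eta v)\in T_{(x,v)}\SM^0_{\Bla}$ already lies in $\Lie P$, and you have not shown this. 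Note that all $\eta\in\Lie P$ satisfy this condition (since $\SM^0_{\Bla}$ is $P$-stable), so the numerator is strictly larger than $\Lie Z_H(x,v)$; the real question is whether it exceeds $\Lie P$, which requires an analysis of $T_{(x,v)}\SM^0_{\Bla}$ (the fibre product of the orbit tangent spaces $T_{(x_1,v)}\SO_1$, $T_{x_2}\SO_2$ over $\Fp\to\Lie L$) that you have not carried out. Until that is supplied, the \'etaleness claim, and hence the conclusion that a bijective proper morphism is an isomorphism, is unsupported.
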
 
\begin{proof}
$\pi^0_{\Bla}$ is $H$-equivariant and surjective. Take $(x,v) \in \SM^0_{\Bla}$, 
and put $x' = \pi_P(x) = (x_1, x_2)$ with $(x_1,v) \in \SO_1, x_2 \in \SO_2$. 
Furthermore, by our assumption, $\SO \cap \pi_P\iv(\SO')$ is open dense in 
$\pi_P\iv(\SO')$, where $\SO$ is the $H$-orbit containing $x$, and 
$\SO' = \SO_1' \times \SO_2$. 
Hence, under the notation of (7.9.1),  we have
\begin{equation*}
\pi_{\Bla}\iv(x,v) \simeq \{ gP \in H/P \mid g\iv(x,v)\in \SM_{\Bla}^0\}
                     \subset \SP_x.
\end{equation*}
By Proposition 7.10, we have $\SP_x = \{ P \}$. It follows that 
$\pi_{\Bla}\iv(x,v) = \{ P\}$. 
This shows that $\pi^0_{\Bla}$ gives a bijective morphism from 
$\wt X^0_{\Bla}$ onto $X^0_{\Bla}$. 
The map $g(x,v) \mapsto gP$ gives a well-defined morphism 
$X^0_{\Bla} \to \wt X^0_{\Bla}$, which is the inverse of $\pi_{\Bla}$. 
\end{proof}

\para{7.13.}
From now on, we fix $\Bm \in \SQ_{n,3}^0$. 
Put $\SB = H/B$.
For any $z = (x,v) \in \SX_{\Bm,\nilp}$, define 
\begin{align*}
\SB_z &= \{ gB \in \SB \mid g\iv x \in \Fn, g\iv v \in M_n\}, \\
\SB_z^{(\Bm)} &= \{ gB \in \SB \mid g\iv x \in \Fn, g\iv v \in M_{m_1} \}.
\end{align*}
Hence $\SB_z^{(\Bm)} \subset \SB_z$ are closed subvarieties of $\SB$.
\par
For each integer $d \ge 0$, 
we define a subset $X(d)$ of $\SX_{\Bm,\nilp}$ by 
\begin{equation*}
\tag{7.13.1}
X(d) = \{ z \in \SX_{\Bm,\nilp} \mid \dim \SB_z^{(\Bm)} = d \}.
\end{equation*}
Then $X(d)$ is a locally closed subvariety of $\SX_{\Bm,\nilp}$, and 
$\SX_{\Bm,\nilp} = \coprod_{d \ge 0}X(d)$. 
We consider the Steinberg varieties $\CZ^{(\Bm)}$ and $\CZ^{(\Bm)}_1$, which 
are a generalization of the Steinberg variety considered in 4.4.

\begin{align*}
\CZ^{(\Bm)} &= \{ (z, gB, g'B) \in \SX \times \SB \times \SB 
                     \mid (z, gB) \in \wt\SX_{\Bm}, (z, g'B) \in \wt\SX_{\Bm}\}  \\
\CZ^{(\Bm)}_1 &= \{ (z, gB, g'B) \in \SX\nil \times \SB \times \SB 
                 \mid (z, gB) \in \wt\SX_{\Bm,\nilp}, (z,g'B) \in \wt\SX_{\Bm,\nilp}\}.  
\end{align*}

Recall, for $\Bm = (m_1, m_2, 0) \in \SQ_{n,3}^0$, that 
$W\nat_{\Bm} = S_{m_1} \times W_{m_2}$.
We show the following lemma.

\begin{lem}  %%%%  Lemma 7.14.
Under the notation in 7.13, 
\begin{enumerate}
\item 
$\dim \CZ^{(\Bm)}_1 = \dim \SX_{\Bm \nilp}$.   The set of irreducible components
of $\CZ^{(\Bm)}_1$ with maximal dimension is parametrized by $w \in W\nat_{\Bm}$.
\item  
$\dim \CZ^{(\Bm)} = \dim \CZ^{(\Bm)}_1 + n$.  The set of irreducible components 
of $\CZ^{(\Bm)}$ with maximal dimension is parametrized by $w \in W\nat_{\Bm}$.
\item 
Assume that $X(d) \ne \emptyset$. For any $z \in X(d)$, we have
\begin{equation*}
\dim \SB^{(\Bm)}_z \le \frac{1}{2}(\dim \SX_{\Bm,\nilp} - \dim X(d)).
\end{equation*}
In particular, $\pi_1^{(\Bm)} :  \wt\SX_{\Bm,\nilp} \to \SX_{\Bm,\nilp}$ 
is semismall with respect to the stratification 
$\SX_{\Bm\nilp} = \coprod_{d}X(d)$. 
\end{enumerate}
\end{lem}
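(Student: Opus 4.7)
The plan is to decompose both Steinberg-type varieties by the relative position of Borel subgroups, and carry out direct dimension counts. Via the projection $(z, gB, g'B) \mapsto (gB, g'B)$ into $\SB \times \SB = \coprod_{w \in W_n}\SO_w$ (with $\SO_w$ the $H$-orbit through $(B, wB)$), one obtains partitions $\CZ^{(\Bm)}_1 = \coprod_w \CZ^{(\Bm)}_{1,w}$ and $\CZ^{(\Bm)} = \coprod_w \CZ^{(\Bm)}_w$. Since $\Bm \in \SQ^0_{n,3}$ has $m_3 = 0$, we have $p_2 = n$, and Lemma 5.5 (i) gives $\ol\FN_n = \Ft \oplus \Fn = \Fb$. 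Hence each piece is realized as a fiber bundle
\begin{equation*}
\CZ^{(\Bm)}_{1,w} \simeq H \times^{B \cap wBw\iv}\bigl((\Fn \cap w\Fn) \times (M_{m_1} \cap wM_{m_1})\bigr),
\end{equation*}
and $\CZ^{(\Bm)}_w$ is obtained similarly with $\Fn$ replaced by $\Fb$.

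Each such fiber is a product of affine spaces, hence irreducible. Writing $\nu(w) = |\Phi^+ \cap w\Phi^+|$ and $\mu(w) = \dim(M_{m_1} \cap w M_{m_1})$, the standard formulas $\dim(B \cap wBw\iv) = n + \nu(w)$, $\dim(\Fn \cap w\Fn) = \nu(w)$, and $\dim(\Fb \cap w\Fb) = n + \nu(w)$ yield
\begin{equation*}
\dim \CZ^{(\Bm)}_{1,w} = 2n^2 + \mu(w), \qquad \dim \CZ^{(\Bm)}_w = 2n^2 + n + \mu(w).
\end{equation*}
Since $\mu(w) \le m_1$ with equality iff $w M_{m_1} = M_{m_1}$, and a direct computation identifies the stabilizer of $M_{m_1}$ in $W_n \simeq S_n \ltimes (\BZ/2\BZ)^n$ (which acts on $V$ by signed permutations of the symplectic basis $\{e_i, f_i\}$) with $W\nat_{\Bm} = S_{m_1} \times W_{m_2}$, the maximal-dimensional components are indexed exactly by $w \in W\nat_{\Bm}$ in both cases. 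Combined with $\dim \SX_{\Bm,\nilp} = 2n^2 + m_1$ from Proposition 7.8, this establishes (i) and (ii).

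For (iii), set $Y(d) = (\pi^{(\Bm)}_1)\iv(X(d))$. By definition of $X(d)$ the fibers of $\pi^{(\Bm)}_1$ over $X(d)$ all have dimension $d$, so the fiber product $Y(d) \times_{X(d)} Y(d)$, a locally closed subset of $\CZ^{(\Bm)}_1$, has dimension $\dim X(d) + 2d$. Therefore
\begin{equation*}
\dim X(d) + 2d \le \dim \CZ^{(\Bm)}_1 = \dim \SX_{\Bm,\nilp},
\end{equation*}
which yields the bound $\dim \SB^{(\Bm)}_z = d \le \tfrac{1}{2}(\dim \SX_{\Bm,\nilp} - \dim X(d))$, and semismallness of $\pi^{(\Bm)}_1$ follows by definition.

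The main obstacle is verifying that the dimension formulas remain valid in characteristic $2$. Although the $\Ft$-weight decomposition of $\Fn$ degenerates on long roots (as noted in Section 2.2), the $T$-weight decomposition $\Fn = \bigoplus_{\a \in \Phi^+}\Fg_\a$ still has one-dimensional root spaces, so $\dim(\Fn \cap w\Fn) = \nu(w)$ holds exactly. Once this is checked, the irreducibility of each $\CZ^{(\Bm)}_{\bullet, w}$ and the combinatorial identification of the stabilizer $W\nat_{\Bm}$ of $M_{m_1}$ in $W_n$ are routine.
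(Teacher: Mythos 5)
Your proposal is correct and follows essentially the same route as the paper's proof: decompose $\CZ^{(\Bm)}_1$ and $\CZ^{(\Bm)}$ by the $H$-orbits $\SO_w$ on $\SB \times \SB$, note that for $\Bm \in \SQ^0_{n,3}$ the condition $m_3 = 0$ forces $\ol\FN_{p_2} = \Fn$, identify the fibers as $(\Fn \cap w\Fn)\times(M_{m_1}\cap wM_{m_1})$ (resp. with $\Fb$ in place of $\Fn$), use $\dim(M_{m_1}\cap wM_{m_1}) \le m_1$ with equality exactly on the stabilizer $W\nat_{\Bm}$, and for (iii) bound $\dim q_1\iv(X(d)) = \dim X(d) + 2d$ by $\dim\CZ^{(\Bm)}_1$. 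The paper records the bundle dimension as $\dim H - \dim T + \dim(M_{m_1}\cap wM_{m_1})$ without spelling out the $\nu(w)$-cancellation, and it does not dwell on the characteristic-$2$ point you raise, but your explicit note that the $T$-weight (not $\Ft$-weight) decomposition of $\Fn$ has one-dimensional root spaces, so $\dim(\Fn\cap w\Fn)=\nu(w)$ is unaffected, is a useful sanity check that is implicit in the paper's argument.
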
  

\begin{proof}
Let $p_1 : \CZ^{(\Bm)}_1 \to \SB \times \SB$ be the projection to the second 
and third factors. For each $w \in W_n$, let $\SO_w$ be the $H$-orbit of $(B, wB)$ in 
$\SB \times \SB$. We have $\SB \times \SB = \coprod_{w \in W_n}\SO_w$. 
Put $Z_w = p_1\iv(\SO_w)$. Then $\CZ^{(\Bm)}_1 = \coprod_{w \in W_n}Z_w$. 
Here $Z_w$ is a vector bundle over 
$\SO_w \simeq H/(B \cap wBw\iv)$ with fibre isomorphic to 
\begin{equation*}
(\Fn \cap w(\Fn)) \times (M_{m_1} \cap w(M_{m_1})).
\end{equation*} 
(Note that $\ol\FN_{p_2} = \Fn_s \oplus \FD_{m_1 +m_2} = \Fn$ since $m_3 = 0$.)
We have 
\begin{equation*}
\dim Z_w = \dim H - \dim T + \dim (M_{m_1} \cap w(M_{m_1})).
\end{equation*}
Here $\dim (M_{m_1} \cap w(M_{m_1})) \le m_1$, and the equality holds 
if and only if $w(M_{m_1}) = M_{m_1}$, namely $w \in W\nat_{\Bm}$. 
It follows that $\dim Z_w \le 2n^2 + m_1$ and the equality holds if and only if
$w \in W\nat_{\Bm}$.   Hence by Proposition 7.8, 
$\dim Z_w = \dim \SX_{\Bm,\nilp}$ if $Z_w$ has the maximal dimension.   
This implies that $\{ \ol Z_w \mid w \in W\nat_{\Bm}\}$ gives the set of 
irreducible components of $\CZ^{(\Bm)}_1$ with maximal dimension.  This proves (i). 
For (ii), we consider $\wt Z_w = p\iv(\SO_w)$, where
 $p: \CZ^{(\Bm)} \to \SB \times \SB$ is the projection to the second and third factors. 
Then $\wt Z_w$ is a vector bundle over $\SO_w$, with fibre isomorphic to 
\begin{equation*}
\Ft \times (\Fn \cap w(\Fn))\times (M_{m_1} \cap w(M_{m_1})).
\end{equation*} 
Hence (ii) is proved in a similar way as (i). 
\par 
We show (iii).  Let $q_1 : \CZ^{(\Bm)}_1 \to \SX_{\Bm, \nilp}$ be the projection 
on the first factor. For each $z \in \SX_{\Bm,\nilp}$,
$q_1\iv(z) \simeq \SB_z^{(\Bm)} \times \SB_z^{(\Bm)}$.   
By (7.13.1), we have
\begin{equation*}
\dim q_1\iv(X(d)) = \dim X(d) + 2d.
\end{equation*}
Since $\dim q_1\iv(X(d)) \le \dim \CZ^{(\Bm)}_1 = \dim \SX_{\Bm, \nilp}$, 
we see that $2d \le \dim \SX_{\Bm,\nilp} - \dim X(d)$. 
This proves (iii).  The lemma is proved. 
\end{proof}

%%%%
%%%%
\par\bigskip

\section{ Springer correspondence for $\Fg^{\th}$}
\par\medskip

\para{8.1.}
In this section we shall prove the Springer correspondence for 
$\Fg^{\th}$.  In [Sh], the Springer correspondence was established 
for the exotic symmetric space of level $r$ for arbitrary $r \ge 1$.
Once Theorem 6.10 and Theorem 6.21 are proved, a similar 
discussion as in [Sh, 7] can be applied to our situation as the special 
case where $r = 3$.   

Assume that $\Bm \in \SQ_{n,3}^0$. 
We consider 
the variety $\CZ^{(\Bm)}$ as in 7.13.
We denote by $\vf: \CZ^{(\Bm)} \to \SX_{\Bm}$ 
the map $(z,gB,g'B) \mapsto z$. 
Let $\a: \CZ^{(\Bm)} \to \Ft$ be the map defined by 
$(x,v, gB, g'B) \mapsto p_1(g\iv x)$, similarly to 4.4.
As in 4.4, we have a commutative diagram 
\begin{equation*}
\begin{CD}
\CZ^{(\Bm)} @>\a>> \Ft  \\
@V\vf VV              @VV\w_1V   \\
\SX_{\Bm} @>\wt\w>>   \Xi,
\end{CD}
\end{equation*}
where $\wt\w$ is the composite of the projection $\SX_{\Bm} \to \Fh$ and $\w$.
Put $\s = \w_1\circ \a$, and $d'_{\Bm} = \dim \SX_{\Bm, \nilp} = 2n^2 + m_1$. 
We define a constructible sheaf $\ST$ on $\Xi$ by 
\begin{equation*}
\tag{8.1.1}
\ST = \SH^{2d'_{\Bm}}(\s_!\Ql) = R^{2d'_{\Bm}}\s_!\Ql. 
\end{equation*}

Note that if $m_1 = 0$, $\CZ^{(\Bm)}$ coincides with $Z$ in 4.4, and $\ST$ 
is nothing but the sheaf $\ST$ defined in (4.4.1). 
Also $\SF$ corresponds to the special case where $r = 3$ of the sheaf $\SF$ 
defined in [Sh, 7.1].  
The following properties of $\SF$ are obtained by similar 
arguments as in [Sh], so we omit the proof.  
The discussion in the proof
of Lemma 4.5 can be generalized to this case 
(see also [Sh, Lemma 7.2]), and we obtain 
%%%%
%%%%
\begin{lem}  %%%%   Lemma 8.2
The sheaf $\ST$ is a perfect sheaf on $\Xi$. 
\end{lem}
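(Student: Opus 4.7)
\medskip
\noindent
\textbf{Proof proposal.}
The plan is to imitate the argument for Lemma 4.5 almost verbatim, the only real modification being that the ``extra'' factor $M_{m_1}$ in the definition of $\CZ^{(\Bm)}$ forces the relevant top-dimensional contributions to come only from a proper subset of $W_n$, namely from $W\nat_{\Bm}$. Let $p : \CZ^{(\Bm)} \to \SB \times \SB$ be the projection to the last two factors. Using the $H$-orbit decomposition $\SB \times \SB = \coprod_{w \in W_n}\SO_w$, set $\CZ^{(\Bm)}_w = p\iv(\SO_w)$; this gives a filtration of $\CZ^{(\Bm)}$ by locally closed subvarieties, and the fiber formula recorded in the proof of Lemma 7.14(ii) shows that $\CZ^{(\Bm)}_w \to \SO_w$ is a vector bundle with fiber $\Ft \times (\Fn \cap w(\Fn)) \times (M_{m_1} \cap w(M_{m_1}))$. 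In particular, $\CZ^{(\Bm)}_w$ is irreducible, and the restriction $\a_w$ of $\a$ to $\CZ^{(\Bm)}_w$ is a locally trivial fibration whose fiber over any $s \in \Ft$ is a vector bundle over $\SO_w$ with fiber $(\Fn \cap w(\Fn)) \times (M_{m_1} \cap w(M_{m_1}))$.

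Since $\dim \SO_w + \dim(\Fn \cap w(\Fn)) = \dim H - n$, the computation
\begin{equation*}
\dim \a_w\iv(s) = 2n^2 + \dim (M_{m_1} \cap w(M_{m_1}))
\end{equation*}
gives $\dim \a_w\iv(s) \le d'_{\Bm} = 2n^2 + m_1$, with equality if and only if $w(M_{m_1}) = M_{m_1}$, i.e.\ $w \in W\nat_{\Bm}$. Because $\a_w$ is an iterated affine bundle, $H^i_c(\a_w\iv(s), \Ql) = 0$ for odd $i$ and, when $w \in W\nat_{\Bm}$, the top-degree cohomology is $\Ql$ since the fiber is irreducible. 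Hence
\begin{equation*}
R^{2d'_{\Bm}}(\a_w)_!\Ql \simeq
   \begin{cases} \Ql & \text{if } w \in W\nat_{\Bm}, \\ 0 & \text{otherwise,} \end{cases}
\qquad R^i(\a_w)_!\Ql = 0 \ (\text{$i$ odd}).
\end{equation*}
Since $\w_1$ is a finite morphism, $R^p(\w_1)_! = 0$ for $p > 0$, so the Leray spectral sequence degenerates and yields $R^{2d'_{\Bm}}(\s_w)_!\Ql \simeq (\w_1)_!\Ql$ for $w \in W\nat_{\Bm}$ and $0$ otherwise, and $R^i(\s_w)_!\Ql = 0$ for odd $i$. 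In particular, for each $w \in W\nat_{\Bm}$, the sheaf $\ST_w := R^{2d'_{\Bm}}(\s_w)_!\Ql$ is perfect on $\Xi$, being the pushforward of $\Ql$ under the finite morphism $\w_1$ from the smooth variety $\Ft$.

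Finally, order $W_n$ by $\dim \SO_w$: set $\CZ^{(\Bm)}_{\le m} = \coprod_{\dim \SO_w \le m}\CZ^{(\Bm)}_w$, which is closed in $\CZ^{(\Bm)}$, with open complement in $\CZ^{(\Bm)}_{\le m}$ given by $\CZ^{(\Bm)}_m = \coprod_{\dim \SO_w = m}\CZ^{(\Bm)}_w$. Since the odd higher direct images of $\s$ restricted to each stratum vanish, the long exact sequence associated to the triangle $(\s_m)_!\Ql \to (\s_{\le m})_!\Ql \to (\s_{\le m-1})_!\Ql \to$ gives short exact sequences
\begin{equation*}
0 \to R^{2d'_{\Bm}}(\s_m)_!\Ql \to R^{2d'_{\Bm}}(\s_{\le m})_!\Ql \to R^{2d'_{\Bm}}(\s_{\le m -1})_!\Ql \to 0,
\end{equation*}
and since $R^{2d'_{\Bm}}(\s_m)_!\Ql \simeq \bigoplus_{\dim \SO_w = m,\, w \in W\nat_{\Bm}}\ST_w$ is perfect, induction on $m$ combined with the closure properties of perfect sheaves under extensions shows $\ST$ is perfect. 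The only non-routine point, and the main obstacle, is the dimension bookkeeping ensuring that the contributions from $w \notin W\nat_{\Bm}$ genuinely vanish in top degree; once this is checked, the rest of the argument is formally identical to Lemma 4.5.
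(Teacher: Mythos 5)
Your proposal is correct and follows essentially the same route the paper intends: the paper explicitly defers the proof of Lemma 8.2 to the argument of Lemma 4.5 (and to [Sh, Lemma 7.2]), and your write-up is precisely that generalization, with the $M_{m_1} \cap w(M_{m_1})$ factor correctly identified as the source of the restriction to $w \in W\nat_{\Bm}$. The dimension count $\dim \a_w\iv(s) = 2n^2 + \dim(M_{m_1}\cap w M_{m_1}) \le d'_{\Bm}$ with equality exactly on $W\nat_{\Bm}$, the vanishing of odd compactly supported cohomology of the iterated affine bundle $\a_w\iv(s)$, the identification $R^{2d'_{\Bm}}(\s_w)_!\Ql \simeq (\w_1)_!\Ql$ via finiteness of $\w_1$, and the filtration by $\dim\SO_w$ together with the extension property of perfect sheaves, are all exactly the ingredients the paper uses, and they are assembled in the right order. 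The observation that contributions from $w\notin W\nat_{\Bm}$ vanish in top degree is also what makes Proposition 8.3 come out as a sum over $W\nat_{\Bm}$ rather than all of $W_n$, so your emphasis on that point is well placed.
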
  
Let $p: \CZ^{(\Bm)} \to \SB \times \SB$ be the projection to the second and third
factors.  We put $\CZ^{(\Bm)}_w = p\iv(\SO_w)$ for $w \in W_n$, 
where $\SO_w$ is as in the proof of Lemma 4.5. 
Let $\s_w$ be the restriction of $\s$ on $\CZ^{(\Bm)}_w$, and put 
$\ST_w = \SH^{2d'_{\Bm}}((\s_w)_!\Ql)$.  In a similar way as in the proof of 
Proposition 4.6 (see also [Sh, Prop. 7.3], 
here the role of $W_n$ is replaced by $W\nat_{\Bm}$), we can prove
%%%%
%%%%
\begin{prop}  %%%%   Prop. 8.3
$\ST \simeq \bigoplus_{w \in W\nat_{\Bm}}\ST_w$.
\end{prop}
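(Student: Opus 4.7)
The plan is to adapt the argument of Proposition 4.6 to the Steinberg-type variety $\CZ^{(\Bm)}$ for $\Bm\in\SQ_{n,3}^0$; the extra $M_{p_1}$-constraint on the vector component is what restricts the indexing set from $W_n$ to $W\nat_{\Bm}$.

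First, I would establish that each $\ST_w$ is a perfect sheaf on $\Xi$ and moreover vanishes unless $w\in W\nat_{\Bm}$. Setting $\CZ^{(\Bm)}_w = p\iv(\SO_w)$, the fibration analysis of Lemma 7.14~(i)-(ii) shows that $\a_w : \CZ^{(\Bm)}_w \to \Ft$ is a locally trivial fibration with irreducible fiber
\[
H \times^{(B\cap wBw\iv)}\bigl((\Fn\cap w\Fn)\times(M_{m_1}\cap w(M_{m_1}))\bigr),
\]
of dimension at most $d'_{\Bm}$, with equality precisely when $w(M_{m_1}) = M_{m_1}$. Hence $R^{2d'_{\Bm}}(\s_w)_!\Ql = 0$ for $w\notin W\nat_{\Bm}$; while for $w\in W\nat_{\Bm}$, the fiber has vanishing odd compactly supported cohomology, giving $R^{2d'_{\Bm}}(\a_w)_!\Ql \simeq \Ql$ and $\ST_w \simeq (\w_1)_!\Ql$, a perfect sheaf since $\w_1$ is finite. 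Combined with Lemma 8.2, both sides of the claimed isomorphism are perfect sheaves on $\Xi$.

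Second, since perfect sheaves on $\Xi$ are determined by their restriction to any open dense subset, I would reduce to an isomorphism over $\Xi_{\sr} = \w_1(\Ft_{\sr})$. Setting $\CZ^{(\Bm),0} = \s\iv(\Xi_{\sr})$, a base-change identification yields $\CZ^{(\Bm),0} \simeq \wt\SY_{\Bm}\times_{\SY_{\Bm}}\wt\SY_{\Bm}$. The decomposition $\wt\SY^+_{\Bm'} = \coprod_{\BI\in\SI(\Bm')}\wt\SY_{\BI}$ from (6.2.3), applied to each factor over the strata $\SY^0_{\Bm'}$ for $\Bm'\le\Bm$, induces a locally closed stratification of $\CZ^{(\Bm),0}$ by the fibered products $\wt\CZ_{\BI,\BJ} = \wt\SY_{\BI}\times_{\SY^0_{\Bm'}}\wt\SY_{\BJ}$. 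Using the factorization $\psi_{\BI} = \eta_{\BI}\circ\xi_{\BI}$, with $\eta_{\BI}$ a finite Galois covering and $\xi_{\BI}$ a $\BP_1^{I_3}$-bundle, each intersection $\wt\CZ_{\BI,\BJ}\cap Z_w$ is open and closed in $\wt\CZ_{\BI,\BJ}$ and its fibers over $\Ft_{\sr}$ have vanishing odd cohomology. A long-exact-sequence argument along this stratification then yields
\[
R^{2d'_{\Bm}}(\a_0)_!\Ql \simeq \bigoplus_{w\in W_n} R^{2d'_{\Bm}}(\a^w_0)_!\Ql
\]
over $\Ft_{\sr}$; applying $R^0(\w_1)_!$ and invoking the vanishing $\ST_w = 0$ for $w\notin W\nat_{\Bm}$ produces the claimed decomposition over $\Xi_{\sr}$, and perfectness extends it to all of $\Xi$.

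The main obstacle is the combinatorial bookkeeping needed to verify odd-vanishing and the open-closed property uniformly across all pairs $(\BI,\BJ)$ and all $w\in W_n$; the interaction between the $M_{p_1}$-constraint and the parabolic subgroup $Z_H(\Ft_{\sr})_{\BI}$ complicates the direct analog of the argument in Proposition 4.6, since $(I_1,I_2,I_3)$ now runs over ordered partitions of $[1,n]$ rather than subsets. Once this step is carried out, the dimension count already performed in Lemma 7.14 pins down $W\nat_{\Bm}$ as the precise set indexing the surviving summands, completing the proof.
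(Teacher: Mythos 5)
Your proposal matches the paper's intended route: the paper proves this proposition only by reference to Proposition 4.6 and [Sh, Prop.~7.3], noting that the role of $W_n$ is replaced by $W\nat_{\Bm}$, and you correctly identify that the mechanism for this replacement is the vanishing $\ST_w = 0$ for $w\notin W\nat_{\Bm}$, which follows from the dimension bound $\dim\a_w\iv(s) = 2n^2 + \dim(M_{m_1}\cap w(M_{m_1})) \le d'_{\Bm}$ with equality iff $w(M_{m_1}) = M_{m_1}$, exactly the computation underlying Lemma 7.14. The remaining steps (perfectness of $\ST$ and each $\ST_w$, reduction to $\Xi_{\sr}$, open-closed stratification of $\s\iv(\Xi_{\sr})$ by the fibered products $\wt\SY_{\BI}\times_{\SY^0_{\Bm'}}\wt\SY_{\BJ}$) are the direct analogues of the proof of Proposition 4.6 that the paper has in mind.
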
   

\para{8.4.}
By the K\"unneth formula, we have 
$\vf_!\Ql \simeq \pi^{(\Bm)}_!\Ql \otimes \pi^{(\Bm)}_!\Ql$. By Theorem 6.21, 
$\pi^{(\Bm)}_!\Ql$ has a natural structure of $W\nat_{\Bm}$-module. 
Hence $\vf_!\Ql$ has a structure of $W\nat_{\Bm} \times W\nat_{\Bm}$-module. 
It follows that 
$\ST = \SH^{2d'_{\Bm}}(\s_!\Ql) \simeq \SH^{2d'_{\Bm}}(\wt\w_!(\vf_!\Ql))$ has 
a natural action of $W\nat_{\Bm} \times W\nat_{\Bm}$. 
Under the decomposition of $\ST$ in Proposition 8.3, the action of 
$W\nat_{\Bm} \times W\nat_{\Bm}$ has the following property (see the discussion 
in 4.7). For each 
$w_1, w_2 \in W\nat_{\Bm}$, 
\begin{equation*}
\tag{8.4.1}
(w_1,w_2)\cdot \ST_w = \ST_{w_1ww_2\iv}.
\end{equation*} 
\par
Let $a_0$ be the element in $\Xi$ corresponding to the $S_n$-orbit of $0 \in \Ft$, 
and $\ST_{a_0}$ be the stalk of $\ST$ at $a_0 \in \Xi$. By Proposition 8.3, we have
a decomposition 
\begin{equation*}
\tag{8.4.2}
\ST_{a_0} = \bigoplus_{w \in W\nat_{\Bm}}(\ST_w)_{a_0}, 
\end{equation*}
where $(\ST_w)_{a_0}$ is the stalk of $\ST_w$ at $a_0$. 
$W\nat_{\Bm} \times W\nat_{\Bm}$ acts on $\ST_{a_0}$ following (8.4.1).
In a similar way as in (4.5.2), one can show that $\ST_w \simeq (\w_1)_!\Ql$.
Since $\w_1\iv(a_0) = \{ 0\} \in \Ft$, 
$(\ST_w)_{a_0} \simeq H_c^0(\w_1\iv(a_0),\Ql) \simeq \Ql$. 
Thus we have proved the following result, which is the stalk version of 
Proposition 4.8.
%%%%
%%%%
\begin{prop}  %%%%  Prop. 8.5.
$\ST_{a_0}$ has a structure of $W\nat_{\Bm} \times W\nat_{\Bm}$-module, which 
coincides with the two-sided regular representation of $W\nat_{\Bm}$. 
\end{prop}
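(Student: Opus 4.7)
The plan is to simply assemble the pieces already prepared in Sections 4 and 8. The statement is the stalk version at the origin $a_0 \in \Xi$ of Proposition 4.8, and the proof strategy runs in parallel.

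First, by Proposition 8.3, $\ST \simeq \bigoplus_{w \in W^{\natural}_{\Bm}} \ST_w$, hence taking stalks at $a_0$ gives the decomposition (8.4.2). I would next show that each summand $(\ST_w)_{a_0}$ is one-dimensional. For this, I follow the argument that produced (4.5.2): the fibre $\a_w^{-1}(s)$ is a locally trivial bundle over $\SO_w$ whose fibre is of the form $(\Fn \cap w(\Fn)) \times (M_{m_1} \cap w(M_{m_1}))$ (with a $\Ft$-factor absent at the level of $\a_w$), and $\a_w$ is a locally trivial fibration of fibre dimension $d'_{\Bm}$ with irreducible fibres. Thus $R^{2d'_{\Bm}}(\a_w)_! \Ql \simeq \Ql$, and applying $R^0(\w_1)_!$, we get $\ST_w \simeq (\w_1)_! \Ql$. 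Since $\w_1$ is finite and $\w_1^{-1}(a_0) = \{0\}$, we conclude $(\ST_w)_{a_0} \simeq \Ql$.

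Next I would verify (8.4.1), which is the key structural input. The argument is the one sketched in 4.7: the $W^{\natural}_{\Bm} \times W^{\natural}_{\Bm}$-action on $\vf_! \Ql$ comes, via Künneth, from the action on each factor $\pi^{(\Bm)}_! \Ql$ supplied by Theorem 6.21. Working over the open locus $\Xi_{\sr}$ (the subregular semisimple part), one sees that the factorization $\psi_{\BI} = \eta_{\BI} \circ \xi_{\BI}$ in Section 6, combined with the natural extension of the $S_{\BI}$-action to a $W^{\natural}_{\Bm}$-action on each $\wh\SY_{\BI}$ obtained by letting each $\BZ/2\BZ$-factor act appropriately, gives that $(w_1,w_2)$ sends the summand indexed by $w$ to that indexed by $w_1 w w_2^{-1}$. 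Since $\ST$ is perfect by Lemma 8.2, the identity extends from $\Xi_{\sr}$ to all of $\Xi$, in particular to the stalk at $a_0$.

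Putting these together, $\ST_{a_0}$ is a $|W^{\natural}_{\Bm}|$-dimensional vector space, with the two $W^{\natural}_{\Bm}$-factors acting by left and right translation on the basis indexed by $W^{\natural}_{\Bm}$. This is exactly the two-sided regular representation. The only real point of care is the verification of (8.4.1); this is routine given Theorem 6.21 and the same argument used for (4.7.1), but it is the step one cannot skip. Everything else is bookkeeping with the perfect-sheaf formalism and the explicit computation $\ST_w \simeq (\w_1)_! \Ql$.
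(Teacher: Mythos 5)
Your proposal is correct and follows essentially the same route as the paper: decompose $\ST$ via Proposition 8.3, compute $(\ST_w)_{a_0}\simeq\Ql$ by the (4.5.2)-style identification $\ST_w\simeq(\w_1)_!\Ql$ and the fact that $\w_1\iv(a_0)=\{0\}$, and invoke (8.4.1) (verified on $\Xi_{\sr}$ and propagated by perfectness) to identify the resulting $W\nat_{\Bm}\times W\nat_{\Bm}$-action as the two-sided regular representation. The only minor caveat is that the fibre-dimension claim for $\a_w$ holds precisely because you are ranging over $w\in W\nat_{\Bm}$ (where $w(M_{m_1})=M_{m_1}$); for $w\notin W\nat_{\Bm}$ the $M_{m_1}\cap w(M_{m_1})$ factor drops dimension, which is exactly why those summands do not appear in Proposition 8.3.
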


\para{8.6.}
We consider the map $\pi_1^{(\Bm)} : \wt\SX_{\Bm\nilp} \to \SX_{\Bm,\nilp}$, 
and put $K_{\Bm,1} = (\pi_1^{(\Bm)})_!\Ql[d'_{\Bm}]$. 
By Lemma 7.14 (iii), $\pi_1^{(\Bm)}$ is semismall. Hence $K_{\Bm,1}$ is a 
semisimple perverse sheaf on $\SX_{\Bm,\nilp}$, and is decomposed as
\begin{equation*}
\tag{8.6.1}
K_{\Bm,\nilp} \simeq \bigoplus_{A}V_A\otimes A,
\end{equation*}
where $A$ is a simple perverse sheaf which is isomorphic to a direct 
summand of $K_{\Bm,1}$, and 
$V_A = \Hom (K_{\Bm,1}, A)$ is the multiplicity space for $A$. 
The following result is a counter part of Proposition 4.11 to the case of nilpotent 
variety (see also Proposition 7.8 in [Sh]). 
%%%%
%%%%
\begin{prop}  %%%%  Prop. 8.7
Under the notation above, put $m_A = \dim V_A$ for each direct summand $A$.
Then we have
\begin{equation*}
\sum_{A}m_A^2 = |W\nat_{\Bm}|.
\end{equation*}
\end{prop}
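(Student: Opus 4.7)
The plan is to mimic the argument of Proposition 4.11, but now working with the stalk of $\ST$ at the distinguished point $a_0 \in \Xi$ (the $S_n$-orbit of $0 \in \Ft$) rather than with its full compactly supported cohomology. The key point is that the fibre $\wt\w\iv(a_0)$ consists exactly of those $(x,v)$ in $\SX_{\Bm}$ with $x$ nilpotent, so $\wt\w\iv(a_0) = \SX_{\Bm,\nilp}$, and $K_{\Bm,1}$ will appear naturally upon restricting $\pi^{(\Bm)}_!\Ql$ to this fibre.

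First I would use $\s = \wt\w\circ\vf$ together with proper base change along $\w_1$ to identify
\begin{equation*}
\ST_{a_0} \simeq \SH^{2d'_{\Bm}}\bigl((\wt\w_!\vf_!\Ql)_{a_0}\bigr)
           \simeq H^{2d'_{\Bm}}_c\bigl(\SX_{\Bm,\nilp},\,\vf_!\Ql|_{\SX_{\Bm,\nilp}}\bigr).
\end{equation*}
The K\"unneth formula gives $\vf_!\Ql \simeq \pi^{(\Bm)}_!\Ql\otimes \pi^{(\Bm)}_!\Ql$ on $\SX_{\Bm}$, and base change for the inclusion $\SX_{\Bm,\nilp} \hra \SX_{\Bm}$ identifies $\pi^{(\Bm)}_!\Ql|_{\SX_{\Bm,\nilp}}$ with $(\pi^{(\Bm)}_1)_!\Ql$. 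Hence, up to the shift $[d'_{\Bm}]$ built into the definition of $K_{\Bm,1}$,
\begin{equation*}
\ST_{a_0} \simeq \BH^0_c\bigl(\SX_{\Bm,\nilp},\,K_{\Bm,1}\otimes K_{\Bm,1}\bigr).
\end{equation*}

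Next I would plug in the decomposition (8.6.1) and invoke Lemma 4.9, which, although stated for $X=\Fh$, holds verbatim for simple perverse sheaves on any variety (it is a direct consequence of Poincar\'e--Verdier duality). Since $\pi_1^{(\Bm)}$ is proper, $K_{\Bm,1}$ is self-dual, so $m_A = m_{D(A)}$, and
\begin{equation*}
\dim \BH^0_c(\SX_{\Bm,\nilp},K_{\Bm,1}\otimes K_{\Bm,1}) = \sum_{A,A'} m_A m_{A'}\dim\BH^0_c(\SX_{\Bm,\nilp}, A\otimes A') = \sum_A m_A^2.
\end{equation*}
Finally, Proposition 8.5 gives $\dim \ST_{a_0}=|W\nat_{\Bm}|$ (the dimension of the two-sided regular representation), and the proposition follows.

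The only delicate point is the base change step identifying $\ST_{a_0}$ as a hypercohomology on $\SX_{\Bm,\nilp}$; this needs that $\wt\w$ is well-behaved over $a_0$, which is ensured because $\w_1:\Ft\to\Xi$ is finite and $\vf$ is proper, so proper base change applies without issue. Beyond this the argument is formal, running parallel to Proposition 4.11 but localized at $a_0$ so that $K_{\Bm,1}$ (rather than $\pi_!\Ql[d_n]$) enters.
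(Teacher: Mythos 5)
Your argument is correct and is essentially the same as the paper's proof: it localizes the $W\nat_{\Bm}\times W\nat_{\Bm}$-module $\ST$ at the point $a_0$, uses proper base change and K\"unneth to rewrite $\ST_{a_0}$ as $\BH^0_c(\SX_{\Bm,\nilp}, K_{\Bm,1}\otimes K_{\Bm,1})$, and then applies Lemma 4.9 together with self-duality of $K_{\Bm,1}$ and Proposition 8.5. The only thing you spell out in more detail than the paper is the identification $\wt\w\iv(a_0)=\SX_{\Bm,\nilp}$ (which holds since $m_3=0$ forces $\Fn_s\oplus\FD_{p_2}=\Fn$, and the nilpotent elements of $\Fb$ are exactly $\Fn$), so this is the same route, merely with the base change step made explicit.
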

\begin{proof}
By 8.4, we have
\begin{align*}
\ST_{a_0} &\simeq \BH^{2d'_{\Bm}}_c(\SX_{\Bm\nilp}, 
                    \pi^{(\Bm)}_!\Ql\otimes \pi^{(\Bm)}_!\Ql) \\
          &\simeq \BH^0_c(\SX_{\Bm,\nilp}, K_{\Bm,1}\otimes K_{\Bm,1}).
\end{align*}
By applying (8.6.1), we have 
\begin{equation*}
\dim \ST_{a_0} \simeq \sum_{A,A'}(m_Am_{A'})
          \dim \BH^0_c(\SX_{\Bm,\nilp}, A\otimes A'). 
\end{equation*}
Apply Lemma 4.9 for $X = \SX_{\Bm,\nilp}$. Then 
$\BH^0_c(\SX_{\Bm,\nilp}, A\otimes A') \ne 0$ only when $D(A) = A'$, in which case
$\dim \BH^0_c(\SX_{\Bm,\nilp}, A \otimes A') = 1$. But since $K_{\Bm,1}$ is self-dual, 
$m_A = m_{D(A)}$ for each $A$. It follows that $\dim \ST_{a_0} = \sum_A m_A^2$. 
On the other hand, by Proposition 8.5, we have $\dim \ST_{a_0} = |W\nat_{\Bm}|$. 
This proves the proposition.  
\end{proof}

\para{8.8.}
We consider the map $\pi^{(\Bm)} : \wt\SX_{\Bm} \to \SX_{\Bm}$. By Theorem 6.21, 
$\pi^{(\Bm)}_!\Ql[d_{\Bm}]$ is a semisimple perverse sheaf on $\SX_{\Bm}$, 
equipped with $W\nat_{\Bm}$-action, and is decomposed as
\begin{equation*}
\tag{8.8.1}
\pi^{(\Bm)}_!\Ql[d_{\Bm}] \simeq \bigoplus_{\r \in (W\nat_{\Bm})\wg}
                            \r \otimes K_{\r},
\end{equation*} 
where $K_{\r}$ is a simple perverse sheaf on $\SX_{\Bm}$, and 
$d_{\Bm} = \dim \SX_{\Bm}$. 
More precisely, 
it is given as
$K_{\r} = \IC(\SX_{\Bm(k)}, \SL_{\r_1})[d_{\Bm(k)}]$ if 
$\r = \r_1\nat$ for $\r_1 \in S_{\Bm(k)}\wg$.    
We consider the complex $(\pi_1^{(\Bm)})_!\Ql[d'_{\Bm}]$ for the map
$\pi_1^{(\Bm)}:\wt\SX_{\Bm, \nilp} \to \SX_{\Bm,\nilp}$, 
where $d'_{\Bm} = \dim \SX_{\Bm, \nilp}$.
The following result gives the Springer correspondence for 
$W\nat_{\Bm}$. (Compare Theorem 8.9 and Corollary 8.11 with Theorem 7.12 and 
Corollary 7.14 in [Sh]). 
%%%%
%%%%
\begin{thm}[Springer correspondence for $W\nat_{\Bm}$]  %%%%  Theorem 8.9
Assume that $\Bm \in \SQ^0_{n,3}$. Then $(\pi^{(\Bm)}_1)_!\Ql[d'_{\Bm}]$ 
is a semisimple perverse sheaf on $\SX_{\Bm,\nilp}$, equipped with $W\nat_{\Bm}$-action, 
and is decomposed as 
\begin{equation*}
\tag{8.9.1}
(\pi^{(\Bm)}_1)_!\Ql[d'_{\Bm}] \simeq \bigoplus_{\r \in (W\nat_{\Bm})\wg}\r \otimes L_{\r},
\end{equation*}
where $L_{\r}$ is a simple perverse sheaf on $\SX_{\Bm,\nilp}$ such that
\begin{equation*}
\tag{8.9.2}
K_{\r}|_{\SX_{\Bm,\nilp}} \simeq L_{\r}[d_{\Bm} - d'_{\Bm}].
\end{equation*}
\end{thm}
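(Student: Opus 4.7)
The plan is to derive Theorem 8.9 by restricting the decomposition of Theorem 6.21 to the closed subvariety $\SX_{\Bm,\nilp}$, and then identifying the restricted summands via the counting formula of Proposition 8.7. The three main ingredients are proper base change, the semismallness of $\pi^{(\Bm)}_1$ from Lemma 7.14~(iii), and a $\sum_A m_A^2 = |W\nat_{\Bm}|$ dimension count that forces each isotypic component to be simple.

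First I would observe that $\wt\SX_{\Bm,\nilp} = (\pi^{(\Bm)})\iv(\SX_{\Bm,\nilp})$, so proper base change yields $(\pi^{(\Bm)}_1)_!\Ql \simeq (\pi^{(\Bm)})_!\Ql|_{\SX_{\Bm,\nilp}}$, and the $W\nat_{\Bm}$-action on $(\pi^{(\Bm)})_!\Ql$ supplied by Theorem 6.21 transfers to one on $(\pi^{(\Bm)}_1)_!\Ql$. Since $\pi^{(\Bm)}_1$ is semismall by Lemma 7.14~(iii), the complex $K_{\Bm,1} := (\pi^{(\Bm)}_1)_!\Ql[d'_{\Bm}]$ is a semisimple perverse sheaf. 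Restricting the decomposition (8.8.1) of Theorem 6.21 then gives
\begin{equation*}
K_{\Bm,1} \simeq \bigoplus_{\r \in (W\nat_{\Bm})\wg} \r \otimes L_{\r},
\qquad L_{\r} := K_{\r}|_{\SX_{\Bm,\nilp}}[d'_{\Bm} - d_{\Bm}],
\end{equation*}
and since the left-hand side is perverse, each isotypic component $\r \otimes L_{\r}$ is perverse, so each $L_{\r}$ is a (possibly zero) semisimple perverse sheaf.

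It remains to show that every $L_{\r}$ is simple and non-zero, which will identify $\r \leftrightarrow L_{\r}$ as a bijection onto the simple summands of $K_{\Bm,1}$. Comparing with the intrinsic decomposition $K_{\Bm,1} = \bigoplus_A V_A \otimes A$ from (8.6.1), each $V_A$ is a $W\nat_{\Bm}$-module of dimension $m_A$; writing $V_A = \bigoplus_{\r} \r^{n_{A,\r}}$ gives $L_{\r} = \bigoplus_A A^{n_{A,\r}}$ and $m_A = \sum_{\r} n_{A,\r}\dim\r$. Proposition~8.7, together with $\sum_{\r}(\dim \r)^2 = |W\nat_{\Bm}|$, would yield
\begin{equation*}
\sum_{\r}(\dim\r)^{2}=\sum_A m_A^{2}
= \sum_A\Bigl(\sum_{\r} n_{A,\r}\dim\r\Bigr)^{\!2}
\ge \sum_{\r}(\dim\r)^{2}\sum_A n_{A,\r}^{2},
\end{equation*}
so once we know $L_{\r} \ne 0$ (equivalently $\sum_A n_{A,\r}^2 \ge 1$) for every $\r$, every inequality is an equality. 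Equality then forces each $V_A$ to be isotypic of a single irreducible type with multiplicity $1$, and forces the resulting map $A \mapsto \r$ to be a bijection; this simultaneously produces the simplicity of each $L_{\r}$ and the decomposition (8.9.1)--(8.9.2).

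The hard part is showing $L_{\r} \ne 0$ for every $\r \in (W\nat_{\Bm})\wg$. The subtlety is that $K_{\r} = \IC(\SX_{\Bm(k)}, \SL_{\r_1})[d_{\Bm(k)}]$ has its defining local system on $\SY^0_{\Bm(k)}$, whose elements have semisimple part in $\Ft_{\sr}$ and are therefore disjoint from $\SX_{\Bm,\nilp}$, so a priori restriction to the nilpotent variety could annihilate the perverse sheaf. However $\SX_{\Bm(k),\nilp} \subseteq \SX_{\Bm(k)} \cap \SX_{\Bm,\nilp}$, and by Proposition~7.8~(i) one has $\SX_{\Bm(k),\nilp} = \ol X_{\Bla(\Bm(k))}$, so the support of $K_{\r}$ genuinely meets $\SX_{\Bm,\nilp}$ in the expected dimension. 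The cleanest way to secure non-vanishing, parallel to [Sh, Thm.~7.12], is to invoke the Ginzburg-type isomorphism (available because $\pi^{(\Bm)}_1$ is semismall) identifying $\End(K_{\Bm,1})$ with the top Borel--Moore homology of $\CZ^{(\Bm)}_1$, whose dimension equals $|W\nat_{\Bm}|$ by Lemma~7.14~(i); combined with Proposition~8.7 this forces the canonical algebra homomorphism $\Ql[W\nat_{\Bm}] \to \End(K_{\Bm,1})$ to be an isomorphism, guaranteeing that every irreducible representation $\r$ appears and hence $L_{\r} \ne 0$. The counting argument above then closes the proof.
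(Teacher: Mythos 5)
The overall skeleton of your argument matches the paper's: proper base change to get the $W\nat_{\Bm}$-action on $(\pi^{(\Bm)}_1)_!\Ql$ from Theorem 6.21, semismallness (Lemma 7.14(iii)) to get a semisimple perverse sheaf, and the dimension count $\sum_A m_A^2 = |W\nat_{\Bm}|$ from Proposition 8.7 to conclude that $\Ql[W\nat_{\Bm}] \to \End K_{\Bm,1}$ is an isomorphism provided one more fact is known. Your combinatorial rearrangement of the $\sum_A m_A^2$ count is correct (though a little more elaborate than needed: once the canonical map to $\End K_{\Bm,1}$ is injective, the equality $\dim\Ql[W\nat_{\Bm}] = \dim\End K_{\Bm,1}$ already finishes the job).

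The genuine gap is in your treatment of the non-vanishing $L_{\r}\neq 0$. You propose to deduce it by identifying $\End(K_{\Bm,1})$ with the top Borel--Moore homology of $\CZ^{(\Bm)}_1$ and then claiming that, since this has dimension $|W\nat_{\Bm}|$ by Lemma 7.14(i) ``combined with Proposition 8.7, this forces the canonical algebra homomorphism $\Ql[W\nat_{\Bm}]\to\End(K_{\Bm,1})$ to be an isomorphism.'' That inference does not follow: two semisimple algebras of equal dimension can easily admit a non-injective, non-surjective homomorphism between them. A dimension count alone, whether obtained from Proposition 8.7 or from a Ginzburg-type description, cannot give injectivity of $\a$. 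The paper closes precisely this hole using \emph{Proposition 8.5}, which you never invoke: the stalk $\ST_{a_0}$ of the perfect sheaf at the origin is the two-sided regular representation of $W\nat_{\Bm}$. Since $\ST_{a_0}\simeq\BH^0_c(\SX_{\Bm,\nilp}, K_{\Bm,1}\otimes K_{\Bm,1})$, and the decomposition $K_{\Bm,1} = \bigoplus_{\r}\r\otimes(K_{\r}|_{\SX_{\Bm,\nilp}})$ pins down the $W\nat_{\Bm}\times W\nat_{\Bm}$-module structure of $\ST_{a_0}$, the regular-representation identification forces every $\r$ to occur, hence $K_{\r}|_{\SX_{\Bm,\nilp}}\neq 0$ for all $\r$, and therefore $\a$ is injective. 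You should replace your appeal to a ``Ginzburg-type isomorphism'' with this stalk computation via Proposition 8.5; without it the injectivity step is unproved.
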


\begin{proof}
As discussed in (8.6.1), $K_{\Bm,1} = (\pi_1^{(\Bm)})_!\Ql[d'_{\Bm}]$ is 
a semisimple perverse sheaf. Since $K_{\Bm,1}$ is the restriction of 
$\pi^{(\Bm)}_!\Ql$ on $\SX_{\Bm,\nilp}$, we have a natural homomorphism 
\begin{equation*}
\a : \Ql[W\nat_{\Bm}] \simeq \End (\pi^{(\Bm)}_!\Ql) \to \End K_{\Bm,1}.
\end{equation*}
In order to prove (8.9.1), it is enough to show that $\a$ is an isomorphism. 
By Proposition 8.7, we have $\dim \End K_{\Bm,1} = |W\nat_{\Bm}|$. Thus we have
only to show that $\a$ is injective. 
Note that $\ST_{a_0} = \BH^0_c(\SX_{\Bm,\nilp}, K_{\Bm,1}\otimes K_{\Bm,1})$,
and $K_{\Bm,1}$ is decomposed as 
$K_{\Bm,1} = \bigoplus_{\r}\r\otimes (K_{\r}|_{\SX_{\Bm,\nilp}})$ by (8.8.1).
This decomposition determines the $W\nat_{\Bm} \times W\nat_{\Bm}$-module structure 
of $\ST_{a_0}$. But by Proposition 8.5, $\ST_{a_0}$ is isomorphic to 
the two-sided regular 
representation of $W\nat_{\Bm}$.  This implies, in particular, 
$K_{\r}|_{\SX_{\Bm,\nilp}} \ne 0$ for any $\r \in W\nat_{\Bm}$.  
Hence $\a$ is injective, and so (8.9.1) holds.  Now (8.9.2) follows 
by comparing (8.8.1) and (8.9.1).  The theorem is proved. 
\end{proof}

\para{8.10.}
For each $\Bm \in \SQ^0_{n,3}$, we denote by $(W\wg_{n,3})_{\Bm}$ the set 
of irreducible representations $\wh\r \in W\wg_{n,3}$ corresponding to 
$\r \in S\wg_{\Bm(k)}$ for various $0 \le k \le m_2$. 
Then we have
\begin{equation*}
\tag{8.10.1}
W\wg_{n,3} = \coprod_{\Bm \in \SQ^0_{n,3}}(W\wg_{n,3})_{\Bm}.
\end{equation*}
For each $\r \in S_{\Bm(k)}\wg$, we can construct $\r\nat \in (W\nat_{\Bm})\wg$ 
as in 6.7, 
and the map $\r \mapsto \r\nat$ gives a bijective correspondence 
\begin{equation*}
\tag{8.10.2}
\coprod_{0 \le k \le m_2}S_{\Bm(k)}\wg \simeq (W\nat_{\Bm})\wg.
\end{equation*}
It follows that the correspondence $\wh\r \lra \r \lra \r\nat$ gives 
a bijective correspondence
\begin{equation*}
\tag{8.10.3}
(W\wg_{n,3})_{\Bm} \simeq (W\nat_{\Bm})\wg, \qquad \wh\r \lra \r\nat. 
\end{equation*}
\par
We consider the map $\ol\pi_{\Bm} : \pi\iv(\SX_{\Bm}) \to \SX_{\Bm}$. 
Then by Theorem 6.10, $(\ol\pi_{\Bm})_!\Ql[d_{\Bm}]$ is a semisimple perverse 
sheaf, equipped with $W_{n,3}$-action, and is decomposed as 
\begin{equation*}
(\ol\pi_{\Bm})_!\Ql[d_{\Bm}] \simeq \bigoplus_{\wh\r \in (W\wg_{n,3})_{\Bm}}
                  \wh\r \otimes K_{\r\nat},
\end{equation*} 
where $K_{\r\nat}$ is a simple perverse sheaf on $\SX_{\Bm}$ as defined 
in (8.8.1). 
Let $\ol\pi_{\Bm,1}: \pi\iv(\SX_{\Bm,\nilp}) \to \SX_{\Bm,\nilp}$ be the restriction 
of $\ol\pi_{\Bm}$ on $\SX_{\Bm,\nilp}$. By applying (8.9.2), we see that 
$(\ol\pi_{\Bm,1})_!\Ql[d'_{\Bm}]$ is a semisimple perverse sheaf.  As a corollary to 
Theorem 8.9, we obtain the Springer correspondence for $W_{n,3}$. 

\begin{cor}[Springer correspondence for $W_{n,3}$]   %%%%   Corollary 8.11
Assume that $\Bm \in \SQ^0_{n,3}$.  Then $(\ol\pi_{\Bm,1})_!\Ql[d'_{\Bm}]$ 
is a semisimple perverse sheaf on $\SX_{\Bm,\nilp}$, equipped with $W_{n,3}$-action, 
and is decomposed as
\begin{equation*}
\tag{8.11.1}
(\ol\pi_{\Bm,1})_!\Ql[d'_{\Bm}] \simeq \bigoplus_{\wh\r \in (W\wg_{n,3})_{\Bm}}
                 \wh\r \otimes L_{\r\nat},
\end{equation*}
where $L_{\r\nat}$ is the simple perverse sheaf on $\SX_{\Bm,\nilp}$ 
as given in Theorem 8.9.  
\end{cor}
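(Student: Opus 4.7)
The plan is to derive the corollary as a direct consequence of Theorem 6.10 combined with the identification (8.9.2) supplied by Theorem 8.9. First I would record the preliminary observation that $\SX_{\Bm,\nilp}$ is closed in $\SX_{\Bm}$: by Lemma 5.5 (i) the fibre $\ol\FN_{p_2}$ decomposes as $\Ft \oplus \Fn_s \oplus \FD_{p_2}$, so a pair $(x,v) \in \SX_{\Bm}$ lies in $\SX_{\Bm,\nilp}$ precisely when $x$ is nilpotent, i.e.\ $\SX_{\Bm,\nilp} = \SX_{\Bm} \cap (\Fh\nil \times V)$. Since $\ol\pi_{\Bm,1}$ is by definition the restriction of the proper map $\ol\pi_{\Bm}$ to the preimage of this closed subset, proper base change yields an isomorphism $(\ol\pi_{\Bm,1})_!\Ql \simeq (\ol\pi_{\Bm})_!\Ql|_{\SX_{\Bm,\nilp}}$, compatible with the $W_{n,3}$-action furnished by Theorem 6.10.

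Next, I would rewrite the decomposition of Theorem 6.10 using the bijection (8.10.3), $\wh\r \lra \r\nat$, and the notation $K_{\r\nat} = \IC(\SX_{\Bm(k)}, \SL_{\r})[d_{\Bm(k)}]$ from (8.8.1), obtaining
\begin{equation*}
(\ol\pi_{\Bm})_!\Ql[d_{\Bm}] \simeq \bigoplus_{\wh\r \in (W\wg_{n,3})_{\Bm}} \wh\r \otimes K_{\r\nat}.
\end{equation*}
Restricting to $\SX_{\Bm,\nilp}$ and invoking (8.9.2) term-by-term gives $K_{\r\nat}|_{\SX_{\Bm,\nilp}} \simeq L_{\r\nat}[d_{\Bm} - d'_{\Bm}]$; after a global shift by $d'_{\Bm} - d_{\Bm}$ this produces the desired formula
\begin{equation*}
(\ol\pi_{\Bm,1})_!\Ql[d'_{\Bm}] \simeq \bigoplus_{\wh\r \in (W\wg_{n,3})_{\Bm}} \wh\r \otimes L_{\r\nat}.
\end{equation*}
The right-hand side is a semisimple perverse sheaf, since each $L_{\r\nat}$ is simple perverse by Theorem 8.9, and the $W_{n,3}$-action is the one induced by restriction from Theorem 6.10.

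There is essentially no substantive obstacle here --- the statement is a formal consequence of the two preceding theorems. The only compatibility to verify is that the $W_{n,3}$-isotypic component labelled $\wh\r$ in the restriction of Theorem 6.10 pairs correctly with the $W\nat_{\Bm}$-isotypic component labelled $\r\nat$ in Theorem 8.9, but this is precisely what the bijection $\wh\r \lra \r\nat$ of (8.10.3) was designed to encode, together with the compatibility of the labelings $K_{\r\nat}$ on $\SX_{\Bm}$ and $L_{\r\nat}$ on $\SX_{\Bm,\nilp}$ given (up to shift) by (8.9.2).
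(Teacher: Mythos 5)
Your proposal is correct and follows essentially the same route as the paper's own derivation in 8.10: rewrite the decomposition of Theorem 6.10 via the bijection (8.10.3), restrict to $\SX_{\Bm,\nilp}$ using proper base change, and apply (8.9.2) term-by-term to replace $K_{\r\nat}$ by $L_{\r\nat}$ up to the appropriate shift. The only addition is your explicit check that $\SX_{\Bm,\nilp}$ is closed in $\SX_{\Bm}$, which the paper leaves implicit; that observation is correct and harmless.
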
 

%%%%
%%%%
\par\bigskip
\section {Determination of the Springer correspondence}

\para{9.1.}
In this section, we shall determine $L_{\r}$ appearing in the Springer 
correspondence explicitly. 
Let $\Bm = (m_1, m_2, 0) \in \SQ^0_{n,3}$.  We define a variety $\CG_{\Bm}$ by 
\begin{align*}
\CG_{\Bm} = \{ (x,&v, W_1)  \mid (x,v) \in \SX_{\Bm}, 
                   W_1 \text{ : isotropic, } \\
            &\dim W_1 = m_1, x(W_1) \subset W_1, v \in W_1 \}.
\end{align*}
Let $\z: \CG_{\Bm} \to \SX_{\Bm}$ be the projection to the first two factors. 
Then the map $\pi^{(\Bm)} : \wt\SX_{\Bm} \to \SX_{\Bm}$ is factored as
\begin{equation*}
\tag{9.1.1}
\begin{CD}
\pi^{(\Bm)} : \wt\SX_{\Bm} @>\vf>>  \CG_{\Bm} @>\z>> \SX_{\Bm}, 
\end{CD}
\end{equation*}
where $\vf$ is defined by $(x,v,gB) \mapsto (x,v, gM_{m_1})$. 
$\vf$ is surjective since there exists an $x$-stable maximal isotropic subspace 
containing $W_1$. 
$\z$ is also surjective since $\pi^{(\Bm)}$ is surjective. 
Since $\Bm \in \SQ^0_{n,3}$, we have 
$\dim \wt\SX_{\Bm} = \dim \SX_{\Bm}$. 
It follows that $\dim \CG_{\Bm} = \dim \SX_{\Bm}$. 
\par
In the case where $m_1 = 0$, $\pi^{(\Bm)} : \wt\SX_{\Bm} \to \SX_{\Bm}$ 
coincides with the map $\pi : \wt X \to X$ (since $m_2 = n$), and 
(9.1.1) can be written as $\pi : \wt X \to \CG = X = \Fh$, where 
$\pi = \vf, \z = \id$.  
\par
By modifying the definition of $\CK_{\Bm}$ in 6.14, we define a variety 
$\CH_{\Bm}$ by 

\begin{align*}
\CH_{\Bm} = \{ (x,v, &W_1, \f_1, \f_2) \mid (x,v, W_1) \in \CG_{\Bm}, \\
                 &\f_1 : W_1 \isom V_0, \f_2 : W_1^{\perp}/W_1 \isom \ol V_0
                      \text{ (symplectic isom.)} \},
\end{align*}
where $V_0 = M_{m_1}$ and $\ol V_0 = V_0^{\perp}/V_0$. 
We also define a variety $\wt\CZ_{\Bm}$ by
\begin{align*}
\wt\CZ_{\Bm} = \{(x,v, &gB, \f_1, \f_2) \mid (x,v,gB) \in \wt\SX_{\Bm}, \\
                 &\f_1 : gM_{m_1} \isom V_0, 
                 \f_2 : (gM_{m_1})^{\perp}/gM_{m_1} \isom \ol V_0 \}.  
\end{align*}

As in 6.14, we consider $G_1 = GL(V_0), H_2 = Sp(\ol V_0)$, and $\Fh_2 = \Lie H_2$.
The maps $\pi^2: \wt X' \to X' = \Fh_2$, 
$\pi^1: \wt\Fg_1 \to \Fg_1$ are given as in 6.14.   
We have the following commutative diagram

\begin{equation*}
\tag{9.1.3}
\begin{CD}
\wt\Fg_1 \times \wt X' @<\wt\s<< \wt\CZ_{\Bm} @>\wt q>> \wt\SX_{\Bm}  \\
       @V\pi^1 \times \pi^2VV      @VV\wt\vf V              @VV\vf V     \\
    \Fg_1 \times X'  @<\s<<  \CH_{\Bm}  @>q>> \CG_{\Bm}   \\
         @.        @.                  @VV\z V    \\
                        @.              @.     \SX_{\Bm},    
\end{CD}
\end{equation*}
where morphisms are defined as 
\begin{align*}
q: &(x,v,W_1, \f_1, \f_2) \mapsto (x,v, W_1), \\
\s : &(x,v, W_1, \f_1, \f_2) \mapsto (\f_1(x|_{W_1})\f_1\iv, 
                                       \f_2(x|_{W_1^{\perp}/W_1})\f_2\iv), \\
\wt\vf : &(x,v, gB, \f_1, \f_2) \mapsto (x, v, (gM_{m_1}), \f_1, \f_2).  
\end{align*}
$\wt\s, \wt q$ are defined naturally. 
\par
One can check that both squares are cartesian squares.  Moreover, it is 
easy to see that
\par\medskip\noindent
(9.1.4) \ $q$ is a principal bundle with fibre isomorphic to $G_1 \times H_2$, and 
$\s$ is a locally trivial fibration with smooth connected fibre of 
dimension  $\dim H + m_1$. 

\para{9.2.}
For a fixed $k$, we consider the variety 
$\wt\SY^+_{\Bm(k)} = (\psi^{(\Bm)})\iv(\SY^0_{\Bm(k)})$ as in 6.2, and 
let $\CG_{\Bm(k),\sr} = \z\iv(\SY^0_{\Bm(k)})$ be the locally closed subvariety 
of $\CG_{\Bm}$.  Here the varieties $Y'^0_k, \wt Y'^+_k$ are defined similarly 
as in Section 2 by replacing $X$ by $X'$. 
As the restriction of (9.1.3), we have the following commutative diagram
\begin{equation*}
\tag{9.2.1}
\begin{CD}
\wt\Fg_{1,\rg} \times \wt Y_k @<<<  \wt\CZ^+_{\Bm(k)} @>>> \wt\SY^+_{\Bm(k)} \\
      @VVV                            @VVV                  @VV\vf_0V   \\
\Fg_{1,\rg} \times Y^0_k    @<<<  \CH_{\Bm(k), \sr} @>>>  \CG_{\Bm(k), \sr}  \\ 
       @.                   @ .                   @VV\z_0V         \\
                            @ .             @ .          \SY^0_{\Bm(k)}, 
\end{CD}
\end{equation*}
where $\CH_{\Bm(k), \sr} = q\iv(\CG_{\Bm(k), \sr})$ and 
$\wt\CZ^+_{\Bm(k)} = \wt q\iv(\wt \SY^+_{\Bm(k)})$, and $\vf_0, \z_0$ 
are restrictions of $\vf, \z$, respectively.  
The following result can be proved in a similar way as [Sh, (8.2.4)].
\par\medskip
\noindent
(9.2.2) \  The map $\z_0$ gives an isomorphism 
$\CG_{\Bm(k), \sr} \simeq \SY^0_{\Bm(k)}$. 
\par\medskip

Take $\r \in (W\nat_{\Bm})\wg$.  Then by (8.10.2), there exists an integer 
$k$ and $\r_0 \in S_{\Bm(k)}\wg$ such that $\r = \r_0\nat$. Thus $K_{\r}$ in 
(8.8.1) is given by $K_{\r} = \IC(\SX_{\Bm(k)}, \SL_{\r_0})[d_{\Bm(k)}]$, 
where $\SL_{\r_0}$ is a simple local system on $\SY^0_{\Bm(k)}$. 
By (9.2.2), we can regard $\SL_{\r_0}$ as a simple local system on 
$\CG_{\Bm(k), \sr}$. 
We put $A_{\r} = \IC(\ol\CG_{\Bm(k),\sr}, \SL_{\r_0})[d_{\Bm(k)}]$.  Then  
$A_{\r}$ is an $H$-equivariant simple perverse sheaf on $\ol\CG_{\Bm(k),\sr}$, 
which we regard as a perverse sheaf on $\CG_{\Bm}$ by extension by zero. 
\par
We show the following result.
\begin{prop}  %%%%  Prop. 9.3
Under the setting in 9.2, we have
\begin{enumerate}
\item 
$\vf_!\Ql[d_{\Bm}]$ is a semisimple perverse sheaf, equipped with $W\nat_{\Bm}$-action, 
and is decomposed as 
\begin{equation*}
\tag{9.3.1}
\vf_!\Ql[d_{\Bm}] \simeq \bigoplus_{\r \in (W\nat_{\Bm})\wg}\r \otimes A_{\r}.
\end{equation*}
\item
$\z_!A_{\r} \simeq K_{\r}$. 
\end{enumerate}
\end{prop}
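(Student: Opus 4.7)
The plan is to deduce (i) from smooth base change applied to the cartesian squares of (9.1.3), combined with the known decompositions of $(\pi^1)_!\Ql$ and $(\pi^2)_!\Ql$; (ii) then follows by pushing (9.3.1) forward under $\z$ and comparing with Theorem 6.21.

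For (i), both squares in (9.1.3) are cartesian, so proper base change yields
\[
q^*\vf_!\Ql \simeq \wt\vf_!\Ql \simeq \s^*\bigl((\pi^1)_!\Ql \boxtimes (\pi^2)_!\Ql\bigr).
\]
Now (5.14.1) gives $(\pi^1)_!\Ql[\dim \Fg_1]$ as a semisimple perverse sheaf on $\Fg_1$ with $S_{m_1}$-action, and Theorem 5.7 applied to $H_2 = Sp(\ol V_0)$ gives $(\pi^2)_!\Ql[\dim \Fh_2]$ as a semisimple perverse sheaf on $X' = \Fh_2$ with $W_{m_2}$-action. Their external tensor product is therefore a semisimple perverse sheaf on $\Fg_1 \times X'$ with $(S_{m_1}\times W_{m_2})$-action, which via the identification $W\nat_{\Bm} \simeq S_{m_1} \times W_{m_2}$ of 6.7 is a $W\nat_{\Bm}$-equivariant perverse sheaf. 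Since $q$ is a principal $(G_1\times H_2)$-bundle of relative dimension $\b_2 = \dim(G_1\times H_2)$ and $\s$ is a locally trivial fibration with smooth connected fibres of dimension $\b_1 = \dim H + m_1$, both $q^*[\b_2]$ and $\s^*[\b_1]$ are exact and preserve simple perverse sheaves, and the elementary dimension check $d_{\Bm} + \b_2 = \dim\Fg_1 + \dim\Fh_2 + \b_1$ (using $n=m_1+m_2$) confirms the shifts match. Faithfully flat descent along $q$ therefore promotes $\vf_!\Ql[d_{\Bm}]$ to a semisimple perverse sheaf on $\CG_{\Bm}$ equipped with $W\nat_{\Bm}$-action.

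To identify the summands with the $A_\r$, I would restrict to each open stratum $\CG_{\Bm(k),\sr}$. By (9.2.2) and the restricted diagram (9.2.1), the descended local system corresponding to $\r=\r_0\nat$ for $\r_0=\r_1\boxtimes\r_2 \in S_{\Bm(k)}\wg$ is pulled back from the same finite Galois covering $\eta_{\BI}$ that defines $\SL_{\r_0}$ in Section 6, so under $\z_0$ it matches $\SL_{\r_0}$. Since each simple summand is determined by its restriction to the appropriate open stratum, it must be the $\IC$-extension $A_\r = \IC(\ol\CG_{\Bm(k),\sr},\SL_{\r_0})[d_{\Bm(k)}]$, giving (9.3.1).

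For (ii), applying the proper pushforward $\z_!$ to (9.3.1) yields
\[
\pi^{(\Bm)}_!\Ql[d_{\Bm}] \;\simeq\; \bigoplus_{\r\in(W\nat_{\Bm})\wg}\r\otimes \z_!A_\r,
\]
while Theorem 6.21 (i.e.\ (8.8.1)) gives $\pi^{(\Bm)}_!\Ql[d_{\Bm}]\simeq\bigoplus_\r \r\otimes K_\r$ as $W\nat_{\Bm}$-equivariant complexes. The main obstacle in the whole argument is verifying that these two $W\nat_{\Bm}$-actions on $\pi^{(\Bm)}_!\Ql[d_{\Bm}]$ coincide (and, likewise, pinning down the action in the last paragraph of step~(i)). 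Both actions ultimately originate on the open dense subset $\SY^0_{\Bm}$ from the same Galois covering $\eta_{\BI(\Bm)}$ with group $S_{\Bm}$ (tensored with the trivial Springer representation on $H^{\bullet}(\BP_1^{m_3})$ since $m_3=0$), so they must agree there, and by semisimplicity together with the extension principle used in Proposition 2.14 they agree globally. Taking isotypic components then gives $\z_!A_\r \simeq K_\r$, completing (ii).
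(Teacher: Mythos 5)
Your proof is correct and follows essentially the same route as the paper: base change along the cartesian squares of (9.1.3), the decompositions (5.14.1) and Theorem 5.7 for $(\pi^1)_!\Ql$ and $(\pi^2)_!\Ql$, descent along $q$ and $\s$ (the paper phrases this as producing $\wt A_\r$ with $q^*\wt A_\r[\b_2]\simeq\s^*(A_{\r_1}\boxtimes A_{\r'})[\b_1]$, then matching it to $A_\r$ via the restricted diagram (9.2.1)), and finally applying $\z_!$ and comparing with (8.8.1). Your discussion of matching the two $W\nat_{\Bm}$-actions via their common origin on $\SY^0_{\Bm}$ (Galois cover $\eta_{\BI(\Bm)}$ with group $S_{\Bm}$, extended trivially on the $(\BZ/2\BZ)^{m_2}$-part since $m_3=0$) is in fact more explicit than the paper's "one can show" at the corresponding step.
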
 

\begin{proof}
By (5.14.1), we can write as 
\begin{equation*}
\tag{9.3.2}
(\pi^1)_!\Ql[\dim \Fg_1] \simeq \bigoplus_{\r_1 \in S_{m_1}\wg}
                           \r_1 \otimes A_{\r_1}
\end{equation*}
where $A_{\r_1} = IC(\Fg_1, \SL^1_{\r_1})[\dim \Fg_1]$ is a simple perverse 
sheaf on $\Fg_1$.
On the other hand, by Theorem 5.7, we can write as 

\begin{equation*}
\tag{9.3.3}
(\pi^2)_!\Ql[\dim \Fh_2]\simeq 
       \bigoplus_{\r' \in W_{m_2, 2}\wg} \r'\otimes A_{\r'},   
\end{equation*}
where $A_{\r'} = \IC(X'_k, \SL^2_{\r_2})[d_k]$ for some $k$ and  
$\r_2 \in (S_k \times S_{m_2-k})\wg$ such that $\r' = \wh\r_2$, which is a simple 
perverse sheaf on $X'$.    
By applying a similar argument as in 6.14 to the diagram (9.1.3), together with 
(9.1.4), one can find an $H$-equivariant simple perverse sheaf $\wt A_{\r}$
on $\CG_{\Bm}$ such that 
\begin{equation*}
\tag{9.3.4}
q^*\wt A_{\r}[\b_2] \simeq \s^*(A_{\r_1} \boxtimes A_{\r'})[\b_1], 
\end{equation*}
where $\b_1 = \dim H + m_1$, and $\b_2 = \dim G_1 + \dim H_2$. 
Moreover, $\r \in W\nat_{\Bm}$ is given by 
$\r = \r_1 \boxtimes \r' \in (S_{m_1}\times W_{m_2,2})\wg$.
By using a similar argument as in [Sh, 8.2], based on the diagram (9.2.1), 
one can show that the restriction of $\wt A_{\r}$ on $\CG_{\Bm(k),\sr}$ 
coincides with $\SL_{\r_0}$.  Hence we have  $\wt A_{\r} = A_{\r}$.    
\par
Put $K_1 = (\pi^1)_!\Ql[\dim \Fg_1], K_2 = (\pi^2)_!\Ql[\dim \Fh_2]$, and also 
put $K = \vf_!\Ql[d_{\Bm}]$. Since both squares in (9.1.3) are cartesian, we have
\begin{equation*}
q^*K[\b_2] \simeq \s^*(K_1\boxtimes K_2)[\b_1].
\end{equation*} 
In particular, $K$ is a semisimple perverse sheaf. 
It follows from the discussion based on the diagram (9.2.1), we see that 
$K$ has a natural action of $W\nat_{\Bm}$.  Then by using 
(9.3.2), (9.3.3) and (9.3.4), we obtain  (9.3.1).  This proves (i).
\par
Next we show (ii).  Since $\z$ is proper, $\z_!A_{\r}$ is a semisimple complex
on $\SX_{\Bm}$.  Since $\z_!K = \pi^{(\Bm)}_!\Ql[d_{\Bm}]$ is a semisimple 
perverse sheaf, $\z_!A_{\r}$ is also a semisimple perverse sheaf by (i). 
By applying $\z_!$ on both sides of (9.3.1), we have 
\begin{equation*}
\tag{9.3.5}
\pi^{(\Bm)}_!\Ql[d_{\Bm}] \simeq \bigoplus_{\r \in (W\nat_{\Bm})\wg}
                    \r \otimes \z_!A_{\r}.
\end{equation*}
By using the diagram (9.2.1), one can show that the $W\nat_{\Bm}$-module structure 
of $\pi^{(\Bm)}_!\Ql[d_{\Bm}]$ induced from $\z_!$ coincides with the 
$W\nat_{\Bm}$-structure given in the formula (8.8.1). 
Thus by comparing (9.3.5) with (8.8.1), we obtain (ii). 
The proposition is proved.
\end{proof}

\para{9.4.}
For each $\Bm \in \SQ^0_{n,3}$, put $\CG_{\Bm, \nilp} = \z\iv(\SX_{\Bm, \nilp})$. 
Then the map $\pi_1^{(\Bm)}$ is factored as 
\begin{equation*}
\begin{CD}
\pi_1^{(\Bm)} : \wt\SX_{\Bm,\nilp} @>\vf_1>> \CG_{\Bm, \nilp} @>\z_1>> \SX_{\Bm,\nilp},
\end{CD}
\end{equation*}
where $\vf_1, \z_1$ are restrictions of $\vf, \z$. Since $\pi^{(\Bm)}$ is surjective, 
$\vf_1$ is surjective.  Put $\CH_{\Bm, \nilp} = q\iv(\CG_{\Bm,\nilp})$.  The inclusion 
map $\CG_{\Bm,\nilp} \hra \CG_{\Bm}$ is compatible with the diagram (9.1.3), 
and we have a commutative diagram

\begin{equation*}
\tag{9.4.1}
\begin{CD}
\Fg_1 \times X'  @<\s<< \CH_{\Bm} @>q>>  \CG_{\Bm} \\
       @AAA              @AAA           @AAA    \\
(\Fg_1)\nil \times X'\nil  @<\s_1<<    \CH_{\Bm,\nilp} @>q_1>>  \CG_{\Bm,\nilp},
\end{CD}
\end{equation*}
\par\medskip\noindent
where $\s_1, q_1$ are restrictions of $\s, q$, respectively, and vertical maps 
are natural inclusions. A similar property as (9.1.4) still holds for $\s_1, q_1$,
and both squares are cartesian squares.
\par
For each $\Bla \in \SP(\Bm(k))$, we define a subset $\CG_{\Bla}$ of 
$\CG_{\Bm,\nilp}$ as follows. Write $\Bla = (\la^{(1)}, \la^{(2)}, \la^{(3)})$, 
where $|\la^{(1)}| = m_1, |\la^{(2)}| = k, |\la^{(3)}| = m_2 - k$. 
Let $\SO'_1 = \SO_{\la^{(1)}}$ be the $G_1$-orbit in $(\Fg_1)\nil$ and 
$\SO_2 = \SO_{(\la^{(2)},\la^{(3)})}$ be the $H_2$-orbit in $X'\nil = (\Fh_2)\nil$
(see the notation in 7.2). 
Put $\CG_{\Bla} = q_1(\s_1\iv(\SO'_1 \times \SO_2))$. 
Then $\CG_{\Bla}$ is an $H$-stable, irreducible smooth subvariety of 
$\CG_{\Bm, \nilp}$. 
\par
Let $\ol\CG_{\Bla}$ be the closure of $\CG_{\Bla}$ in $\CG_{\Bm,\nilp}$.
Recall the map $\pi_{\Bla} : \wt X_{\Bla} \to \ol X_{\Bla}$ defined in 7.4, 
and let $\wt X^0_{\Bla}$ be as in 7.11.
It follows from the construction that $\wt X_{\Bla}$ is a closed subset of 
$\CG_{\Bm,\nilp}$.  We show a lemma.

\begin{lem}  %%%%%  Lemma 9.5
\begin{enumerate}
\item  
$\ol\CG_{\Bla}$ coincides with $\wt X_{\Bla}$. In particular, 
$\wt X^0_{\Bla}$ is an open dense subset of $\ol\CG_{\Bla}$. 
\item 
$\z_1(\ol\CG_{\Bla}) = \ol X_{\Bla}$, and $\z_1\iv(X^0_{\Bla}) = \wt X^0_{\Bla}$.
Hence the restriction of $\z_1$ on $\z_1\iv(X^0_{\Bla})$ gives an isomorphism 
$\z_1\iv(X^0_{\Bla}) \isom X^0_{\Bla}$. 
\end{enumerate}
\end{lem}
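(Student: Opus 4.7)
The plan is to realize $\wt X_{\Bla}$ as a closed subvariety of $\CG_{\Bm,\nilp}$ via the $H$-equivariant map
\begin{equation*}
g*(x',v') \mapsto (gx', gv', gV_0), \qquad V_0 = M_{m_1},
\end{equation*}
and then to identify $\wt X^0_{\Bla}$ as a common open dense subset of $\wt X_{\Bla}$ and $\CG_{\Bla}$ to deduce (i). Well-definedness is immediate: $x' \in \Fp\nil$ preserves $V_0$, $v' \in V_0$, and any nilpotent in $\Fp$ can be $L$-conjugated into $\Fn$ while keeping $v'$ in $M_{m_1}$, so $(x',v') \in \SX_{\Bm,\nilp}$. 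Injectivity follows from the $H\times^P$ construction, and one checks that $\CG_{\Bm,\nilp} \simeq H\times^P(\Fp\nil \times V_0)$, under which $\wt X_{\Bla}$ corresponds to the closed subvariety cut out by $(\ol\SO_1 + \ol\SO_2) + \Fn_P$; hence the embedding is closed.

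For (i), the first step would be to verify $\wt X^0_{\Bla} \subset \CG_{\Bla}$ by unwinding definitions: the condition $(x'|_{V_0}, v') \in \SO_1$ forces $x'|_{V_0} \in \SO'_1$, and together with $x'|_{\ol V_0} \in \SO_2$ this matches the fiber condition defining $\CG_{\Bla} = q_1(\s_1\iv(\SO'_1 \times \SO_2))$. Next comes the dimension match: using that $q_1$ is a $G_1\times H_2$ principal bundle and $\s_1$ is locally trivial with fibre of dimension $\dim H + m_1$ (by (9.1.4)), one computes
\begin{equation*}
\dim \CG_{\Bla} = \dim \SO'_1 + \dim \SO_2 + \dim H + m_1 - \dim G_1 - \dim H_2,
\end{equation*}
which, via $\dim H - \dim L = 2\dim U_P$ and $\dim \SO_1 = \dim \SO'_1 + m_1$, simplifies to $\dim \SO_1 + \dim \SO_2 + 2\dim U_P = \dim \wt X_{\Bla}$ by Lemma 7.5(ii). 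Since $\CG_{\Bla}$ is irreducible (as the $q_1$-image of a locally trivial fibration over the irreducible $\SO'_1 \times \SO_2$) and $\wt X^0_{\Bla}\subset \CG_{\Bla}$ attains maximal dimension, $\wt X^0_{\Bla}$ is open dense in $\CG_{\Bla}$; combined with the openness and density of $\wt X^0_{\Bla}$ in $\wt X_{\Bla}$ (noted in 7.11), this forces $\ol\CG_{\Bla} = \wt X_{\Bla}$.

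For (ii), the equality $\z_1(\ol\CG_{\Bla}) = \ol X_{\Bla}$ is immediate from (i) together with $\z_1|_{\wt X_{\Bla}} = \pi_{\Bla}$ and Lemma 7.5(i). The main obstacle will be the reverse inclusion $\z_1\iv(X^0_{\Bla}) \subset \wt X^0_{\Bla}$: given $(x,v,W_1) \in \CG_{\Bm,\nilp}$ with $(x,v) \in X^0_{\Bla}$, one must show that $W_1$ is forced. The key observation is that for any $(x,v) \in X_{\Bla}$, writing $(x,v) = g(x',v')$ with $(x',v') \in \SM_{\Bla}$, the cyclicity $(x'|_{V_0},v') \in \SO_1$ yields $\Bk[x'|_{V_0}]v' = V_0$; since $x'$ stabilizes $V_0$ we obtain $\Bk[x']v' = \Bk[x'|_{V_0}]v' = V_0$, whence $\Bk[x]v = gV_0$ has dimension exactly $m_1$. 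Because $W_1$ is $x$-stable of dimension $m_1$ containing $v$, the inclusion $\Bk[x]v \subset W_1$ must be an equality, so $W_1 = \Bk[x]v$ is uniquely determined by $(x,v)$, and it coincides with the subspace produced by the inverse of $\pi^0_{\Bla}: \wt X^0_{\Bla} \isom X^0_{\Bla}$ of Lemma 7.12; hence $(x,v,W_1) \in \wt X^0_{\Bla}$. The final isomorphism assertion then follows directly from Lemma 7.12.
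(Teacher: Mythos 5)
Your proof of part (i) follows essentially the same route as the paper's: realize $\wt X_{\Bla}$ inside $\CG_{\Bm,\nilp}\simeq H\times^P(\Fp\nil\times V_0)$ as a closed subvariety, verify $\wt X^0_{\Bla}\subset\CG_{\Bla}$ from the definitions, match dimensions using (9.1.4) and Lemma 7.5(ii), and conclude by irreducibility and closedness of $\wt X_{\Bla}$.

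For part (ii), there is a genuine gap in your argument for $\z_1\iv(X^0_{\Bla})\subset\wt X^0_{\Bla}$. You assert that ``the cyclicity $(x'|_{V_0},v')\in\SO_1$ yields $\Bk[x'|_{V_0}]v'=V_0$,'' but this is not what membership in $\SO_1$ says. By 7.3, the orbit $\SO_1=\OO_{(\la^{(1)},\emptyset)}$ in the Achar--Henderson parametrization is characterized by $E^{x'|_{V_0}}v'=V_0$, where $E^{x'|_{V_0}}$ is the full \emph{centralizer algebra} of $x'|_{V_0}$ in $\End(V_0)$, not the subalgebra $\Bk[x'|_{V_0}]$ generated by $x'|_{V_0}$. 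These agree only when $\la^{(1)}$ has a single part. If $\la^{(1)}$ has more than one part one has $\dim\Bk[x'|_{V_0}]v'<m_1$ — in the extreme case $\la^{(1)}=(1^{m_1})$ with $m_1>1$, $x'|_{V_0}=0$ and $\Bk[x'|_{V_0}]v'=\Bk v'$ is one-dimensional. Hence the step ``$\Bk[x]v=gV_0$ has dimension exactly $m_1$,'' and with it the claimed uniqueness of the $x$-stable isotropic $W_1\ni v$ via $\Bk[x]v\subset W_1$, fails. The paper does not use any cyclicity of $v$; it deduces (ii) from the identification $\z_1|_{\ol\CG_{\Bla}}=\pi_{\Bla}$ and Lemma 7.12 (which rests on Proposition 7.10 and is a statement about the Jordan types of $(x|_{W_1},x|_{W_1^\perp/W_1})$, not about the $\Bk[x]$-submodule generated by $v$). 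A correct completion of your argument would have to be rebuilt along those lines.
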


\begin{proof}
It follows from the construction that $\wt X^0_{\Bla} \subset \CG_{\Bla}$. 
Hence $\wt X_{\Bla} \subset \ol\CG_{\Bla}$. 
By Lemma 7.5, $\dim \wt X_{\Bla} = 2\dim  U_P + \dim \SO_1 + \dim \SO_2$. 
On the other hand, 
\begin{align*}
\dim \CG_{\Bla} &= \dim \SO'_1 + \dim \SO_2 + \b_1 - \b_2  \\
                &= 2\dim U_P + \dim \SO_1 + \dim \SO_2.
\end{align*}            
(Here $\b_1, \b_2$ are as in (9.3.4), and $\dim \SO_1 = \dim \SO_1' + m_1$
by (7.3.1).)
Since $\wt X_{\Bla}$ is irreducible and closed, we have $\wt X_{\Bla} = \ol\CG_{\Bla}$. 
This proves (i).  Then the restriction of $\z_1$ on $\ol\CG_{\Bla}$ coincides with 
the map $\pi_{\Bla} : \wt X_{\Bla} \to \ol X_{\Bla}$. Thus (ii) follows from Lemma 7.12.
The lemma is proved.
\end{proof}

\para{9.6.}
Recall the set $\wt\SP(\Bm)$ in (7.7.1) for each $\Bm \in \SQ^0_{n,3}$. 
By (8.10.2), the set $(W\nat_{\Bm})\wg$ is parametrized by $\wt\SP(\Bm)$. 
We denote by $\r_{\Bla}\nat$ the irreducible representation of $W\nat_{\Bm}$ 
corresponding to $\Bla \in \wt\SP(\Bm)$. 
On the other hand, we denote by $\wh\r_{\Bla}$ the irreducible representation of 
$W_{n,3}$ belonging to $(W\wg_{n,3})_{\Bm}$ under the correspondence (8.10.3). 
By (8.10.1), we have a parametrization 
\begin{equation*}
W\wg_{n,3} \simeq \coprod_{\Bm \in \SQ^0_{n,3}}\wt\SP(\Bm).
\end{equation*}
The following result determines the Springer correspondence explicitly
(compare with [Sh, Thm. 8.7]). 
%%%%
%%%%
\begin{thm}   Assume that $\Bm \in \SQ^0_{n,3}$.   
\begin{enumerate}
\item 
Let $L_{\r}$ be as in Theorem 8.9.  Assume that 
$\r = \r\nat_{\Bla} \in (W\nat_{\Bm})\wg$ for $\Bla \in \wt\SP(\Bm)$. Then 
we have 
\begin{equation*}
\tag{9.7.1}
L_{\r} \simeq \IC(\ol X_{\Bla}, \Ql)[\dim X_{\Bla}]. 
\end{equation*} 
\item 
$($Springer correspondence for $W\nat_{\Bm}$$)$ 
\begin{equation*}
(\pi^{(\Bm)}_1)_!\Ql[d'_{\Bm}] \simeq \bigoplus_{\Bla \in \wt\SP(\Bm)}
                   \r\nat_{\Bla}\otimes \IC(\ol X_{\Bla}, \Ql)[\dim X_{\Bla}].
\end{equation*}
\item
$($Springer correspondence for $W_{n,3}$$)$
\begin{equation*}
(\ol\pi_{\Bm})_!\Ql[d'_{\Bm}] \simeq \bigoplus_{\Bla \in \wt\SP(\Bm)}
                  \wh\r_{\Bla} \otimes \IC(\ol X_{\Bla}, \Ql)[\dim X_{\Bla}]. 
\end{equation*}
\end{enumerate}
\end{thm}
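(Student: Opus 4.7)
The plan is to prove statement (i) first; statements (ii) and (iii) then follow immediately. For (ii), substitute (9.7.1) into Theorem 8.9 and use the parametrization (8.10.2) in the form $\r = \r\nat_\Bla$. For (iii), combine (ii) with the bijection (8.10.3) $\wh\r_\Bla \leftrightarrow \r\nat_\Bla$ and with Corollary 8.11, which expresses $(\ol\pi_{\Bm,1})_!\Ql[d'_\Bm]$ via the same summands $L_{\r\nat_\Bla}$ but with $\wh\r_\Bla$ replacing $\r\nat_\Bla$ as multiplicity spaces.

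For (i), I would begin with Proposition 9.3(ii), which identifies $K_\r \simeq \z_!A_\r$ on $\SX_\Bm$. Observing that $\CG_{\Bm,\nilp} = \z\iv(\SX_{\Bm,\nilp})$, the inclusions fit into a Cartesian square, and proper base change yields $K_\r|_{\SX_{\Bm,\nilp}} \simeq (\z_1)_!(A_\r|_{\CG_{\Bm,\nilp}})$. Combined with (8.9.2), this gives the basic identity
\begin{equation*}
L_\r[d_\Bm - d'_\Bm] \simeq (\z_1)_!\bigl(A_\r|_{\CG_{\Bm,\nilp}}\bigr).
\end{equation*}
Since $\pi^{(\Bm)}_1$ is semismall by Lemma 7.14(iii) and $(\pi^{(\Bm)}_1)_!\Ql[d'_\Bm]$ is already decomposed in Theorem 8.9 into simple perverse sheaves $L_\r$, each $L_\r$ is $H$-equivariant simple with support equal to the closure of some stratum in the partition $\SX_{\Bm,\nilp} = \coprod_\Bmu X_\Bmu$ of Lemma 7.5(iii).

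To identify the correct stratum, I would use Lemma 9.5: the closure $\ol\CG_\Bla$ equals the resolution $\wt X_\Bla$ from (7.4.2), the open subset $\wt X^0_\Bla$ is dense in $\ol\CG_\Bla$, and $\z_1$ restricts to an isomorphism $\wt X^0_\Bla \isom X^0_\Bla$. Thus, restricting the displayed identity to $X^0_\Bla$ gives $L_\r|_{X^0_\Bla}$ as (a shift of) $A_\r|_{\wt X^0_\Bla}$. One must then show that this restriction is non-zero precisely for the $\Bla$ corresponding to $\r = \r\nat_\Bla$ under (8.10.2), and that when non-zero it is the constant sheaf $\Ql$ of rank one. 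The non-vanishing and dimension matching is verified by comparing $\dim X_\Bla$ (via Lemma 7.5(ii), Proposition 7.8 and (7.3.2)) with the codimension expected from semismallness, reading off which $\Bla$ produces an honest component of maximal dimension within each support. The bijectivity of $\r \leftrightarrow \Bla$ is then forced by Proposition 8.7, which gives $\sum_\r (\dim \r)^2 = |W\nat_\Bm|$ matching $|\wt\SP(\Bm)|$.

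The main obstacle is the triviality of the local system on $X^0_\Bla$, i.e., showing that the simple local system $\SL_{\r_0}$ on $\SY^0_{\Bm(k)}$ (coming from the Galois covering $\eta_\BI$ of 6.2) degenerates to $\Ql$ after IC-extension to $\ol\CG_{\Bm(k),\sr}$ and restriction to $\wt X^0_\Bla$. My approach would be to invoke connectedness of stabilizers: for $(x,v) \in X^0_\Bla$ with Levi decomposition $x|_{V_0} = x_1$, $x|_{\ol V_0} = x_2$ as in Section 7.4, Proposition 7.10(iv) identifies $Z_H(x,v)$ with $Z_{P}(x,v)$, whose reductive part is $Z_{G_1}(x_1,v) \times Z_{H_2}(x_2)$. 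The first factor is connected by a standard $GL$ argument (it is the stabilizer in $GL(V_0)$ of a pair $(x_1,v)$ with $E^{x_1}v = V_0$), and the second is connected by the result of Spaltenstein recalled in 3.6. Hence every $H$-equivariant local system on $X^0_\Bla$ is trivial, which forces $L_\r|_{X^0_\Bla} \simeq \Ql$ up to shift. Intersection cohomology extension then gives $L_\r \simeq \IC(\ol X_\Bla,\Ql)[\dim X_\Bla]$ as required.
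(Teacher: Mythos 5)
Your setup for (i) is correct and matches the paper: reduce via Proposition 9.3(ii), proper base change, (8.9.2), and Lemma 9.5 to showing that $L_\r|_{X^0_\Bla}$ is the constant sheaf $\Ql$, identifying the correct $\Bla$ for each $\r$ along the way. The deductions of (ii) and (iii) from (i) are also as in the paper. But the key step --- triviality of the local system on $X^0_\Bla$ --- is where your argument breaks down.

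You argue that connectedness of stabilizers forces every $H$-equivariant local system on $X^0_\Bla$ to be trivial. This inference is not valid, because $X^0_\Bla$ is not a single $H$-orbit: as noted in Remark 7.6, when $m_1 > 0$ the stratum $X_\Bla$ (hence $X^0_\Bla$) contains infinitely many $H$-orbits. Connectedness of the stabilizer of a point $z$ only controls $H$-equivariant local systems on the orbit $Hz$, not on the whole stratum. Concretely, by (7.5.1) the full stratum $X_\Bla$ is an affine bundle over $H/(Z_L(x,v)U_P)$ and one could hope to read off $\pi_1$ this way, but $X^0_\Bla$ is a proper open subset whose complement may have codimension one, so you cannot conclude that $X^0_\Bla$ is simply connected or that equivariant local systems on it are constant. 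There is also a secondary slip: Proposition 7.10(iv) concerns the centralizer $Z_H(x)$ of the nilpotent element $x$ alone, not of the pair $(x,v)$, so it does not directly give $Z_H(x,v) = Z_P(x,v)$ as you assert.

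The paper's proof instead computes the restriction of $A_\r$ to $\CG_{\Bm,\nilp}$ explicitly. It introduces the simple perverse sheaf $B_\Bla$ on $\CG_{\Bm,\nilp}$ defined by the relation (9.7.2) $q_1^*B_{\Bla}[\b_2] \simeq \s_1^*(B_{\r_1}\boxtimes B_{\r'})[\b_1]$, where $B_{\r_1} = \IC(\ol\SO'_1,\Ql)$ comes from the ordinary $GL$ Springer correspondence and $B_{\r'} = \IC(\ol\SO_2,\Ql)$ comes from Corollary 5.20 (the $Sp$ Springer correspondence in characteristic $2$, which itself encodes the Spaltenstein connectedness result). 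Comparing (9.7.2) with (9.3.4) identifies $A_\r|_{\CG_{\Bm,\nilp}}$ with $B_\r$, and since $\CG_\Bla = q_1(\s_1^{-1}(\SO'_1\times\SO_2))$ the restriction $B_\r|_{\CG_\Bla}$ is visibly pulled back from the constant sheaf on $\SO'_1\times\SO_2$ and is therefore $\Ql$. Pushing forward through the isomorphism $\z_1 : \wt X^0_\Bla \isom X^0_\Bla$ of Lemma 9.5(ii) then gives $L_\r|_{X^0_\Bla} \simeq \Ql$. This simultaneously identifies the correct $\Bla$ for each $\r$ (it is forced by (9.7.2)), so the dimension-count appeal to Proposition 8.7 you propose is not needed at that stage; the bijection is already supplied by Theorem 8.9. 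You should replace the stabilizer-connectedness step by this explicit cartesian-square comparison, invoking Corollary 5.20 as the source of the triviality.
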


\begin{proof}
By Proposition 9.3, we know that $\z_!A_{\r} = K_{\r}$ under the notation in 9.2.
Hence by the base change theorem , 
$(\z_1)_!(A_{\r}|_{\CG_{\Bm\nil}}) \simeq K_{\r}|_{\CG_{\Bm,\nil}}$. 
For $\Bla \in \SP(\Bm(k))$, put $\r = \r\nat_{\Bla}$. 
We define a simple perverse sheaf $B_{\Bla}$ on 
$\CG_{\Bm,\nilp}$ as follows. Let $\SO_1', \SO_2$ be as in 9.4.  
Put $B_{\r_1} = \IC(\ol\SO_1', \Ql)[\dim \SO_1']$ for 
$\r_1 = \r_{\la^{(1)}} \in S_{m_1}\wg$, and $B_{\r'} = \IC(\ol\SO_2, \Ql)[\dim \SO_2]$
for $\r' = \wh\r_2 \in W_{m_2,2}\wg$ with $\r_2 \in (S_k \times S_{m_2-k})\wg$.
By a similar construction as in the proof of Proposition 9.3, there exists a unique 
simple perverse sheaf $B_{\r}$ on $\CG_{\Bm,\nilp}$ satisfying the relation 
\begin{equation*}
\tag{9.7.2}
q_1^*B_{\Bla}[\b_2] \simeq \s^*(B_{\r_1}\boxtimes B_{\r'})[\b_1].
\end{equation*}
We know that $A_{\r_1}|_{(\Fg_1)\nil} \simeq B_{\r_1}$, up to shift.
On the other hand, by Corollary 5.20, we have 
$A_{\r'}|_{(\Fh_2)\nil} \simeq B_{\r'}$, up to shift.  
Thus by comparing (9.7.2) and (9.3.4), we see that the restriction of $A_{\r}$
on $\CG_{\Bm,\nilp}$ coincides with $B_{\r}$, up to shift. Also by (9.7.2), 
the restriction of $B_{\r}$ on $\CG_{\Bla}$ is a constant sheaf $\Ql$. In particular, 
$\supp B_{\r} = \ol\CG_{\Bla}$. By Lemma 9.5, the support of $(\z_1)_!B_{\Bla}$ 
coincides with $\ol X_{\Bla}$. By Theorem 8.9, we know that the restriction of 
$K_{\r}$ on $\SX_{\Bm\nilp}$ is a simple perverse sheaf $L_{\r}$. Hence in order
to show (9.7.1), it is enough to see that $L_{\r}|_{X^0_{\Bla}}$ is a constant sheaf $\Ql$. 
By Lemma 9.5 (ii), $\z\iv(X_{\Bla}^0) = \wt X^0_{\Bla} \subset \CG_{\Bla}$, and 
$\z_1\iv(X_{\Bla}^0) \isom X_{\Bla}^0$. It follows that 
$(\z_1)_!B_{\la}|_{X_{\Bla}^0}$ coincides with $\Ql$, up to shift. 
This proves (9.7.1). (ii) and (iii) then follows from Theorem 8.9 and Corollary 8.11.
The theorem is proved.  
\end{proof}

\para{9.8.}
For each $z \in \SX_{\Bm, \nilp}$, we consider the Springer fibres 
$\SB_z = \pi\iv(z)$ and $\SB^{(\Bm)}_z = (\pi^{(\Bm)})\iv(z)$ as in 7.13.
We have $\SB_z^{(\Bm)} \subset \SB_z$.  The cohomology group $H^i(\SB^{(\Bm)}_z,\Ql)$
has a structure of $W\nat_{\Bm}$-module, and $H^i(\SB_z, \Ql)$ has a structure of 
$W_{n,3}$-module. For $\Bla \in \wt\SP(\Bm)$, put
\begin{equation*}
\tag{9.8.1}
d_{\Bla} = \frac{1}{2}(\dim \SX_{\Bm,\nilp} - \dim X_{\Bla})
\end{equation*}
We have a lemma.

\begin{lem}  %%%%  Lemma 9.9.
Assume that $\Bla \in \wt\SP(\Bm)$. 
\begin{enumerate}
\item
For any $z \in X_{\Bla}$, $\dim \SB_z^{(\Bm)} \ge d_{\Bla}$. The set of $z \in X_{\Bla}$
such that $\dim \SB_z^{(\Bm)} = d_{\Bla}$ forms an open dense subset of $X_{\Bla}$.
\item
For any $z \in X_{\Bla}$, $H^{2d_{\Bla}}(\SB_z^{(\Bm)},\Ql)$ contains an irreducible 
$W\nat_{\Bm}$-module $\r\nat_{\Bla}$. 
\end{enumerate}
\end{lem}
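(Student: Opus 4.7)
The plan is to read both statements off from the decomposition
\begin{equation*}
(\pi_1^{(\Bm)})_!\Ql[d'_{\Bm}] \simeq \bigoplus_{\Bmu \in \wt\SP(\Bm)} \r\nat_{\Bmu} \otimes \IC(\ol X_{\Bmu}, \Ql)[\dim X_{\Bmu}]
\end{equation*}
provided by Theorem 9.7 (ii), by taking stalk cohomology at a point $z \in X_{\Bla}$. Writing $K_{\Bmu} = \IC(\ol X_{\Bmu}, \Ql)[\dim X_{\Bmu}]$ and using that $\pi_1^{(\Bm)}$ is proper, I get
\begin{equation*}
H^i(\SB_z^{(\Bm)}, \Ql) \simeq \bigoplus_{\Bmu \in \wt\SP(\Bm)} \r\nat_{\Bmu} \otimes \SH^{i - d'_{\Bm}}(K_{\Bmu})_z
\end{equation*}
for every $i$. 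For $i = 2d_{\Bla}$ the shift $i - d'_{\Bm} = -\dim X_{\Bla}$ is exactly the degree in which $K_{\Bla}$ restricts to $\Ql$ on its open stratum $X_{\Bla}$, so the $\Bmu = \Bla$ summand contributes a canonical copy of $\r\nat_{\Bla}$ to $H^{2d_{\Bla}}(\SB_z^{(\Bm)}, \Ql)$ for every $z \in X_{\Bla}$; this is precisely assertion (ii), and its non-vanishing forces $\dim \SB_z^{(\Bm)} \geq d_{\Bla}$, giving the lower bound in (i).

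For the matching generic upper bound I invoke the perverse support condition characterizing IC: $\dim \supp \SH^j(K_{\Bmu}) < -j$ whenever $j > -\dim X_{\Bmu}$. For $\Bmu \neq \Bla$ with $X_{\Bla} \subset \ol X_{\Bmu}$ the strict inequality $\dim X_{\Bla} < \dim X_{\Bmu}$ (coming from the irreducibility of each $\ol X_{\Bla}$ and the disjointness of the partition of Lemma 7.5 (iii)) lets me apply this for any $j \geq -\dim X_{\Bla}$, yielding $\dim \supp \SH^j(K_{\Bmu}) < \dim X_{\Bla}$ and hence $\SH^j(K_{\Bmu})_z = 0$ on an open dense subset of $X_{\Bla}$. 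For $\Bmu = \Bla$ the equality $K_{\Bla}|_{X_{\Bla}} = \Ql[\dim X_{\Bla}]$ makes $\SH^j(K_{\Bla})_z = 0$ identically on $X_{\Bla}$ for every $j > -\dim X_{\Bla}$. Intersecting the finitely many resulting open dense loci produces an open dense $U \subset X_{\Bla}$ on which $H^i(\SB_z^{(\Bm)}, \Ql) = 0$ for every $i > 2d_{\Bla}$, so $\dim \SB_z^{(\Bm)} \leq d_{\Bla}$ on $U$; combined with the lower bound, equality holds on $U$, settling the second part of (i).

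The argument is essentially bookkeeping once Theorem 9.7 is in hand, so there is no truly substantial obstacle. The only delicate point is the strict inequality $\dim X_{\Bla} < \dim X_{\Bmu}$ whenever $X_{\Bla} \subset \ol X_{\Bmu} \setminus X_{\Bmu}$, which is what allows the IC support condition to be pushed into all degrees $\geq -\dim X_{\Bla}$; this follows because each $\ol X_{\Bla}$ is irreducible and the $X_{\Bla}$ are pairwise disjoint, so a stratum strictly contained in another's closure cannot be open there.
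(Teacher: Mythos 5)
Your proof of part (ii) and the lower bound in (i) is exactly the paper's argument: specialize the decomposition of Theorem 9.7(ii) at $z \in X_{\Bla}$ in degree $i = 2d_{\Bla}$ and observe $\SH^0\IC(\ol X_{\Bla},\Ql) = \Ql$.

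For the matching upper bound in (i), you take a genuinely different route from the paper. The paper introduces the auxiliary stratification $\SX_{\Bm,\nilp} = \coprod_d X(d)$ by fibre dimension (7.13.1), notes that $X_{\Bla} \cap X(d)$ is open dense in $X_{\Bla}$ for $d = \dim(\pi^{(\Bm)})\iv(X_{\Bla}) - \dim X_{\Bla}$, and then applies the semismallness inequality of Lemma 7.14(iii) to conclude $d = d_{\Bla}$. You instead read the bound off the IC support conditions applied to each summand $\IC(\ol X_{\Bmu},\Ql)[\dim X_{\Bmu}]$ of Theorem 9.7(ii): for $j > -\dim X_{\Bla}$, the $\Bmu = \Bla$ term vanishes identically on $X_{\Bla}$, and for $\Bmu \neq \Bla$ with $X_{\Bla}\subset\ol X_{\Bmu}$, the strict inequality $\dim X_{\Bla} < \dim X_{\Bmu}$ (which you correctly deduce from $X_{\Bmu}$ being open dense in $\ol X_{\Bmu}$ and the strata being disjoint) forces $j > -\dim X_{\Bmu}$ so that $\dim\supp\SH^j < -j < \dim X_{\Bla}$. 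Both proofs are correct and rely on the same upstream input (Theorem 9.7 / Lemma 7.14); yours avoids the explicit $X(d)$ stratification and extracts the bound purely from the IC axioms, while the paper's is more in the spirit of the semismall-map machinery used throughout Section 7. One small slip in your write-up: you write the range as $j \geq -\dim X_{\Bla}$, but what is actually used (and all that is needed) is $j > -\dim X_{\Bla}$, since you only need to kill $H^i(\SB_z^{(\Bm)},\Ql)$ for $i > 2d_{\Bla}$.
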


\begin{proof}
First we show (ii). For any $z \in \SX_{\Bm,\nilp}$, Theorem 9.7 (ii) implies that
\begin{equation*}
\tag{9.9.1}
H^i(\SB_z^{(\Bm)},\Ql) \simeq \bigoplus_{\Bmu \in \wt\SP(\Bm)}
                   \r\nat_{\Bmu}\otimes 
          \SH_z^{i - d'_{\Bm} + \dim X_{\Bmu}}\IC(\ol X_{\Bmu}, \Ql)
\end{equation*}
as $W\nat_{\Bm}$-modules. Assume that $z \in X_{\Bla}$ and put $i = 2d_{\Bla}$.
Since $\SH^0\IC(\ol X_{\Bla}, \Ql) = \Ql$, $H^{2d_{\Bla}}(\SB_z^{(\Bm)},\Ql)$ 
contains $\r\nat_{\Bla}$. This proves (ii). 
\par
(ii) implies, in particular, that $\dim \SB_z^{(\Bm)} \ge d_{\Bla}$. 
Put $d = \dim (\pi^{(\Bm)})\iv(X_{\Bla}) - \dim X_{\Bla}$.  Let $X(d)$ be as in 
(7.13.1).  Then $X(d) \cap X_{\Bla}$ is open dense in $X_{\Bla}$. Hence 
$\dim X_{\Bla} \le \dim X(d)$.  By Lemma 7.14 (iii), we have, for any 
$z \in X(d) \cap \SB_z^{(\Bm)}$, 
\begin{equation*}
\dim \SB_z^{(\Bm)} \le \frac{1}{2}(\dim \SX_{\Bm,\nilp} - \dim X(d))
                   \le \frac{1}{2}(\dim \SX_{\Bm,\nilp} - \dim X_{\Bla}) 
                   = d_{\Bla}.
\end{equation*} 
Hence $\dim \SB^{(\Bm)}_z = d_{\Bla}$ and $d = d_{\Bla}$. This proves (i). 
\end{proof}

We show the following result (compare with [Sh, Prop. 8.16]).

\begin{prop} %%%%  Prop. 9.10
Take $z \in X^0_{\Bla}$, and assume that $\Bla \in \wt\SP(\Bm)$. 
\begin{enumerate}
\item
$\dim \SB_z^{(\Bm)} = d_{\Bla}$, 
and 
$H^{2d_{\Bla}}(\SB_z^{(\Bm)},\Ql) \simeq \r\nat_{\Bla}$ as $W\nat_{\Bm}$-modules.
\item
$\dim \SB_z = d_{\Bla}$, and  $H^{2d_{\Bla}}(\SB_z, \Ql) \simeq \wh\r_{\Bla}$ as 
$W_{n,3}$-modules.  Hence the map $z \mapsto H^{2d_{\Bla}}(\SB_z, \Ql)$ 
gives a canonical bijection 
\begin{equation*}
\{ X^0_{\Bla} \mid \Bla \in \SP_{n,3}\} \isom W_{n,3}\wg.
\end{equation*}
\end{enumerate}
\end{prop}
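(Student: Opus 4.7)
The plan is to reduce $\SB^{(\Bm)}_z$ for $z \in X^0_{\Bla}$ to a product of Springer fibers via the factorization $\pi_1^{(\Bm)} = \z_1\circ\vf_1$ from 9.4 together with the isomorphism $\z_1^{-1}(X^0_{\Bla}) \simeq X^0_{\Bla}$ of Lemma 9.5(ii). For $z = (x,v) \in X^0_{\Bla}$, writing $\tilde z = (x, v, W_1) = \z_1^{-1}(z)$, we have $\SB^{(\Bm)}_z = \vf_1^{-1}(\tilde z)$. Unraveling the defining conditions---a Borel $gB$ with $gM_{m_1} = W_1$, the induced full flag on $W_1$ and full isotropic flag on $W_1^{\perp}/W_1$ being preserved by $x$, while the condition $g^{-1}v \in M_n$ is automatic since $v \in W_1$---I would exhibit a canonical isomorphism $\vf_1^{-1}(\tilde z) \simeq \CB^1_{y_1} \times \CB^2_{y_2}$, where $y_1 = x|_{W_1}$ lies in the $G_1 = GL(W_1)$-nilpotent orbit of Jordan type $\la^{(1)}$, $y_2 = x|_{W_1^{\perp}/W_1}$ lies in the $H_2 = Sp(W_1^{\perp}/W_1)$-orbit $\SO_{(\la^{(2)},\la^{(3)})}$, and the two factors are the respective Springer fibers for $G_1$ and $H_2$.

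Since $y_1, y_2$ remain in fixed orbits as $\tilde z$ varies over the $H$-stable $\wt X^0_{\Bla}$, the resulting fiber dimension is constant on $X^0_{\Bla}$; combined with the generic lower bound of Lemma 9.9(i) this constant must equal $d_{\Bla}$, proving the dimension claim in (i). Applying K\"unneth, $H^{2d_{\Bla}}(\SB^{(\Bm)}_z,\Ql) \simeq H^{\mathrm{top}}(\CB^1_{y_1},\Ql)\otimes H^{\mathrm{top}}(\CB^2_{y_2},\Ql)$; the classical Springer correspondence for $G_1$ identifies the first factor with $\r_{\la^{(1)}}$ as $S_{m_1}$-module, Theorem 3.6 applied to $H_2$ identifies the second with $\r_{(\la^{(2)},\la^{(3)})}$ as $W_{m_2,2}$-module, and by the construction recalled in 6.7 their external tensor product is precisely $\r\nat_{\Bla}$. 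The principal obstacle is to verify that the $W\nat_{\Bm}$-action on $H^{2d_{\Bla}}(\SB^{(\Bm)}_z,\Ql)$ inherited from the $W\nat_{\Bm}$-structure on $\pi^{(\Bm)}_!\Ql$ (Proposition 6.8) agrees with this external product of the two individual Springer actions; this should follow from the cartesian diagram (9.1.3) relating $\vf_1$ to $\pi^1 \times \pi^2$, via a Lusztig-type character-formula argument parallel to Theorem 3.8.

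For (ii), the parallel analysis applied to $\pi$ in place of $\pi^{(\Bm)}$, with Corollary 8.11 (the Springer correspondence for $W_{n,3}$) in place of Theorem 9.7(ii), yields $\dim\SB_z = d_{\Bla}$ and $H^{2d_{\Bla}}(\SB_z,\Ql) \simeq \wh\r_{\Bla}$ as $W_{n,3}$-modules; concretely, stalk analysis of the decomposition $(\ol\pi_{\Bm,1})_!\Ql[d'_{\Bm}] \simeq \bigoplus_{\Bmu} \wh\r_{\Bmu}\otimes\IC(\ol X_{\Bmu},\Ql)[\dim X_{\Bmu}]$ at $z \in X^0_{\Bla}$, combined with the strict support condition on IC sheaves and the dimension matching inherited from part (i), isolates the $\Bmu=\Bla$ contribution in top degree. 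The resulting bijection $\{X^0_{\Bla}\mid\Bla\in\SP_{n,3}\} \isom W_{n,3}\wg$ follows immediately from the partition $\SX\nil = \coprod_{\Bla\in\SP_{n,3}} X_{\Bla}$ of Lemma 7.5(iii) combined with the parametrization $W_{n,3}\wg \simeq \coprod_{\Bm \in \SQ^0_{n,3}} \wt\SP(\Bm)$ from (8.10.1).
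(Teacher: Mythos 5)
Your proposal is essentially the paper's route: factor $\pi_1^{(\Bm)}=\z_1\circ\vf_1$, use Lemma 9.5(ii) to reduce $\SB_z^{(\Bm)}$ to a $\vf_1$-fiber, identify that fiber as a product $\SB^1_{y_1}\times\SB^2_{y_2}$ of $G_1$- and $H_2$-Springer fibres via the cartesian diagrams (9.1.3), (9.4.1), and apply K\"unneth. For the dimension claim the paper computes $d_{\Bla}=d_{\la^{(1)}}+d_{\Bla'}$ directly (formula (9.10.1)), so that $\dim\SB^{(\Bm)}_z=d_{\la^{(1)}}+d_{\Bla'}=d_{\Bla}$ comes for free; your alternative (constancy of the fiber dimension on $X^0_{\Bla}$ plus the generic lower bound of Lemma 9.9(i)) also works but is a detour.

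The genuine gap is the $W\nat_{\Bm}$-module identification in (i). You correctly isolate the issue — that the $W\nat_{\Bm}$-action on $H^{2d_{\Bla}}(\SB_z^{(\Bm)},\Ql)$ coming from Proposition 6.8 must be matched with the external product of the two Springer actions — but you leave it at ``should follow'' from a character-formula argument parallel to Theorem 3.8. That argument is not carried out and would be substantially longer than what is actually needed. The paper circumvents the comparison entirely: Lemma 9.9(ii), which is a direct consequence of the $\IC$-decomposition in Theorem 9.7(ii), already asserts that $\r\nat_{\Bla}$ \emph{occurs} in $H^{2d_{\Bla}}(\SB_z^{(\Bm)},\Ql)$ as a $W\nat_{\Bm}$-submodule; the K\"unneth computation then shows $\dim H^{2d_{\Bla}}(\SB_z^{(\Bm)},\Ql)=\dim\r_{\la^{(1)}}\cdot\dim\wh\r_{\Bla'}=\dim\r\nat_{\Bla}$, forcing equality. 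You should close the gap this way rather than attempting to match the actions by hand.

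For (ii), the phrase ``strict support condition on IC sheaves'' is slightly off: the support estimate gives vanishing of $\SH^{j}_z\IC(\ol X_{\Bmu},\Ql)$ for $j>0$ only at \emph{generic} $z$, not at every $z\in X^0_{\Bla}$. What actually does the work, and what I suspect you mean by ``dimension matching inherited from part (i),'' is the transfer: the vanishing $H^i(\SB_z^{(\Bm)},\Ql)=0$ for $i>2d_{\Bla}$ (from $\dim\SB_z^{(\Bm)}=d_{\Bla}$) pushed through the $W\nat_{\Bm}$-decomposition (9.9.1) forces $\SH^{i-d'_{\Bm}+\dim X_{\Bmu}}_z\IC(\ol X_{\Bmu},\Ql)=0$ there, and only then is this vanishing substituted into the $W_{n,3}$-decomposition (9.10.4). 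Make that step explicit; relying on support conditions alone would not cover every $z\in X^0_{\Bla}$.
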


\begin{proof}
We prove (i). We consider the diagram as in (9.1.3), restricted to the nilpotent variety
as in 9.4.  Write $\Bla = (\la^{(1)}, \Bla')$ with $\Bla' = (\la^{(2)}, \la^{(3)})$. 
Put 
\begin{align*}
d_{\la^{(1)}} &= (\dim (\Fg_1)\nil - \dim \SO_1')/2,  \\ 
d_{\Bla'} &= (\dim X'\nil - \dim \SO_2)/2.
\end{align*} 
We note that 
\begin{equation*}
\tag{9.10.1}
d_{\Bla} = d_{\la^{(1)}} + d_{\Bla'}.
\end{equation*}
In fact, by Proposition 7.8 and Lemma 7.5,
\begin{align*}
d_{\Bla} &= ((2n^2 + m_1) - (2\dim U_P + \dim \SO_1 + \dim \SO_2))/2 \\
         &= (\dim G_1 + \dim H_2 - n - \dim \SO_1' - \dim \SO_2)/2 \\
         &= (\dim (\Fg_1)\nil + \dim (\Fh_2)\nil - \dim \SO_1' - \dim \SO_2)/2. 
\end{align*}
Thus (9.10.1) holds. 
\par
Take $z \in X^0_{\Bla}$. Assume that $\Bla \in \wt\SP(\Bm)$. 
By Lemma 9.5 (ii), $\z_1$ gives an isomorphism 
$\z_1\iv(X^0_{\Bla}) \to X^0_{\Bla}$.  Hence there exists a unique 
$z_* \in \z_1\iv(X^0_{\Bla})$ such that $\z_1(z_*) = z$. 
Since $\z_1\iv(X_{\Bla}^0) \subset \CG_{\Bla}$, 
by using the diagram (9.1.3) and its restriction on the nilpotent variety 
(9.4.1), one can find $(x_1, x_2) \in \SO_1' \times \SO_2$ such that 
$\s_1\iv(x_1,x_2) = q_1\iv(z_*)$.  
Note that $\dim \SB^1_{x_1} = d_{\la^{(1)}}, \dim \SB^2_{x_2} = d_{\Bla'}$, 
where $\SB^1$ is the flag variety for $G_1$ and $\SB^2$ is the flag variety for 
$H_2$.  Thus by (9.1.3) together with (9.10.1), we have 
\begin{equation*}
\tag{9.10.2}
\bigl(R^{2d_{\la^{(1)}}}\pi^1_!\Ql\bigr)_{x_1}
       \otimes \bigl(R^{2d_{\Bla'}}\pi^2_!\Ql\bigr)_{x_2}
    \simeq \bigl(R^{2d_{\Bla}}\wt\vf_!\Ql\bigr)_{\xi} 
    \simeq \bigl(R^{2d_{\Bla}}(\vf_1)_!\Ql\bigr)_{z_*},
\end{equation*} 
where $\xi$ is an element in $\s_1\iv(x_1,x_2) = q_1\iv(z_*)$. 
Since $\z_1\iv(X^0_{\Bla}) \simeq X^0_{\Bla}$, we have
\begin{equation*}
\tag{9.10.3}
H^{2d_{\Bla}}(\SB^{(\Bm)}_z,\Ql) \simeq \bigl(R^{2d_{\Bla}}(\pi_1)_!\Ql\bigr)_z
     \simeq \bigl(R^{2d_{\Bla}}(\vf_1)_!\Ql)_{z_*}.
\end{equation*} 
We already know, from the Springer correspondence for $\Fg_1$ and $\Fh_2$, 
\begin{align*}
\dim \bigl(R^{2d_{\la^{(1)}}}\pi^1_!\Ql\bigr)_{x_1} 
       &= \dim H^{2d_{\la^{(1)}}}(\SB^1_{x_1},\Ql)  = \r_{\la^{(1)}}, \\
\dim \bigl(R^{2d_{\Bla'}}\pi^2_!\Ql\bigr)_{x_2} 
       &= \dim H^{2d_{\Bla'}}(\SB^2_{x_2},\Ql) = \wh\r_{\Bla'}. 
\end{align*} 
Then (9.10.2) and (9.10.3) show that 
$\dim H^{2d_{\Bla}}(\SB^{(\Bm)}_z, \Ql) = \dim \r_{\Bla}\nat$. 
By Lemma 9.9 (ii), 
$H^{2d_{\Bla}}(\SB^{(\Bm)}_z,\Ql)$ contains $\r\nat_{\Bla}$. 
It follows that 
$H^{2d_{\Bla}}(\SB^{(\Bm)}_z,\Ql) \simeq \r\nat_{\Bla}$ as 
$W\nat_{\Bm}$-modules. (9.10.2) also shows that $\dim \SB^{(\Bm)}_z = d_{\Bla}$.
This proves (i).
\par
Next we show (ii). 
We consider the decomposition of $H^i(\SB^{(\Bm)}_z,\Ql)$ in (9.9.1).
By Theorem 9.7 (iii), we have a similar decomposition 
\begin{equation*}
\tag{9.10.4}
H^i(\SB_z, \Ql) \simeq \bigoplus_{\Bmu \in \wt\SP(\Bm)}
                  \wh\r_{\Bmu} \otimes 
                \SH_z^{i - d'_{\Bm} + \dim X_{\Bmu}}\IC(\ol X_{\Bmu},\Ql). 
\end{equation*}
Since $H^i(\SB^{(\Bm)}_z,\Ql) = 0$ for $i > 2d_{\Bla}$, (9.9.1) implies 
that $\SH^{i-d'_{\Bm} + \dim X_{\Bmu}}\IC(\ol X_{\Bmu},\Ql) = 0$ for 
any choice of $\Bmu \in \wt\SP(\Bm)$ and of $i > 2d_{\Bla}$. 
It follows, by (9.10.4) that $H^i(\SB_z,\Ql) = 0$ for $i > 2d_{\Bla}$. 
Since $\SB^{(\Bm)}_z \subset \SB_z$, $\dim \SB_z \ge \dim \SB_z^{(\Bm)} = d_{\Bla}$.
Hence $\dim \SB_z = d_{\Bla}$.  
By (i) and (9.9.1), we see that 
$\SH^{2d_{\Bla} - d'_{\Bm} + \dim X_{\Bmu}}\IC(\ol X_{\Bmu},\Ql) = 0$ for any 
$\Bmu \ne \Bla$, and is equal to $\Ql$ for $\Bmu = \Bla$.
Hence by (9.10.4), we have 
$H^{2d_{\Bla}}(\SB_z, \Ql) \simeq \wh\r_{\Bla}$ as $W_{n,3}$-modules. 
This proves (ii).  The proposition is proved.
\end{proof}

\par\bigskip

\par\vspace{1cm}
\noindent
J. Dong  \\
School of Mathematical Sciences, Tongji University \\ 
1239 Siping Road, Shanghai 200092, P. R. China  \\
E-mail: \verb|dongjunbin1990@126.com |

\newpage
\par\vspace{1cm}
\noindent
T. Shoji \\
School of Mathematical Sciences, Tongji University \\ 
1239 Siping Road, Shanghai 200092, P. R. China  \\
E-mail: \verb|shoji@tongji.edu.cn|

\par\vspace{1cm}
\noindent
G. Yang \\
School of Mathematical Sciences, Tongji University \\ 
1239 Siping Road, Shanghai 200092, P. R. China  \\
E-mail: \verb|yanggao_izumi@foxmail.com|

\end{document}